\newtheorem{thm}{Theorem}[section]
\newtheorem{cor}[thm]{Corollary}
\newtheorem{lem}[thm]{Lemma}
\newtheorem{prop}[thm]{Proposition}
\newtheorem{rem}[thm]{Remark}
\newtheorem{claim}{Claim}
\newcommand{\bremark}{\begin{rem}}
\newcommand{\eremark}{\end{rem} }
\numberwithin{equation}{section}
\newcommand{\equ}[1]{(\ref{#1})}
\newcommand{\C}{{\mathbb C}}
\newcommand{\cuad}{{\sqcap\kern-.68em\sqcup}}
\newcommand{\e}{\epsilon}
\newcommand{\de}{\delta}
\newcommand{\la}{\lambda}
\newcommand{\ptl}{{\partial}}
\newcommand{\Om}{{\Omega}}
\newcommand{\R}{{\mathbb{R}}}
\newcommand{\N}{{\mathbb{N}}}
\newcommand{\lf}{\left}
\newcommand{\rg}{\right}
\newcommand{\ti}{\tilde}
\newcommand{\st}{such that }
\newcommand{\sm}{\setminus}
\newcommand\ds{\displaystyle}
\newcommand\lap{\Delta}
\newcommand{\grad}{\nabla}
\newcommand{\ml}{\mathcal}
\def\qed{{\unskip\nobreak\hfil\penalty50
         \hskip2em\hbox{}\nobreak\hfil\mbox{\rule{1ex}{1ex} \qquad}
           \parfillskip=0pt
           \finalhyphendemerits=0\par }}
\DeclareMathOperator{\re}{Re} \DeclareMathOperator{\im}{Im}
\begin{document}

\title{Non-topological condensates for the self-dual Chern-Simons-Higgs model}

\author{Manuel del Pino\footnote{Departamento de Ingenier\'ia Matem\'atica
and CMM, Universidad de Chile, Casilla 170, Correo 3, Santiago,
Chile. E-mail: delpino@dim.uchile.cl. Author supported by grants
Fondecyt 1070389 and FONDAP (Chile).}\quad Pierpaolo Esposito
\footnote{Dipartimento di Matematica e Fisica, Universit\`a degli
Studi ``Roma Tre", Largo S. Leonardo Murialdo, 1 -- 00146 Roma,
Italy. E-mail: esposito@mat.uniroma3.it. Author supported by the
PRIN project ``Critical Point Theory and Perturbative Methods for
Nonlinear Differential Equations" and the Firb-Ideas project
``Analysis and Beyond".} \quad Pablo
Figueroa\footnote{Departamento de Matem\'atica, Pontificia
Universidad Cat\'olica de Chile, Avenida Vicuna Mackenna 4860,
Macul, Santiago, Chile. E-mail: pfigueros@mat.puc.cl. Author
supported by grants Proyecto Anillo ACT-125 and Fondecyt
Postdoctorado 3120039 (Chile).} \quad Monica Musso
\footnote{Departamento de Matem\'atica, Pontificia Universidad Cat\'olica de Chile, Avenida
Vicuna Mackenna 4860, Macul, Santiago, Chile. E-mail:
mmusso@mat.puc.cl. Author supported by Fondecyt grant 1040936
(Chile), and by PRIN project ``Metodi variazionali e
topologici nello studio di fenomeni non lineari''.}}

\maketitle

\begin{abstract}
\noindent For the abelian self-dual Chern-Simons-Higgs model we
address existence issues of periodic vortex configurations -- the
so-called condensates-- of non-topological type as $k \to 0$,
where $k>0$ is the Chern-Simons parameter. We provide a
positive answer to the long-standing problem on the existence of
non-topological condensates with magnetic field concentrated at
some of the vortex points (as a sum of Dirac measures) as $k \to
0$, a question which is of definite physical interest.
\end{abstract}

\vskip 0.2truein

\medskip \noindent {\bf Keywords}:
\medskip \noindent {\bf  AMS subject classification}: 



\vskip 0.2truein

\section{Introduction and statement of main results}
The Chern-Simons vortex theory is a planar theory which is
physically relevant in connection with high critical temperature
superconductivity, the quantum Hall effect and anyonic particle
physics, as widely discussed by Dunne \cite{D}. Hong-Kim-Pac
\cite{HKP} and Jackiw-Weinberg \cite{JW} have proposed an abelian
self-dual model where the electrodynamics is governed only by the
Chern-Simons term. Over the Minkowski space
$(\mathbb{R}^{1+2},g)$, with metric tensor $g=\hbox{diag
}(1,-1,-1)$, the model is described by the following Lagrangean
density:
$${\cal L}_({\cal A},\phi)=\frac{k}{4}\e^{\alpha \beta
\gamma}A_\alpha F_{\beta \gamma}+D_\alpha \phi \overline{D^\alpha
\phi}-\frac{1}{k^2}|\phi|^2\left(|\phi|^2-1 \right)^2,$$
where the Chern-Simons coupling parameter $k>0$ measures the
strenght of the Chern-Simons term and the antisymmetric
Levi-Civita tensor $\e^{\alpha \beta \gamma}$ is fixed with $\e^{0
1 2}=1$. The metric tensor $g$ is used to lower and raise indices
in the usual way, and the standard summation convention over
repeated indices is adopted. The gauge potential ${\cal A}=-i
A_\alpha dx^{\alpha}$ is a $1$-form (a connection over the
principal bundle $\mathbb{R}^{1+2}\times U(1)$), $
A_\alpha:\mathbb{R}^{1+2}\to \mathbb{R}$ for $\alpha=0,1,2$, and
the Higgs field $\phi:\mathbb{R}^{1+2} \to \mathbb{C}$ is the
matter field. The gauge field $F_{\cal A}=-\frac{i}{2}F_{\alpha
\beta}dx^\alpha \wedge dx^\beta$ is a $2-$form (the curvature of
${\cal A}$), where $F_{\alpha \beta}=\partial_\alpha
A_\beta-\partial_\beta A_\alpha$, and the Higgs field $\phi$ is
weakly coupled with the gauge potential ${\cal A}$ through the
covariant derivative $D_A$ as follows: $D_A \phi=D_\alpha \phi\,
dx^\alpha$, $D_\alpha \phi=\partial_\alpha \phi-i A_\alpha \phi$
for $\alpha=0,1,2$.

\medskip \noindent The self-dual regime has been identified by Hong-Kim-Pac \cite{HKP} and
Jackiw-Weinberger \cite{JW} through the choice of the
``triple-well" potential $\frac{1}{k^2}|\phi|^2 (|\phi|^2-1)^2$
which yields to a Bogomol'nyi reduction \cite{Bog} for the
Chern-Simons-Higgs model, as we discuss below. Vortices are
time-independent ($x^0$ is the time-variable) configurations
$(\mathcal{A},\phi)$ which solve the Euler-Lagrange equations
\begin{equation} \label{ELequations}
\left\{ \begin{array}{l} \displaystyle  D_\mu D^\mu \phi=-\frac{1}{k^2}(|\phi|^2-1) (3|\phi|^2-1) \phi \\
\displaystyle  \frac{k}{2}\e^{\mu \alpha \beta} F_{\alpha
\beta}=J^\mu:=i \left(\phi \overline{D^\mu
\phi}-\overline{\phi}D^\mu \phi\right) \end{array} \right.
\end{equation}
and have finite energy. In the self-dual regime, for
energy-minimizing vortices (at given magnetic flux) the
second-order Euler-Lagrange equations are equivalent to the
first-order self-dual equations
\begin{equation}\label{CSeqs}
\left\{ \begin{array}{l} D_\pm\phi=0 \\
F_{12} \pm \frac{2}{k^2}|\phi|^2(|\phi|^2-1)=0 \\
kF_{12}+2A_0|\phi|^2=0,
\end{array}\right.
\end{equation}
where $D_{\pm}=D_1 \pm i D_2 $ and the last equation is usually
referred to as the Gauss law. In the sequel, we restrict our
attention to energy-minimizing vortices (at given magnetic flux),
and we will simply refer to them as vortices.

\medskip \noindent In the physical interpretation, the electric field $\vec{E}=(\partial_1 A_0,\partial_2 A_0,0)$ is planar, the magnetic field $\vec{B}=(0,0,F_{1,2})$ is in the orthogonal direction, and $J^0$, $\vec{J}=(J^1,J^2)$ can be identified with the charge density, current density, respectively, as in the classical Maxwell theory. Thanks to the Gauss law, vortices are both electrically and magnetically charged, a physical relevant property which was absent in the abelian Maxwell-Higgs model \cite{JaTa,Taubes}. Notice that ${\cal A}$ and $\phi$ are not observable quantities, as they are defined only up to a gauge transformation,  whereas the
electric and magnetic fields as well as the magnitude $|\phi|$ of
the Higgs field define gauge-independent quantities. The second
and third equations in (\ref{CSeqs}) only involve observable
quantities, whereas the first one $D_+ \phi=0$ (or $D_-\phi=0$) --
a  gauge invariant version of the Cauchy-Riemann equations--
implies holomorphic-type properties for the Higgs field $\phi$ (or
$\bar{\phi}$) in a suitable gauge. Following an approach first
developed by Taubes \cite{Taubes} for the abelian Maxwell-Higgs
model, vortices $(\phi,\mathcal{A})$ can be found in the form:
\begin{equation} \label{1917}
\phi=e^{\frac{u}{2} \pm i\sum_{j=1}^N  Arg(z-p_j)},\quad A_0=\pm
\frac{1}{k}(|\phi|^2-1), \quad A_1\pm iA_2=-i (\partial_1\pm
i\partial_2) \log \phi
\end{equation}
as soon as $u=\log |\phi|^2$ does solve the elliptic problem
\begin{equation}\label{1}
-\Delta u= \frac{1}{\epsilon^2}e^u(1-e^u)-4\pi \sum_{j=1}^N
\delta_{p_j},
\end{equation}
where $\epsilon=\frac{k}{2}$ and $p_1,\dots,p_N$ are the zeroes of
$\phi$ (repeated according to their multiplicities)-- usually
referred to as the vortex points (with the convention $N=0$ if
$\phi \not= 0$). We refer the interested reader to
\cite{Tbook,Ybook} and the references therein for more details and
for an extensive discussion of several gauge field theories.

\medskip \noindent For planar vortices, the finite energy condition $\int_{\mathbb{R}^2} e^u(1-e^u)<+\infty$ imposes two possible asymptotic behaviors at infinity. The topological behavior $|\phi|^2=e^u \to 1$ as $|z|\to \infty$ gives the vortex number $N$ the topological meaning of winding number for $\phi$ at infinity (up to a $\pm$ sign, depending on whether $D_+ \phi=0$ or $D_-\phi=0$), yielding to quantization
effects for the energy $E$, the magnetic flux $\Phi$ and the
electric charge $Q$ in the class of topological $N-$vortices:
$E=2\pi N$, $\Phi=\pm 2\pi N$ and $Q=\pm 2\pi kN$. The existence
of planar topological vortices has been addressed in
\cite{H,SY2,RWa}. The non-topological behavior $|\phi|^2=e^u \to
0$ as $|z|\to \infty$ has no counterpart in the abelian
Maxwell-Higgs model, and the possible coexistence of topological
and non-topological $N-$vortices is the main new feature in
Chern-Simons theories. After the seminal work \cite{SY1} in a
radial setting with a single vortex point (see also \cite{CHMY}
for related results), it has been a challenging problem to find
planar non-topological $N-$vortices \cite{ChI,CFL} for an
arbitrary configuration of $p_1,\dots,p_N$. Surprisingly, two
different classes have been found by using different limiting
problems: the singular Liouville equation in \cite{ChI} or the
Chern-Simons equation $-\Delta U=e^U(1-e^U)-4\pi \delta_0$ in
\cite{CFL}. Since the latter problem has no scale-invariance, in \cite{CFL} the points
$p_1,\dots,p_N$ are taken along the vertices of a regular
$N-$polygon in order to glue together $U(\frac{x-p_j}{\epsilon})$, $j=1,\dots,N$,
for there is no freedom to adjust the height at each $p_j$ to
account for the interaction, but the approximating function has
invertible linearized operator.

\medskip \noindent Since the theoretical prediction by Abrikosov \cite{Abr}, the appearance of lattice structure, in the form of spatially periodic vortices, has been experimentally observed. To account for it, the model is formulated on
$$\Omega=\{z=t\omega_1+s \omega_2: \:(t,s) \in (-\frac{1}{2},\frac{1}{2}) \times (-\frac{1}{2},\frac{1}{2})\},$$
where $\omega_1,\: \omega_2 \in \mathbb{C} \setminus \{0\}$
satisfy $\hbox{Im }(\frac{\omega_2}{\omega_1})>0$. Condensates are
time-independent configurations $(\mathcal{A},\phi)$ which solve
the Euler-Lagrange equations \eqref{ELequations}, have finite
energy and satisfy the 't Hooft boundary conditions \cite{tHo}:
\begin{equation}\label{tH}
e^{i\xi_k(z+\omega_k)}\phi(z+\omega_k)=e^{i\xi_k(z)}\phi(z),\quad
A_0(z+\omega_k)=A_0(z), \quad \left(A_j +\partial_j
\xi_k\right)(z+\omega_k)=\left(A_j +\partial_j \xi_k\right)(z)
\end{equation}
for all $z\in \Gamma^1\cup \Gamma^2 \setminus \Gamma^k$ and
$k=1,2$, where $\Gamma^1=\{z=t \omega_1
-\frac{1}{2}\omega_2:\:|t|<\frac{1}{2} \}$,
$\Gamma^2=\{z=-\frac{1}{2}\omega_1+t \omega_2:\:|t|<\frac{1}{2}
\}$ and $\xi_1$, $\xi_2$ are real-valued smooth functions defined
in a neighborhood of $\Gamma^2 \cup\{\omega_1+\Gamma^2\}$,
$\Gamma^1 \cup\{\omega_2+\Gamma^1\}$, respectively. For
energy-minimizing vortices (at given magnetic flux) the
Euler-Lagrange equations \eqref{ELequations} are still equivalent
to the self-dual ones \eqref{CSeqs}. Since \eqref{tH} just reduces
to a double periodicity for the observable quantities $F_{12}$ and
$|\phi|$ in $\Omega$, a configuration $(\mathcal{A},\phi)$ in the
form \eqref{1917} does solve \eqref{CSeqs} as soon as $u=\log
|\phi|^2$ is a doubly-periodic solution of \eqref{1} in $\Omega$,
see \cite{CY,T} for an exact derivation.

\medskip \noindent Hereafter, up to a translation, let us assume that $\phi\not=0$ on
$\partial \Omega$ (i.e. $p_1,\dots,p_N \in \Omega$) in such a way the winding number $\hbox{deg
}(\phi,\partial\Omega,0)$ is well-defined, and the vortex number $N$ is simply
given by $|\hbox{deg }(\phi,\partial\Omega,0)|$. By \eqref{tH} we
still have quantization effects as in the case of planar
topological vortices: $E=2\pi N$, $\Phi=\pm 2\pi N$ and $Q=\pm
2\pi kN$ , where the $\pm$ sign depends on whether $D_+\phi=0$ or
$D_-\phi=0$. Hereafter, up to change $\phi$ with $\bar \phi$, let
us assume that $D_+\phi=0$ and restrict our attention to
energy-minimizing condensates (at given magnetic flux), simply
referred to as condensates.

\medskip \noindent Letting $G(z,p)$ be the Green function of $-\Delta$ in
$\Omega$ with pole at $p$:
$$\left\{ \begin{array}{ll} -\Delta G(z,p)=\delta_p-\frac{1}{|\Omega|}&\hbox{in }\Omega\\
\int_\Omega G(z,p)dz=0, & \end{array}\right.$$
one is led to consider the following equivalent regular version of
(\ref{1}):
\begin{equation}
\label{2}-\Delta v=\frac{1}{\epsilon^2} e^{u_0+v}
(1-e^{u_0+v})-\frac{4\pi N}{|\Omega|}\qquad \hbox{in }
\Omega\end{equation} in terms of $v=u-u_0$, where $u_0=-4\pi
\displaystyle \sum_{j=1}^N G(z,p_j)$ and the potential $e^{u_0}$
is a smooth non-negative function which vanishes exactly at
$p_1,\dots,p_N$. By translation invariance, notice that
$G(z,p)=G(z-p,0)$, and $G(z,0)$ can be decomposed as
$G(z,0)=-{1\over 2\pi}\log|z|+H(z)$, where $H$ is a (not
doubly-periodic) function with $\Delta H= \frac{1}{|\Omega|}$ in
$\Omega$. If $v$ is a solution of \eqref{2}, by integration over
$\Om$ notice that
\begin{equation}\label{ci0}
\int_\Om e^{u_0+v}(1-e^{u_0+v})=\int_\Om
|\phi|^2(1-|\phi|^2)=2\e^2 \int_\Omega F_{12} =4\pi N\e^2
\end{equation}
in view of \eqref{CSeqs}, yielding to the necessary condition
$$16\pi N\e^2=|\Omega| -4 \int_\Om\lf(e^{u_0+v}-{1\over2}\rg)^2<|\Omega|$$
for the solvability. According to \cite{CY}, Caffarelli and Yang
show the existence of $0<\e_c< \sqrt{\frac{|\Omega|}{16\pi N}}$ so
that \eqref{1} has a maximal doubly-periodic solution $u_\e$ for
$0<\epsilon<\epsilon_c$, while no solution exists for $\e >\e_c$.
Notice that \eqref{2} admits a variational structure with energy
functional
$$J_\e(v)={1\over2}\int_\Om|\grad
v|^2+{1\over2\e^2}\int_\Om\lf(e^{u_0+v}-1\rg)^2+{4\pi
N\over|\Om|}\int_\Om v$$
where $v \in H^1(\Om)=\{v \in H_{\text{loc}}^1(\R^2):\, v\text{
doubly periodic in }\Om\}$. Later, Tarantello \cite{T} shows that
the maximal solution $u_\e$ is a local minimum for $J_\e$ in
$H^1(\Om)$, and a second solution $u^\e$ is found as a
mountain-pass critical point for $J_\e$.

\medskip \noindent To each solution $u$ of \eqref{1} we can associate the $N-$condensate $(\mathcal{A},\phi)$ in the form \eqref{1917} (with the $+$ sign as we agreed), and let $(\mathcal{A}_\e,\phi_\e)$, $(\mathcal{A}^\e,\phi^\e)$ be the ones corresponding to $u_\e$, $u^\e$. Concerning the asymptotic behavior as $\e \to 0$, by \eqref{ci0} we can expect two classes of $N-$condensates:
\begin{itemize}
\item $|\phi| \to 1$ as $\e \to 0$ (``topological" behavior),
\item $|\phi| \to 0$ as $\e \to 0$ (``non-topological" behavior),
\end{itemize}
to be understood in suitable norms. For example,
$(\mathcal{A}_\e,\phi_\e)$ exhibits ``topological" behavior:
$$|\phi_\e| \to 1 \hbox{ in }C_{\hbox{loc}}(\bar\Om\sm\{p_1,\dots, p_N\}),$$
with
\begin{equation} \label{1820}
(F_{12})_\e \rightharpoonup 2\pi \sum_{j=1}^N \delta_{p_j} \quad
\hbox{in the sense of measures}
\end{equation}
as $\e \to 0$ according to \eqref{ci0}, see \cite{T}. The
concentration property \eqref{1820} for the magnetic field has a
definite physical interest, and supports the use of the
terminology ``vortex points" for the zeroes $p_1,\dots,p_N$ of the
Higgs field $\phi$. The $N-$condensate $(\mathcal{A}^\e,\phi^\e)$
has in general a different asymptotic behavior as $\e \to 0$:
\begin{itemize}
\item[(i)] when $N=1$, $|\phi^\e|\to 0$ in $C^m(\bar \Omega)$, for
all $m \geq 0$, and $(F_{12})^\e$ is a compact sequence in
$L^1(\Omega)$ (see \cite{T}); \item[(ii)] when $N=2$,
$|\phi^\e|\to 0$ in $C(\bar \Omega)$ and either $(F_{12})^\e$ is a
compact sequence in $L^1(\Omega)$ or $(F_{12})^\e \rightharpoonup
4\pi \delta_q$ in the sense of measures, for some $q
\not=p_1,\:p_2$ with $u_0(q)=\max_\Omega u_0$, depending on
whether
$$I(v)={1\over2}\int_\Om|\grad
v|^2-8\pi \log \left(\int_\Omega e^{u_0+v} \right) +{8\pi
\over|\Om|}\int_\Om v $$ attains its infimum or not in $H^1(\Om)$
(see \cite{NoTa3}, and also \cite{DJLW2}); \item[(iii)] when $N\geq
3$, $|\phi^\e|\to 0$ in $C(\bar \Omega)$ and $ (F_{12})^\e
\rightharpoonup 2\pi N \delta_q $ in the sense of measures, for
some $q \not=p_1,\dots,p_N$ with $u_0(q)=\max_\Omega u_0$ (see
\cite{Ch}).
\end{itemize}
In \cite{DJLPW} it is shown the existence of $N-$condensates
$(\mathcal{A},\phi)$ so that $|\phi|\to 0$ a.e. in $\Omega$ as $\e
\to 0$.
Concerning the case $N=2$, it is a very difficult question, which
has been discussed in \cite{CLW,LiWa} for $p_1=p_2$, to know
whether or not $I$ attains the infimum in $H^1(\Omega)$. An
alternative approach of perturbative type has revelead to be
successful for $N=2$ \cite{LinYan1} (see also \cite{EsFi} among other things) by constructing a
sequence of $2-$condensates for which the second alternative in (ii) does hold, for a critical point $q$ of $u_0$.
The same approach works as well for $N\geq 3$, provided the
concentration points of the magnetic field are not vortex points.

\medskip \noindent The existence of non-topological $N-$condensates with magnetic field concentrated at vortex points as $\e \to 0$ (like in \eqref{1820}) is the main issue from a physical viewpoint and has not received an answer so far. A first partial answer has
been provided by Lin and Yan \cite{LinYan} who construct
$N-$condensates $(\mathcal{A}_\e,\phi_\e)$ so that $(F_{12})_\e
\rightharpoonup 2 \pi N \delta_{p_j}$ in the sense of measures as
$\e \to 0$, as soon as $N>4$ and $p_j$ is a simple vortex point in
$\{p_1,\dots,p_N\}$. As in \cite{CFL}, they make use of the
Chern-Simons equation $-\Delta U=e^U(1-e^U)-4\pi \delta_0$ as
limiting problem, which is not suitable to manage multiple concentration points. Moreover, such a condensate does satisfy
$\max_\Omega |\phi_\e|\geq c>0$ for $\e$ small and $|\phi_\e|\to
0$ in $C_{\hbox{loc}} (\bar \Omega \setminus \{p_j\})$, which fits the notion of ``non-topological" behavior in a weak sense. Our aim is to extend to $N-$condensates the
perturbative approach developed by Chae and Imanuvilov \cite{ChI}
for planar $N-$vortices, based on the use of the singular
Liouville equation as limiting problem. As far as non-topological behavior, let us stress that the problem on the torus is much more rigid than the planar case,
as well illustrated by the quantization property $\Phi=2\pi N$
(valid just in the doubly-periodic situation). For example, when
$F_{12}$ is concentrated like a Dirac measure at a vortex point
$p_l$, by the use of Liouville profiles it is natural, as we will
see, to have $4\pi(n_l+1)$ as concentration mass of $F_{12}$ at
$p_l$, where $n_l$ is the multiplicity of $p_l$ in the set
$\{p_1,\dots,p_N\}$, and then the relation $2\pi N=4\pi
\displaystyle \sum_{l=1}^m (n_l+1)$ does hold as soon as $F_{12}
\rightharpoonup 4\pi \displaystyle  \sum_{l=1}^m (n_l+1)
\delta_{p_l}$ in the sense of measures. In particular, the
concentration of the magnetic field can not take place at all the
vortex points $p_1,\dots,p_N$ as in the planar case \cite{ChI}. Let us stress that the $N-$condensates constructed in \cite{Nol} have exactly such a concentration property and then violate the balancing condition \eqref{hhh}.

\medskip \noindent Our aim is to provide a general answer to the long-standing question on the existence of non-topological $N-$condensates with magnetic field concentrated at some vortex points. Compared with \cite{ChI}, our main result is rather surprising and reads as follows.
\begin{thm} \label{mainbb}
Let $\{p_1,\dots,p_m\}$ be a subset of the vortex set
$\{p_1,\dots,p_N\} \subset \Omega$, $\{p_j\}_j$ be the remaining points and
$n_l$, $n_j$ be the corresponding multiplicities so that
\begin{equation} \label{hhh}
2\pi N=4\pi \sum_{l=1}^m (n_l+1).
\end{equation}
Letting $\mathcal{H}_0$ be a meromorphic function in $\Omega$
so that $|\mathcal{H}_0(z)|^2=e^{u_0+8\pi \sum_{l=1}^m (n_l+1) G(z,p_l)}$ (which exists and is unique up to rotations),  assume that $\mathcal{H}_0$ has zero residue at each $p_1,\dots,p_m$. Letting $\sigma_0(z)=-\left( \int^z \mathcal{H}_{0}(w) dw \right)^{-1}$ (a well-defined meromorphic function), assume that 
\begin{equation} \label{ggg}
D_0=\frac{1}{\pi} \left[ \int_{\Omega \setminus \sigma_0^{-1}(B_\rho(0))} e^{u_0+8\pi
\sum_{l=1}^m (n_l+1)G(z,p_l)} - \sum_{l=1}^m (n_l+1)
\int_{\mathbb{R}^2 \setminus B_\rho(0)}
\frac{dy}{|y|^4}\right]<0
\end{equation} 
for small $\rho>0$ and the ``non-degeneracy condition"
$\hbox{det }A \not=0$, where $A$ is given by \eqref{matrixA}. Then,
for $\e>0$ small there exists $N-$condensate
$(\mathcal{A}_\e,\phi_\e)$ so that $|\phi_\e| \to 0$ in $C(\bar
\Omega)$ and
\begin{equation} \label{magconc}
(F_{12})_\e \rightharpoonup 4\pi \displaystyle \sum_{l=1}^m (n_l+1) \delta_{p_l}
\end{equation}
weakly in the sense of measures, as $\e \to 0$.
\end{thm}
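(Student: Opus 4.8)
\medskip

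\noindent{\bf Proof strategy.} The proof is a singular perturbation (Lyapunov--Schmidt) argument: one builds $N$-condensates by gluing, at the points $p_1,\dots,p_m$, suitably rescaled and projected bubbles modelled on the singular Liouville equation, and then solves a finite-dimensional reduced system for the concentration parameters. Writing $w=u_0+v$, equation \eqref{2} reads $-\Delta w=\e^{-2}e^{w}(1-e^{w})-\frac{4\pi N}{|\Om|}-\Delta u_0$, and since $e^{u_0}$ vanishes like $|z-p_l|^{2n_l}$ at $p_l$, after the change of variables $\zeta=\sig_0(z)$ --- which is a branched covering vanishing to order $n_l+1$ at $p_l$, with $|\sig_0'|^2|\sig_0|^{-4}=e^{u_0+8\pi\sum_l(n_l+1)G(\cdot,p_l)}$ by construction of $\ml H_0$ and $\sig_0$ --- the local geometry reduces to that of the \emph{regular} Liouville equation $-\Delta U=e^U$. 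Its finite-mass entire solutions form the standard family $U_{\la,c}=\log\frac{8\la^2}{(\la^2+|\zeta+c|^2)^2}$, $\la>0$, $c\in\C$, of mass $8\pi$ and universal tail $e^{U_{\la,c}}\sim 8\la^2|\zeta|^{-4}$; pulled back through $\sig_0$ these give, at each $p_l$, a bubble of mass $8\pi(n_l+1)$, which is exactly the mass needed for \eqref{magconc} since $F_{12}=\frac{1}{2\e^2}e^{u}(1-e^u)$ and the balancing \eqref{hhh} is just $2\pi N=\sum_l\frac{1}{2}\cdot8\pi(n_l+1)$. The hypothesis that $\ml H_0$ has vanishing residue at every $p_l$ is precisely what makes $\sig_0$, hence the pulled-back bubbles, single-valued and the gluing admissible.

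\medskip

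\noindent The first step is to fix the ansatz $V_\e=-u_0+\sum_{l=1}^m PW_{\mu_l,a_l}+(\text{lower-order corrections})$, where $PW_{\mu_l,a_l}$ is the doubly periodic projection of the pulled-back Liouville bubble at $p_l$ with dilation $\mu_l=\mu_l(\e)\to0$ of a size to be determined and non-radial parameter $a_l$, and to estimate the error $S_\e(V_\e)=\Delta V_\e+\e^{-2}e^{u_0+V_\e}(1-e^{u_0+V_\e})-\frac{4\pi N}{|\Om|}$ in a weighted $L^p$ (or $L^\infty$) norm adapted to the bubbles; the quadratic term $-e^{2(u_0+V_\e)}$ is negligible by non-topological smallness, and the dominant contribution comes from the mismatch of the $|\zeta|^{-4}$-tails of the bubbles with the outer Green's-function profile, controlled by the choice of $\mu_l$. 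The second step is the linear theory for $L_\e\phi=\Delta\phi+\e^{-2}e^{u_0+V_\e}(1-2e^{u_0+V_\e})\phi$: by a blow-up/compactness argument one shows that, orthogonally to the approximate kernel spanned by the dilation and translation Jacobi fields $Z_0^l,Z_1^l,Z_2^l$ of the $m$ bubbles, $L_\e$ is uniformly invertible in the chosen norms, any loss of uniformity being traced to a nontrivial bounded kernel element of the linearized Liouville operator on $\R^2$, which is spanned exactly by $Z_0,Z_1,Z_2$ (nondegeneracy of $U_{\la,c}$). A contraction mapping then gives, for each admissible $(\mu_l,a_l)_{l=1}^m$, a small solution $\phi_\e(\mu,a)$ of $L_\e\phi+S_\e(V_\e)+N_\e(\phi)=\sum_{l,i}c_{li}Z_i^l$ together with multipliers $c_{li}(\mu,a)$.

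\medskip

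\noindent The last step is the reduced problem: $c_{li}\equiv0$ is equivalent to a system of $3m$ real equations in $(\mu_l,a_l)$, solved by the implicit function theorem. The equation carried by the dilation mode $Z_0^l$ is a Pohozaev-type identity whose leading term is, up to a positive constant, the quantity $D_0$ of \eqref{ggg}: the comparison of $\int_{\Om\setminus\sig_0^{-1}(B_\rho)}e^{u_0+8\pi\sum(n_l+1)G}=\int_{\sig_0(\Om)\setminus B_\rho}|\zeta|^{-4}d\zeta$ with the $\sum_l(n_l+1)$ universal full-plane tails is the ``renormalized excess mass'', and $D_0<0$ is exactly the sign forcing the $\mu_l$'s to be positive of the predicted order. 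The equations carried by $Z_1^l,Z_2^l$ fix the $a_l$'s, and the nondegeneracy $\det A\neq0$ of the matrix \eqref{matrixA} --- the Jacobian of the reduced map at the candidate --- yields a unique $(\mu_\e,a_\e)$. Finally, $u=u_0+V_\e(\mu_\e,a_\e)+\phi_\e$ solves \eqref{2}; undoing the Taubes substitution \eqref{1917} produces the condensate, with $|\phi_\e|^2=e^u\to0$ uniformly in $\bar\Om$ (each rescaled bubble tends to $0$ in $L^\infty$ and $e^{PW_{\mu_l,a_l}}\to0$ away from $p_l$), while \eqref{magconc} follows from $\int_{B_r(p_l)}\frac{1}{2\e^2}e^u(1-e^u)\to4\pi(n_l+1)$ via \eqref{ci0} and the bubble masses.

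\medskip

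\noindent I expect the main obstacle to be the linear theory coupled with the rigidity of the matching forced by $\ml H_0$ and $\sig_0$. Unlike the planar construction of Chae--Imanuvilov, equation \eqref{2} has no scaling invariance, the heights at the $p_l$'s are not free, and the non-radial parameters $a_l$ are constrained through the global developing map; carrying out the blow-up analysis of $L_\e$ at each $p_l$ while simultaneously controlling the far region (where $v$ is close to a multiple of the Green's function) and certifying that the only obstructions are the $3m$ expected ones is the delicate heart of the argument --- and it is exactly this rigidity that produces the balancing constraint \eqref{hhh} and rules out concentration at all vortex points.
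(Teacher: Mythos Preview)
Your overall architecture---Lyapunov--Schmidt reduction with Liouville bubbles pulled back through the developing map $\sigma_0$, linear theory modulo the approximate kernel, and a finite-dimensional reduced system---is the right one and matches the paper's scheme. But two specific claims would make the argument fail as written.

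\medskip

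\textbf{The quadratic term is not negligible where it matters.} You write that ``the quadratic term $-e^{2(u_0+V_\e)}$ is negligible by non-topological smallness.'' Pointwise this is true, but in the reduced dilation equation it is \emph{exactly} the term that balances $D_0$. The paper first rewrites \eqref{2} as the nonlocal mean-field equation \eqref{3} via the choice $c=c_-(w)$ in \eqref{cc}; the quadratic nonlinearity then becomes the explicit $\e^2$-perturbation on the right of \eqref{3}. In the expansion of the reduced equation against $PZ_0$ one finds (see \eqref{solve1b}, \eqref{solve1})
\[
\int_\Om R\,PZ_0=-8\pi D_0\,\de^2+C\,\e^2\de^{-\frac{2}{n+1}}\!\int_{\R^2}\frac{(|y|^2-1)|y|^{\frac{2n}{n+1}}}{(1+|y|^2)^5}\,dy+o(\de^2+\e^2\de^{-\frac{2}{n+1}}),
\]
with the integral strictly negative (Appendix~D). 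The two leading terms are of the same order precisely when $\de\sim\e^{\frac{n+1}{n+2}}$; since $D_0<0$ makes the first term positive and the second is negative, they balance and fix $\de$. If you drop the $\e^2$ term, the equation reads $-8\pi D_0\de^2+o(\de^2)=0$, which forces $\de=0$ and kills the construction. So the sign condition $D_0<0$ does not by itself ``force the $\mu_l$'s to be positive''---it is the interplay with the Chern--Simons perturbation that does.

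\medskip

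\textbf{There are $2m+1$ parameters, not $3m$.} You take independent dilations $\mu_l$ and a $3m$-dimensional reduced system. The paper shows this is not the right count: near $\xi_l$ the ansatz gives
\[
4\pi N\,\frac{e^{u_0+W}}{\int_\Om e^{u_0+W}}\sim\frac{|\sigma_l'|^2 e^{U_{\de_l,a_l,\sigma_l}}}{2\sum_{l'}(n_{l'}+1)\de_l^2\de_{l'}^{-2}},
\]
and for this to approach $|\sigma_l'|^2 e^{U_{\de_l,a_l,\sigma_l}}$ (mass $8\pi(n_l+1)$) at every $l$ simultaneously one needs $\de_l/\de_{l'}\to1$, i.e.\ a \emph{single} $\de$ (see \eqref{repla2}). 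Accordingly the approximate cokernel is spanned by $PZ_0=\sum_l PZ_{0l}$ (one real function) and $PZ_l$, $l=1,\dots,m$ (complex), and the matrix $A$ in \eqref{matrixA} is $2m\times 2m$, governing only the $a_l$'s. The dilation equation is scalar and is solved first by the balance above; $\det A\neq0$ then closes the $a$-system.

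\medskip

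A smaller point: the paper does not use the naive pullback $U_{\de,a}(\sigma_0(z))$ but constructs, for each $a$, a perturbed map $\sigma_a$ solving \eqref{sigmaa}--\eqref{ca} (and \eqref{sigmaagen} for $m\ge2$) so that the error in \eqref{3} is small enough in the weighted norm \eqref{wn}. This refinement, together with the computation of $\partial_a c_a$, $\partial_{\bar a} c_a$ (Lemma~\ref{derivca}/\ref{derivcagen}), is what produces the coefficients $\Gamma^{ll'},\Upsilon^{ll'}$ entering $A$.
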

\noindent Notice that we can also allow some concentration point
not to be a vortex point, by simply adding it to the vortex set
with null multiplicity. In section \ref{examples} we will see that
in the double-vortex case $N=2$ Theorem \ref{mainbb} essentially
recovers the result in \cite{EsFi,LinYan1} concerning single-point
concentration, for the assumptions just reduce to have the
concentration point $q \not=p_1,p_2$ as a non-degenerate critical
point of $u_0$ with $D_0<0$ (for similar
results concerning the Liouville equation, see \cite{BaPa,dkm,EGP} in case of bounded domains with
Dirichlet b.c. and \cite{Fi} in case of a flat two-torus). Despite of the complex statement, for a rectangle $\Omega$ with $p_1=0$,
$p_2=\frac{\omega_1}{2}$, $p_3=\frac{\omega_2}{2}$ and $p_4=\frac{\omega_1+\omega_2}{2}$, and $n_1,n_2,n_3,n_4$ even
multiplicities with $\frac{n_4}{2}$ odd, we will check in section \ref{examples} that the assumptions of Theorem
\ref{mainbb} do hold for $m=1$ and concentration point $p_1$, up to perform a small translation so to have $p_j \in \Omega$. For computational simplicity, the ``non-degeneracy condition" will be checked just for a square with $n=n_3=2$ and $(n_1,n_2)=(2,0)$ or viceversa. Even more important, examples with $m\geq 2$ will be discussed in section \ref{general}.

\medskip \noindent Following an approach developed by Tarantello \cite{T} and exploited in \cite{NoTa3}, \eqref{2} can be seen as a perturbed mean-field equation \eqref{3} with potential $e^{u_0}$ and unperturbed part 
\begin{equation} \label{10100}
-\Delta w= 4\pi N \left(\frac{e^{u_0+w} }{\int_\Omega
e^{u_0+w}}-\frac{1}{|\Omega|}\right).
\end{equation}
Since $e^{u_0}$ vanishes like $|z-p_l|^{2n_l}$ near each $p_l$, $l=1,\dots,m$, the Liouville equation $-\Delta U=|z|^{2n} e^U$ will play a central role in the construction of an approximating function in the perturbative approach. Since $U_{\delta,\sigma_0}=\log \frac{8 \delta^2}{(\delta^2 +|\sigma_0|^2)^2}$ does solve $-\Delta U= |\sigma_0'|^2 e^U$ in $\Omega \setminus \{\hbox{poles of }\sigma_0\}$, a natural choice is $\sigma_0=z^{n+1}$ when $m=1$ and $p_1=0$. Letting $P$ be a projection operator on the space of doubly-periodic functions, the approximation rate of $PU_{\delta,z^{n+1}}$ is unfortunately not sufficiently small to carry out the argument, a problem which often arises in perturbation arguments and is usually overcome by refining the ansatz via linear theory around the approximating function. However, such a procedure would require several subsequent refinements, yielding in general to a high level of complexity. Inspired by \cite{DEM4}, in section \ref{improved} we will take advantage of the Liouville formula to use the inner parameter $\sigma_0$, present in the Liouville formula, to get improved profiles. Since $PU_{\delta,\sigma_0} \sim U_{\delta,\sigma_0}-\log(8\delta^2)+\log |\sigma_0|^4+8\pi (n+1)G(z,0)$ as $\delta \to 0$, $PU_{\delta,\sigma_0}$ is a good approximate solution of \eqref{10100} if $\frac{|\sigma_0'|^2}{|\sigma_0|^4}=|(\frac{1}{\sigma_0})'|^2=e^{u_0+8\pi(n+1)G(z,0)}$. By definition of $\mathcal{H}_0$, it is enough to find a meromorphic $\sigma_0$ with $(\frac{1}{\sigma_0})'=\mathcal{H}_0$, a solvable equation if and only if $\mathcal{H}_0$ has zero residue at its unique pole $0$. As we will discuss precisely in Remark \ref{remark2bis}, the assumption on the residues of $\mathcal{H}_0$ is then necessary in our context. Moreover, since $\mathcal{H}_0$ has a pole at $0$ of multiplicity $n+2$ and zeroes $p_j$'s of multiplicities $n_j$, by the property $\mathcal{H}_0(z+\omega_j)=e^{i\theta_j}\mathcal{H}_0(z)$, $j=1,2$, near $\partial \Omega$ for some $\theta_1,\theta_2 \in \mathbb{R}$ we deduce that
$$0=\frac{1}{2\pi i} \int_{\partial \Omega} \frac{\mathcal{H}_0'}{\mathcal{H}_0}dz=n+2-\sum_j n_j=2(n+1)-N,$$
providing \eqref{hhh} as a necessary and sufficient condition for the existence of such $\mathcal{H}_0$ (the sufficient part in shown in next section). $D_0<0$ and the ``non-degeneracy condition'' will be necessary to determine $\delta$ and $a$, a sort of small translation parameter accounting for the perturbation term in  \eqref{3}, according to the asymptotic expansion for the corresponding ``reduced equations" as derived in section \ref{reduced}. Theorem \ref{mainbb} is proved in section \ref{mainresults} when $m=1$ and in section \ref{general} when $m \geq 2$.


\section{Improved Liouville profiles} \label{improved}
\noindent Let us decompose any solution $v$ of (\ref{2}) as
$v=w+c$, where $c=\frac{1}{|\Omega|}\int_\Omega v$. In this way,
$w$ has zero average: $\int_\Omega w dz=0$, and by (\ref{ci0}) one
has
$$e^{2c} \int_\Omega e^{2u_0+2w}-e^c \int_\Omega e^{u_0+w}+4\pi N
\epsilon^2=0.$$ This last identity then provides a relation
between $c$ and $w$ in the form $c=c_\pm (w)$, where
\begin{equation}\label{cc}
e^{c_\pm(w)}=\frac{8\pi N \epsilon^2}{\int_\Omega e^{u_0+w} \mp
\sqrt{(\int_\Omega e^{u_0+w})^2-16\pi N \epsilon^2 \int_\Omega
e^{2u_0+2w}}}, \end{equation} whenever $\big(\int_\Om
e^{u_0+w}\big)^2-16\pi N \epsilon^2 \int_\Om e^{2u_0+2w}\ge 0$.
The two possible choice of ``plus'' or ``minus'' sign in
\eqref{cc} is another indication of multiple solutions for
\eqref{2}. In \cite{T}, topological solutions are
characterized to satisfy \eqref{cc} with the ``plus'' sign. Since
we are interested to non-topological solutions, it is natural to
restrict the attention to the case $c=c_-(w)$, reducing problem
(\ref{2}) to the following equation in $\Omega$:
\begin{equation} \label{3} \left\{ \begin{array}{rl}  -\Delta w=& \displaystyle 4\pi
N\left(\frac{e^{u_0+w} }{\int_\Omega
e^{u_0+w}}-\frac{1}{|\Omega|}\right)
\\
&\displaystyle+\frac{64 \pi^2N^2 \epsilon^2 \int_\Omega
e^{2u_0+2w}}{(\int_\Omega e^{u_0+w}+\sqrt{(\int_\Omega
e^{u_0+w})^2-16\pi N\epsilon^2\int_\Omega
e^{2u_0+2w}})^2}\left(\frac{e^{u_0+w}}{\int_\Omega
e^{u_0+w}}-\frac{
e^{2u_0+2w}}{\int_\Omega e^{2u_0+2w}}\right)  \\
\displaystyle \int_\Omega w=0. \end{array}\right.\end{equation}

\medskip \noindent Here and in the next sections, we first discuss the case $m=1$ in Theorem \ref{mainbb}. Assume that $p$ is present $n-$times in $\{p_1,\dots,p_N\}$, and denote by $p_j'$s the remaining points in the set $\{p_1,\dots,p_N\}$ with corresponding multiplicities $n_j'$s. Up to a translation, we are assuming that $p_j \in \Omega$ 
for $j=1,\dots,N$, a crucial property which will simplify the arguments below. Since  the assumptions in Theorem \ref{mainbb} for the concentration at $p$ are just local properties, for simplicity in the notations let us simply consider the case $p=0$.

\medskip \noindent Since $e^{u_0}$ behaves like $|z|^{2n}$ as $z \to 0$, the local profile of $w$ near $0$ will be given in terms of solutions of the ``singular" Liouville equation:
\begin{equation}\label{starr}
-\Delta U=  |z|^{2n}e^U.
\end{equation}
Recall that by Liouville formula the function
$$\log \frac{8|F'|^2}{(1+|F|^2)^2}$$
does solve $-\Delta U=e^U$ in the set $\{F'\not= 0 \}$, for any
holomorphic map $F$. For entire solutions of \eqref{starr} with finite-energy:
$\int_{\mathbb{R}^2} |z|^{2n}e^U<+\infty$, it is well known that
necessarily $F(z)=\frac{z^{n+1}-a}{\delta}$, and then all the
entire finite-energy solutions of \eqref{starr} are classified as
$$U_{\delta,a}(z)=\log \frac{8 (n+1)^2 \delta^2}{(\delta^2 +|z^{n+1}-a|^2)^2},\quad \delta>0, \:a \in \mathbb{C}.$$
Moreover, we have that $\int_{\mathbb{R}^2}
|z|^{2n}e^{U_{\delta,a}}=8\pi(n+1)$. Since by construction the corresponding
$v=w+c_-(w)$ will satisfy
$$ \frac{1}{\epsilon^2} e^{u_0+v}\left(1-e^{u_0+v}\right) \rightharpoonup 8\pi (n+1) \delta_0$$
in the sense of measures, the balance condition
\begin{equation} \label{balance}
2\pi N=4 \pi(n+1)
\end{equation}
is necessary in view of (\ref{ci0}).

\medskip \noindent Assume for simplicity $e^{u_0}=|z|^{2n}$. Since $ \int_\Omega
|z|^{2n} e^{U_{\delta,a}}\to 8\pi(n+1)$ as $\delta \to 0$, by
(\ref{balance}) we have the asymptotic matching of $-\Delta
U_{\delta,a}=  |z|^{2n} e^{U_{\delta,a}}$ and $4\pi N
\frac{|z|^{2n} e^{U_{\delta,a}}}{\int_\Omega
|z|^{2n}e^{U_{\delta,a}}}$ as $\delta \to 0$.  To correct
$U_{\delta,a}$ into a doubly-periodic function, we consider the
projection $PU_{\delta,a}$ of $U_{\delta,a}$ as the solution of
$$\left\{ \begin{array}{ll} -\Delta PU_{\delta,a}=-\Delta U_{\delta,a} +\frac{1}{|\Omega|} \int_\Omega \Delta U_{\delta,a}& \hbox{in }\Omega\\
\int_\Omega PU_{\delta,a}=0.& \end{array} \right.$$ In this way,
we gain the constant term
$$\frac{1}{|\Omega|} \int_\Omega \Delta U_{\delta,a}   =-\frac{1}{|\Omega|} \int_\Omega |z|^{2n} e^{U_{\delta,a}} \to -\frac{4\pi N}{|\Omega|} \qquad \hbox{as }\delta \to 0$$
in view of (\ref{balance}), and
we still need to check that the difference between $-\Delta
U_{\delta,a}= |z|^{2n}e^{U_{\delta,a}}$ and $4\pi N \frac{|z|^{2n}
e^{PU_{\delta,a}}}{\int_\Omega |z|^{2n} e^{PU_{\delta,a}}}$ is
asymptotically small. Thanks to an asymptotic expansion of
$PU_{\delta,a}$ in terms of $U_{\delta,a}$, we will see that the
difference is small (i.e. $PU_{\delta,a}$ is an approximating
function of (\ref{3})) but behaves at most like
$|z|^{2n}e^{U_{\delta,a}}O(|z|+\delta^2)$ which is not
sufficiently small. A first refinement of the ansatz via the
linear theory around $PU_{\delta,a}$ could improve the pointwise
error estimate into $|z|^{2n}e^{U_{\delta,a}}O(|z|^2+\delta^2)$,
which unfortunately is in general still not enough. Since there is
a strong mismatch between the dependence of $U_{\delta,a}$ on
$z^{n+1}$ and that of the error on $z$ (or even on $z^2$), we
should push such a procedure through several subsequent
refinements. Instead, we play directly with the inner parameters
present in the Liouville formula, for we have more flexibility in
the choice of $F(z)$ on bounded domains. Hereafter, let us fix an
open simply-connected domain $\tilde \Omega$ so that
$\overline{\Omega}\subset \tilde \Omega$ and $\tilde \Omega \cap
\,\left(\omega_1 \mathbb{Z}+\omega_2 \mathbb{Z}\right)=\{0\}$, and
set $\mathcal{M}(\overline{\Omega})=\{ \sigma
\Big|_{\overline{\Omega}}: \sigma\hbox{ meromorphic in }\tilde
\Omega\}$. Let $\delta \in (0,+\infty)$, $a\in \mathbb{C}$ and
$\sigma \in \mathcal{M}(\overline{\Omega})$ be a function which
vanishes only at $0$ with multiplicity $n+1$. Since $\log
|\sigma'(z)|^2$ is harmonic in $\{ \sigma' \not= 0\}$, the choice
$F(z)=\frac{\sigma(z)-a}{\delta}$ yields to solutions
$$U_{\delta,a,\sigma}(z)=\log \frac{8 \delta^2}{(\delta^2 +|\sigma(z)-a|^2)^2}$$
of $-\Delta U= |\sigma'(z)|^2 e^U$ in $\Omega \setminus
\{\hbox{poles of }\sigma\}$, for $U_{\delta,a,\sigma}$ is a smooth
function up to $\{\sigma'=0\}$.

\medskip \noindent The guess is so to find a better local approximating function $PU_{\delta,a,\sigma}$ for
a suitable choice of $\sigma$, where $PU_{\delta,a,\sigma}$ does
solve
\begin{equation} \label{lll}
\left\{ \begin{array}{ll} -\Delta PU_{\delta,a,\sigma} =
|\sigma'(z)|^2 e^{U_{\delta,a,\sigma}} -\frac{1}{|\Omega|} \int_\Omega |\sigma'(z)|^2 e^{U_{\delta,a,\sigma}}& \hbox{in }\Omega\\
\int_\Omega PU_{\delta,a,\sigma}=0.&
\end{array} \right.
\end{equation}
Notice that $PU_{\delta,a,\sigma}$ is well-defined and smooth as
long as $\sigma \in \mathcal{M}(\overline{\Omega})$, no matter
$\sigma$ has poles or not.

\medskip \noindent Recall that $G(z,0)$ can be thought as a doubly-periodic function in $\mathbb{C}$ with singularities on the lattice vertices $\omega_1 \mathbb{Z}+\omega_2\mathbb{Z}$, and $H(z)=G(z,0)+\frac{1}{2\pi} \log |z|$ is then a smooth function in $2\Omega$ with $\Delta H=\frac{1}{|\Omega|}$. Since $2 \Omega$ is simply-connected, we can find an holomorphic function $H^*$ in $2 \Omega$ having the harmonic function $H-\frac{|z|^2}{4|\Omega|}$ as real part. Since $p_j \in \Omega$, take $\tilde \Omega$ close to $\Omega$ so that $\tilde \Omega-p_j \subset 2 \Omega$ for all $j=1,\dots, N$. The function
\begin{equation} \label{definitionH}
\mathcal{H}(z)=  \prod_j  (z-p_j)^{n_j} \hbox{exp} \left(
4\pi(n+1) H^*(z) -2\pi\sum_{j=1}^N
H^*(z-p_j)-\frac{\pi}{2|\Omega|}\sum_{j=1}^N |p_j|^2
+\frac{\pi}{|\Omega|}z \overline{\sum_{j=1}^N p_j}\right)
\end{equation}
is holomorphic in $\tilde \Omega$ with
\begin{equation} \label{keyrelationH}
|\mathcal{H}(z)|^2=\frac{1}{|z|^{2n}} e^{u_0+8\pi(n+1)
H(z)}=e^{4\pi(n+2)H(z)-4 \pi \sum_j n_j G(z,p_j)} \qquad \hbox{in
}\tilde \Omega
\end{equation}
in view of \eqref{balance}. The meromorphic function $\mathcal{H}_0(z)=\frac{\mathcal{H}(z)}{z^{n+2}}$ does satisfy $|\mathcal{H}_0(z)|^2=e^{u_0+8\pi(n+1)
G(z,0)}$ in $\tilde \Omega$.
\bremark \label{1149} For simplicity in the notations, we are considering the case $p=0$. When $p \not=0$, by assuming $\tilde \Omega-p \subset 2\Omega$ the function
\begin{eqnarray*}
\mathcal{H}^p(z)&=&  \prod_j  (z-p_j)^{n_j} \hbox{exp} \left(
4\pi(n+1) H^*(z-p) +\frac{\pi(n+1)}{|\Omega|}|p|^2-\frac{2\pi(n+1)}{|\Omega|}z \bar p \right) \times\\
&&\times \hbox{exp} \left(-2\pi\sum_{j=1}^N
H^*(z-p_j)-\frac{\pi}{2|\Omega|}\sum_{j=1}^N |p_j|^2
+\frac{\pi}{|\Omega|}z \overline{\sum_{j=1}^N p_j}\right)
\end{eqnarray*}
is holomorphic in $\tilde \Omega$ with
$$|\mathcal{H}^p(z)|^2=\frac{1}{|z-p|^{2n}} e^{u_0+8\pi(n+1)
H(z-p)}=e^{4\pi(n+2)H(z-p)-4 \pi \sum_j n_j G(z,p_j)} \qquad \hbox{in
}\tilde \Omega$$
in view of \eqref{balance}. The meromorphic function $\mathcal{H}_0^p(z)=\frac{\mathcal{H}^p(z)}{(z-p)^{n+2}}$ does satisfy $|\mathcal{H}^p_0(z)|^2=e^{u_0+8\pi(n+1)
G(z,p)}$ in $\tilde \Omega$.
\eremark

\medskip \noindent Hereafter, for a meromorphic function $g$ in $\tilde \Omega$ the notation $\int^z g(w) dw$ stands for the anti-derivative of $g(z)$, which is a well-defined meromorphic function in the simply-connected domain $\tilde \Omega$ as soon as $g$ has zero residues at each of its poles. Since $\mathcal{H}(0)\not=0$ by \eqref{keyrelationH}, we define
\begin{equation} \label{sigma0}
\sigma_0(z)=-\left(\int^z \mathcal{H}_0(w) e^{-c_0
w^{n+1}} dw \right)^{-1}=-\left(\int^z \frac{\mathcal{H}(w) e^{-c_0
w^{n+1}}}{w^{n+2}} dw \right)^{-1},
\end{equation}
where
\begin{equation} \label{c0}
c_0=\frac{1}{\mathcal{H}(0) (n+1)! }\frac{d^{n+1}
\mathcal{H}}{dz^{n+1}}(0)
\end{equation}
guarantees that the residue of $\mathcal{H}_0(z) e^{-c_0
z^{n+1}}$ at $0$ vanishes. By construction $\sigma_0
\in \mathcal{M}(\overline{\Omega})$ vanishes only at zero with
multiplicity $n+1$, as needed, with
\begin{equation} \label{0942}
\lim_{z \to 0}
\frac{z^{n+1}}{\sigma_0(z)}=\frac{\mathcal{H}(0)}{n+1},
\end{equation}
and does solve
\begin{equation} \label{eq sigma0}
|\sigma_0'(z)|^2= |\sigma_0(z)|^4 e^{u_0+8\pi(n+1)G(z,0)} e^{-2
\re [c_0 z^{n+1}]}
\end{equation}
in view of \eqref{keyrelationH}.

\medskip \noindent Let $\sigma \in \mathcal{M}(\overline{\Omega})$ be a function which vanishes only at zero with multiplicity $n+1$. For $a \in \mathbb{C}$ small there exist $a_0,\dots,a_n$ so that $\{z \in \tilde \Omega: \, \sigma(z)=a \}=\{a_0,\dots,a_n\}$ (distinct points when $a \not=0$). For $a$ small the function
\begin{eqnarray}
\mathcal{H}_{a,\sigma}(z) &=&  \prod_j  (z-p_j)^{n_j} \hbox{exp}\left( 4\pi \sum_{k=0}^n H^*(z-a_k)-\frac{2\pi}{|\Omega|}z \overline{\sum_{k=0}^n a_k}-2\pi\sum_{j=1}^N H^*(z-p_j)\right. \label{Hasigma} \\
&&\left.-\frac{\pi}{2|\Omega|}\sum_{j=1}^N |p_j|^2
+\frac{\pi}{|\Omega|}z \overline{\sum_{j=1}^N p_j}\right)
\nonumber
\end{eqnarray}
is holomorphic in $\tilde \Omega$ with
\begin{equation} \label{keyrelation}
|\mathcal{H}_{a,\sigma}(z)|^2=\frac{1}{|z|^{2n}} e^{u_0+8\pi
\sum_{k=0}^n H(z-a_k)-\frac{2\pi}{|\Omega|} \sum_{k=0}^n |a_k|^2}
\qquad \hbox{in }\tilde \Omega
\end{equation}
in view of \eqref{balance}. The advantage in our construction of
$\mathcal{H}_{a,\sigma}$, which might be carried over in a
simpler and more direct way, is the holomorphic/anti-holomorphic
dependence in the $a_k$'s as well as in $z$, a crucial property as
we will see in Appendix A. When $a=0$, then $a_0=\dots=a_n=0$ and
$\mathcal{H}=\mathcal{H}_{0,\sigma}$.

\medskip \noindent Endowed with the norm $\|\sigma\|:=\| \frac{\sigma}{\sigma_0}\|_{\infty,\tilde \Omega}$, the set $\mathcal{M}'(\overline{\Omega})=\{ \sigma \in \mathcal{M}(\overline{\Omega}):\,\|\sigma\|<\infty\}$ is a Banach space, and let $\mathcal{B}_r$ be the closed ball centered at $\sigma_0$ and radius $r>0$, i.e.
\begin{equation} \label{setB}
\mathcal{B}_r=\bigg\{ \sigma \in
\mathcal{M}({\overline{\Omega}}):\:\Big\|
\frac{\sigma}{\sigma_0}-1\Big\|_{\infty,\tilde \Omega} \leq r
\bigg\}.
\end{equation}
For $a\not=0$ and $r$ small, the aim is to find a solution
$\sigma_a \in \mathcal{B}_r$ of
$$\sigma(z)= -\left[ \int^z \left(\frac{\sigma(w)-a}{\prod_{k=0}^n (w-a_k)} \frac{w^{n+1}}{\sigma(w)} \right)^2     \frac{\mathcal{H}_{a,\sigma}(w)}{w^{n+2}} e^{-c_{a,\sigma}w^{n+1}}dw\right]^{-1}$$
for a suitable coefficient $c_{a,\sigma}$. To be more precise,
letting
$$g_{a,\sigma}(z)=\frac{\sigma(z)-a}{\prod_{k=0}^{n}(z-a_k)}$$
for $|a|<\rho$ and $\sigma \in \mathcal{B}_r$, by Lemma
\ref{gomme} we have that $g_{a,\sigma} \in
\mathcal{M}(\overline{\Omega})$  never vanishes, and the problem
above gets re-written as
\begin{equation} \label{sigmaa}
\sigma(z)= -\left[ \int^z
\frac{g^2_{a,\sigma}(w)}{g^2_{0,\sigma}(w)}
\frac{\mathcal{H}_{a,\sigma}(w)}{w^{n+2}}
e^{-c_{a,\sigma}w^{n+1}}dw\right]^{-1}.
\end{equation}
The choice
\begin{equation} \label{ca}
c_{a,\sigma}=\frac{1}{(n+1)!}\frac{d^{n+1}}{dz^{n+1}}\left[
\frac{g^2_{a,\sigma}(z) g^2_{0,\sigma}(0)}{g^2_{a,\sigma}(0)
g^2_{0,\sigma}(z)}
\frac{\mathcal{H}_{a,\sigma}(z)}{\mathcal{H}_{a,\sigma}(0) }
\right](0)
\end{equation}
lets vanish the residue of the integrand function in
\eqref{sigmaa} making the R.H.S. well-defined. Since $\sigma_a \in
\mathcal{B}_r$, the function $\sigma_a$ vanishes only at zero with
multiplicity $n+1$, and satisfies
\begin{equation} \label{eq sigmaa}
|\sigma_a'(z)|^2=  |\sigma_a(z)-a|^4  \hbox{exp}\left(u_0+8\pi
\sum_{k=0}^n G(z,a_k)-\frac{2\pi}{|\Omega|} \sum_{k=0}^n
|a_k|^2-2\re [c_{a,\sigma_a}z^{n+1}]\right)
\end{equation}
in view of \eqref{keyrelation}. The resolution of problem
\eqref{sigmaa}-\eqref{ca} will be addressed in Appendix A.

\medskip\noindent We have the following expansion for $PU_{\de,a,\sigma}$ as
$\de\to0$:
\begin{lem}\label{expPU} There holds
\begin{eqnarray} \label{1138}
PU_{\de,a,\sigma}=U_{\de,a,\sigma}-\log (8 \delta^2)+4 \log |g_{a,\sigma}|+8\pi \sum_{k=0}^n H(z-a_k)+\Theta_{\delta,a,\sigma}+2 \delta^2 f_{a,\sigma}+O(\de^4)
\end{eqnarray}
in $C(\overline{\Omega})$, uniformly for $|a|< \rho$ and $\sigma \in \mathcal{B}_r$, where
$$\Theta_{\de,a,\sigma}=-\frac{1}{|\Omega|}\int_\Om \log {|\sigma(z)-a|^4\over
(\de^2+|\sigma(z)-a|^2)^2}$$ and $f_{a,\sigma}$ is defined in
\eqref{FaQ}. In particular, there holds
$$PU_{\de,a,\sigma}=8\pi \sum_{k=0}^n G(z,a_k)+\Theta_{\delta,a,\sigma}+2\delta^2 \lf(f_{a,\sigma}-{1\over |\sigma(z)-a|^2}\rg)+O(\delta^4)$$
in $C_{\text{loc}}(\overline{\Omega} \setminus\{0 \})$, uniformly for $|a|< \rho$ and $\sigma \in \mathcal{B}_r$.
\end{lem}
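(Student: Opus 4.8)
The plan is to write $PU_{\de,a,\sigma}=U_{\de,a,\sigma}+R_{\de,a,\sigma}$ where $R_{\de,a,\sigma}$ is the harmonic (up to a constant correction) remainder, and to identify its expansion term by term. First I would observe that, by the defining system \eqref{lll} and the equation $-\Delta U_{\de,a,\sigma}=|\sigma'|^2 e^{U_{\de,a,\sigma}}$ away from the poles of $\sigma$, the function $R_{\de,a,\sigma}$ satisfies $-\Delta R_{\de,a,\sigma}=-\frac{1}{|\Omega|}\int_\Omega |\sigma'|^2 e^{U_{\de,a,\sigma}}$ in $\Omega$ with $\int_\Omega PU_{\de,a,\sigma}=0$ normalizing the additive constant; since $U_{\de,a,\sigma}$ has a logarithmic-type behavior at the zeros of $\sigma'$ only through $\log|\sigma'|^2$, the singular part of $R_{\de,a,\sigma}$ is governed by the large-$|\sigma(z)-a|$ asymptotics of $U_{\de,a,\sigma}$. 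Concretely, expand
$$U_{\de,a,\sigma}(z)=\log\frac{8\de^2}{|\sigma(z)-a|^4}+\log\frac{1}{(1+\de^2/|\sigma(z)-a|^2)^2}=\log(8\de^2)-4\log|\sigma(z)-a|-\frac{2\de^2}{|\sigma(z)-a|^2}+O(\de^4),$$
uniformly on $\overline\Omega$ away from the $a_k$'s, and near each $a_k$ the smooth function $U_{\de,a,\sigma}+4\log|\sigma(z)-a|$ has a removable-type behavior once one factors $\sigma(z)-a=(\text{unit})\prod_k(z-a_k)$.

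Next I would match the global harmonic pieces. The key identity is $|\sigma(z)-a|^4=|g_{a,\sigma}(z)|^4\prod_{k=0}^n|z-a_k|^4$, so that $-4\log|\sigma(z)-a|=-4\log|g_{a,\sigma}(z)|-4\sum_k\log|z-a_k|$; combining $-4\sum_k\log|z-a_k|$ with the Green's function representation $G(z,a_k)=-\frac{1}{2\pi}\log|z-a_k|+H(z-a_k)$ produces the term $8\pi\sum_{k=0}^n H(z-a_k)$ plus $8\pi\sum_k G(z,a_k)$, the latter being harmonic in $\Omega\setminus\{a_k\}$ with the correct Dirac sources to cancel $-\Delta U_{\de,a,\sigma}$'s concentrated mass as $\de\to0$. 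The constant $\Theta_{\de,a,\sigma}$ is then forced by the normalization $\int_\Omega PU_{\de,a,\sigma}=0$: integrating the whole expansion over $\Omega$ and solving for the additive constant yields exactly $\Theta_{\de,a,\sigma}=-\frac{1}{|\Omega|}\int_\Omega\log\frac{|\sigma(z)-a|^4}{(\de^2+|\sigma(z)-a|^2)^2}$, after checking that $\int_\Omega G(z,a_k)\,dz=0$ and that the $\de^2$-order remainder integrates to a quantity absorbed into $f_{a,\sigma}$ via \eqref{FaQ}. The $2\de^2 f_{a,\sigma}$ term comes from solving $-\Delta(\cdot)=\frac{2}{|\sigma(z)-a|^2}-\frac{1}{|\Omega|}\int_\Omega\frac{2}{|\sigma(z)-a|^2}$ with zero average — this is the definition of $f_{a,\sigma}$ — and the local expansion near $0$ follows because $U_{\de,a,\sigma}+\log(8\de^2)-4\log|g_{a,\sigma}|\to -8\pi\sum_k\log|z-a_k|\cdot(\ldots)$, more precisely $U_{\de,a,\sigma}=-4\log|\sigma(z)-a|+\log(8\de^2)-\frac{2\de^2}{|\sigma(z)-a|^2}+O(\de^4)$ away from $0$, which recombines with the harmonic terms into $8\pi\sum_k G(z,a_k)$ after cancelling the $\log|\sigma|$ singularity against $4\log|g_{a,\sigma}|$ near $0$ where $\sigma$ vanishes.

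The main obstacle is controlling the error \emph{uniformly} for $\sigma\in\mathcal B_r$ and $|a|<\rho$, not merely for fixed parameters. This requires: (i) uniform lower bounds on $|\sigma(z)-a|$ away from the $a_k$'s and on $|g_{a,\sigma}(z)|$ everywhere (here Lemma \ref{gomme}, asserting $g_{a,\sigma}$ never vanishes, is essential, together with the norm $\|\sigma\|=\|\sigma/\sigma_0\|_{\infty,\tilde\Omega}$ controlling $\sigma$ from above and below); (ii) showing the $O(\de^4)$ and the next-order terms in the geometric expansion of $\log(1+\de^2/|\sigma-a|^2)^{-2}$ are bounded in $C(\overline\Omega)$ uniformly — delicate precisely near the zeros of $\sigma$, where $|\sigma(z)-a|$ is small and one must instead use the removable structure and the $n+1$ distinct preimages $a_0,\dots,a_n$; and (iii) verifying that $f_{a,\sigma}$ and $\Theta_{\de,a,\sigma}$ depend continuously (indeed smoothly, via the holomorphic/antiholomorphic dependence stressed after \eqref{keyrelation}) on $(a,\sigma)$, so that the remainder estimates survive taking suprema. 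The elliptic estimate converting the pointwise/$L^\infty$ control of $-\Delta R_{\de,a,\sigma}$ into $C(\overline\Omega)$ control of $R_{\de,a,\sigma}$ is standard once the right-hand side is handled, so the real work is the uniform bookkeeping of the Liouville bubble's tail near its concentration set.
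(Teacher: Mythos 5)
Your overall architecture matches the paper's: subtract from $PU_{\de,a,\sigma}$ the bubble $U_{\de,a,\sigma}$, the constant $\log(8\de^2)$, the harmonic correction $4\log|g_{a,\sigma}|$ (which cancels the Dirac masses that $U_{\de,a,\sigma}$ produces at the poles of $\sigma$, giving \eqref{yth}), and $8\pi\sum_k H(z-a_k)$; then recombine $-4\sum_k\log|z-a_k|$ with $H$ into $8\pi\sum_k G(z,a_k)$ and fix the additive constant $\Theta_{\de,a,\sigma}$ from $\int_\Om PU_{\de,a,\sigma}=0$. All of that is correct and is what the paper does.

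The genuine gap is in how you produce the $2\de^2 f_{a,\sigma}$ term. You assert that $f_{a,\sigma}$ is ``the solution of $-\Delta(\cdot)=\frac{2}{|\sigma(z)-a|^2}-\frac{1}{|\Om|}\int_\Om\frac{2}{|\sigma(z)-a|^2}$ with zero average --- this is the definition of $f_{a,\sigma}$''. It is not: \eqref{FaQ} defines $f_{a,\sigma}$ as a boundary integral over $\partial\Om$, and your proposed Poisson problem is in fact ill-posed, since $|\sigma(z)-a|^2\sim c\,|z-a_k|^2$ near each $a_k$ (and $\sim c\,|z|^{2n+2}$ when $a=0$), so the source $|\sigma(z)-a|^{-2}$ is not locally integrable and has no finite average. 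This is the same obstruction you correctly flag when you note that the expansion $U_{\de,a,\sigma}=\log(8\de^2)-4\log|\sigma-a|-2\de^2|\sigma-a|^{-2}+O(\de^4)$ degenerates near the $a_k$'s; your route forces you to use that expansion in the interior, precisely where it fails. The paper's mechanism avoids this entirely: after the $4\log|g_{a,\sigma}|$ correction the remainder $r_{\de,a,\sigma}$ solves $-\Delta r_{\de,a,\sigma}=\text{const}$ in all of $\Om$, so it is recovered from the Green representation formula \eqref{repr} by its average (which gives $\Theta_{\de,a,\sigma}$) and its boundary data; the $\de^2$-expansion of $U_{\de,a,\sigma}$ is only ever invoked on $\partial\Om$, where $|\sigma(w)-a|$ is uniformly bounded below for $\sigma\in\mathcal B_r$, $|a|<\rho$, the doubly-periodic part of the boundary data contributes nothing by periodicity, and $f_{a,\sigma}$ emerges exactly as the layer potential \eqref{FaQ}. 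This single step also disposes of the uniformity worries near the concentration set that you list as the main obstacle: no interior expansion of the bubble tail near the $a_k$'s is needed anywhere in the argument.
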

\begin{proof}
Define
$$r_{\de,a,\sigma}=PU_{\de,a,\sigma}-U_{\de,a,\sigma}+\log
(8 \delta^2)-4 \log |g_{a,\sigma}|-8\pi \sum_{k=0}^n H(z-a_k).$$ The function $U_{\de,a,\sigma}$ does satisfy $-\Delta U_{\de,a,\sigma}=|\sigma'(z)|^2 e^{U_{\de,a,\sigma}}$ just in $\Omega \setminus \{\hbox{poles of }\sigma\}$. At the same time, the function $-4\log |g_{a,\sigma}|$ is harmonic in $\Omega \setminus \{\hbox{poles of }\sigma\}$, and has exactly the same singular behavior of $U_{\de,a,\sigma}$ near each pole of $\sigma$. It means that
\begin{equation} \label{yth}
-\Delta \left[U_{\delta,a,\sigma}+4\log |g_{a,\sigma}|\right]=|\sigma'(z)|^2 e^{U_{\de,a,\sigma}}
\end{equation}
does hold in the whole $\Omega$. Since $\Delta H=\frac{1}{|\Omega|}$, by (\ref{lll}) and (\ref{yth}) we get that
\begin{eqnarray*}
-\Delta r_{\de,a,\sigma}= \frac{1}{|\Omega|}\left[ 8\pi(n+1)-\int_\Omega |\sigma'(z)|^2 e^{U_{\de,a,\sigma}}\right] .
\end{eqnarray*}
By the Green's representation formula we have that
\begin{eqnarray} \label{repr}
r_{\de,a,\sigma}(z)=\frac{1}{|\Omega|}\int_\Omega r_{\de,a,\sigma}+\int_{\partial \Omega}[\partial_\nu r_{\de,a,\sigma}(w) G(w,z)-r_{\de,a,\sigma}(w) \partial_\nu G(w,z)]ds(w),
\end{eqnarray}
where $\nu$ is the unit outward normal of $\partial \Omega$ and $ds(w)$ is the line integral element. Since as $\delta \to 0$ there holds
$$r_{\de,a,\sigma}(w)=PU_{\de,a,\sigma}(w)-8\pi \sum_{k=0}^n G(w,a_k) +2 \frac{\delta^2}{|\sigma(w)-a|^2}+O(\delta^4)$$
in $C^1(\partial \Omega)$ uniformly in $|a|< \rho$ and $\sigma \in \mathcal{B}_r$, by double-periodicity of $PU_{\de,a,\sigma}-8\pi \displaystyle \sum_{k=0}^n G(\cdot,a_k)$ we get that
\begin{eqnarray} \label{repr1}
\int_{\partial \Omega}[\partial_\nu r_{\de,a,\sigma}(w) G(w,z)-r_{\de,a,\sigma}(w) \partial_\nu G(w,z)]ds(w)=
2 \delta^2 f_{a,\sigma}(z) +O(\delta^4)
\end{eqnarray}
in $C(\bar \Omega)$, where
\begin{eqnarray}\label{FaQ}
f_{a,\sigma}(z)=\int_{\partial \Omega}\Big[\partial_\nu \frac{1}{|\sigma(w)-a|^2} G(w,z)-\frac{1}{|\sigma(w)-a|^2} \partial_\nu G(w,z)\Big]ds(w).\end{eqnarray}
Inserting \eqref{repr1} into \eqref{repr} we get that
\begin{eqnarray} \label{repr2}
r_{\de,a,\sigma}(z)=\Theta_{\de,a,\sigma}+2 \delta^2 f_{a,\sigma}(z) +O(\delta^4)
\end{eqnarray}
in $C(\overline{\Omega})$ uniformly in $|a|< \rho$ and $\sigma \in \mathcal{B}_r$, where
$$\Theta_{\de,a,\sigma}:=\frac{1}{|\Omega|}\int_\Omega r_{\de,a,\sigma}=-\frac{1}{|\Omega|}\int_\Om \log {|\sigma(z)-a|^4\over
(\de^2+|\sigma(z)-a|^2)^2}.$$
The estimate \eqref{repr2} yields to the desired expansion for $PU_{\delta,a,\sigma}$ as $\delta \to 0$. \qed \end{proof}

\medskip \noindent Letting $\sigma_a \in \mathcal{B}_r$ be the solution of \eqref{sigmaa}-\eqref{ca}, we build up the correct approximating function as $W=PU_{\de,a,\sigma_a}$. We need to control the approximation rate of $W$ for $\delta$ and $\e$ small enough, by estimating the error term
\begin{eqnarray}\label{R}
R&=&\Delta W+4\pi N\left(\frac{e^{u_0+W}}{\int_\Omega
e^{u_0+W}}-\frac{1}{|\Omega|}\right)\\
&&+ \frac{64 \pi^2N^2 \epsilon^2 \int_\Omega
e^{2u_0+2W}}{\lf(\int_\Omega e^{u_0+W}+\sqrt{(\int_\Omega e^{u_0+W})^2-16\pi
N\epsilon^2\int_\Omega
e^{2u_0+2W}}\rg)^2}\left(\frac{e^{u_0+W}}{\int_\Omega e^{u_0+W}}-\frac{
e^{2u_0+2W}}{\int_\Omega e^{2u_0+2W}}\right).\nonumber
\end{eqnarray}
In order to simplify the notations, we set $U_{\delta,a}=U_{\delta,a,\sigma_a}$, $c_a=c_{a,\sigma_a}$, $\Theta_{\delta,a}=\Theta_{\delta,a,\sigma_a}$, $f_a=f_{a,\sigma_a}$, and omit the
subscript $a$ in $\sigma_a$. We have the following crucial result.
\begin{thm}\label{estrr01550} Let $|a|<\frac{\rho}{2}$ and set
\begin{eqnarray} \label{rateeps}
\eta=\epsilon^2 \delta^{-\frac{2}{n+1}} \max\Big\{1,  \frac{|a|}{\delta}\Big\}^{\frac{2n}{n+1}}.
\end{eqnarray}
The following expansions do hold
\begin{eqnarray}
&&\Delta W+4\pi N\left( \frac{  e^{u_0+W}}{\int_\Omega
e^{u_0+W}}-\frac{1}{|\Omega|}\right) \nonumber\\
&&=|\sigma'(z)|^2 e^{U_{\delta,a}}
\left[\frac{e^{2\re[c_a z^{n+1}]}}{1+2 \re[c_a F_a(a)] + |c_a|^2  \re G_a(a)+\frac{1}{2} |c_a|^2 \Delta \re G_a(a)  \delta^2 \log \frac{1}{\delta}+\frac{\delta^2}{n+1} D_a}-1\right] \nonumber \\
&&\quad +|\sigma'(z)|^2 e^{U_{\delta,a}}O(\delta^2 |z| +\delta^2|a|^{\frac{1}{n+1}}+\delta^2|c_a|+\delta^{\frac{2n+3}{n+1}})+O(\delta^2)
\label{imp}
\end{eqnarray}
and
\begin{eqnarray}
&&\ds \frac{64 \pi^2 N^2 \epsilon^2 \int_\Omega
e^{2u_0+2W}}{(\int_\Omega e^{u_0+W}+\sqrt{(\int_\Omega e^{u_0+W})^2-16\pi
N\epsilon^2\int_\Omega e^{2u_0+2W}})^2}  \left(\frac{
e^{u_0+W}}{\int_\Omega e^{u_0+W}}-\frac{e^{2u_0+2W}}{\int_\Omega e^{2u_0+2W}}\right) \nonumber \\
&&= |\sigma'(z)|^2 e^{U_{\delta,a}}
\left[\frac{8(n+1)^2\epsilon^2}{\pi |\alpha_a|^{\frac{2}{n+1}}
\delta^{\frac{2}{n+1}}} E_{a,\delta}-\e^2 |\sigma'(z)|^2
e^{U_{\delta,a}}\right] \left[1+O(|c_a||z|^{n+1}+\eta)+o(1)
\right] \label{eps4}
\end{eqnarray}
as $\e, \delta \to 0$, where $\alpha_a$, $F_a$, $G_a$, $D_a$, $E_{a,\delta}$ are given in
\eqref{alpha0}, \eqref{FG}, \eqref{Da}, \eqref{Eadelta},
respectively.
\end{thm}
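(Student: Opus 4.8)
The plan is to read the residual $R$ of \eqref{R} off the equation \eqref{lll} satisfied by $W=PU_{\de,a,\sigma}$ together with Lemma \ref{expPU}, reducing both \eqref{imp} and \eqref{eps4} to the asymptotics of a few concentrating integrals, and then to compute those asymptotics through the branched change of variables $w=\sigma(z)$. First, inserting the expansion of $PU_{\de,a,\sigma}$ from Lemma \ref{expPU} into $u_0+W$, writing $G(z,a_k)=-\frac1{2\pi}\log|z-a_k|+H(z-a_k)$ so that $4\log|g_{a,\sigma}|+8\pi\sum_k H(z-a_k)=4\log|\sigma-a|+8\pi\sum_k G(z,a_k)$, and then using \eqref{keyrelation} and \eqref{eq sigmaa}, all the logarithmic and Green's function terms collapse against $|\sigma'|^2$ and one obtains the closed form identity
$$e^{u_0+W}=\lambda_{\de,a}\,|\sigma'(z)|^2 e^{U_{\de,a}}\,e^{2\re[c_a z^{n+1}]}\big(1+2\de^2 f_a+O(\de^4)\big),$$
where the positive constant $\lambda_{\de,a}$ (gathering $\frac1{8\de^2}$, $e^{\Theta_{\de,a}}$ and the $|a_k|^2$ factors) does not depend on $z$, hence cancels in the normalized nonlinearity. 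Since \eqref{lll} gives $\Delta W=-|\sigma'|^2 e^{U_{\de,a}}+\frac1{|\Om|}\int_\Om|\sigma'|^2 e^{U_{\de,a}}$ and $4\pi N=8\pi(n+1)$ by \eqref{balance}, the left side of \eqref{imp} equals
$$|\sigma'|^2 e^{U_{\de,a}}\Big[\frac{4\pi N\,e^{2\re[c_a z^{n+1}]}(1+2\de^2 f_a+O(\de^4))}{\int_\Om|\sigma'|^2 e^{U_{\de,a}}e^{2\re[c_a w^{n+1}]}(1+2\de^2 f_a+O(\de^4))\,dw}-1\Big]+\frac1{|\Om|}\Big[\int_\Om|\sigma'|^2 e^{U_{\de,a}}-4\pi N\Big];$$
the last bracket is $O(\de^2)$ since the tail loss of a Liouville bump gives $\int_\Om|\sigma'|^2 e^{U_{\de,a}}=8\pi(n+1)+O(\de^2)$, which accounts for the stray $+O(\de^2)$ in \eqref{imp}. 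Expanding $\int_\Om e^{2u_0+2W}=\int_\Om(e^{u_0+W})^2$ in the same manner will handle \eqref{eps4}.

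The heart of the matter is the expansion of the denominator integral to order $\de^2$. I split $\Om=\sigma^{-1}(B_\rho(0))\cup(\Om\setminus\sigma^{-1}(B_\rho(0)))$. On the outer piece $|\sigma-a|\ge c>0$, so $e^{U_{\de,a}}=8\de^2|\sigma-a|^{-4}(1+O(\de^2))$, and \eqref{eq sigmaa} turns $|\sigma'|^2|\sigma-a|^{-4}e^{2\re[c_a w^{n+1}]}$ into the $\de$ independent density $e^{u_0+8\pi\sum_k G(z,a_k)-\frac{2\pi}{|\Om|}\sum_k|a_k|^2}$, contributing $8\de^2 e^{-\frac{2\pi}{|\Om|}\sum|a_k|^2}\int_{\Om\setminus\sigma^{-1}(B_\rho)}e^{u_0+8\pi\sum_k G(z,a_k)}\,dz+O(\de^4)$. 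On the inner piece $\sigma$ is $(n+1)$ to one near $0$, so $w=\sigma(z)$ turns the integral into $\int_{B_\rho(0)}\Phi(w)\frac{8\de^2}{(\de^2+|w-a|^2)^2}\,dw$ with sheet sum $\Phi(w)=\sum_{z\in\sigma^{-1}(w)}e^{2\re[c_a z^{n+1}]}(1+2\de^2 f_a+O(\de^4))$; rescaling $w=a+\de\zeta$ and Taylor expanding $\Phi$ at $a$, the zeroth moment gives $8\pi\Phi(a)$, the first moment vanishes by symmetry, the second moment is logarithmically divergent and, cut off at $|\zeta|\sim\rho/\de$, yields a term proportional to $\de^2\log\frac1\de\,\Delta_w\Phi_0(a)$ plus a finite $O(\de^2)$ part, while discarding the kernel tail outside $B_\rho$ costs $-8\de^2\Phi_0(a)\int_{\mathbb{R}^2\setminus B_\rho(0)}|y|^{-4}\,dy+O(\de^4)$. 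Evaluating $\Phi_0(a)=\sum_k e^{2\re[c_a a_k^{n+1}]}$ and $\Delta_w\Phi_0(a)$ through the moments of $z^{n+1}$ over $\sigma^{-1}(a)$ reproduces $\alpha_a,F_a,G_a$ as in \eqref{alpha0}, \eqref{FG}, and gluing the inner finite $O(\de^2)$ part to the outer integral (their $|y|^{-4}$ tails cancel, which is exactly why $D_0$ in \eqref{ggg} is finite) produces the renormalized $D_a$ of \eqref{Da}. Altogether
$$\int_\Om|\sigma'|^2 e^{U_{\de,a}}e^{2\re[c_a w^{n+1}]}(1+\cdots)\,dw=8\pi(n+1)\Big[1+2\re[c_a F_a(a)]+|c_a|^2\re G_a(a)+\tfrac12|c_a|^2\Delta\re G_a(a)\,\de^2\log\tfrac1\de+\tfrac{\de^2}{n+1}D_a\Big](1+\mathrm{err}),$$
with $\mathrm{err}$ of the order displayed in \eqref{imp} (coming from the quadratic remainder in $\sigma(z)-a\approx\sigma'(a_k)(z-a_k)$, the oscillation of $e^{2\re[c_a z^{n+1}]}$ over one bump, and higher Taylor terms), uniformity in $a$ and $\sigma\in\mathcal B_r$ resting on the holomorphic/anti-holomorphic dependence of $\mathcal H_{a,\sigma}$ on the $a_k$'s (Appendix A). Substituting back, \eqref{imp} follows.

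For \eqref{eps4} write $\mathcal I_1=\int_\Om e^{u_0+W}$ and $\mathcal I_2=\int_\Om(e^{u_0+W})^2$. From the closed form, $\mathcal I_1=\lambda_{\de,a}\frac{4\pi N}{8\de^2}[1+2\re[c_a F_a(a)]+\cdots]$ and $\mathcal I_2=\lambda_{\de,a}^2\int_\Om(|\sigma'|^2 e^{U_{\de,a}})^2 e^{4\re[c_a z^{n+1}]}(1+\cdots)\,dz$; now $w=\sigma(z)$ produces the kernel $(8\de^2)^2(\de^2+|w-a|^2)^{-4}$ and a sheet sum $\sum_{z\in\sigma^{-1}(w)}|\sigma'(z)|^2 e^{4\re[c_a z^{n+1}]}(1+\cdots)$ that vanishes at $w=0$ like $|w|^{2n/(n+1)}$, since $|\sigma'|^2\sim(n+1)^2|\alpha_a|^2|z|^{2n}$ at the branch point; rescaling $w=a+\de\zeta$ this integral reduces, up to the factor $|\alpha_a|^{2/(n+1)}\de^{2/(n+1)-6}$ and lower order, to the profile integral $E_{a,\de}$ of \eqref{Eadelta}, which interpolates between the regimes $|a|\lesssim\de$ and $|a|\gtrsim\de$. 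One then checks $16\pi N\epsilon^2\mathcal I_2/\mathcal I_1^2=O(\eta)$ with $\eta$ as in \eqref{rateeps}, so that $\sqrt{\mathcal I_1^2-16\pi N\epsilon^2\mathcal I_2}=\mathcal I_1(1-8\pi N\epsilon^2\mathcal I_2\mathcal I_1^{-2}+O(\eta^2))$ and the prefactor in \eqref{eps4} becomes $16\pi^2N^2\epsilon^2\mathcal I_2\mathcal I_1^{-2}(1+O(\eta))$; in the factor $\frac{e^{u_0+W}}{\mathcal I_1}-\frac{e^{2u_0+2W}}{\mathcal I_2}$, the first product reassembles (the powers of $\de$ and $\lambda_{\de,a}$ matching since $\mathcal I_1\sim\de^{-2}$) as $|\sigma'|^2 e^{U_{\de,a}}\cdot\frac{8(n+1)^2\epsilon^2}{\pi|\alpha_a|^{2/(n+1)}\de^{2/(n+1)}}E_{a,\de}$, while the second equals $-16\pi^2N^2\epsilon^2 e^{2u_0+2W}\mathcal I_1^{-2}=-\epsilon^2(|\sigma'|^2 e^{U_{\de,a}})^2(1+O(|c_a||z|^{n+1}+\eta))$ after the $\lambda_{\de,a}^2$ cancels between $e^{2u_0+2W}$ and $\mathcal I_1^2$; factoring $|\sigma'|^2 e^{U_{\de,a}}$ out of both terms yields \eqref{eps4}.

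The main obstacle is the inner region expansion just sketched: pushing Taylor expansion together with rescaling through a branched $(n+1)$ sheeted change of variables, cleanly isolating the $\de^2\log\frac1\de$ from the $\de^2$ contributions, and, for \eqref{imp}, gluing the logarithmically divergent tail of the Liouville bump to the slowly decaying tail of $e^{u_0+8\pi\sum_k G(z,a_k)}$ on $\Om\setminus\sigma^{-1}(B_\rho)$ so that only the finite constant $D_a$ survives. The secondary difficulty is keeping every remainder uniform in $a$ and in $\sigma\in\mathcal B_r$, which is precisely what the holomorphic/anti-holomorphic dependence built into $\mathcal H_{a,\sigma}$ is designed for.
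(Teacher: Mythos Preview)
Your approach is essentially the same as the paper's: the closed-form identity for $e^{u_0+W}$ is exactly the paper's equation \eqref{cinema}, the inner/outer splitting and the definition of $D_a$ as the glued finite part are identical, and the treatment of the $\epsilon^2$ term via $B(W)$ and the profile integral $E_{a,\de}$ matches the paper's computation around \eqref{const1}--\eqref{BW}.

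The one place where your write-up differs in presentation is the inner integral. You pass directly to $w=\sigma(z)$ and work with the sheet sum $\Phi_0(w)=\sum_{z\in\sigma^{-1}(w)}e^{2\re[c_a z^{n+1}]}$, then Taylor expand $\Phi_0$ at $w=a$. The paper instead first uses the \emph{biholomorphic} change $y=q(z)$ (Lemma \ref{gomme}), expands $e^{c_a(q^{-1}(y))^{n+1}}$ as a convergent power series \eqref{keyexp}, invokes the rotational symmetry identity \eqref{symmetryint} to kill all powers $y^k$ with $k\notin(n+1)\mathbb N$, and only then passes to $w=y^{n+1}$. This produces the explicit splitting \eqref{FG} into the holomorphic $F_a$ and the function $G_a$. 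The payoff is regularity: $F_a$ is analytic, so its moments against the radial kernel are read off from the power series \eqref{expF} and contribute no $\de^2\log\frac1\de$ term; $G_a$ is only $C^{2+\frac{2}{n+1}}$, and it is precisely this limited regularity (through \eqref{expG}) that both generates the $\de^2\log\frac1\de$ coefficient $\frac12|c_a|^2\Delta\re G_a(a)$ and fixes the remainder size $O(\de^{(2n+3)/(n+1)})$. In your language $\Phi_0$ inherits exactly this $C^{2+\frac{2}{n+1}}$ regularity, so your Taylor argument works, but the paper's decomposition makes the source of each error term in \eqref{imp} transparent rather than leaving it as ``higher Taylor terms''.
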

\begin{proof} Recall that \eqref{sigmaa} implies the validity of \eqref{eq sigmaa}, which, combined with Lemma \ref{expPU}, yields to the following crucial estimate:
\begin{eqnarray} \label{cinema}
W=U_{\delta,a}-\log (8 \delta^2)+\log |\sigma'(z)|^2-u_0+\frac{2\pi}{|\Omega|} \sum_{k=0}^n |a_k|^2+2\re [c_a z^{n+1}]+\Theta_{\delta,a}+2\delta^2 f_a+O(\delta^4)
\end{eqnarray}
in $C(\overline{\Omega})$ as $\delta \to 0$, uniformly for $|a|< \rho$. Since by Lemma \ref{gomme} $\sigma=q^{n+1}$ in $\sigma^{-1}(B_\rho(0))$, through the change of variables
$y=q(z)$ in $\sigma^{-1}(B_\rho(0))=q^{-1}(B_{\rho^{\frac{1}{n+1}}}(0))$, by  (\ref{cinema}) we have that
\begin{eqnarray}
&& \frac{8 \delta^2} {e^{\frac{2\pi}{|\Omega|} \sum_{k=0}^n |a_k|^2+\Theta_{\de,a}+2\de^2 f_a(0)}}\int_{\sigma^{-1}(B_{\rho}(0))} e^{u_0+W}
= \int_{q^{-1}(B_{\rho^{\frac{1}{n+1}}}(0))}
|\sigma'(z)|^2 e^{U_{\de,a}+2\re[c_a z^{n+1}] +O(\delta^2|z|+\delta^4)} \nonumber \\
&&=  \int_{B_{\rho^{\frac{1}{n+1}}}(0)} \frac{8(n+1)^2 \delta^2 |y|^{2n}}{(\delta^2+|y^{n+1}-a|^2)^2}
e^{2\re[c_a (q^{-1}(y))^{n+1}] +O(\delta^2 |y|+\delta^4)}. \label{1045}
\end{eqnarray}
Since $q^{-1}(y)\sim y $ at $y=0$, the following Taylor expansion does hold
\begin{equation} \label{keyexp}
e^{c_a(q^{-1}(y))^{n+1}}=1+c_a y^{n+1} \sum_{k=0}^{+\infty} \alpha_a^k y^k
\end{equation}
in $B_{\rho^{\frac{1}{n+1}}}(0)$, where the coefficients $\alpha_a^k$ depend on $a$ through $\sigma=\sigma_a$. In particular, we have that $\alpha_a:=\alpha_a^0$ takes the form
\begin{eqnarray} \label{alpha0}
\alpha_a=\displaystyle \lim_{z \to 0}\frac{z^{n+1}}{\sigma(z)} \not=0.
\end{eqnarray}
By \eqref{keyexp} we then deduce that
\begin{equation} \label{keyexp1}
e^{2\re[c_a (q^{-1}(y))^{n+1}]}=\big|e^{c_a (q^{-1}(y))^{n+1}}\big|^2=1+2\re\bigg[c_a y^{n+1} \sum_{k=0}^{+\infty} \alpha_a^k y^k\bigg]+|c_a|^2 |y|^{2n+2}\sum_{k,s=0}^{+\infty} \alpha_a^k \overline{\alpha}_a^s y^k \overline{y}^s.
\end{equation}
Since
$$\sum_{j=0}^n [e^{i\frac{2\pi}{n+1}j}]^k=\sum_{j=0}^n e^{i\frac{2\pi}{n+1}j}=0$$
for all integer $k\notin (n+1)\mathbb{N}$, by the change of variables $y \to  e^{i\frac{2\pi}{n+1}j} y$ we have that
\begin{eqnarray}
\int_{B_{\rho^{\frac{1}{n+1}}}(0)} \frac{|y|^m y^k}{(\delta^2+|y^{n+1}-a|^2)^2}
&=&\sum_{j=0}^n \int_{B_{\rho^{\frac{1}{n+1}}}(0)\cap C_j} \frac{|y|^m y^k}{(\delta^2+|y^{n+1}-a|^2)^2} \nonumber\\
&=& \int_{B_{\rho^{\frac{1}{n+1}}}(0)\cap C_0} \frac{|y|^m y^k}{(\delta^2+|y^{n+1}-a|^2)^2} \sum_{j=0}^n [e^{i\frac{2\pi}{n+1}j}]^k=0 \label{symmetryint}
\end{eqnarray}
for all $m\geq 0$ and integer $k \notin (n+1)\mathbb{N}$, where $C_j$ is the sector of the plane between the angles $e^{i\frac{2\pi}{n+1}j}$ and $e^{i\frac{2\pi}{n+1}(j+1)}$. Formula \eqref{symmetryint} tells us that many terms of the expansion \eqref{keyexp1} will give no contribution when inserted in an integral formula like \eqref{1045}. Using the notation $\dots$ to denote such terms, we can rewrite \eqref{keyexp1} as
\begin{eqnarray} \label{keyexp2}
e^{2\re[c_a (q^{-1}(y))^{n+1}]}&=&1+2\re\bigg[c_a \sum_{k=0}^{+\infty} \alpha_a^{k(n+1)} y^{(k+1)(n+1)}\bigg]+|c_a|^2 |y|^{2n+2} \sum_{k=0}^{+\infty} |\alpha_a^k|^2 |y|^{2k} \\
&&+2|c_a|^2 |y|^{2n+2}\re \bigg[\sum_{k=0}^{+\infty}\sum_{m=1}^{+\infty} \overline{\alpha}_a^k \alpha_a^{k+m(n+1)} |y|^{2k} y^{m(n+1)}\bigg]+\dots \nonumber
\end{eqnarray}
Setting
\begin{equation} \label{FG}
F_a(y)=\sum_{k=0}^{+\infty} \alpha_a^{k(n+1)} y^{k+1},\quad G_a(y)=|y|^2 \left[2 \sum_{k=0}^{+\infty}\sum_{m=1}^{+\infty} \overline{\alpha}_a^k \alpha_a^{k+m(n+1)} |y|^{\frac{2k}{n+1}} y^m+\sum_{k=0}^{+\infty} |\alpha_a^k|^2 |y|^{\frac{2k}{n+1}}  \right],
\end{equation}
through the change of variables $y \to y^{n+1}$ we can re-write \eqref{1045} as
\begin{eqnarray}
&&\hspace{-1.1cm} \frac{8 \delta^2}{(n+1) e^{\frac{2\pi}{|\Omega|} \sum_{k=0}^n |a_k|^2+\Theta_{\de,a}+2\de^2 f_a(0)}}\int_{\sigma^{-1}(B_\rho(0))} e^{u_0+W} \nonumber\\
&&\hspace{-1.1cm}= \int_{B_\rho(0)} \frac{8\delta^2}{(\delta^2+|y-a|^2)^2}
\left(1+\re[2 c_a F_a(y)+|c_a|^2 G_a(y)]+O(\delta^2 |y|^{\frac{1}{n+1}}+\delta^4)\right)\nonumber\\
&&\hspace{-1.1cm} = 8\pi-\int_{\mathbb{R}^2 \setminus B_\rho(0)} \frac{8\delta^2}{|y|^4}+
\int_{B_\rho(0)} \frac{8\delta^2}{(\delta^2+|y-a|^2)^2}
\re[2 c_a F_a(y)+|c_a|^2 G_a(y)]+O(\delta^2|a|^{\frac{1}{n+1}}+\delta^{\frac{2n+3}{n+1}}). \label{1046}
\end{eqnarray}
Since $|a|<\frac{\rho}{2}$ and $F$ is an holomorphic function in $B_{\frac{\rho}{2}}(a) \subset B_\rho(0)$, we can expand $F_a$ in a power series around $y=a$:
\begin{eqnarray} \label{expF}
F_a(y)=\sum_{k=0}^\infty  \frac{F_a^{(k)}(a)}{k!} (y-a)^k,
\end{eqnarray}
and then get
\begin{eqnarray}
2 \int_{B_\rho(0)} \frac{8\delta^2}{(\delta^2+|y-a|^2)^2} \re[c_a F_a(y)]&=&
2 \int_{B_{\frac{\rho}{2}}(a)} \frac{8\delta^2}{(\delta^2+|y-a|^2)^2} \re[c_a F_a(y)]+O(\delta^2|c_a|)\nonumber\\
&=& 16\pi \re[c_a F_a(a)] +O(\delta^2|c_a|)
\label{1047}
\end{eqnarray}
in view of
$$\int_{B_{\frac{\rho}{2}}(a)} \frac{(y-a)^k}{(\delta^2+|y-a|^2)^2}=0$$
for all integer $k\geq 1$. The map $\re G_a$ is just $C^{2+\frac{2}{n+1}}(B_\rho(0))$ and can be expanded up to second order in $y=a$:
\begin{eqnarray}\label{expG}
\re G_a(y)=\re G_a(a)+\langle\nabla \re G_a(a),y-a\rangle+\frac{1}{2}\langle D^2 \re G_a(a) (y-a),y-a\rangle+O(|y-a|^{\frac{2(n+2)}{n+1}})
\end{eqnarray}
for $y \in B_{\frac{\rho}{2}}(a)$, yielding to
\begin{eqnarray}
&& |c_a|^2 \int_{B_\rho(0)} \frac{8\delta^2}{(\delta^2+|y-a|^2)^2} \re G_a(y)= |c_a|^2 \int_{B_{\frac{\rho}{2}}(a)} \frac{8\delta^2}{(\delta^2+|y-a|^2)^2} \re G_a(y)+O(\delta^2|c_a|^2) \nonumber \\
&&=8\pi |c_a|^2  \re G_a(a)+\frac{|c_a|^2}{4} \Delta \re G_a(a) \int_{B_{\frac{\rho}{2}}(a)} \frac{8\delta^2}{(\delta^2+|y-a|^2)^2} |y-a|^2+O(\delta^2|c_a|^2) \nonumber\\
&&=8\pi |c_a|^2  \re G_a(a)+4\pi |c_a|^2 \Delta \re G_a(a) \delta^2 \log \frac{1}{\delta} +O(\delta^2|c_a|^2)
\label{1048}
\end{eqnarray}
in view of
\begin{eqnarray*}
&&\int_{B_{\frac{\rho}{2}}(a)} \frac{(y-a)_1}{(\delta^2+|y-a|^2)^2}=\int_{B_{\frac{\rho}{2}}(a)} \frac{(y-a)_2}{(\delta^2+|y-a|^2)^2}=
\int_{B_{\frac{\rho}{2}}(a)} \frac{(y-a)_1(y-a_2)}{(\delta^2+|y-a|^2)^2}=0\\
&&\int_{B_{\frac{\rho}{2}} (a)} \frac{(y-a)_1^2}{(\delta^2+|y-a|^2)^2}=\int_{B_{\frac{\rho}{2}} (a)} \frac{(y-a)_2^2}{(\delta^2+|y-a|^2)^2}=
\frac{1}{2}\int_{B_{\frac{\rho}{2}}(a)} \frac{|y-a|^2}{(\delta^2+|y-a|^2)^2}.
\end{eqnarray*}
By inserting \eqref{1047}, \eqref{1048} into \eqref{1046} we get that
\begin{eqnarray}
&&\frac{8\delta^2}{(n+1)  e^{\frac{2\pi}{|\Omega|} \sum_{k=0}^n |a_k|^2+\Theta_{\de,a}+2\de^2 f_a(0)}}\int_{\sigma^{-1}(B_\rho(0))} e^{u_0+W}\nonumber\\
&&= 8\pi-\int_{\mathbb{R}^2 \setminus B_\rho(0)} \frac{8\delta^2}{|y|^4}
+16\pi \re[c_a F_a(a)] +8\pi |c_a|^2  \re G_a(a)+4 \pi |c_a|^2 \Delta \re G_a(a)  \delta^2 \log \frac{1}{\delta} \nonumber \\
&&+O(\delta^2|a|^{\frac{1}{n+1}}+\delta^2|c_a|+\delta^{\frac{2n+3}{n+1}}).\label{1049}
\end{eqnarray}
By Lemma \ref{expPU}, \eqref{1049} and Lemma \ref{gomme} we get that
\begin{eqnarray}
&&\frac{ \delta^2}{\pi (n+1)  e^{\frac{2\pi}{|\Omega|} \sum_{k=0}^n |a_k|^2+\Theta_{\delta,a}+2\delta^2f_{a}(0)}}\int_\Omega e^{u_0+W}  = 1+2 \re[c_a F_a(a)] + |c_a|^2  \re G_a(a) \nonumber \\
&&+\frac{1}{2} |c_a|^2 \Delta \re G_a(a)  \delta^2 \log \frac{1}{\delta}+\frac{\delta^2}{n+1} D_a+O(\delta^2|a|^{\frac{1}{n+1}}+\delta^2|c_a|+\delta^{\frac{2n+3}{n+1}}), \label{const}
\end{eqnarray}
where
\begin{equation} \label{Da}
\pi D_a=\int_{\Omega \setminus \sigma^{-1}(B_\rho(0))} e^{u_0+8\pi \sum_{k=0}^n G(z,a_k)-\frac{2\pi}{|\Omega|} \sum_{k=0}^n |a_k|^2}   -\int_{\mathbb{R}^2 \setminus B_\rho(0)} \frac{n+1}{|y|^4}.
\end{equation}
In view of (\ref{balance}) and $\int_\Omega |\sigma'(z)|^2 e^{U_{\delta,a}}=8\pi(n+1)+O(\delta^2)$, by (\ref{cinema}) and (\ref{const}) we have that
\begin{eqnarray*}
&&\Delta W+4\pi N\left( \frac{  e^{u_0+W}}{\int_\Omega
e^{u_0+W}}-\frac{1}{|\Omega|}\right) \\
&&=|\sigma'(z)|^2 e^{U_{\delta,a}}
\left[4\pi N \frac{e^{2\re[c_a z^{n+1}]+O(\delta^2 |z|+\delta^4)}
}{8 \delta^2  e^{-\frac{2\pi}{|\Omega|} \sum_{k=0}^n |a_k|^2-\Theta_{\delta,a}-2 \delta^2 f_a
(0)} \int_\Omega e^{u_0+W}}-1 \right] +\frac{1}{|\Omega|}\left(\int_\Omega |\sigma'(z)|^2
e^{U_{\delta,a}}-4\pi N \right)\\
&&=|\sigma'(z)|^2 e^{U_{\delta,a}}
\left[\frac{e^{2\re[c_a z^{n+1}]}}{1+2 \re[c_a F_a(a)] + |c_a|^2  \re G_a(a)+\frac{1}{2} |c_a|^2 \Delta \re G_a(a)  \delta^2 \log \frac{1}{\delta}+\frac{\delta^2}{n+1} D_a}-1\right] \\
&& +|\sigma'(z)|^2 e^{U_{\delta,a}}O(\delta^2 |z| +\delta^2|a|^{\frac{1}{n+1}}+\delta^2|c_a|+\delta^{\frac{2n+3}{n+1}})+O(\delta^2)
\end{eqnarray*}
as $\de  \to 0$, yielding to the validity of \eqref{imp}.

\medskip \noindent Introducing the notation $B(w)=16\pi N (\int_\Om
e^{2u_0+2w})(\int_\Om e^{u_0+w})^{-2}$, we can write the following
expansion
\begin{equation} \label{BWW}
\frac{16 \pi N \int_\Omega e^{2u_0+2W}}{(\int_\Omega
e^{u_0+W}+\sqrt{(\int_\Omega e^{u_0+W})^2-16\pi N\epsilon^2\int_\Omega
e^{2u_0+2W}})^2}={B(W) \over 4}+O(\e^2 B^2(W)).
\end{equation}
Arguing as for (\ref{const}), the change of variables $y=\sigma(z)$ yields to
\begin{eqnarray}
&& {64 \delta^{4+\frac{2}{n+1}}\over
  e^{\frac{4\pi}{|\Omega|} \sum_{k=0}^n |a_k|^2+2\Theta_{\delta,a}}}\int_\Omega e^{2u_0+2W}=\delta^{\frac{2}{n+1}}\int_{\sigma^{-1} (B_\rho(0))} |\sigma'(z)|^4 e^{2U_{\delta,a} +O(|c_a||z|^{n+1}+\delta^2)}+O(\delta^{4+\frac{2}{n+1}})\nonumber \\
&&= 64(n+1)^3 |\alpha_a|^{-\frac{2}{n+1}} \int_{B_\rho (0)} \frac{\delta^{4+\frac{2}{n+1}} |y|^{\frac{2n}{n+1}}}{(\delta^2+|y-a|^2)^4}\left(1+O(|c_a||y|+\delta^2+|y|^{\frac{1}{n+1}})\right)+O(\delta^{4+\frac{2}{n+1}}) \nonumber \\
&&= 64(n+1)^3 |\alpha_a|^{-\frac{2}{n+1}} \int_{B_\rho (0)} \frac{\delta^{4+\frac{2}{n+1}} |y+a|^{\frac{2n}{n+1}}}{(\delta^2+|y|^2)^4}\left(1+O(\delta^2+|y|^{\frac{1}{n+1}}+|a|^{\frac{1}{n+1}})\right)+O(\delta^{4+\frac{2}{n+1}}) \label{const1}
\end{eqnarray}
in view of
\begin{eqnarray} \label{1852}
|\sigma'(z)|^2=(n+1)^2 |\alpha_a|^{-2} |z|^{2n}(1+O(|z|))=(n+1)^2 |\alpha_a|^{-\frac{2}{n+1}} |\sigma(z)|^{\frac{2n}{n+1}}(1+O(|\sigma(z)|^{\frac{1}{n+1}})),
\end{eqnarray}
where $\alpha_a$ is given by \eqref{alpha0}. We have that
$$\int_{B_\rho (0)} \frac{\delta^{4+\frac{2}{n+1}} |y+a|^{\frac{2n}{n+1}}}{(\delta^2+|y|^2)^4}=
\int_{\mathbb{R}^2} \frac{|y+\frac{a}{\delta} |^{\frac{2n}{n+1}}}{(1+|y|^2)^4} +O(\de^{4+\frac{2}{n+1}}) $$
if $|a|=O(\delta)$ and
$$\int_{B_\rho (0)} \frac{\delta^{4+\frac{2}{n+1}} |y+a|^{\frac{2n}{n+1}}}{(\delta^2+|y|^2)^4}=
\Big(\frac{|a|}{\delta}\Big)^{\frac{2n}{n+1}} \int_{\mathbb{R}^2} \frac{1}{(1+|y|^2)^4}\left[1 +O\Big(\frac{\delta}{|a|}+\delta^6\Big)\right]$$
if $|a|>>\delta$, where in the latter we have used the inequality:
$$|y+a|^{\frac{2n}{n+1}}=|a|^{\frac{2n}{n+1}}+O(|a|^{\frac{n-1}{n+1}}|y|+|y|^{\frac{2n}{n+1}}).$$
Setting
\begin{eqnarray} \label{Eadelta}
E_{a,\delta}:= \left\{\begin{array}{ll}
\ds\int_{\mathbb{R}^2} \frac{|y+\frac{a}{\delta} |^{\frac{2n}{n+1}}}{(1+|y|^2)^4} &\hbox{if }|a|=O(\delta)\\ \\
\ds\frac{\pi}{3} \Big(\frac{|a|}{\delta}\Big)^{\frac{2n}{n+1}}  &\hbox{if }|a|>>\delta, \end{array} \right.
\end{eqnarray}
by (\ref{const1}) we get that
\begin{eqnarray}
{64 \delta^{4+\frac{2}{n+1}}\over
  e^{\frac{4\pi}{|\Omega|} \sum_{k=0}^n |a_k|^2+2\Theta_{\delta,a}}}\int_\Omega e^{2u_0+2W}= 64(n+1)^3 |\alpha_a|^{-\frac{2}{n+1}} (1+o(1)) E_{a,\delta}.\label{const1517}
\end{eqnarray}
Since by a combination of (\ref{const}) and (\ref{const1517}) for $B(W)$ we have
that
\begin{equation} \label{BW}
B(W)=32 \frac{(n+1)^2}{ \pi \delta^{\frac{2}{n+1}}}|\alpha_a|^{-\frac{2}{n+1}}(1+o(1))E_{a,\delta}
\end{equation}
in view of (\ref{balance}), by \eqref{BWW} and \eqref{BW} we get that
\begin{eqnarray} \label{eps0}
\frac{16 \pi N \int_\Omega e^{2u_0+2W}}{(\int_\Omega
e^{u_0+W}+\sqrt{(\int_\Omega e^{u_0+W})^2-16\pi N\epsilon^2\int_\Omega
e^{2u_0+2W}})^2} =8 \frac{(n+1)^2}{ \pi \delta^{\frac{2}{n+1}}}|\alpha_a|^{-\frac{2}{n+1}} (1+o(1)+O(\eta)) E_{a,\delta},
\end{eqnarray}
where $\eta$ is given by \eqref{rateeps}. As we have already seen in deriving (\ref{imp}), by (\ref{cinema}) we have that
\begin{eqnarray}
4\pi N\frac{  e^{u_0+W}}{\int_\Omega
e^{u_0+W}}=|\sigma'(z)|^2 e^{U_{\delta,a}}
\left[1+O(|c_a| |z|^{n+1})+O(|c_a||a|+\delta^2 |\log \delta| ) \right],
\label{eps1}
\end{eqnarray}
and in a similar way one can show that
\begin{eqnarray}
{64(n+1)^3\over \delta^{\frac{2}{n+1}}} |\alpha_a|^{-\frac{2}{n+1}} \frac{e^{2u_0+2W} }{\int_\Omega e^{2u_0+2W} }
E_{a,\delta}= |\sigma'(z)|^4 e^{2U_{\delta,a}}\left[1+ O(|c_a||z|^{n+1})+o(1)\right]
\label{eps2}
\end{eqnarray}
in view of (\ref{const1517}). In conclusion, by (\ref{eps0})-(\ref{eps2})  we have for the
$\epsilon^2-$term in $R$ that
\begin{eqnarray*}
&&\ds \frac{64 \pi^2 N^2 \epsilon^2 \int_\Omega
e^{2u_0+2W}}{(\int_\Omega e^{u_0+W}+\sqrt{(\int_\Omega e^{u_0+W})^2-16\pi
N\epsilon^2\int_\Omega e^{2u_0+2W}})^2}  \left(\frac{
e^{u_0+W}}{\int_\Omega e^{u_0+W}}-\frac{e^{2u_0+2W}}{\int_\Omega e^{2u_0+2W}}\right)  \\
&&= |\sigma'(z)|^2 e^{U_{\delta,a}}
\left[\frac{8(n+1)^2\epsilon^2}{\pi |\alpha_a|^{\frac{2}{n+1}} \delta^{\frac{2}{n+1}}}
E_{a,\delta}-\e^2
|\sigma'(z)|^2 e^{U_{\delta,a}}\right] \left[1+O(|c_a||z|^{n+1}+\eta)+o(1) \right]
\end{eqnarray*}
in view of (\ref{balance}), yielding to the validity of \eqref{eps4}. This completes the proof. \qed
\end{proof}

\medskip \noindent Let us introduce the following weighted norm
\begin{equation}\label{wn}
\| h \|_*=\sup_{z\in \Om}
\frac{(\delta^2+|\sigma(z)-a|^2)^{1+\frac{\gamma}{2}}}{\delta^{\gamma} (|\sigma'(z)|^2+\delta^{\frac{2n}{n+1}})}\;  |h(z)|
\end{equation}
for any $h\in L^\infty(\Om)$, where $0<\gamma<1$ is a small fixed
constant. We have that
\begin{cor}\label{estrr0cor}
There exist positive constants $\delta_0$, $\e_0$ and $C_0$ such
that
\begin{equation}\label{ere}
\|R\|_*\le C_0 \left(\delta |c_a|+\delta^{2-\gamma} +\delta^{\frac{2}{n+1}-\gamma} |a|^{2+\gamma}+|c_a||a|^{\frac{n+2}{n+1}}+\eta+\eta^2\right)
\end{equation}
for any $\delta \in (0,\delta_0)$ and $\e\in(0,\e_0)$, where
$\eta$ is given by \eqref{rateeps}.
\end{cor}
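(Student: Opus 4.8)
The plan is to substitute the two expansions \eqref{imp} and \eqref{eps4} of Theorem \ref{estrr01550} into the definition \eqref{wn} of $\|\cdot\|_*$ and to bound each resulting term. Everything rests on the elementary identity, valid for $h\in L^\infty(\Omega)$,
\[
\big\||\sigma'|^2e^{U_{\delta,a}}\,h\big\|_*=8\,\delta^{2-\gamma}\sup_{z\in\Omega}\frac{|\sigma'(z)|^2\,|h(z)|}{\big(|\sigma'(z)|^2+\delta^{\frac{2n}{n+1}}\big)\big(\delta^2+|\sigma(z)-a|^2\big)^{1-\frac{\gamma}{2}}},
\]
together with its consequences, obtained by splitting $\Omega$ into the core $\{|\sigma(z)-a|\le\delta\}$, the intermediate annulus, and the outer region $\Omega\setminus\sigma^{-1}(B_\rho(0))$, and using $|\sigma'(z)|^2\sim|\sigma(z)|^{\frac{2n}{n+1}}$ near $z=0$ from \eqref{1852}: (i) $\||\sigma'|^2e^{U_{\delta,a}}h\|_*\le C\|h\|_\infty$ for bounded $h$, in particular $\||\sigma'|^2e^{U_{\delta,a}}\|_*\le C$; (ii) each factor $|\sigma(z)-a|$ in $h$ gains a power $\delta$, precisely $\||\sigma'|^2e^{U_{\delta,a}}\,|\sigma(\cdot)-a|^\theta\|_*\le C\delta^{\min\{\theta,1\}}$ for $0<\theta\le2$, via $\sup_{s>0}s\,(\delta^2+s^2)^{-1+\frac{\gamma}{2}}\lesssim\delta^{\gamma-1}$; (iii) for a standalone constant $c$, $\|c\|_*\le C|c|\,\delta^{\frac{2}{n+1}-\gamma-2}(\delta^2+|a|^2)^{1+\frac{\gamma}{2}}$, the supremum in \eqref{wn} being essentially attained near $z=0$, where $|\sigma'|^2+\delta^{\frac{2n}{n+1}}\approx\delta^{\frac{2n}{n+1}}$ and $|\sigma-a|\approx|a|$; and (iv) $\||\sigma'|^2e^{U_{\delta,a}}\,(\delta^2|z|+\delta^2|a|^{\frac1{n+1}}+\delta^2|c_a|+\delta^{\frac{2n+3}{n+1}})\|_*=O(\delta^2+\delta^2|c_a|)$. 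Throughout one uses that $|c_a|\le C$ for $|a|$ small and $\sigma\in\mathcal B_r$.

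For the first piece I would rewrite the bracket in \eqref{imp} as $(e^{2\re[c_a z^{n+1}]}-1-\mathcal D_a)/(1+\mathcal D_a)$, where $\mathcal D_a$ denotes the whole $z$-independent denominator correction there. Since $c_a z^{n+1}$ need not be small, instead of a Taylor expansion in $c_a$ I would use the exact identity $e^{c_a z^{n+1}}=1+c_a\sigma(z)\sum_{k\ge0}\alpha_a^k q(z)^k$ underlying \eqref{keyexp} (here $\sigma=q^{n+1}$ near $0$, from Lemma \ref{gomme}); squaring and separating the angularly surviving monomials, $e^{2\re[c_a z^{n+1}]}-1$ splits as a real-linear-in-$c_a$ term reducing to $2\re[c_aF_a(\sigma(z))]$, a $|c_a|^2$ term whose angular part reduces to the integrand producing $G_a$, and a remainder $2\re[c_aN_a(z)]$ with $N_a(z)=\sigma(z)\sum_{k\ge1,\,k\notin(n+1)\mathbb N}\alpha_a^kq(z)^k=O(|\sigma(z)|^{\frac{n+2}{n+1}})$. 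The definitions of $F_a$, $G_a$ together with the centering identities \eqref{expF}--\eqref{1048} show that $2\re[c_aF_a(a)]$ and $|c_a|^2\re G_a(a)$ inside $\mathcal D_a$ cancel the $z$-independent parts of the first two groups, so that $e^{2\re[c_a z^{n+1}]}-1-\mathcal D_a$ equals
\[
2\re\big[c_a(F_a(\sigma(z))-F_a(a))\big]+2\re[c_a N_a(z)]+|c_a|^2\,O(|\sigma(z)-a|)-\tfrac{\delta^2}{n+1}D_a-\tfrac12|c_a|^2\Delta\re G_a(a)\,\delta^2\log\tfrac1\delta
\]
plus a $|c_a|^2\,O(|\sigma(z)-a|^2)$ remainder. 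Now $|F_a(\sigma(z))-F_a(a)|\lesssim|\sigma(z)-a|$ and $|N_a(z)|\lesssim|\sigma(z)-a|^{\frac{n+2}{n+1}}+|a|^{\frac{n+2}{n+1}}$, so by (i)--(iv) and $1/(1+\mathcal D_a)=1+O(|c_a||a|+\delta^2)$, the first piece is bounded in $\|\cdot\|_*$ by $C(\delta|c_a|+\delta^{2-\gamma}+|c_a||a|^{\frac{n+2}{n+1}})$; the remainder terms of \eqref{imp} add $O(\delta^2+\delta^2|c_a|)$ from the $|\sigma'|^2e^{U_{\delta,a}}O(\dots)$ block and $O\big(\delta^{\frac2{n+1}-\gamma}(\delta^2+|a|^2)^{1+\frac\gamma2}\big)=O(\delta^{2-\gamma}+\delta^{\frac2{n+1}-\gamma}|a|^{2+\gamma})$ from the standalone $O(\delta^2)$, by (iii).

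For the $\epsilon^2$-piece I would first observe, from \eqref{Eadelta}, that $E_{a,\delta}\asymp\max\{1,|a|/\delta\}^{\frac{2n}{n+1}}$, so the leading constant $\tfrac{8(n+1)^2\epsilon^2}{\pi|\alpha_a|^{2/(n+1)}}\delta^{-2/(n+1)}E_{a,\delta}$ has size $\asymp\eta$ and, being multiplied by $|\sigma'|^2e^{U_{\delta,a}}$, contributes $O(\eta)$ in $\|\cdot\|_*$ by (i). The term $\epsilon^2\big(|\sigma'|^2e^{U_{\delta,a}}\big)^2$ I would estimate by the change of variables $y=(\sigma(z)-a)/\delta$ in the core (it is $O(\epsilon^2\delta^2)$ outside $\sigma^{-1}(B_\rho(0))$), obtaining again $O(\eta)$, using that in the core $|\sigma'|^2e^{U_{\delta,a}}\lesssim\delta^{-2/(n+1)}\max\{1,|a|/\delta\}^{\frac{2n}{n+1}}$ and the elementary identities $\eta=\epsilon^2\delta^{-2}|a|^{\frac{2n}{n+1}}$ when $|a|\gg\delta$, $\eta=\epsilon^2\delta^{-2/(n+1)}$ when $|a|\lesssim\delta$. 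Finally the factor $[1+O(|c_a||z|^{n+1}+\eta)+o(1)]$ is $O(1)$, and since $|z|^{n+1}\lesssim|\sigma(z)-a|+|a|$ the correction terms upgrade the bound of this piece to $O(\eta+\eta^2+\delta|c_a|)$. Summing the two pieces and choosing $\delta_0,\epsilon_0$ small enough for all the expansions of Theorem \ref{estrr01550} and the implicit $o(1)$'s to be valid yields \eqref{ere}.

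I expect the difficulty to be entirely in the weighted-norm bookkeeping, and concentrated in two points. First, one must recognize that a generic $z$-independent error of size $O(\delta^2)$ costs only $O(\delta^{\frac2{n+1}-\gamma}|a|^{2+\gamma}+\delta^{2-\gamma})$ in $\|\cdot\|_*$ rather than more — this is exactly where the precise shape of \eqref{wn}, and in particular the summand $\delta^{\frac{2n}{n+1}}$ in its denominator (which mimics the size of $|\sigma'|^2e^{U_{\delta,a}}$ in the core), pays off. Second, one must extract from the bracket in \eqref{imp}, after the cancellations built into $F_a$ and $G_a$, that every surviving $z$-dependent term carries either a factor $|\sigma(z)-a|$ or the harmless constant $|a|^{\frac{n+2}{n+1}}$, and never merely a factor $|z|$; only then does the gain (ii) produce the crucial $\delta|c_a|$ (and $|c_a||a|^{\frac{n+2}{n+1}}$) instead of a plain $|c_a|$. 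Making the size and structure of the angular remainder $N_a$ precise, so that it is controlled by $|\sigma(\cdot)-a|^{\frac{n+2}{n+1}}+|a|^{\frac{n+2}{n+1}}$, is the combinatorial heart of this step.
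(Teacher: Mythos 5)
Your proposal is correct and follows essentially the same route as the paper: Theorem \ref{estrr01550} is plugged into the weighted norm, the bracket in \eqref{imp} is reduced to $2\re[\alpha_a c_a(\sigma(z)-a)]$ plus remainders of the form $|c_a|\,|z|^{n+2}$, $|c_a|\,|a|^2$, $\delta^2|\log\delta|$, and the supremum is computed via $y=\sigma(z)$ followed by $y\mapsto (y-a)/\delta$. The only real difference is cosmetic: you re-derive the cancellation of the $z$-independent denominator through the full angular decomposition ($F_a$, $G_a$, $N_a$), whereas the paper simply writes $e^{2\re[c_az^{n+1}]}-1=2\re[c_az^{n+1}]+O(|c_a|^2|z|^{2n+2})$ and uses $F_a(a)=\alpha_a a+O(|a|^2)$, $z^{n+1}=\alpha_a\sigma(z)+O(|z|^{n+2})$ --- your worry that a Taylor expansion in $c_a$ is illegitimate is unfounded, since $c_a$ and $z^{n+1}$ are bounded. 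One small inaccuracy: your item (iii) asserts that for a constant $c$ the supremum in \eqref{wn} is essentially attained near $z=0$, giving $\|c\|_*\lesssim |c|\,\delta^{\frac{2}{n+1}-\gamma-2}(\delta^2+|a|^2)^{1+\gamma/2}$. This underestimates the norm: away from the concentration point the weight $(\delta^2+|\sigma-a|^2)^{1+\gamma/2}\delta^{-\gamma}(|\sigma'|^2+\delta^{2n/(n+1)})^{-1}$ is of order $\delta^{-\gamma}$, so the standalone $O(\delta^2)$ actually costs $O(\delta^{2-\gamma})$ from the outer region (this is exactly how the paper accounts for it). Your final tally happens to include $\delta^{2-\gamma}$ anyway, so the conclusion is unaffected, but the justification should be the outer-region estimate rather than (iii).
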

\begin{proof}
Since
\begin{eqnarray*}
&&\frac{e^{2\re[c_a z^{n+1}]}}{1+2 \re[c_a F_a(a)] + |c_a|^2  \re G_a(a)+\frac{1}{2} |c_a|^2 \Delta \re G_a(a)  \delta^2 \log \frac{1}{\delta}+\frac{\delta^2}{n+1} D_a}-1\\
&&=\frac{e^{2\re[c_a z^{n+1}]}-1}{1+2 \re[c_a F_a(a)] + |c_a|^2  \re G_a(a)+\frac{1}{2} |c_a|^2 \Delta \re G_a(a)  \delta^2 \log \frac{1}{\delta}+\frac{\delta^2}{n+1} D_a}-2 \re[c_a F_a(a)]\\
&&+O(|c_a|^2|a|^2+\delta^2 |\log \delta|)=2\re[c_a (z^{n+1}-\alpha_a a)]+O(|c_a|^2 |z|^{2n+2}+ |c_a| |a|^2+\delta^2 |\log \delta|)\\
&&=2\re[\alpha_a c_a (\sigma(z)-a)]+O(|c_a| |z|^{n+2}+ |c_a| |a|^2+\delta^2 |\log \delta|),
\end{eqnarray*}
by Theorem \ref{estrr01550} we deduce that
\begin{eqnarray*}
R = |\sigma'(z)|^2 e^{U_{\delta,a}}O\left(|c_a||\sigma(z)-a|+|c_a||z|^{n+2}+ |c_a||a|^2+\delta^2|\log \delta|+\eta +\eta^2 \right) +\e^2 |\sigma'(z)|^4 e^{2U_{\delta,a}}(1+O(\eta))+ O(\delta^2)
\end{eqnarray*}
as $\delta \to 0$, where $\eta$ is given in \eqref{rateeps}. In view of the estimates $|z|=O(|\sigma(z)|^{\frac{1}{n+1}})$ and $|\sigma'(z)|^2=O(|\sigma(z)|^{\frac{2n}{n+1}})$ near $0$, by setting $y=\sigma(z)$ in $\sigma^{-1}(B_\rho(0))$ we get that
\begin{eqnarray*}
\|R\|_*&=&O\left(\sup_{y \in B_\rho(0)}  \frac{\delta^{2-\gamma}}{(\delta^2+|y-a|^2)^{1-\frac{\gamma}{2}}}\left[|c_a||y-a|+|c_a||y|^{\frac{n+2}{n+1}} +|c_a||a|^2+\delta^2 |\log \delta|+\eta +\eta^2 \right]\right)\\
&&+O\left( \sup_{y \in B_\rho(0)}  \frac{\epsilon^2 \delta^{4-\gamma}|y|^{\frac{2n}{n+1}}}{(\delta^2+|y-a|^2)^{3-\frac{\gamma}{2}}} [1+O(\eta)]\right)+O\left( \sup_{y \in B_\rho(0)} \frac{\delta^{2-\gamma} (\delta^2+|y-a|^2)^{1+\gamma/2}}{(|y|^{\frac{2n}{n+1}}+\delta^{\frac{2n}{n+1}})}  \right)  +O(\delta^{2-\gamma})\\
&=& O\left(\sup_{y \in B_{2\rho/\delta}(0)}  \frac{1}{(1+|y|^2)^{1-\frac{\gamma}{2}}}\left[\delta |c_a||y|+\delta^{\frac{n+2}{n+1}} |c_a| |y|^{\frac{n+2}{n+1}}+|c_a||a|^{\frac{n+2}{n+1}}+\delta^2 |\log \delta|+\eta +\eta^2 \right]\right)\\
&&+O\left( \sup_{y \in B_{2\rho/\delta}(0)}  \frac{\epsilon^2 \delta^{-2}(\delta^{\frac{2n}{n+1}}|y|^{\frac{2n}{n+1}}+|a|^{\frac{2n}{n+1}})}{(1+|y|^2)^{3-\frac{\gamma}{2}}}[1+O(\eta)] \right)\\
&&+O\left( \sup_{y \in B_{\rho/\delta}(0)} \frac{\delta^{\frac{2}{n+1}-\gamma} (\delta^{2+\gamma}+|a|^{2+\gamma}+\delta^{2+\gamma} |y|^{2+\gamma})}{(|y|^{\frac{2n}{n+1}}+1)}  \right)+O(\delta^{2-\gamma})\\
&=& O\left(\delta |c_a|+\delta^{2-\gamma} +\delta^{\frac{2}{n+1}-\gamma} |a|^{2+\gamma}+|c_a||a|^{\frac{n+2}{n+1}}+\eta+\eta^2 \right)
\end{eqnarray*}
as claimed. \qed \end{proof}


\section{The reduced equations}\label{reduced}
As we will discuss precisely in the next section, it will be
crucial to study the system $\int_\Omega R  \, PZ_0=0$ and $\int_\Omega R\,  PZ=0$, where
$PZ_0$ and $PZ$ are the unique solutions with zero average of
$\Delta PZ_{0} =\Delta Z_{0}-\frac{1}{|\Om|}\int_\Om \Delta Z_0$
and $\Delta PZ =\Delta Z-\frac{1}{|\Om|}\int_\Om \Delta Z$ in
$\Omega$. Here, the functions $Z_0$ and $Z$ are defined as
follows:
$$Z_0(z)=\frac{\delta^2-|\sigma(z)-a|^2}{\delta^2+|\sigma(z)-a|^2}\quad\text{and}\quad
Z(z)= \frac{\delta(\sigma(z)-a)}{\delta^2+|\sigma(z)-a|^2},$$
and are (not doubly-periodic) solutions of $-\Delta \phi =|\sigma'(z)|^2 e^{U_{\delta,a,\sigma}} \phi$ in $\Omega$. Through the changes of variable $y=\sigma(z)$ and $y \to \frac{y-a}{\delta}$ notice that
\begin{eqnarray}
\int_\Omega \Delta Z_0&=&-\int_{\sigma^{-1}(B_\rho(0))}|\sigma'(z)|^2e^{U_{\delta,a,\sigma}} Z_0+O(\delta^2) =-8(n+1) \delta^2 \int_{B_\rho(0)}\frac{\delta^2-|y-a|^2}{(\delta^2+|y-a|^2)^3} +O(\delta^2)\nonumber \\
&=&-8(n+1) \int_{B_{\rho/\delta}(0)}\frac{1-|y|^2}{(1+|y|^2)^3} +O(\delta^2)=O(\delta^2) \label{deltaZ0}
\end{eqnarray}
and
\begin{eqnarray}
\int_\Omega \Delta Z&=&-\int_{\sigma^{-1}(B_\rho(0))}|\sigma'(z)|^2 e^{U_{\delta,a,\sigma}} Z+O(\delta^3) =-8(n+1) \delta^3 \int_{B_\rho(0)}\frac{y-a}{(\delta^2+|y-a|^2)^3} +O(\delta^3) \nonumber \\
&=&-8(n+1) \int_{B_{\rho/\delta}(0)}\frac{y}{(1+|y|^2)^3} +O(\delta^3)=O(\delta^3) \label{deltaZ}
\end{eqnarray}
in view of
$$\int_{\mathbb{R}^2} \frac{1-|y|^2}{(1+|y|^2)^3}=0\,,\quad \int_{\mathbb{R}^2} \frac{y}{(1+|y|^2)^3}=0.$$
By \eqref{deltaZ0}-\eqref{deltaZ} the following expansions, useful in the sequel, are easily deduced:
\begin{equation}\label{pzij}
PZ_{0}=Z_{0} - {1\over|\Om|}\int_\Om Z_{0}+O(\de^2)\:,\qquad
PZ=Z-{1\over|\Om|}\int_\Om Z+O(\de)
\end{equation}
in $C(\overline{\Omega})$, uniformly in $|a|< \rho$ and $\sigma \in \mathcal{B}_r$.

\medskip \noindent Notice that up to now there is no relation between $a$ and $\delta$. However, as we will show in Remarks \ref{remark1} and \ref{remark2}, the range $|a|>>\delta$ is not compatible with solving simultaneously $\int_\Omega R  \, PZ_0=0$ and $\int_\Omega R\,  PZ=0$. Hence, we shall restrict our attention to the case $a=O(\delta)$ in next sections, so that, we can assume that $\eta=\epsilon^2 \delta^{-\frac{2}{n+1}}$ in \eqref{rateeps} and $E_{a,\delta}=\int_{\mathbb{R}^2} \frac{|y+\frac{a}{\delta} |^{\frac{2n}{n+1}}}{(1+|y|^2)^4}$ in \eqref{Eadelta}. We have that
\begin{prop} \label{reducedequations}
Assume $|a|\leq C_0 \delta$ for some $C_0>0$. The following
expansions do hold as $\de,\eta\to 0$
\begin{eqnarray} \label{solve1b}
\int_\Omega R \, PZ_0&=&- 16 \pi (n+1) |\alpha_a|^2 |c_a|^2 \delta^2 \log \frac{1}{\delta}-8\pi \delta^2 D_a +64(n+1)^3 |\alpha_a|^{-\frac{2}{n+1}}  \eta \int_{\mathbb{R}^2} \frac{(|y|^2-1)|y+\frac{a}{\delta} |^{\frac{2n}{n+1}}}{(1+|y|^2)^5} \nonumber \\
&&+o(\delta^2+\eta)+O(\de^2 |c_a|+|a|^{\frac{1}{n+1}}\delta^2 |\log \delta|+\eta^2),
\end{eqnarray}
and
\begin{eqnarray}  \label{solve2b}
\int_\Omega R \, PZ = 4 \pi (n+1) \delta \overline{\alpha_a c_a}
-64(n+1)^3 |\alpha_a|^{-\frac{2}{n+1}} \eta \int_{\mathbb{R}^2} {|y+\frac{a}{\delta}|^{2n\over n+1} y
\over(1+|y|^2)^5}+o(\delta|c_a|+\delta|a|+\eta+\delta^2)+O(\eta^2),
\end{eqnarray}
where $\eta=\epsilon^2 \delta^{-\frac{2}{n+1}}$ and $c_a=c_{a,\sigma_a}$, $\alpha_a$, $D_a$ are given by \eqref{ca}, \eqref{alpha0}, \eqref{Da}, respectively.
\end{prop}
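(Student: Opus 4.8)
The plan is to feed the pointwise expansions of $R$ from Theorem \ref{estrr01550} into the two integrals, after first replacing $PZ_0$, $PZ$ by the concentrated profiles they approximate. Since $W$ is doubly periodic, integrating the right-hand side of \eqref{3} with $w=W$ gives $\int_\Omega R=0$ \emph{exactly} (the $\Delta W$-term has zero mean by periodicity, and the other two terms are constants times $\int_\Omega(\frac{e^{u_0+W}}{\int e^{u_0+W}}-\frac1{|\Omega|})=0$, resp. $\int_\Omega(\frac{e^{u_0+W}}{\int e^{u_0+W}}-\frac{e^{2u_0+2W}}{\int e^{2u_0+2W}})=0$). Hence, by \eqref{pzij},
\[
\int_\Omega R\, PZ_0=\int_\Omega R\,\tfrac{2\delta^2}{\delta^2+|\sigma-a|^2}+O(\delta^2\|R\|_{L^1(\Omega)}),\qquad \int_\Omega R\, PZ=\int_\Omega R\,Z+O(\delta\|R\|_{L^1(\Omega)}),
\]
using $Z_0+1=\tfrac{2\delta^2}{\delta^2+|\sigma-a|^2}$; and $\|R\|_{L^1(\Omega)}\lesssim\|R\|_*$ follows from \eqref{wn} by the change of variables $y=\sigma(z)$, so by Corollary \ref{estrr0cor} these remainders are negligible. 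The gain is that $\tfrac{2\delta^2}{\delta^2+|\sigma-a|^2}=Z_0+1$ and $Z$ are both concentrated near the zero set $\{\sigma=0\}=\{0\}$.

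Next I would split $R=R_1+R_2+O(\delta^2)$ according to Theorem \ref{estrr01550}, with $R_1$ the right-hand side of \eqref{imp} (minus its $O$-terms) and $R_2$ that of \eqref{eps4}; the residual $O(\delta^2)$ is a constant and, against the concentrated weights, contributes only $O(\delta^4\log\frac1\delta)$. Away from $0$ one has $|\sigma'|^2e^{U_{\delta,a}}=O(\delta^2)$ and $Z=O(\delta)$, so the parts of $\int R_1(Z_0+1)$, $\int R_1 Z$ over $\Omega\setminus\sigma^{-1}(B_\rho(0))$ are $O(\delta^4)$, $O(\delta^3)$; on $\sigma^{-1}(B_\rho(0))$ I would change variables $y=\sigma(z)$, keeping track of the $(n+1)$-sheeted covering — precisely what, via the vanishing identities \eqref{symmetryint}, replaces multivalued objects by the single-valued $F_a,G_a$ of \eqref{FG} — and rescale $y=a+\delta Y$, so that $|\sigma'|^2e^{U_{\delta,a}}\tfrac{2\delta^2}{\delta^2+|\sigma-a|^2}\,dz\mapsto(n+1)\tfrac{16}{(1+|Y|^2)^3}\,dY$, $|\sigma'|^2e^{U_{\delta,a}}Z\,dz\mapsto(n+1)\tfrac{8Y}{(1+|Y|^2)^3}\,dY$, $Z_0+1\mapsto\tfrac2{1+|Y|^2}$, $Z\mapsto\tfrac{Y}{1+|Y|^2}$. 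Expanding the bracket of \eqref{imp} to second order in $c_a,\delta$, using $z^{n+1}=\alpha_a\sigma+O(\sigma^{1+\frac1{n+1}})$, $F_a(y)=\alpha_a y+O(y^2)$, $\mathrm{Re}\,G_a(y)=|\alpha_a|^2|y|^2+O(|y|^{2+\frac2{n+1}})$ (so $\Delta\mathrm{Re}\,G_a(a)=4|\alpha_a|^2+O(|a|^{\frac2{n+1}})$), and the parities $Y\mapsto-Y$, $Y\mapsto e^{2\pi ij/(n+1)}Y$ to discard odd/non-symmetric terms: for $\int R_1(Z_0+1)$ the denominator terms $\frac12|c_a|^2\Delta\mathrm{Re}\,G_a(a)\delta^2\log\frac1\delta$ and $\frac{\delta^2}{n+1}D_a$, once expanded and integrated against $\int(n+1)\tfrac{16}{(1+|Y|^2)^3}=8\pi(n+1)$, give $-16\pi(n+1)|\alpha_a|^2|c_a|^2\delta^2\log\frac1\delta-8\pi\delta^2D_a$; for $\int R_1 Z$ the linear term $2\delta\,\mathrm{Re}[c_a\alpha_aY]$ against $\tfrac{8(n+1)Y}{(1+|Y|^2)^3}$ gives $4\pi(n+1)\delta\,\overline{\alpha_ac_a}$ via $\int_{\mathbb{R}^2}\frac{\mathrm{Re}[wY]\,Y}{(1+|Y|^2)^3}\,dY=\frac\pi4\,\bar w$, all other contributions being odd, of higher order, or cancelling (the $O(|c_a||a|)$ constant against $-2\,\mathrm{Re}[c_aF_a(a)]$, etc.).

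For $R_2$ I would proceed identically: after rescaling and using $|\sigma'|^2=(n+1)^2|\alpha_a|^{-\frac2{n+1}}|\sigma|^{\frac{2n}{n+1}}(1+O(|\sigma|^{\frac1{n+1}}))$ from \eqref{1852}, the two terms of \eqref{eps4} turn into explicit $\eta$-multiples of $\int\frac{|Y+a/\delta|^{2n/(n+1)}}{(1+|Y|^2)^4}$ and $\int\frac{|Y+a/\delta|^{2n/(n+1)}}{(1+|Y|^2)^5}$. Against $\tfrac2{1+|Y|^2}$ they combine via $1-\tfrac2{1+|Y|^2}=\tfrac{|Y|^2-1}{1+|Y|^2}$ into $64(n+1)^3|\alpha_a|^{-\frac2{n+1}}\eta\int_{\mathbb{R}^2}\frac{(|Y|^2-1)\,|Y+a/\delta|^{2n/(n+1)}}{(1+|Y|^2)^5}\,dY$, the term of \eqref{solve1b}; against $\tfrac{Y}{1+|Y|^2}$ the $E_{a,\delta}$-piece is $O(\eta\delta^3)$ by oddness (cf. \eqref{deltaZ}) while the $-\e^2|\sigma'|^4e^{2U_{\delta,a}}$-piece yields $-64(n+1)^3|\alpha_a|^{-\frac2{n+1}}\eta\int_{\mathbb{R}^2}\frac{|Y+a/\delta|^{2n/(n+1)}\,Y}{(1+|Y|^2)^5}\,dY$, the term of \eqref{solve2b}; the prefactor $[1+O(|c_a||z|^{n+1}+\eta)+o(1)]$ in \eqref{eps4} only produces $o(\eta)+O(\eta^2)$. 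Collecting the leading terms, and using $|a|\le C_0\delta$ together with Corollary \ref{estrr0cor} and the expansions above, I would finally check that every accumulated remainder — the $\delta|c_a|$- and $\eta^2$-pieces from $\|R\|_*$, the $O(|\sigma'|^2e^{U_{\delta,a}})\cdot O(\delta^2|z|+\delta^2|a|^{1/(n+1)}+\delta^2|c_a|+\delta^{\frac{2n+3}{n+1}})$ piece of \eqref{imp}, the $O(|c_a|^2|a|^2)$, $O(|c_a|^2|a|^{2/(n+1)}\delta^2\log\frac1\delta)$, $O(\eta\delta^3)$ terms, etc. — falls into $o(\delta^2+\eta)+O(\delta^2|c_a|+|a|^{\frac1{n+1}}\delta^2\log\delta+\eta^2)$ for \eqref{solve1b}, resp. $o(\delta|c_a|+\delta|a|+\eta+\delta^2)+O(\eta^2)$ for \eqref{solve2b}.

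The hard part will be this bookkeeping: unwinding the several nested error terms of \eqref{imp}, \eqref{eps4} and \eqref{ere} once they are weighted by the concentrated profiles, and verifying none of them contaminates the claimed orders — in particular that $D_a$, which enters \eqref{imp} only through the denominator of the bracket, reappears with coefficient exactly $-8\pi\delta^2$ after the near/far decomposition, and that the $\delta^2\log\frac1\delta$ already present in \eqref{imp} through $\Delta\mathrm{Re}\,G_a(a)$ is carried over with the right constant once $\Delta\mathrm{Re}\,G_a(a)=4|\alpha_a|^2+O(|a|^{2/(n+1)})$ is used.
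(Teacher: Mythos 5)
Your proposal follows essentially the same route as the paper: it reduces $\int_\Omega R\,PZ_0$ and $\int_\Omega R\,PZ$ to $\int_\Omega R\,(Z_0+1)$ and $\int_\Omega R\,Z$ via \eqref{pzij}, $\int_\Omega R=0$ and the $\|\cdot\|_*$-control of $\|R\|_{L^1}$, then feeds in \eqref{imp} and \eqref{eps4}, changes variables $y=\sigma(z)$, $y\to(y-a)/\delta$, and uses the symmetry identities and the expansions of $F_a$, $G_a$ to isolate exactly the leading terms the paper obtains (including the algebraic identity $\frac{1}{(1+|Y|^2)^4}-\frac{2}{(1+|Y|^2)^5}=\frac{|Y|^2-1}{(1+|Y|^2)^5}$ behind \eqref{solve1b} and the computation $\int\re[wY]\,Y\,(1+|Y|^2)^{-3}=\frac{\pi}{4}\bar w$ behind \eqref{solve2b}). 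The remaining bookkeeping you flag is indeed the bulk of the paper's proof, but your identification of all leading contributions and of the error structure is correct.
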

\begin{proof} Through the changes of variable $y=q(z)$ in $\sigma^{-1}(B_\rho(0))$, $ y \to y^{n+1}$ and $y \to \frac{y-a}{\delta}$ we get that
\begin{eqnarray}
&&\int_\Omega \frac{\delta^{\gamma} (|\sigma'(z)|^2+\delta^{\frac{2n}{n+1}})}{(\delta^2+|\sigma(z)-a|^2)^{1+\frac{\gamma}{2}}}=\int_{\sigma^{-1}(B_\rho(0))} \frac{\delta^{\gamma} (|\sigma'(z)|^2+\delta^{\frac{2n}{n+1}})}{(\delta^2+|\sigma(z)-a|^2)^{1+\frac{\gamma}{2}}}+O(\delta^\gamma) \label{1458} \\
&&=O\left(\int_{B_{\rho^{\frac{1}{n+1}}}(0)} \frac{\delta^{\gamma} (|y|^{2n}+\delta^{\frac{2n}{n+1}})}{(\delta^2+|y^{n+1}-a|^2)^{1+\frac{\gamma}{2}}}\right)+O(\delta^\gamma)=O\left(\int_{B_\rho(0)} \frac{\delta^{\gamma} (1+\delta^{\frac{2n}{n+1}} |y|^{-\frac{2n}{n+1}})}{(\delta^2+|y-a|^2)^{1+\frac{\gamma}{2}}}\right)+O(\delta^\gamma) \nonumber \\
&&=O\left(\int_{B_{\rho/\delta}(0)} \frac{1+ |y+\frac{a}{\delta}|^{-\frac{2n}{n+1}}}{(1+|y|^2)^{1+\frac{\gamma}{2}}}\right)+O(\delta^\gamma)=O(1) \nonumber
\end{eqnarray}
in view of
$$\int_{B_{\rho/\delta}(0)} \frac{|y+\frac{a}{\delta}|^{-\frac{2n}{n+1}}}{(1+|y|^2)^{1+\frac{\gamma}{2}}}
\leq \int_{B_1(0)} |y|^{-\frac{2n}{n+1}}+\int_{\mathbb{R}^2} \frac{1}{(1+|y|^2)^{1+\frac{\gamma}{2}}}<+\infty.$$
Hence, by Corollary \ref{estrr0cor} we get that
\begin{eqnarray} \label{int|R|}
\int_\Omega |R|= O\left(\delta |c_a|+\delta^{2-\gamma} +\delta^{\frac{2}{n+1}-\gamma} |a|^{2+\gamma}+|c_a||a|^{\frac{n+2}{n+1}}+\eta+\eta^2 \right).
\end{eqnarray}
By \eqref{pzij} and \eqref{int|R|} we deduce that
\begin{eqnarray} \label{zerotermbisb}
\int_\Omega R \, PZ_0=\int_\Omega  R (Z_0+1)+o(\delta^2)+O(\eta \delta^2+\eta^2 \delta^2)
\end{eqnarray}
in view of $\int_\Om R=0$. Since by H\"older inequality
\begin{eqnarray*}
\int_\Omega |Z_0+1|&=& \int_{\sigma^{-1}(B_\rho(0))} \frac{2
\de^2}{\de^2+|\sigma(z)-a|^2}+O(\de^2)=
O\bigg(\int_{B_\rho(0)} |y|^{-{2n \over n+1}} \frac{\de^2}{\de^2+|y-a|^2}\bigg)+O(\de^2)\\
&=& O\bigg(\de^{1\over n+1} \int_{B_\rho(0)}
\frac{1}{|y|^{2n \over n+1} |y-a|^{1 \over
n+1}}\bigg)+O(\de^2)\\
&=&O\bigg( \de^{1\over n+1} \bigg[\int_{B_\rho(0)}
\frac{1}{|y|^{2n+1 \over n+1}} \bigg]^{2n \over 2n+1} \bigg[\int_{B_\rho(0)} \frac{1}{|y-a|^{2n+1 \over
n+1}} \bigg]^{1 \over 2n+1} \bigg)+O(\delta^2)=O(\de^{1\over n+1}),
\end{eqnarray*}
by (\ref{imp}) we have that
\begin{eqnarray} \label{firsttermbis}
&&\hspace{-0.5cm}  \int_\Om (Z_0+1)\lf[\lap W+4\pi N\lf({e^{u_0+W} \over \int_\Om
e^{u_0+W}}-{1\over
|\Om|}\rg)\rg] \\
&&\hspace{-0.5cm} =\int_{\sigma^{-1}(B_\rho(0))}|\sigma'(z)|^2 e^{U_{\de,a}}(Z_0+1)
\left[\frac{e^{2\re[c_a z^{n+1}]}}{1+2 \re[c_a F_a(a)] + |c_a|^2  \re G_a(a)+\frac{1}{2} |c_a|^2 \Delta \re G_a(a)  \delta^2 \log \frac{1}{\delta}+\frac{\delta^2}{n+1} D_a}-1\right] \nonumber \\
&&\hspace{-0.5cm} +O(\de^2 |c_a|)+o(\delta^2)\nonumber\\
&&\hspace{-0.5cm}  = \int_{B_{\rho^{\frac{1}{n+1}}}(0)} \frac{16(n+1)^2  \delta^4 |y|^{2n}}{(\delta^2+|y^{n+1}-a|^2)^3}
\left[\frac{e^{2\re[c_a (q^{-1}(y))^{n+1}]}}{1+2 \re[c_a F_a(a)] + |c_a|^2  \re G_a(a)+\frac{1}{2} |c_a|^2 \Delta \re G_a(a)  \delta^2 \log \frac{1}{\delta}+\frac{\delta^2}{n+1} D_a}-1 \right] \nonumber \\
&&\hspace{-0.5cm}  +O(\de^2 |c_a|)+o(\delta^2). \nonumber
\end{eqnarray}
We have that the expansion \eqref{keyexp2} still holds in this context, where the notation $\dots$ stands for terms
that give no contribution in the integral term of \eqref{firsttermbis} in view of the analogous of formula \eqref{symmetryint}:
\begin{eqnarray} \label{symmetryintbis}
\int_{B_{\rho^{\frac{1}{n+1}}}(0)} \frac{|y|^m y^k}{(\delta^2+|y^{n+1}-a|^2)^3}=0
\end{eqnarray}
for all $m\geq 0$ and integer $k \notin (n+1)\mathbb{N}$. Hence, through the changes of variables $y \to y^{n+1}$ and $y \to \frac{y-a}{\delta}$, by the symmetries we have that
\begin{eqnarray}
&&\int_{B_{\rho^{\frac{1}{n+1}}}(0)} \frac{16(n+1)^2  \delta^4 |y|^{2n}}{(\delta^2+|y^{n+1}-a|^2)^3}
e^{2\re[c_a (q^{-1}(y))^{n+1}]}= \int_{B_\rho(0)} \frac{16 (n+1) \delta^4}{(\delta^2+|y-a|^2)^3}
\re[1+2 c_a F_a(y)+|c_a|^2 G_a(y)] \nonumber \\
&&=\int_{B_{\rho}(a)} \frac{16 (n+1) \delta^4}{(\delta^2+|y-a|^2)^3}
\left[1+2 \re[c_a F_a(a)]+|c_a|^2 \re G_a(a)+\frac{1}{4} |c_a|^2 \Delta \re G_a(a) |y-a|^2 +O(|y-a|^{\frac{2(n+2)}{n+1}}) \right]\nonumber\\
&&+O(\delta^4)=8 \pi (n+1) \left[1+2 \re[c_a F_a(a)]+|c_a|^2 \re G_a(a)+\frac{1}{4}|c_a|^2 \Delta \re G_a(a) \delta^2\right] +O(\delta^{\frac{2(n+2)}{n+1}}) \label{1156}
\end{eqnarray}
in view of \eqref{expF}, \eqref{expG} and
$$\int_{\mathbb{R}^2} \frac{dy}{(1+|y|^2)^3}=\int_{\mathbb{R}^2} \frac{|y|^2}{(1+|y|^2)^3}dy=\frac{\pi}{2},$$
where $F_a$ and $G_a$ are given by \eqref{FG}. By \eqref{1156} we can re-write \eqref{firsttermbis} as
\begin{eqnarray}
&&\hspace{-0.5cm}\int_\Om (Z_0+1)\lf[\lap W+4\pi N\lf({e^{u_0+W} \over \int_\Om
e^{u_0+W}}-{1\over
|\Om|}\rg)\rg] \nonumber \\
&&\hspace{-0.5cm}= 8\pi (n+1) \left[ \frac{1+2 \re[c_a F_a(a)]+|c_a|^2 \re G_a(a)+\frac{1}{4}|c_a|^2 \Delta \re G_a(a) \delta^2}{1+2 \re[c_a F_a(a)] + |c_a|^2  \re G_a(a)+\frac{1}{2} |c_a|^2 \Delta \re G_a(a)  \delta^2 \log \frac{1}{\delta}+\frac{\delta^2}{n+1} D_a}
-1 \right]  +O(\de^2 |c_a|) \nonumber\\
&&\hspace{-0.5cm}+o(\delta^2)=  -16 \pi (n+1) |\alpha_a|^2 |c_a|^2 \delta^2 \log \frac{1}{\delta}-8\pi \delta^2 D_a +O(\de^2 |c_a|+|a|^{\frac{1}{n+1}}\delta^2 |\log \delta|)+o(\delta^2)
\label{1046pr}
\end{eqnarray}
in view of $\Delta \re G_a(a)=4|\alpha_a|^2+O(|a|^{\frac{1}{n+1}})$. By (\ref{eps4}) we also deduce that
\begin{eqnarray}
&&\int_\Om\frac{64 \pi^2 N^2 \epsilon^2 \int_\Omega
e^{2u_0+2W}}{(\int_\Omega e^{u_0+W}+\sqrt{(\int_\Omega e^{u_0+W})^2-16\pi
N\epsilon^2\int_\Omega e^{2u_0+2W}})^2} (Z_0+1)
\lf({e^{u_0+W}\over \int_\Om e^{u_0+W}}-{e^{2u_0+2W}\over
\int_\Om e^{2u_0+2W}}\rg)\nonumber \\
&&=\,\int_{\sigma^{-1}(B_\rho(0))}|\sigma'(z)|^2 e^{U_{\de,a}}(Z_0+1)
\lf[\frac{8(n+1)^2\epsilon^2}{\pi |\alpha_a|^{\frac{2}{n+1}} \delta^{\frac{2}{n+1}}}
E_{a,\delta}-\e^2 |\sigma'(z)|^2 e^{U_{\delta,a}} \rg]\left[1+O(|c_a||z|^{n+1}+\eta)+o(1) \right]\nonumber \\
&&+O(\delta^4 \eta)=\frac{128 (n+1)^3\epsilon^2}{\pi |\alpha_a|^{\frac{2}{n+1}} \delta^{\frac{2}{n+1}}}
E_{a,\delta} \int_{B_\rho(0)}{\de^4\over(\de^2+|y-a|^2)^3} \left[1+O(|c_a||y|+\eta)+o(1) \right] \nonumber \\
&&-128(n+1)^3\e^2 |\alpha_a|^{-\frac{2}{n+1}} \int_{B_\rho(0)}\frac{\de^6|y|^{2n\over
n+1}}{(\de^2+|y-a|^2)^5}\left[1+O(|y|^{1\over n+1}+\eta)+o(1) \right] +O(\delta^4 \eta)\nonumber \\
&& =64(n+1)^3 |\alpha_a|^{-\frac{2}{n+1}}  \epsilon^2 \delta^{-\frac{2}{n+1}} E_{a,\delta}-128(n+1)^3\e^2 |\alpha_a|^{-\frac{2}{n+1}} \int_{B_\rho(0)}\frac{\de^6|y+a|^{2n\over n+1}}{(\de^2+|y|^2)^5}\left[1+O(|y|^{1\over n+1}+\eta)+o(1) \right]\nonumber \\
&& +o(\eta+\delta^2)+O(\eta^2) \nonumber
\end{eqnarray}
in view of \eqref{1852}. Since
$$\delta^{\frac{2}{n+1}} \int_{B_\rho(0)}\frac{\de^6|y+a|^{2n\over n+1}}{(\de^2+|y|^2)^5}\left[1+O(|y|^{1\over n+1}+\eta)+o(1) \right] =\int_{\mathbb{R}^2}\frac{|y+\frac{a}{\delta}|^{2n\over n+1}}{(1+|y|^2)^5}+o(1)+O(\eta)$$
when $|a|=O(\delta)$, we then have that
\begin{eqnarray}
&&\int_\Om\frac{64 \pi^2 N^2 \epsilon^2 \int_\Omega
e^{2u_0+2W}}{(\int_\Omega e^{u_0+W}+\sqrt{(\int_\Omega e^{u_0+W})^2-16\pi
N\epsilon^2\int_\Omega e^{2u_0+2W}})^2} (Z_0+1)
\lf({e^{u_0+W}\over \int_\Om e^{u_0+W}}-{e^{2u_0+2W}\over
\int_\Om e^{2u_0+2W}}\rg)\nonumber \\
&& =64(n+1)^3 |\alpha_a|^{-\frac{2}{n+1}}  \eta \int_{\mathbb{R}^2} \frac{(|y|^2-1)|y+\frac{a}{\delta} |^{\frac{2n}{n+1}}}{(1+|y|^2)^5}+o(\eta+\delta^2)+O(\eta^2) \label{2024}
\end{eqnarray}
in view of \eqref{Eadelta}. Inserting \eqref{1046pr} and \eqref{2024} into \eqref{zerotermbisb}, we get the validity of \eqref{solve1b}.
\bremark \label{remark1}  
Notice that in the range $|a|>>\delta$ we find that
$$\delta^{\frac{2}{n+1}} \int_{B_\rho(0)}\frac{\de^6|y+a|^{2n\over n+1}}{(\de^2+|y|^2)^5}\left[1+O\bigg(|y|^{1\over n+1}+\eta\Big(\frac{|a|}{\delta}\Big)^{\frac{2n}{n+1}}\bigg)+o(1) \right] =\frac{\pi}{4} \Big(\frac{|a|}{\delta}\Big)^{\frac{2n}{n+1}}\bigg[1+o(1)+O\Big(\eta\Big(\frac{|a|}{\delta}\Big)^{\frac{2n}{n+1}}\Big)\bigg]$$
in view of the inequality $|y+a|^{\frac{2n}{n+1}}=|a|^{\frac{2n}{n+1}}+O(|a|^{\frac{n-1}{n+1}}|y|+|y|^{\frac{2n}{n+1}})$, so that the main order of $\int_\Omega R PZ_0$ in this range is essentially given by
\begin{eqnarray*}
- 16 \pi (n+1) |\alpha_a|^2 |c_a|^2 \delta^2 \log \frac{1}{\delta}-8\pi \delta^2 D_a - \frac{32 \pi}{3} (n+1)^3 |\alpha_a|^{-\frac{2}{n+1}}  \eta \Big(\frac{|a|}{\delta}\Big)^{\frac{2n}{n+1}}.\end{eqnarray*}
\eremark

\medskip \noindent By \eqref{pzij} and \eqref{int|R|} we deduce that
\begin{eqnarray} \label{zerotermbisc}
\int_\Omega R \, PZ =\int_\Omega  R Z+o(\delta |c_a|+\delta |a|+\eta+\delta^2)+O(\eta^2 \delta)
\end{eqnarray}
in view of $\int_\Om R=0$. Since as before
\begin{eqnarray*}
\int_\Omega |Z|&=& \int_{\sigma^{-1}(B_\rho(0))} \frac{
\de|\sigma(z)-a|}{\de^2+|\sigma(z)-a|^2}+O(\de)=
O\bigg(\int_{B_\rho(0)} |y|^{-{2n \over n+1}} \frac{\de |y-a|}{\de^2+|y-a|^2}\bigg)+O(\de)\\
&=& O\bigg(\de^{1\over n+1} \int_{B_\rho(0)}
\frac{1}{|y|^{2n \over n+1} |y-a|^{1 \over
n+1}}\bigg)+O(\de)=O(\de^{1\over n+1}),
\end{eqnarray*}
by (\ref{imp}) we have that
\begin{eqnarray}
&&\hspace{-0.5cm} \int_\Om Z\,\lf[\lap W+4\pi N\lf({ e^{u_0+W} \over \int_\Om
e^{u_0+W}}-{1\over |\Om|}\rg)\rg]  \label{1011} \\
&&\hspace{-0.5cm} =\int_{\sigma^{-1}(B_\rho(0))}|\sigma'(z)|^2 e^{U_{\de,a}}Z
\left[\frac{e^{2\re[c_a z^{n+1}]}}{1+2 \re[c_a F_a(a)] + |c_a|^2  \re G_a(a)+\frac{1}{2} |c_a|^2 \Delta \re G_a(a)  \delta^2 \log \frac{1}{\delta}+\frac{\delta^2}{n+1} D_a}-1\right]\nonumber\\
&&\hspace{-0.5cm} +O(\de^2 |c_a|)+o(\delta^2) \nonumber\\
&&\hspace{-0.5cm}=\int_{B_{\rho^{\frac{1}{n+1}} }(0)} \frac{8(n+1)^2 \delta^3 |y|^{2n}(y^{n+1}-a)}{(\delta^2+|y^{n+1}-a|^2)^3} \frac{e^{2\re[c_a (q^{-1}(y))^{n+1}]}}{1+2 \re[c_a F_a(a)] + |c_a|^2  \re G_a(a)+\frac{1}{2} |c_a|^2 \Delta \re G_a(a)  \delta^2 \log \frac{1}{\delta}+\frac{\delta^2}{n+1} D_a}\nonumber \\
&&\hspace{-0.5cm} -\int_{B_\rho(0)} \frac{8(n+1) \delta^3 (y-a)}{(\delta^2+|y-a|^2)^3}+O(\de^2 |c_a|)+o(\delta^2)\nonumber \\
&&\hspace{-0.5cm} =\frac{\int_{B_{\rho^{\frac{1}{n+1}} }(0)} \frac{8(n+1)^2 \delta^3 |y|^{2n}(y^{n+1}-a)}{(\delta^2+|y^{n+1}-a|^2)^3}  e^{2\re[c_a (q^{-1}(y))^{n+1}]}}{1+2 \re[c_a F_a(a)] + |c_a|^2  \re G_a(a)+\frac{1}{2} |c_a|^2 \Delta \re G_a(a)  \delta^2 \log \frac{1}{\delta}+\frac{\delta^2}{n+1} D_a}+O(\de^2 |c_a|)+o(\delta^2)\nonumber
\end{eqnarray}
in view of
$$\int_{B_\rho(a)} \frac{8(n+1) \delta^3 (y-a)}{(\delta^2+|y-a|^2)^3}=0.$$
Since expansion \eqref{keyexp2} is still valid in view of \eqref{symmetryintbis}, through the changes of variables $y \to y^{n+1}$ and $y \to \frac{y-a}{\delta}$, by the symmetries we have that
\begin{eqnarray} \label{1158}
&&\int_{B_{\rho^{\frac{1}{n+1}} }(0)} \frac{8(n+1)^2 \delta^3 |y|^{2n}(y^{n+1}-a)}{(\delta^2+|y^{n+1}-a|^2)^3} e^{2\re[c_a (q^{-1}(y))^{n+1}]}\nonumber \\
&&=\int_{B_\rho(0)} \frac{8(n+1) \delta^3(y-a)}{(\delta^2+|y-a|^2)^3} \re[1+2 c_a F_a(y)+|c_a|^2 G_a(y)] \nonumber \\
&&=\int_{B_{\rho}(a)} \frac{8 (n+1) \delta^3}{(\delta^2+|y-a|^2)^3}
\left[\overline{c_a F_a'(a)} |y-a|^2+\frac{1}{2} |c_a|^2 (\partial_1+i\partial_2) \re G_a(a) |y-a|^2 +O(|c_a|^2 |y-a|^3) \right]+O(\delta^3)\nonumber\\
&&=4 \pi (n+1) \delta \left[\overline{c_a F_a'(a)} +\frac{1}{2} |c_a|^2 (\partial_1+i\partial_2) \re G_a (a) \right] +O(\de^2 |c_a|^2+\delta^3)
\end{eqnarray}
in view of \eqref{expF}, \eqref{expG} and $\int_{\mathbb{R}^2} \frac{|y|^2}{(1+|y|^2)^3}dy=\frac{\pi}{2}$, where $F_a$ and $G_a$ are given by \eqref{FG}. By \eqref{1158} we can re-write \eqref{1011} as
\begin{eqnarray}
&&\int_\Om Z \lf[\lap W+4\pi N\lf({e^{u_0+W} \over \int_\Om
e^{u_0+W}}-{1\over
|\Om|}\rg)\rg] =4 \pi (n+1) \delta \left[\overline{c_a F_a'(a)} +\frac{1}{2} |c_a|^2 (\partial_1+i\partial_2) \re G_a(a) \right] \nonumber\\
&&+o(\de |c_a|+\delta^2)=4 \pi (n+1) \delta \overline{\alpha_a c_a} +o(\de |c_a|+\delta^2) \label{firstterm}
\end{eqnarray}
in view of $F_a'(a)=\alpha_a+O(|a|)$ and $\frac{1}{2} (\partial_1+i\partial_2) \re G_a(a)=O(|a|)$. As far as the second term of $R$, by (\ref{eps4}) we have that
\begin{eqnarray}
&&\int_\Om\frac{64 \pi^2 N^2 \epsilon^2 \int_\Omega
e^{2u_0+2W}}{(\int_\Omega e^{u_0+W}+\sqrt{(\int_\Omega e^{u_0+W})^2-16\pi
N\epsilon^2\int_\Omega e^{2u_0+2W}})^2}  Z \lf({e^{u_0+W}\over
\int_\Om e^{u_0+W}}-{e^{2u_0+2W}\over
\int_\Om e^{2u_0+2W}}\rg)\nonumber \\
&&=\,\int_{\sigma^{-1}(B_\rho(0))}|\sigma'(z)|^2 e^{U_{\de,a}}Z
\lf[\frac{8(n+1)^2\epsilon^2}{\pi |\alpha_a|^{\frac{2}{n+1}} \delta^{\frac{2}{n+1}}}
E_{a,\delta}-\e^2 |\sigma'(z)|^2 e^{U_{\delta,a}} \rg] \left[1+O(|c_a||z|^{n+1}+\eta)+o(1) \right] \nonumber\\
&&+O(\de^3\eta) = \frac{64(n+1)^3 \epsilon^2}{\pi  |\alpha_a|^{\frac{2}{n+1}} \delta^{\frac{2}{n+1}}}
E_{a,\delta} \int_{B_\rho(0)}\frac{\de^3 (y-a)}{(\de^2+|y-a|^2)^3}dy \left[1+O(|c_a||y|+\eta)+o(1) \right] \nonumber \\
&&-64 (n+1)^3 \e^2 |\alpha_a|^{-\frac{2}{n+1}} \int_{B_\rho(0)} \frac{\delta^5 |y|^{2n \over n+1}(y-a)}{(\de^2+|y-a|^2)^5}\left[1+O(|y|^{\frac{1}{n+1}}+\eta)+o(1) \right]+O(\de^3\eta) \nonumber \\
&&=\,-64(n+1)^3 |\alpha_a|^{-\frac{2}{n+1}} \eta \int_{\mathbb{R}^2} {|y+\frac{a}{\delta}|^{2n\over n+1} y
\over(1+|y|^2)^5}+o(\eta)+O(\eta^2)
\label{secondterm}
\end{eqnarray}
in view of \eqref{1852} and
$$\int_{B_\rho(0)}\frac{\de^3 (y-a)}{(\de^2+|y-a|^2)^3}dy=\int_{B_\rho(a)}\frac{\de^3 (y-a)}{(\de^2+|y-a|^2)^3}dy+O(\de^3)=O(\de^3).$$
Inserting (\ref{firstterm}) and (\ref{secondterm}) into (\ref{zerotermbisc}), we get the validity of (\ref{solve2b}).\qed
\end{proof}
\bremark\label{remark2}  Since for $|a|>>\delta$ and $n>1$
\begin{eqnarray*}
\delta^{\frac{2}{n+1}} \int_{B_\rho(0)}\frac{\de^5 |y|^{2n\over n+1}(y-a)}{(\de^2+|y-a|^2)^5}=\delta^{\frac{2}{n+1}} \int_{B_\rho(0)}\frac{\de^5 |y+a|^{2n\over n+1}y}{(\de^2+|y|^2)^5}+o(1)= \frac{\pi n}{12(n+1)} \Big(\frac{|a|}{\delta}\Big)^{-\frac{2}{n+1}} \frac{a}{\delta} [1+o(1)]
\end{eqnarray*}
in view of
$$\int_{\mathbb{R}^2} \frac{|y|^2}{(1+|y|^2)^5}=\int_{\mathbb{R}^2} \frac{1}{(1+|y|^2)^4}-\int_{\mathbb{R}^2} \frac{1}{(1+|y|^2)^5}= \frac{\pi}{12}$$
and the inequality
$$|y+a|^{\frac{2n}{n+1}}=|a|^{\frac{2n}{n+1}}+\frac{n}{n+1} |a|^{-\frac{2}{n+1}}(a\overline{y}+\overline{a}y)+O(|a|^{-\frac{2}{n+1}}|y|^2+|y|^{\frac{2n}{n+1}}),$$
notice that the main order of $\int_\Omega R PZ$, in this range, is essentially given by
$$4 \pi (n+1) \delta \overline{\alpha_a c_a}-\frac{16}{3} \pi n  (n+1)^2 \e^2 \delta^{-\frac{2}{n+1}} |\alpha_a|^{-\frac{2}{n+1}}  \Big(\frac{|a|}{\delta}\Big)^{-\frac{2}{n+1}} \frac{a}{\delta}.$$
Since $\alpha_a$ is uniformly away from zero, the vanishing of $\int_\Omega R PZ$, which is equivalent to have $\epsilon^2 \delta^{-\frac{2}{n+1}} (\frac{|a|}{\delta})^{\frac{2n}{n+1}} \sim \overline{\alpha_a c_a a}$, is generally not compatible in the range $|a|>>\delta$ with the vanishing of $\int_\Omega RPZ_0$ in view of Remark \ref{remark1}, which can take place only if $c_0=0$ (in which case $c_a \sim a$). Indeed, the vanishing of $\int_\Omega RPZ$ and $\int_\Omega RPZ_0$ in the range $|a|>>\delta$ implies the contradiction $|a|^2 \sim \delta^2$. This explains why we don't consider the case $|a|>>\delta$.
\eremark


\section{Proof of the main results}\label{mainresults}
In the previous section, we have built up an approximating function $W=PU_{\de,a,\sigma_a}$. We will now look for solutions
$w$ of the form $w=W+\phi$, where $\phi$ is a small correcting
term. In terms of $\phi$, problem \eqref{3} is equivalent to find
a doubly-periodic solution $\phi$ of
\begin{equation}\label{ephi}
L(\phi)=-[R+N(\phi)]\qquad\text{ in $\Om$}
\end{equation}
with $\int_\Omega \phi=0$. Recalling the notation $B(w)=16 \pi N
(\int_\Om e^{2u_0+2w})(\int_\Om e^{u_0+w})^{-2}$, the linear operator $L$
is given by
$$L(\phi) = \Delta \phi + \ml{K} \phi+\tilde \gamma(\phi),$$
where
$$\ml{K}=4\pi N {e^{u_0+W}\over\int_\Om e^{u_0+W}} +\frac{4 \pi N \epsilon^2  B(W)}{\lf(1+\sqrt{1-\epsilon^2 B(W)}\rg)^2} \left({e^{u_0+W}\over\int_\Om e^{u_0+W}}- 2 \frac{e^{2u_0+2W}}{\int_\Omega e^{2u_0+2W}}\right) $$ and
\begin{equation*}
\begin{split}
\tilde \gamma(\phi)&=-4\pi N {e^{u_0+W} \int_\Om
e^{u_0+W} \phi \over(\int_\Om e^{u_0+W})^2 }-\frac{4 \pi N \epsilon^2  B(W)}{\lf(1+\sqrt{1-\epsilon^2 B(W)}\rg)^2} {e^{u_0+W} \over (\int_\Om e^{u_0+W})^2} \int_\Om e^{u_0+W} \phi \\
&+\frac{8 \pi N \epsilon^2  B(W)}{\lf(1+\sqrt{1-\epsilon^2
B(W)}\rg)^2}  \frac{
e^{2u_0+2W}}{(\int_\Omega e^{2u_0+2W})^2} \int_\Omega e^{2u_0+2W} \phi \\
&+4 \pi N \epsilon^2 \frac{DB(W)[\phi]}{(1+\sqrt{1-\epsilon^2
B(W)})^2\sqrt{1-\epsilon^2 B(W)}} \left(\frac{e^{u_0+W}}{\int_\Omega
e^{u_0+W}}-\frac{e^{2u_0+2W}}{\int_\Omega e^{2u_0+2W}}\right)
\end{split}
\end{equation*}
with
$$DB(W)[\phi]= 2 B(W) \left(
{\int_\Om e^{2u_0+2W} \phi \over \int_\Om e^{2u_0+2W}}- {\int_\Om
e^{u_0+W} \phi \over \int_\Om e^{u_0+W}}\right).$$ The nonlinear term
$N(\phi)$, which is quadratic in $\phi$, is given by
\begin{eqnarray} \label{nlt}
&&N(\phi)=4\pi N\left[\frac{e^{u_0+W+\phi}}{\int_\Omega
e^{u_0+W+\phi}}-\frac{e^{u_0+W}}{\int_\Omega e^{u_0+W}}-{e^{u_0+W}\over\int_\Om
e^{u_0+W}}\lf(\phi-\frac{\int_\Om
e^{u_0+W}\phi}{\int_\Om e^{u_0+W}}\rg)\right]\nonumber\\
&&+\left[\frac{4 \pi N \epsilon^2 B(W+\phi)}{(1+\sqrt{1-\epsilon^2 B(W+\phi)})^2}-\frac{4 \pi N \epsilon^2  B(W)}{(1+\sqrt{1-\epsilon^2 B(W)})^2}-\frac{4 \pi N \epsilon^2 DB(W)[\phi]}{(1+\sqrt{1-\epsilon^2 B(W)})^2\sqrt{1-\epsilon^2 B(W)}} \right]\times \nonumber\\
&&\times \left(\frac{e^{u_0+W+\phi}}{\int_\Omega
e^{u_0+W+\phi}}-\frac{
e^{2(u_0+W+\phi)}}{\int_\Omega e^{2(u_0+W+\phi)}}\right)\nonumber \\
&&+\frac{4 \pi N \epsilon^2 B(W)}{\lf(1+\sqrt{1-\epsilon^2
B(W)}\rg)^2}\left[\frac{e^{u_0+W+\phi}}{\int_\Omega e^{u_0+W+\phi}}-\frac{e^{u_0+W}}{\int_\Omega e^{u_0+W}}-{e^{u_0+W} \over\int_\Om
e^{u_0+W}}\lf(\phi-\frac{\int_\Om
e^{u_0+W}\phi}{\int_\Om e^{u_0+W}}\rg) \right] \\
&&-\frac{4 \pi N \epsilon^2  B(W)}{\lf(1+\sqrt{1-\epsilon^2
B(W)}\rg)^2}\left[\frac{e^{2(u_0+W+\phi)}}{\int_\Omega
e^{2(u_0+W+\phi)}}-\frac{e^{2(u_0+W)}}{\int_\Omega e^{2(u_0+W)}}-2
\frac{
e^{2(u_0+W)}}{\int_\Omega e^{2(u_0+W)}}\left( \phi-\frac{\int_\Omega e^{2(u_0+W)} \phi}{\int_\Omega e^{2(u_0+W)}} \right)\right] \nonumber\\
&&+\frac{4 \pi N \epsilon^2  DB(W)[\phi]}{(1+\sqrt{1-\epsilon^2
B(W)})^2\sqrt{1-\epsilon^2 B(W)}}
\left(\frac{e^{u_0+W+\phi}}{\int_\Omega
e^{u_0+W+\phi}}-\frac{e^{u_0+W}}{\int_\Omega e^{u_0+W}}-\frac{
e^{2(u_0+W+\phi)}}{\int_\Omega e^{2(u_0+W+\phi)}}+\frac{
e^{2(u_0+W)}}{\int_\Omega e^{2(u_0+W)}}\right). \nonumber
\end{eqnarray}
Notice that we can re-write $\tilde \gamma (\phi)$ as
\begin{equation*}
\begin{split}
\tilde \gamma(\phi)&=- \ml{K} {\int_\Om e^{u_0+W}\phi \over \int_\Om
e^{u_0+W}} +\frac{8\pi N \epsilon^2  B(W)}{(1+\sqrt{1-\epsilon^2
B(W)})^2 \sqrt{1-\epsilon^2 B(W)}}
 \left({\int_\Om e^{2(u_0+W)} \phi \over \int_\Om e^{2(u_0+W)}}- {\int_\Om e^{u_0+W} \phi \over \int_\Om e^{u_0+W} }\right)
\left[{ e^{u_0+W} \over \int_\Om e^{u_0+W}}\right.\\
&\left.+(\sqrt{1-\epsilon^2 B(W)}-1){ e^{2(u_0+W)} \over \int_\Om e^{2(u_0+W)}}\right]\\
&=\ml{K}\left[- {\int_\Om e^{u_0+W}\phi \over \int_\Om e^{u_0+W}}
+\frac{\epsilon^2  B(W)}{(1+\sqrt{1-\epsilon^2
B(W)})\sqrt{1-\epsilon^2 B(W)}}
 \left({\int_\Om e^{2(u_0+W)} \phi \over \int_\Om e^{2(u_0+W)}}- {\int_\Om e^{u_0+W} \phi \over \int_\Om e^{u_0+W}}\right)
\right],
\end{split}
\end{equation*}
and  $L$ as
\begin{equation}\label{ol}
L(\phi) = \Delta \phi + \ml{K} \left[ \phi+ \gamma(\phi)\right],
\end{equation}
where
$$\gamma(\phi)=- {\int_\Om e^{u_0+W}\phi \over \int_\Om e^{u_0+W}}
+\frac{\epsilon^2  B(W)}{(1+\sqrt{1-\epsilon^2
B(W)})\sqrt{1-\epsilon^2 B(W)}}
 \left({\int_\Om e^{2(u_0+W)} \phi \over \int_\Om  e^{2(u_0+W)}}- {\int_\Om e^{u_0+W} \phi \over \int_\Om e^{u_0+W}}\right).$$
Let us observe that
$$\int_\Om R=\int_\Om L(\phi)=\int_\Om N(\phi)=0.$$

\medskip \noindent Since the operator $L$ is not invertible, equation $L(\phi)=-R-N(\phi)$ is not generally solvable.
The linear theory we will develop in Appendix B states that
$L$ has a kernel which is almost generated by $PZ_0$, $PZ$ and
$\overline{PZ}$, yielding to
\begin{prop} \label{prop4.1}
Let $M_0>0$. There exists $\eta_0>0$ small such that for any $0<\de\leq \eta_0$,
$|\log \delta| \e^2 \leq \eta_0 \de^{2\over n+1}$, $|a|\leq M_0 \de$ and $h\in
L^\infty(\Om)$ with $\int_\Om h=0$ there is a unique solution
$\phi$, $d_0\in\R$ and $d \in\C$ to
\begin{equation}\label{plcobis}
\left\{\begin{array}{ll}
L(\phi) =h + d_0 \Delta PZ_{0}+\re[d \lap PZ] &\text{in }\Om\\
\int_{\Omega } \phi=\int_{\Omega } \phi \Delta PZ_0 = \int_\Om \phi \Delta PZ=0.&
\end{array} \right.
\end{equation}
Moreover, there is a constant $C>0$ \st
$$\|\phi \|_\infty \le C\lf(\log \frac 1\de \rg)\|h\|_*,\qquad
|d_{0}|+|d| \le C\|h\|_*.$$
\end{prop}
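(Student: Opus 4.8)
The proof splits into two movements: first an \emph{a priori estimate} valid for every solution of \eqref{plcobis}, and then a functional-analytic existence argument once that estimate is available. The geometric input throughout is that, after the change of variables $y=\sigma(z)$ followed by the rescaling $y\mapsto\frac{y-a}{\delta}$ (and, inside $\sigma^{-1}(B_\rho(0))$, the further blow-down $y\mapsto y^{n+1}$ using $\sigma=q^{n+1}$ from Lemma \ref{gomme}), the leading part of the operator $L$ in \eqref{ol} becomes the linearized singular Liouville operator $-\Delta-\frac{8}{(1+|y|^2)^2}$, whose space of bounded solutions on $\R^2$ is exactly three-dimensional, spanned by $\frac{1-|y|^2}{1+|y|^2}$, $\frac{y}{1+|y|^2}$ and its conjugate --- i.e. by the functions whose pull-backs are $Z_0$, $Z$, $\overline Z$. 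The potential $\ml K$ of \eqref{ol} is, up to the explicit $O(\e^2)$ nonlocal corrections that the hypothesis $|\log\delta|\e^2\le\eta_0\delta^{2/(n+1)}$ makes lower order, a small perturbation of $|\sigma'(z)|^2 e^{U_{\delta,a}}$, so this is the model to which everything reduces.

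\textbf{The a priori estimate.} I claim there is $C>0$ so that any solution $(\phi,d_0,d)$ of \eqref{plcobis} satisfies $\|\phi\|_\infty\le C|\log\delta|\,\|h\|_*$ and $|d_0|+|d|\le C\|h\|_*$. Testing \eqref{plcobis} against $PZ_0$, $PZ$, $\overline{PZ}$ and using the expansions \eqref{pzij} one checks that the Gram matrix of $\{\Delta PZ_0,\Delta PZ,\Delta\overline{PZ}\}$ paired with $\{PZ_0,PZ,\overline{PZ}\}$ is uniformly invertible after the natural normalization, so that $|d_0|+|d|=O(\|h\|_*)+o(1)\|\phi\|_\infty$; it remains to bound $\|\phi\|_\infty$. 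Argue by contradiction: take sequences $\delta_k,\e_k\to0$ in the admissible range, $|a_k|\le M_0\delta_k$, $h_k$ with $\|h_k\|_*\to0$, and solutions with $\|\phi_k\|_\infty=1$. In the inner variable the rescaled functions $\tilde\phi_k$ are bounded, solve an equation converging to the linearized Liouville equation, and by elliptic estimates converge in $C^1_{\mathrm{loc}}$ to a bounded solution $\tilde\phi_\infty$; by the nondegeneracy classification $\tilde\phi_\infty$ is a linear combination of the three model kernel functions, and the orthogonality conditions $\int_\Omega\phi_k\,\Delta PZ_0=\int_\Omega\phi_k\,\Delta PZ=0$ pass to the limit (here one uses $\int_\Omega|Z_0+1|=O(\delta^{1/(n+1)})$ and $\int_\Omega|Z|=O(\delta^{1/(n+1)})$ as in Section \ref{reduced}) and force $\tilde\phi_\infty\equiv0$. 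In the outer region $\phi_k$ solves $\Delta\phi_k=O(\|h_k\|_*)+\text{l.o.t.}$ with zero average, so the Green representation and the maximum principle give $\sup_{\Omega\setminus\sigma^{-1}(B_\rho(0))}|\phi_k|\to0$; matching inner and outer estimates yields $\|\phi_k\|_\infty\to0$, a contradiction. The factor $|\log\delta|$ appears precisely in controlling the interaction of $\phi$ with the slowly-decaying mode $PZ_0$, exactly as in the classical Liouville linear theory.

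\textbf{Existence and uniqueness.} With the a priori bound in hand, the solvability of \eqref{plcobis} is routine. Work in the Hilbert space $H=\{\phi\in H^1(\Omega):\ \int_\Omega\phi=\int_\Omega\phi\,\Delta PZ_0=\int_\Omega\phi\,\Delta PZ=0\}$ with the $H^1$ inner product, and recast \eqref{plcobis} weakly: projecting the equation $L(\phi)=h+d_0\Delta PZ_0+\re[d\,\Delta PZ]$ onto the orthogonal complement of $\{\Delta PZ_0,\Delta PZ,\Delta\overline{PZ}\}$ eliminates the parameters and leaves an equation of the form $\phi=\ml T(\phi)+\tilde h$ on $H$, where $\ml T$ is compact --- it is the composition of $(-\Delta)^{-1}$ on $H$ with multiplication by the bounded potential $\ml K$ of \eqref{ol} and the finite-rank nonlocal corrections $\gamma$, followed by the relevant $L^2$-projection. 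By the Fredholm alternative, solvability for all $h$ is equivalent to injectivity of $I-\ml T$, i.e. to the homogeneous problem $h=0$ having only the trivial solution; but that is exactly the content of the a priori estimate with $h=0$. The parameters $d_0,d$ are then recovered by the projection step, and the norm bounds for $\phi$, $d_0$, $d$ are read off by applying the a priori estimate to the solution just produced.

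\textbf{Main obstacle.} The delicate point is the a priori estimate, and within it the presence of \emph{three} interacting modes with very different decay --- $Z$ like $|y|^{-1}$, $Z_0$ only tending to a constant --- which forces the weighted norm \eqref{wn} to be calibrated exactly so and produces the unavoidable $|\log\delta|$ loss; one must also verify that the $O(\e^2)$ nonlocal terms entering $\ml K$ and $\gamma$ are genuinely subleading under $|\log\delta|\e^2\le\eta_0\delta^{2/(n+1)}$, which is where the rate $\eta$ of Theorem \ref{estrr01550} re-enters. These verifications are lengthy but standard in structure, and are carried out in Appendix B.
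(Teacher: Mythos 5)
Your overall architecture — an a priori estimate proved by blow-up and contradiction, followed by a Fredholm-alternative argument on the constrained Hilbert space — is exactly the paper's (Appendix B, Propositions \ref{p1} and \ref{p2}), and your existence/uniqueness step and your estimate of $|d_0|+|d|$ by testing against $PZ_0,PZ,\overline{PZ}$ match the paper. The gap is in the a priori estimate, precisely where you assert that the blow-up limit lies in the kernel of $L_0$ and that the orthogonality conditions then force it to vanish. The operator is $L(\phi)=\Delta\phi+\ml{K}\,[\phi+\gamma(\phi)]$ by \eqref{ol}, and $\gamma(\phi_k)$ is a \emph{constant of order one}: to leading order it equals $-\int_\Om e^{u_0+W}\phi_k/\int_\Om e^{u_0+W}$, which is bounded by $\|\phi_k\|_\infty=1$ but has no reason to be small. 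If you rescale $\phi_k$ itself, the source term $-\ml{K}\gamma(\phi_k)$ does not vanish in the limit, so the limit $\hat\Psi$ solves $L_0(\hat\Psi)=-\tilde c\,\frac{8(n+1)^2|y|^{2n}}{(1+|y|^{2n+2})^2}$ with $\tilde c=\lim_k\gamma(\phi_k)$; equivalently $\hat\Psi+\tilde c$ lies in the kernel. The three orthogonality conditions do annihilate the three genuine kernel components $Y_0,Y_1,Y_2$, but they are blind to the constant, because $\int_{\R^2}\frac{|y|^{2n}(1-|y|^{2n+2})}{(1+|y|^{2n+2})^3}\,dy=0$ (substitute $s=|y|^{2n+2}$) and the $Y_l$-weighted integrals of a constant vanish by symmetry. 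So your argument leaves $\hat\Psi=-\tilde c$ undetermined and the contradiction does not close.

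The paper's remedy is to pass to $\psi_k=\phi_k+\gamma(\phi_k)$, which solves the genuinely local equation $\Delta\psi_k+\ml{K}_k\psi_k=h_k$ with the same orthogonality constraints, and then to prove \emph{separately} (Claim 3 of Appendix B) that $\tilde c=0$: one evaluates the Green representation formula for $\psi_k$ at the concentration point, uses $\psi_k(0)\to 0$ from the inner analysis together with $\int_\Om \ml{K}_k\psi_k=0$, and estimates $\int_\Om G(y,0)h_k(y)\,dy$ by $C|\log\de_k|\,\|h_k\|_*$. This is where the hypothesis that $|\log\de|\,\|h\|_*$ is small is actually used, and it — rather than the decay of the $PZ_0$ mode, as you suggest — is the source of the $|\log\de|$ loss in the final bound. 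The remaining ingredients of your sketch (classification of the bounded kernel of $L_0$, a barrier/maximum-principle argument in the intermediate annulus $B_\eta(0)\setminus B_{R\de^{1/(n+1)}}(0)$ where the weighted norm \eqref{wn} is calibrated, and the Fredholm step) coincide with the paper's Claims 2, 4, 5 and Proposition \ref{p2}.
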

\noindent As a consequence, in Appendix C we will show
\begin{prop}\label{nlp}
Let $M_0>0$. There exists $\eta_0>0$ small such that for any
$0<\de\leq\eta_0$, $|\log \delta|^2 \e^2\leq \eta_0 \de^{2\over n+1}$
and $|a|\leq M_0 \de$ there is a unique solution
$\phi=\phi(\de,a)$, $d_0=d_0(\de,a)\in\R$ and $d=d(\de,a)\in\C$ to
\begin{equation}\label{linear}
\left\{\begin{array}{ll}
L(\phi) =-[R+N(\phi)]  + d_0 \Delta PZ_{0}+\re[d \lap PZ] &\text{in }\Om\\
\int_{\Omega } \phi=\int_{\Omega } \phi  \Delta PZ_0= \int_\Om \phi \Delta PZ=0.&
\end{array} \right.
\end{equation}
Moreover, the map $(\de,a)\mapsto \phi(\de,a)$ is $C^1$ with
\begin{equation}\label{estphi}
\|\phi\|_\infty\le C |\log \delta| \|R\|_*.
\end{equation}
\end{prop}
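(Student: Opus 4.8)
The plan is to solve \eqref{linear} by a fixed-point argument built on the linear theory of Proposition \ref{prop4.1}. Denote by $T_{\de,a}$ the linear solution operator furnished by that proposition: for $h\in L^\infty(\Om)$ with $\int_\Om h=0$ it returns the triple $(\phi,d_0,d)$ solving \eqref{plcobis}, with $\|\phi\|_\infty\le C|\log\de|\,\|h\|_*$ and $|d_0|+|d|\le C\|h\|_*$. Since $\int_\Om R=\int_\Om N(\phi)=0$, a solution of \eqref{linear} is exactly a fixed point of the map $\mathcal{A}(\phi)=\Pi_1 T_{\de,a}\big(-[R+N(\phi)]\big)$, where $\Pi_1$ extracts the first component; I would look for it in the ball $\mathcal{F}=\{\phi\in L^\infty(\Om):\int_\Om\phi=0,\ \|\phi\|_\infty\le\rho_0\}$ with $\rho_0=A|\log\de|\,\|R\|_*$ and $A$ a large fixed constant, noting that $\|R\|_*$ is controlled by Corollary \ref{estrr0cor} and that, for $|a|\le M_0\de$ and $c_a=O(1)$, \eqref{ere} gives $\|R\|_*=O(\de+\eta)\to0$ with $\eta=\e^2\de^{-2/(n+1)}$.

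The two analytic ingredients are weighted estimates for the nonlinear term: a quadratic bound $\|N(\phi)\|_*\le C\|\phi\|_\infty^2$ for $\|\phi\|_\infty\le1$, and a local Lipschitz bound $\|N(\phi_1)-N(\phi_2)\|_*\le C(\|\phi_1\|_\infty+\|\phi_2\|_\infty)\|\phi_1-\phi_2\|_\infty$, both uniform in the stated parameter range. To obtain them one treats separately the five blocks in \eqref{nlt}: the pure mean-field block is handled by the elementary inequality $|e^t-1-t|\le\tfrac12 t^2 e^{|t|}$ together with the description \eqref{cinema} of $W$ near the concentration point; the blocks carrying the factors $\e^2 B(W)$, $\e^2 DB(W)[\phi]$ are of lower order because $\e^2 B(W)=O(\eta)\to0$ by \eqref{BW}, which in particular keeps $1-\e^2 B(W+\phi)$ bounded away from zero for $\phi\in\mathcal{F}$ so that all the square roots are well-defined. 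Combining these with Corollary \ref{estrr0cor} yields $\|\mathcal{A}(\phi)\|_\infty\le C|\log\de|(\|R\|_*+C\rho_0^2)\le\rho_0$ once $A$ is large and $CA|\log\de|^2\|R\|_*\le1$; the latter holds for $\eta_0$ small and $\de$ small since $|\log\de|^2\|R\|_*=O(|\log\de|^2\de+|\log\de|^2\eta)$ and $|\log\de|^2\eta=|\log\de|^2\e^2\de^{-2/(n+1)}\le\eta_0$ by hypothesis. The same estimates give $\|\mathcal{A}(\phi_1)-\mathcal{A}(\phi_2)\|_\infty\le 2C|\log\de|\rho_0\,\|\phi_1-\phi_2\|_\infty<\tfrac12\|\phi_1-\phi_2\|_\infty$, so $\mathcal{A}$ is a contraction of $\mathcal{F}$ and the Banach fixed point theorem produces a unique $\phi=\phi(\de,a)\in\mathcal{F}$ with associated $d_0(\de,a)\in\R$, $d(\de,a)\in\C$. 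Substituting $\|N(\phi)\|_*\le C\rho_0\|\phi\|_\infty$ back into $\|\phi\|_\infty\le C|\log\de|(\|R\|_*+\|N(\phi)\|_*)$ and absorbing the small term gives \eqref{estphi}.

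For the $C^1$ dependence on $(\de,a)$ I would invoke the implicit function theorem applied to $\mathcal{G}(\de,a,\phi)=\phi+\Pi_1 T_{\de,a}\big(R+N(\phi)\big)$, viewed on $\mathcal{F}$: the data $R$, $N$, $W$, and the coefficients $\sigma_a$, $\alpha_a$, $c_a$, $a_k$ depend in a $C^1$ (indeed real-analytic) way on $(\de,a)$ — here one uses the holomorphic/antiholomorphic dependence of $\mathcal{H}_{a,\sigma}$ and Lemma \ref{gomme} from Appendix A, together with the $C^1$ dependence of $T_{\de,a}$ on its parameters built into the linear theory — so $\mathcal{G}$ is $C^1$, while its $\phi$-differential at the fixed point, $\mathrm{Id}+\Pi_1 T_{\de,a}\circ DN(\phi)[\cdot]$, is invertible because $\|\Pi_1 T_{\de,a}\circ DN(\phi)\|\le C|\log\de|\rho_0<1$. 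Hence $(\de,a)\mapsto(\phi,d_0,d)$ is $C^1$. I expect the genuine obstacle to be the second paragraph: carrying out the weighted $\|\cdot\|_*$-estimates for $N(\phi)$ term by term through \eqref{nlt}, keeping careful track of all the pieces multiplied by $\e^2 B(W)$ and by the $(1-\e^2 B(W))^{-1/2}$-type factors, and verifying they are genuinely subordinate to the mean-field block; the rest is the standard contraction/IFT machinery.
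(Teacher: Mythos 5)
Your proposal is correct and follows essentially the same route as the paper's own proof in Appendix C: a contraction mapping argument for $\mathcal{A}(\phi)=-T(R+N(\phi))$ on the ball of radius $\sim|\log\de|\,\|R\|_*$, resting on the quadratic/Lipschitz weighted estimate for $N$ (the paper's \eqref{star}) and on the smallness of $|\log\de|^2\|R\|_*$ guaranteed by the hypothesis $|\log\de|^2\e^2\le\eta_0\de^{2/(n+1)}$ together with \eqref{ere}. The $C^1$ dependence via the implicit function theorem is likewise the standard step the paper leaves implicit.
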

\noindent The function $W+\phi$ will be a true solution of equation (\ref{3}) once we adjust $\delta$ and $a$ to have
$d_0(\de,a)=d(\de,a)=0$. \noindent The crucial point is the following:
\begin{lem} \label{1039}
Let $\phi=\phi(\de,a)$, $d_0=d_0(\de,a)\in\R$ and
$d=d(\de,a)\in\C$ be the solution of \eqref{linear} given by Proposition \ref{nlp}. There exists $\eta_0>0$ \st if
$0<\de \leq \eta_0$, $|a| \leq \eta_0$ and
\begin{equation}  \label{solve}
\int_\Omega (L(\phi)+N(\phi)+R) PZ_0=0,\qquad \int_\Omega
(L(\phi)+N(\phi)+R) PZ=0
\end{equation}
do hold, then $W+\phi$ is a solution
of \eqref{3}, i.e. $d_0(\de,a)=d(\de,a)=0$.
\end{lem}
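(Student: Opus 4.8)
The plan is to read the identity \eqref{linear} as
$$L(\phi)+N(\phi)+R=d_0\,\Delta PZ_0+\re[d\,\Delta PZ]\qquad\text{in }\Om,$$
and to test it against $PZ_0$ and against $PZ$, using the hypotheses \eqref{solve}. Since the right-hand side $\mathcal{E}:=d_0\Delta PZ_0+\re[d\Delta PZ]=d_0\Delta PZ_0+\tfrac12 d\,\Delta PZ+\tfrac12\bar d\,\Delta\overline{PZ}$ is real-valued, the two conditions \eqref{solve} (the second one complex) amount to three real scalar equations for the three real unknowns $d_0$, $\re d$, $\im d$; expanding $\int_\Om\mathcal{E}\,PZ_0=\int_\Om\mathcal{E}\,PZ=0$ they take the form
\begin{eqnarray*}
&& d_0\int_\Om \Delta PZ_0\, PZ_0+\tfrac12 d\int_\Om \Delta PZ\, PZ_0+\tfrac12 \bar d\int_\Om \Delta\overline{PZ}\, PZ_0=0,\\
&& d_0\int_\Om \Delta PZ_0\, PZ+\tfrac12 d\int_\Om \Delta PZ\, PZ+\tfrac12 \bar d\int_\Om \Delta\overline{PZ}\, PZ=0.
\end{eqnarray*}
It then suffices to show that for $\de$ and $\eta$ small this linear system is nonsingular, which forces $d_0=d=0$, i.e. $L(\phi)=-[R+N(\phi)]$ and $W+\phi$ solves \eqref{3}.

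The coefficients are computed by integration by parts over $\Om$ (no boundary terms, since $PZ_0,PZ$ are doubly periodic, and the bilinear pairing $\int_\Om(\Delta u)\,v=\int_\Om u\,\Delta v$ is symmetric), together with $-\Delta Z_0=|\sigma'|^2e^{U_{\de,a}}Z_0$, $-\Delta Z=|\sigma'|^2e^{U_{\de,a}}Z$ and \eqref{deltaZ0}--\eqref{deltaZ}, which give $\Delta PZ_0=-|\sigma'|^2e^{U_{\de,a}}Z_0+O(\de^2)$ and $\Delta PZ=-|\sigma'|^2e^{U_{\de,a}}Z+O(\de^3)$. Inserting these together with the expansions \eqref{pzij} and passing to the variable $y=\frac{\sigma(z)-a}{\de}$ inside $\sigma^{-1}(B_\rho(0))$ as in Section~\ref{reduced} (the complementary contribution being $O(\de^2)$), one obtains the two diagonal entries
$$\int_\Om \Delta PZ_0\, PZ_0=-8(n+1)\int_{\R^2}\frac{(1-|y|^2)^2}{(1+|y|^2)^4}\,dy+o(1)=:-c_1+o(1),$$
$$\int_\Om \Delta\overline{PZ}\, PZ=\int_\Om \Delta PZ\, \overline{PZ}=-8(n+1)\int_{\R^2}\frac{|y|^2}{(1+|y|^2)^4}\,dy+o(1)=:-c_2+o(1),$$
with $c_1,c_2>0$ bounded away from zero uniformly in the admissible range of $a$, while the remaining coefficients are negligible,
$$\int_\Om \Delta PZ_0\, PZ=\int_\Om \Delta PZ\, PZ_0=o(1),\qquad \int_\Om \Delta PZ\, PZ=o(1),$$
because the associated limit integrals $\int_{\R^2}\frac{(1-|y|^2)\,y}{(1+|y|^2)^4}\,dy$ and $\int_{\R^2}\frac{y^2}{(1+|y|^2)^4}\,dy$ (with $y=y_1+iy_2$ and $y^2$ the complex square) vanish by oddness, respectively by the symmetry $y_1\leftrightarrow y_2$.

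With this information the system reduces to $c_1\,d_0=o(|d|)$ and $\tfrac{c_2}{2}\,\bar d=o(|d_0|+|d|)$, hence $|d_0|\le C\,o(1)\,|d|$ and $|d|\le C\,o(1)\,(|d_0|+|d|)$; choosing $\de$ and $\eta=\e^2\de^{-\frac{2}{n+1}}$ small enough first forces $d=0$ and then $d_0=0$, which gives the claim. The only genuinely delicate point is to keep track of the error terms in the two diagonal coefficients so as to be sure they stay uniformly negative as $\de,\eta\to0$ and $a$ varies in the admissible set, which is routine given the expansions already obtained in Sections~\ref{improved}--\ref{reduced}; the vanishing of the off-diagonal coefficients, on the contrary, is immediate from the rotational symmetry of the Liouville profiles $Z_0$ and $Z$.
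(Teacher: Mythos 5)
Your proposal is correct and follows essentially the same route as the paper: test the relation $L(\phi)+N(\phi)+R=d_0\Delta PZ_0+\re[d\,\Delta PZ]$ against $PZ_0$ and $PZ$, compute the resulting $3\times3$ real Gram-type matrix via $-\Delta Z_l=|\sigma'|^2e^{U_{\de,a}}Z_l$, the expansions \eqref{pzij} and the change of variables $y=\sigma(z)$, identify the two nonvanishing diagonal entries $-\frac{8\pi}{3}(n+1)+O(\de^2)$ and $-\frac{2\pi}{3}(n+1)+O(\de)$ and the vanishing (by oddness/rotational symmetry) off-diagonal ones, and conclude $d_0=d=0$ by diagonal dominance. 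The only differences are cosmetic (you write the first diagonal entry via $Z_0^2$ instead of $Z_0(Z_0+1)$, which agree up to $O(\de^2)$ by \eqref{deltaZ0}, and you state the errors as $o(1)$ where the paper tracks $O(\de^2)$, $O(\de)$), so this matches the paper's proof.
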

\begin{proof} Since by (\ref{pzij}) and $\|Z_0\|_\infty+\|Z\|_\infty\leq 2$
there hold
\begin{eqnarray*}
\int_\Om \lap PZ_0PZ_0&=& \int_\Om\lap Z_0  PZ_0=-\int_{\sigma^{-1}(B_\rho(0))}|\sigma'(z)|^2 e^{U_{\de,a}}Z_0(Z_0+1)+O(\de^2)\\
&=&- 16 (n+1) \delta^4 \int_{B_\rho(0)}
\frac{\delta^2-|y-a|^2}{(\delta^2+|y-a|^2)^4} +O(\de^2) =- \frac{8 \pi}{3} (n+1) +O(\de^2)
\end{eqnarray*}
and
\begin{eqnarray*}
\int_\Om \lap PZPZ_0&=&\int_\Om\lap Z PZ_0=-\int_{\sigma^{-1}(B_\rho(0))}|\sigma'(z)|^2 e^{U_{\de,a}}Z(Z_0+1)+O(\de^2)\\
&=& -\int_{B_\rho(0)}{16(n+1)\de^5 (y-a)
\over(\de^2+|y-a|^2)^4} +O(\de^2)=-
\int_{B_\rho(0)}{16(n+1)\de^5 y \over(\de^2+|y|^2)^4}
+O(\de^2)=O(\de^2)
\end{eqnarray*}
in view of \eqref{deltaZ0}-\eqref{deltaZ} and
$$\int_{\mathbb{R}^2} \frac{1-|y|^2}{(1+|y|^2)^4}dy=2 \int_{\mathbb{R}^2} \frac{dy}{(1+|y|^2)^4}-\int_{\mathbb{R}^2} \frac{dy}{(1+|y|^2)^3}=\frac{\pi}{6},$$
by (\ref{linear}) we rewrite the first of (\ref{solve}) as
\begin{eqnarray*}
0=d_0 \int_\Omega \lap PZ_0PZ_0+\int_\Om\re[d \lap PZPZ_0]=-\frac{8}{3}\pi
(n+1) d_0+O(\delta^2 |d_0|+\delta^2 |d|).
\end{eqnarray*}
Similarly, the second of (\ref{solve}) gives that
\begin{eqnarray*}
0&=& d_0 \int_\Omega \lap PZ_0 PZ+\int_\Om {1\over 2}\lf[d \lap
PZ+\bar d \lap\overline{PZ}\rg]PZ=
-\int_{\sigma^{-1}(B_\rho(0))} {1\over 2} |\sigma'(z)|^2 e^{U_{\de,a}} \lf[d Z +\bar d\ \overline{Z} \rg] Z\\
&&+ O(\delta^2 |d_0|+\delta |d|)= -4 (n+1) \bar
d\int_{\mathbb{R}^2}  \frac{|y|^2 }{(1+|y|^2)^4}+ O(\delta^2
|d_0|+\delta |d|)
\end{eqnarray*}
in view of $\int_\Omega \lap PZ_0 PZ=\int_\Omega \lap PZ PZ_0=O(\delta^2)$, \eqref{deltaZ} and \eqref{pzij}. Hence, (\ref{solve}) can be simply re-written as $d_0+O(\delta^2
|d_0|+\delta^2 |d|)=0$, $d+O(\delta^2 |d_0|+\delta |d|)=0$.
Summing up the two relations, we then obtain $|d_0|+|d|=\delta
O(|d_0|+|d|)$ which implies $d_0=d=0$.\qed
\end{proof}

\bremark \label{remark2bis} Since $\phi$ is sufficiently small, the system \eqref{solve} will be a perturbation of the reduced equations $\int_\Omega R \, PZ_0=0$, $\int_\Omega R \, PZ=0$. The integral coefficient in \eqref{solve1b} is negative for all $\frac{a}{\delta}$, as we will see in Appendix D. Since $\alpha_a \to \alpha_0 =\frac{\mathcal{H}(0)}{n+1}\not=0$ and $c_a \to c_0$ as $a \to 0$, we can always exclude the case $c_0 \not=0$. Indeed, in such a case the equation $\int_\Omega R \, PZ_0=0$ yields to $\e^2\de^{-{2\over n+1}} \sim \delta^2 |\log \delta|$ as $\delta \to 0$ by means of \eqref{solve1b} (we are implicitly assuming $\e^2\de^{-{2\over n+1}} \to 0$, which is a natural range for solving the reduced equations through \eqref{solve1b}-\eqref{solve2b}). This is not compatible with $\int_\Omega R \, PZ=0$, which allows at most $\delta=O(\e^2\de^{-{2\over n+1}})$ by means of \eqref{solve2b}.
\eremark

\medskip \noindent The last ingredient is an expansion of the system (\ref{solve}) with the aid of Proposition \ref{reducedequations}:
\begin{prop} \label{1219}
Assume $c_0=0$ and $|a|\leq M_0\delta$ for some $M_0>0$. The
following expansions do hold as $\de\to 0$ and $\e\to0$
\begin{eqnarray}
\int_\Omega (L(\phi)+N(\phi)+R) PZ_0&=& -8\pi\delta^2  D_0 +64(n+1)^{\frac{3n+5}{n+1}} |\mathcal{H}(0)|^{-\frac{2}{n+1}} \e^2 \de^{-{2 \over n+1}}
\int_{\mathbb{R}^2} \frac{(|y|^2-1)|y+\frac{a}{\delta} |^{\frac{2n}{n+1}}}{(1+|y|^2)^5}  \nonumber \\
&&+ o(\de^2+\e^2\de^{-{1\over n+1}})+O(\e^4 \de^{-{2\over n+1}}|\log \delta|^2+\e^8 \de^{-{4\over n+1}}|\log \delta|^2) \label{solve1}
\end{eqnarray}
and
\begin{eqnarray}
\int_\Omega (R+L(\phi)+N(\phi)) PZ &=& 4 \pi \delta (\bar \Upsilon a+ \bar \Gamma \bar a)  -64(n+1)^{\frac{3n+5}{n+1}} |\mathcal{H}(0)|^{-\frac{2}{n+1}} \e^2 \de^{-{2\over n+1}}
\int_{\mathbb{R}^2} {|y+\frac{a}{\delta}|^{2n\over n+1} y
\over(1+|y|^2)^5} \nonumber \\
&&+o(\de^2+\epsilon^2 \delta^{-\frac{2}{n+1}})+O(\e^4 \de^{-{2\over n+1}}|\log \delta|^2+\e^8 \de^{-{4\over n+1}}|\log \delta|^2), \label{solve2}
\end{eqnarray}
where $D_0$ and $\Gamma$, $\Upsilon$ are defined in \eqref{ggg} and
Lemma \ref{derivca}, respectively.
\end{prop}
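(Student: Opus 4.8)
\medskip\noindent
The plan is to split $L(\phi)+N(\phi)+R=R+\big(L(\phi)+N(\phi)\big)$ and treat the two pieces separately: the ``$R$--part'' $\int_\Omega R\,PZ_0$ and $\int_\Omega R\,PZ$ is read off from Proposition~\ref{reducedequations}, while the ``correction part'' $\int_\Omega(L(\phi)+N(\phi))PZ_0$ and $\int_\Omega(L(\phi)+N(\phi))PZ$ must be shown to be of strictly lower order than the leading terms.

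\medskip\noindent
For the $R$--part I would specialize Proposition~\ref{reducedequations} to the present situation $c_0=0$, $|a|\le M_0\delta$. Since $\sigma_a$, hence $c_a=c_{a,\sigma_a}$ through \eqref{ca}, depends in a $C^1$ way on $a$ (Appendix~A) and $c_0=0$, one has $c_a=O(|a|)=O(\delta)$; this makes the term $-16\pi(n+1)|\alpha_a|^2|c_a|^2\delta^2\log\frac1\delta$ of \eqref{solve1b} of order $o(\delta^2)$ and $\delta^2|c_a|=O(\delta^3)$, while a first order Taylor expansion of $c_a$ at $a=0$ (computed in Lemma~\ref{derivca}), together with $\alpha_a=\alpha_0+O(|a|)$ and $\alpha_0=\mathcal H(0)/(n+1)$ (by \eqref{0942} and \eqref{alpha0}), turns the leading term $4\pi(n+1)\delta\,\overline{\alpha_ac_a}$ of \eqref{solve2b} into $4\pi\delta(\bar\Upsilon a+\bar\Gamma\bar a)+o(\delta|a|)$. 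Finally $D_a\to D_0$ as $a\to0$ and $|\alpha_a|^{-2/(n+1)}=(n+1)^{2/(n+1)}|\mathcal H(0)|^{-2/(n+1)}(1+o(1))$, which converts $64(n+1)^3|\alpha_a|^{-2/(n+1)}$ into $64(n+1)^{(3n+5)/(n+1)}|\mathcal H(0)|^{-2/(n+1)}(1+o(1))$. Inserting $c_a=O(\delta)$ and $|a|=O(\delta)$ into the remainders of \eqref{solve1b}--\eqref{solve2b}, with $\eta=\epsilon^2\delta^{-2/(n+1)}$, then yields precisely the $R$--contributions appearing in \eqref{solve1}--\eqref{solve2}.

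\medskip\noindent
For the correction part I would first record, via Corollary~\ref{estrr0cor} with $c_a=O(\delta)$, $|a|=O(\delta)$ and $\eta^2=o(\eta)$ (the last by the standing hypothesis $|\log\delta|^2\epsilon^2\le\eta_0\delta^{2/(n+1)}$), that $\|R\|_*=O(\delta^{2-\gamma}+\eta)$, hence $\|\phi\|_\infty\le C|\log\delta|(\delta^{2-\gamma}+\eta)$ by Proposition~\ref{nlp}. To bound $\int_\Omega L(\phi)PZ_0$ I would use the form \eqref{ol}, $L(\phi)=\Delta\phi+\mathcal K[\phi+\gamma(\phi)]$: integrating the Laplacian by parts (legitimate since $\phi$ and $PZ_0$ are doubly periodic) and using the orthogonality $\int_\Omega\phi\,\Delta PZ_0=0$ leaves $\int_\Omega\mathcal K\,PZ_0\,\phi+\gamma(\phi)\int_\Omega\mathcal K\,PZ_0$; invoking $\int_\Omega\phi=0$ and the expansions of $\mathcal K$ (from \eqref{imp}, \eqref{eps1}, \eqref{eps4}), of $PZ_0$ (from \eqref{pzij}) and of $\gamma(\phi)$, the principal parts of these two terms cancel --- this is exactly the purpose of the nonlocal correction $\gamma(\phi)$ built into $L$ --- and what survives is $O(\|\phi\|_\infty)$ times the discrepancies $\mathcal K-|\sigma'(z)|^2e^{U_{\delta,a}}$ and $PZ_0-(Z_0-\tfrac1{|\Omega|}\int_\Omega Z_0)$, which is $o(\delta^2+\eta)$; the same computation handles $\int_\Omega L(\phi)PZ$. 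For $\int_\Omega N(\phi)PZ_0$ and $\int_\Omega N(\phi)PZ$ I would use the quadratic estimates for $N$ established in Appendix~C to control $\|N(\phi)\|_*$ by (a constant times) $\|\phi\|_\infty^2$, together with $\sup_\Omega(|PZ_0|+|PZ|)=O(1)$ and the bound $\int_\Omega\frac{\delta^\gamma(|\sigma'(z)|^2+\delta^{2n/(n+1)})}{(\delta^2+|\sigma(z)-a|^2)^{1+\gamma/2}}=O(1)$ from \eqref{1458}; combined with $\|\phi\|_\infty\le C|\log\delta|(\delta^{2-\gamma}+\eta)$ this produces the remainders $o(\delta^2+\epsilon^2\delta^{-2/(n+1)})$ and $O(\epsilon^4\delta^{-2/(n+1)}|\log\delta|^2+\epsilon^8\delta^{-4/(n+1)}|\log\delta|^2)$ stated in \eqref{solve1}--\eqref{solve2}.

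\medskip\noindent
I expect the main obstacle to be the correction part: one must make the cancellation in $\int_\Omega L(\phi)PZ_0$ and $\int_\Omega L(\phi)PZ$ completely explicit --- without $\gamma(\phi)$ the surviving contribution would be of order $\|\phi\|_\infty$, far larger than the $o(\delta^2+\eta)$ required --- which amounts to showing that $PZ_0$ and $PZ$ are approximate kernel elements of the formal adjoint of $L$, with error measured against the $\|\cdot\|_*$--dual norm, and then to keep track of the several distinct small scales ($\delta^2$, $\eta$, $\delta^2|\log\delta|$, $|a|^{1/(n+1)}\delta^2|\log\delta|$, $\|\phi\|_\infty^2$, $\eta\,\|\phi\|_\infty$) so that nothing exceeds the stated remainders. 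By contrast the $R$--part is a bookkeeping exercise once Proposition~\ref{reducedequations} and the $C^1$--dependence of $c_a,\alpha_a,D_a$ on $a$ are in hand.
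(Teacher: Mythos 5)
Your proposal is correct and follows essentially the same route as the paper: the $R$--part is read off from Proposition \ref{reducedequations} combined with Lemma \ref{derivca} (giving $\mathcal{H}(0)c_a=\Gamma a+\Upsilon\bar a+o(|a|)$, $\alpha_a\to\mathcal{H}(0)/(n+1)$ and $D_a\to D_0$), while the correction part is shown negligible using \eqref{estphi}, the quadratic estimate \eqref{star} for $N(\phi)$, and the approximate-kernel property of $PZ_0$ and $PZ$. The only difference is organizational: the paper packages your term-by-term cancellation into the single duality identity \eqref{diesis} together with the bound \eqref{diesisdiesis} on $\|\ti L(PZ_j+t_j)\|_*$, which also makes transparent the point your accounting leaves implicit, namely that the surviving term $\int_\Omega|\sigma'(z)|^2e^{U_{\de,a}}Z_0\,\phi=-\int_\Omega\phi\,\Delta Z_0$ is itself killed by the orthogonality $\int_\Omega\phi\,\Delta PZ_0=0$ (plus $\int_\Omega\phi=0$) rather than by the two discrepancies you list.
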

\begin{proof}
First, note that from the assumptions and \eqref{ere}, we find
that $\|R\|_*=O(\de^{2-\gamma}+\eta +\eta^2)$, where $\eta=\epsilon^2 \delta^{-\frac{2}{n+1}}$. Hence,
since $|\gamma(\phi)|=O((1+\eta)\|\phi\|_\infty)$ in view of \eqref{BW}, by (\ref{estphi}), (\ref{diesis}), (\ref{diesisdiesis})
and (\ref{star}) we have that
\begin{eqnarray} \label{zerotermbis}
\int_\Omega (R+L(\phi)+N(\phi)) PZ_0&=& \int_\Omega R PZ_0+O\lf( (1+\eta) \Big\|\ti
L\Big(PZ_0+\frac{1}{|\Omega|}\int_\Omega
Z_0\Big)\Big\|_*\|\phi\|_\infty+\|\phi\|_\infty^2\rg)\\
&=& \int_\Omega R P Z_0+
o(\de^2+\eta)+O(\eta^2+\eta^4)  \nonumber
\end{eqnarray}
and
\begin{eqnarray} \label{zeroterm}
\int_\Omega (R+L(\phi)+N(\phi)) PZ&=&\int_\Omega RPZ+O\lf( (1+\eta) \Big\|\ti
L\Big(PZ+\frac{1}{|\Omega|}\int_\Omega
Z\Big)\Big\|_*\|\phi\|_\infty+\|\phi\|_\infty^2\rg)\\
&=& \int_\Omega R PZ+ o(\de^2+\eta)+O(\eta^2+ \eta^4)\nonumber
\end{eqnarray}
in view of $PZ_0=O(1)$ and $PZ=O(1)$, where $\ti L(\phi)=\lap
\phi+\ml{K}\phi$. Since by Lemma \ref{derivca} $\mathcal{H}(0)
c_a=\Gamma a+\Upsilon \bar a+o(|a|)$ as $a \to 0$ in view of
$c_0=0$, the desired expansions \eqref{solve1}-\eqref{solve2}
follow by a combination of \eqref{solve1b}-\eqref{solve2b} and
\eqref{zerotermbis}-\eqref{zeroterm}. We have used that $\alpha_a
\to \alpha_0=\frac{\mathcal{H}(0)}{n+1}$ as $a \to 0$ in view of
\eqref{0942}, where $\alpha_a$ is given by (\ref{alpha0}), and $D_a
\to D_0$ as $a \to 0$, where $D_a$ is given by \eqref{Da}.\qed
\end{proof}

\medskip \noindent Thanks to \eqref{solve1}-\eqref{solve2}, the aim is to find $(\de(\e),a(\e))$ so that \eqref{solve} does hold. To simplify the notations, we denote
$$\varphi_0(\de,a,\epsilon)=\int_\Omega (L(\phi)+N(\phi)+R)
PZ_0 \qquad \varphi(\de,a,\epsilon)=\overline{ \int_\Omega
(L(\phi)+N(\phi)+R) PZ},$$ and \eqref{solve} reduces to find a solution of
\begin{equation}\label{solve3}
\varphi_0(\de(\e),a(\e),\epsilon)=\varphi(\de(\e),a(\e),\epsilon)=0
\end{equation}
for $\e$ small. We are now ready to prove our first main result, which clearly implies the validity of Theorem \ref{mainbb} with $m=1$.
\begin{thm} \label{main} Let $\mathcal{H}_0=\frac{\mathcal{H}}{z^{n+2}}$, where $\mathcal{H}$ is given in \eqref{definitionH}, be a meromorphic function in $\Omega$ with $|\mathcal{H}_0(z)|^2=e^{u_0+8\pi(n+1)G(z,0)}$ (which exists in view of \eqref{balance} and is unique up to rotations), and $\sigma_0(z)=-(\int^z \mathcal{H}_0 (w) dw)^{-1}$. Assume that
\begin{equation}\label{pc}
\frac{d^{n+1} \mathcal{H}}{dz^{n+1}}(0)=0
\end{equation}
and for some small $\rho>0$
\begin{equation} \label{D0}
D_0:=\frac{1}{\pi}\left[\int_{\Omega \setminus \sigma_0^{-1} (B_\rho(0))}  e^{u_0+8\pi(n+1)G(z,0)} -
\int_{\mathbb{R}^2 \setminus B_\rho(0)}
\frac{n+1}{|y|^4}\right]<0.
\end{equation}
If the ``non-degeneracy condition"
\begin{equation} \label{nondegenracy}
|\Gamma| \not= \Big|\Upsilon+{ n(2n+3) \over n+1} D_0\Big|
\end{equation}
does hold, where $\Gamma$ and $\Upsilon$ are given in Lemma \ref{derivca}, for $\epsilon>0$ small there exist $a (\epsilon)$,
$\delta(\epsilon)>0$ small so that $w_\epsilon=PU_{\delta(\epsilon),a(\epsilon), \sigma_{a(\epsilon)}}+\phi(\delta(\epsilon),a(\epsilon))$ does solve \eqref{3} with
\begin{eqnarray*}  &&4\pi N \frac{e^{u_0+w_\e}}{\int_\Omega e^{u_0+w_\e}}+
\frac{64 \pi^2N^2 \epsilon^2 \int_\Omega
e^{2u_0+2w_\e}}{(\int_\Omega e^{u_0+w_\e}+\sqrt{(\int_\Omega
e^{u_0+w_\e})^2-16\pi N\epsilon^2\int_\Omega
e^{2u_0+2w_\e}})^2}\left(\frac{e^{u_0+w_\e}}{\int_\Omega
 e^{u_0+w_\e}} -\frac{e^{2u_0+2w_\e}}{\int_\Omega
e^{2u_0+2w_\e}}\right)\\
&&\hspace{2cm} \rightharpoonup 8\pi(n+1) \delta_0
\end{eqnarray*}
in the sense of measures as $\epsilon \to 0$.
\end{thm}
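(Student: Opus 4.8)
\medskip\noindent\textbf{Proof proposal.}
By Lemma \ref{1039} the whole statement reduces to producing, for each small $\e>0$, a pair $(\de(\e),a(\e))$ with $\de(\e)>0$ small, $|a(\e)|\le M_0\de(\e)$ (fixed $M_0$) and $|\log\de(\e)|^2\e^2\le\eta_0\de(\e)^{\frac{2}{n+1}}$, solving the $2\times2$ system $\varphi_0(\de,a,\e)=\varphi(\de,a,\e)=0$ of \eqref{solve3}: for such a pair the correction $\phi=\phi(\de,a)$ of Proposition \ref{nlp} has vanishing multipliers and $w_\e=PU_{\de(\e),a(\e),\sigma_{a(\e)}}+\phi(\de(\e),a(\e))$ solves \eqref{3}; note that \eqref{pc} is exactly $c_0=0$ (see \eqref{c0}), so Proposition \ref{1219} applies. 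To solve \eqref{solve3} I would rescale, setting $a=\de s$ and $t=\e^2\de^{-\frac{2n+4}{n+1}}$, so that $\e^2\de^{-\frac{2}{n+1}}=t\,\de^2$ and $\de=(\e^2/t)^{\frac{n+1}{2n+4}}$, and then divide \eqref{solve1} and \eqref{solve2} by $\de^2$. Every remainder in Proposition \ref{1219} then carries an extra power of $\de$ or a factor $|\log\de|^kt$, hence is $o(1)$ as $\e\to0$, uniformly for $(t,s)$ in any fixed compact subset of $(0,+\infty)\times\{|s|\le M_0\}$. With $C$ the positive constant appearing in \eqref{solve1}--\eqref{solve2} and
$$\mathcal{I}_0(s)=\int_{\R^2}\frac{(|y|^2-1)\,|y+s|^{\frac{2n}{n+1}}}{(1+|y|^2)^5}\,dy,\qquad \mathcal{I}_1(s)=\int_{\R^2}\frac{|y+s|^{\frac{2n}{n+1}}\,y}{(1+|y|^2)^5}\,dy,$$
the system \eqref{solve3} becomes
$$\left\{\begin{array}{l}-8\pi D_0+C\,t\,\mathcal{I}_0(s)+o(1)=0,\\[1mm] 4\pi(\bar\Upsilon s+\bar\Gamma\,\bar s)-C\,t\,\mathcal{I}_1(s)+o(1)=0,\end{array}\right.$$
which is a continuous --- in fact $C^1$, by Propositions \ref{nlp} and \ref{1219} --- perturbation of the autonomous limiting system obtained by deleting the $o(1)$'s.

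\medskip\noindent Next I would exhibit a non-degenerate zero of that limiting system. By Appendix D, $\mathcal{I}_0(s)<0$ for all $s$, so, since $D_0<0$ by \eqref{D0}, the first equation forces $t=t_*(s):=8\pi D_0/(C\,\mathcal{I}_0(s))>0$; substituting into the second gives the reduced equation $\bar\Upsilon s+\bar\Gamma\,\bar s=2D_0\,\mathcal{I}_1(s)/\mathcal{I}_0(s)$. By the rotational symmetry of the integrand $\mathcal{I}_1(0)=0$, so $(t_*(0),0)$ is a zero, and the gradient of $\mathcal{I}_0$ at $s=0$ vanishes by the same symmetry. Hence the Jacobian at $(t_*(0),0)$ is block triangular with nonzero $(t,t)$-entry $C\,\mathcal{I}_0(0)$, and its $(s,s)$-block is the $\R$-linear map $s\mapsto 4\pi\big[(\bar\Upsilon+\tfrac{n(2n+3)}{n+1}D_0)\,s+\bar\Gamma\,\bar s\big]$: here I use $\mathcal{I}_1(s)=\tfrac{n}{n+1}J\,s+o(|s|)$, $J=\int_{\R^2}|y|^{\frac{2n}{n+1}}(1+|y|^2)^{-5}dy$, together with the identity $2J+(2n+3)\,\mathcal{I}_0(0)=0$, which yields $2D_0\cdot\tfrac{n}{n+1}J/\mathcal{I}_0(0)=-\tfrac{n(2n+3)}{n+1}D_0$. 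Since a real map $s\mapsto As+B\bar s$ on $\C$ has Jacobian determinant $|A|^2-|B|^2$, this block is invertible precisely when $|\Upsilon+\tfrac{n(2n+3)}{n+1}D_0|\neq|\Gamma|$, which is the non-degeneracy condition \eqref{nondegenracy}. Thus the limiting map has an isolated zero at $(t_*(0),0)$ of Brouwer index $\pm1$.

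\medskip\noindent I would then close the argument topologically: choose a small closed ball $\overline B\subset(0,+\infty)\times\{|s|\le M_0\}$ around $(t_*(0),0)$ on whose boundary the limiting map does not vanish; for $\e$ small the perturbed map above is uniformly close to the limiting map on $\overline B$, so the straight-line homotopy between them has no zeros on $\partial B$, the Brouwer degree on $B$ is $\pm1\neq0$, and there is a zero $(t(\e),s(\e))\in B$. Setting $\de(\e)=(\e^2/t(\e))^{\frac{n+1}{2n+4}}\to0$ and $a(\e)=\de(\e)s(\e)\to0$, all constraints of Propositions \ref{nlp}, \ref{1219} and Lemma \ref{1039} hold for $\e$ small, so $d_0=d=0$ and $w_\e$ solves \eqref{3}. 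For the weak limit, the left-hand side of the asserted convergence equals $-\lap w_\e+\tfrac{4\pi N}{|\Om|}$ by \eqref{3}, hence has total mass $4\pi N=8\pi(n+1)$ by \eqref{balance}. Since $\|\phi\|_\infty\to0$ (by \eqref{estphi} and \eqref{ere}), $e^{u_0+w_\e}=e^{u_0+W}(1+o(1))$ uniformly with $W=PU_{\de(\e),a(\e),\sigma_{a(\e)}}$, and by the change of variables $y=\sigma(z)$ as in the proof of Theorem \ref{estrr01550} the density $e^{u_0+W}/\int_\Om e^{u_0+W}$ concentrates at $0$; thus $4\pi N\,e^{u_0+w_\e}/\int_\Om e^{u_0+w_\e}\rightharpoonup 8\pi(n+1)\delta_0$. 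The $\e^2$-term has zero mean over $\Om$ and total variation $O(\e^2\de^{-\frac{2}{n+1}})=O(t\,\de^2)\to0$ by \eqref{eps4} and \eqref{const1517}, hence converges weakly to $0$. This proves the theorem.

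\medskip\noindent The genuinely delicate point --- everything else being bookkeeping on top of Propositions \ref{nlp}--\ref{1219} and Lemma \ref{1039} --- is that the linearization of the reduced equation at $s=0$ is \emph{exactly} the operator measured by \eqref{nondegenracy}; this rests on $2J+(2n+3)\mathcal{I}_0(0)=0$, which, after polar coordinates and $u=|y|^2$, reduces to the Beta-function relation $B\!\big(\tfrac{3n+2}{n+1},\tfrac{2n+3}{n+1}\big)=\tfrac{2n+1}{2n+3}B\!\big(\tfrac{2n+1}{n+1},\tfrac{3n+4}{n+1}\big)$. One also has to check carefully that the $o(1)$ remainders in the rescaled reduced system are uniform over the relevant compact $(t,s)$-range, so that the degree argument is legitimate.
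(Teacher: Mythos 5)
Your argument is correct and follows essentially the same route as the paper's proof: after the reduction via Lemma \ref{1039} and Proposition \ref{1219}, you rescale the two reduced equations, identify the limiting system, exhibit a non-degenerate zero at $a/\delta=0$ (your key identities $\mathcal{I}_1(s)=\tfrac{n}{n+1}Js+o(|s|)$ and $2J+(2n+3)\mathcal{I}_0(0)=0$ are exactly what the paper uses, via \eqref{1228}, to produce the coefficient $\tfrac{n(2n+3)}{n+1}D_0$ in $D\Gamma_0(\mu_0,0)$), and conclude by a degree/perturbation argument. The only cosmetic difference is that you parametrize by $(t,s)=(\e^2\de^{-2(n+2)/(n+1)},a/\de)$ and eliminate $t$ from the first equation before linearizing, whereas the paper works with the full block-triangular Jacobian in the variables $(\mu,\zeta)$; both yield precisely the non-degeneracy condition \eqref{nondegenracy}.
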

\bremark \label{0923} For simplicity, we are considering the case $p=0$ in Theorem \ref{main}, which however is still true for $p \not=0$ by simply replacing in the statement $\mathcal{H}$, $\mathcal{H}_0$ and corresponding quantities with $\mathcal{H}^p$, $\mathcal{H}_0^p$ and corresponding quantities at $p$, where the latter have been defined in Remark \ref{1149}.
\eremark
\begin{proof} Since the equation $\varphi_0(\de,a,\epsilon)=0$ naturally requires $\delta^2 \sim \e^2 \delta^{-\frac{2}{n+1}}$ in view of \eqref{solve1}, we make the following change of variables: $\delta=[\frac{(n+1)\e^{n+1} }{|\mathcal{H}(0)|}]^{\frac{1}{n+2}} \mu$ and $\zeta=\frac{a}{\delta}$. The system \eqref{solve3} is equivalent to find zeroes of
$$\Gamma_\epsilon(\mu,\zeta):=\left[ \frac{(n+1) \e^{n+1}}{|\mathcal{H}(0)|} \right] ^{-\frac{2}{n+2}}  \left(- \frac{1}{8} \varphi_0, \frac{1}{4\pi \mu^2 } \varphi\right)
\left(\bigg[\frac{ (n+1) \e^{n+1}}{|\mathcal{H}(0)|}\bigg]^{\frac{1}{n+2}} \mu , \bigg[\frac{ (n+1) \e^{n+1}}{|\mathcal{H}(0)|}\bigg]^{\frac{1}{n+2}} \mu \zeta,\epsilon\right),$$
which has the expansion $\Gamma_\epsilon(\mu,\zeta)=\Gamma_0(\mu, \zeta)+o(1)$ as $\e \to 0^+$, uniformly for $\mu$ in compact subsets of
$(0,+\infty)$, in view of (\ref{solve1})-(\ref{solve2}), where the map $\Gamma_0: \mathbb{R} \times \mathbb{C} \to
\mathbb{R} \times \mathbb{C}$ is defined as
$$\Gamma_0(\mu,\zeta)= \left(\pi D_0 \mu^2-\frac{8 (n+1)^3 }{\mu^{{2\over n+1}}} \int_{\mathbb{R}^2} \frac{(|y|^2-1)|y+\zeta|^{\frac{2n}{n+1}}}{(1+|y|^2)^5},  \Gamma \zeta+\Upsilon \bar \zeta-{16 (n+1)^3  \over \pi \mu^{{2(n+2)\over n+1}}}  \int_{\mathbb{R}^2} {|y+\zeta|^{2n\over n+1} \bar y \over(1+|y|^2)^5}    \right).$$
We need to exhibit ``stable" zeroes of $\Gamma_0$ in $(0,+\infty)\times \mathbb{C}$, which will persist under $L^\infty-$small perturbations yielding to zeroes of $\Gamma_\epsilon$ as required. The easiest case is given by the point $(\mu_0,0)$, that solves $\Gamma_0=0$ for $\mu_0=({8 (n+1)^3 I_0 \over \pi  D_0})^{n+1\over 2(n+2)}>0$ in view of the assumption \eqref{D0} and  (see \eqref{1228})
$$I_0:=\int_{\mathbb{R}^2} \frac{(|y|^2-1)|y|^{\frac{2n}{n+1}}}{(1+|y|^2)^5}<0.$$
Regarding $\Gamma_0$ as a map from $\mathbb{R}^3$
into $\mathbb{R}^3$ and setting $\Gamma=\Gamma_1+i\Gamma_2$, $\Upsilon=\Upsilon_1+i\Upsilon_2$, we have that
$$D \Gamma_0(\mu_0,0)=\left(\begin{array}{ccc} \frac{2(n+2)}{n+1}\pi D_0 \mu_0 & 0 & 0 \\ 0 & \Gamma_1+\Upsilon_1 +{ n(2n+3)\over n+1} D_0 & \Upsilon_2-\Gamma_2 \\ 0 & \Gamma_2+\Upsilon_2 & \Gamma_1-\Upsilon_1 -{ n(2n+3)\over n+1} D_0 \end{array}\right)$$
in view of \eqref{1228} and
$$ \int_{\mathbb{R}^2}\frac{|y|^{\frac{2n}{n+1}}}{(1+|y|^2)^5}dy=\pi \int_0^\infty \frac{\rho^{\frac{n}{n+1}}}{(1+\rho)^5}d\rho=\pi I_5^{\frac{n}{n+1}}.$$
Since
$$\hbox{det }D\Gamma_0(\mu_0,0)=\frac{2(n+2)}{n+1}\pi D_0 \mu_0 \lf(|\Gamma |^2-\lf|\Upsilon+{ n(2n+3) \over n+1} D_0\rg|^2\rg) \not= 0$$
in view of assumption (\ref{nondegenracy}), the point $(\mu_0,0)$ is
an isolated zero of $\Gamma_0$ with non-trivial local index. Since
$D\Gamma_0(\mu_0,0)$ is an invertible matrix, there exists $\nu>0$ small so that
$|D\Gamma_0(\mu_0,0)(\mu-\mu_0,\zeta)|\geq \nu
|(\mu-\mu_0,\zeta)|$. By a Taylor expansion of $\Gamma_0$ we can find $r_0>0$ small so that
$$|\Gamma_\epsilon(\mu,\zeta)|=|\Gamma_0(\mu,\zeta)|+o(1) \geq \nu |(\mu-\mu_0,\zeta)|+O\left((\mu-\mu_0)^2+|\zeta|^2\right)+o(1) \geq  {\nu \over 2}|(\mu-\mu_0,\zeta)|$$
for all $(\mu,\zeta) \in \partial B_{r}(\mu_0,0)$ and all $r
\leq r_0$, for $\epsilon$ sufficiently small depending on
$r$. Then, the map $\Gamma_\epsilon$ has in $
B_{r_0}(\mu_0,0)$ well-defined degree for all $\epsilon$ small,
and it then coincides with the local index of $\Gamma_0$ at
$(\mu_0,0)$. In this way, the map $\Gamma_\epsilon$ has a zero of
the form $(\mu_\epsilon,\zeta_\epsilon)$ with $\mu_\epsilon \to \mu_0$
and $|\zeta_\epsilon|\to 0$ as $\epsilon \to 0$. Therefore, we have
solved \eqref{solve3} for $\de(\e)=[\frac{(n+1)\e^{n+1} }{|\mathcal{H}(0)|}]^{\frac{1}{n+2}} \mu_\epsilon$
and $a(\e)=\delta(\epsilon)\zeta_\epsilon$, and the corresponding $w_\e$ does solve (\ref{3}) and satisfy the required concentration property as
stated in Theorem \ref{main}.\qed
\end{proof}
\bremark
With some extra work, it is rather standard to see that \eqref{solve1} does hold in a $C^1-$sense. For $\zeta$ in a bounded set, by IFT we can find $\epsilon>0$ small so that the first equation in $\Gamma_\epsilon(\mu,\zeta)=0$ can be solved by $\mu(\epsilon,\zeta)$, depending continuously in $\zeta$, so that
$$\mu(\epsilon,\zeta) \to \mu(\zeta):= \left(\frac{8 (n+1)^3}{\pi D_0} \int_{\mathbb{R}^2} \frac{(|y|^2-1)|y+\zeta|^{\frac{2n}{n+1}}}{(1+|y|^2)^5}\right)^{\frac{n+1}{2(n+2)}}$$
as $\epsilon \to 0$. In Appendix D it is proved that $ \int_{\mathbb{R}^2} \frac{(|y|^2-1)|y+\zeta|^{\frac{2n}{n+1}}}{(1+|y|^2)^5}<0$ for all $\zeta \in \mathbb{C}$, yielding to $\mu(\zeta)>0$ when $D_0<0$. Plugging $\mu(\epsilon,\zeta)$ into the second equation in $\Gamma_\epsilon(\mu,\zeta)=0$ we are reduced to find a ``stable" zero of
$$\int_{\mathbb{R}^2} \frac{(|y|^2-1)|y+\zeta|^{\frac{2n}{n+1}}}{(1+|y|^2)^5}  \left(\bar \Upsilon \zeta +\bar \Gamma \bar \zeta\right)-2  D_0    \int_{\mathbb{R}^2} {|y+\zeta|^{2n\over n+1} y \over(1+|y|^2)^5}=0.$$
Notice that $\bar \Upsilon \zeta+\bar \Gamma  \bar \zeta$ acts in real notation as the multiplication for the matrix
$$A=\left(\begin{array}{cc} \re (\Gamma+ \Upsilon)& \im (\Upsilon-\Gamma) \\ -\im (\Gamma+ \Upsilon) & \re (\Upsilon-\Gamma) \end{array}\right).$$
Since by Appendix D we have that
$$\int_{\mathbb{R}^2} \frac{(|y|^2-1)|y+\zeta|^{\frac{2n}{n+1}}}{(1+|y|^2)^5}=f(|\zeta|),\qquad  \int_{\mathbb{R}^2} {|y+\zeta|^{2n\over n+1} y \over(1+|y|^2)^5}=g(|\zeta|)\zeta,$$
we can re-write the above equation as $A\zeta=\frac{2D_0 g(|\zeta|)}{f(|\zeta|)}\zeta$. Letting $(\lambda_1,e_1)$ be an eigen-pair of $A$ with $|e_1|=1$, we can find a solution $\zeta_0=|\zeta_0|e_1$ as soon as $|\zeta_0|\not= 0$ does solve $\frac{2D_0 g(|\zeta_0|)}{f(|\zeta_0|)}=\lambda_1$. Since by Appendix D we know that $f<0<g$, we can find solutions $(\mu_\epsilon,\zeta_\epsilon)$ of $\Gamma_\epsilon(\mu,\zeta)=0$ with $\zeta_\epsilon$ bifurcating from $\zeta_0 \not= 0$ as soon as one of the eigenvalues of $A$ positive and belongs to $\frac{2D_0 g}{f}(0,+\infty)$. In particular, by \eqref{1228}-\eqref{1902} and \eqref{1903}-\eqref{1904} we have that
$$\frac{g(0)}{f(0)}=-\frac{(2n+3)(3n+1)}{4(n+1)},\qquad \frac{g(|\zeta|)}{f(|\zeta|)} \to -\frac{51}{356} \hbox{ as }|\zeta|\to \infty,$$
and the condition above is fullfilled if one of the eigenvalues of $A$ lies in $(\frac{51}{178}|D_0|, \frac{(2n+3)(3n+1)}{2(n+1)}|D_0|)$.
\eremark


\section{Examples and comments}\label{examples}
In this section, we will discuss the validity of \eqref{pc}-\eqref{nondegenracy} by providing some  examples. Recall that in Theorem \ref{main} we were implicitly assuming that $\{p_1,\dots,p_N\} \subset \Omega$ and denoting for simplicity the concentration point $p$ as $0$. The assumption $\{p_1,\dots,p_N\} \subset \Omega$ simplifies the global construction in $\tilde \Omega$ of $\mathcal{H}$  but \eqref{pc}-\eqref{nondegenracy} just require the local existence for such $\mathcal{H}$ at $0$ as well as for $\sigma_0$ and $H^*$. In this respect, the only relevant assumption is that the concentration point lies in $\Omega$, and so we will provide examples with $0 \in \{\tilde p_1,\dots,\tilde p_N\} \subset \bar \Omega$. To be more precise, let us explain the general strategy we will adopt below. Since we are in a doubly-periodic setting, the configuration of the vortex points has to be periodic in $\bar \Omega$: for all $j=1,\dots,N$ the points $(\tilde p_j +\omega_1 \mathbb{Z}+\omega_2 \mathbb{Z})\cap \bar \Omega$ belong to $\{\tilde p_1,\dots,\tilde p_N\}$ and have all the same multiplicity. Then, we can find $J \subset \{1,\dots,N\}$ so that the points $\{\tilde p_j:\, j \in J\}$ are all non-zero, distinct modulo $\omega_1 \mathbb{Z}+\omega_2 \mathbb{Z}$ and $\left(\{\tilde p_j:\, j \in J\}+\omega_1 \mathbb{Z}+\omega_2 \mathbb{Z}\right) \cap \bar \Omega=\{\tilde p_1,\dots, \tilde p_N\}\setminus \{0\}$. Take now a translation vector $\tau \in \Omega$ so that $\{\tilde p_1+\tau,\dots,\tilde p_N+\tau\}\cap \partial \Omega=\emptyset$, or equivalently $\left(\{\tilde p_1,\dots,\tilde p_N\}+\tau+ \omega_1 \mathbb{Z}+\omega_2 \mathbb{Z}\right)\cap \partial \Omega=\emptyset$. Then, it follows that  $\left(\tilde p_j+\tau+\omega_1 \mathbb{Z}+\omega_2 \mathbb{Z}\right)\cap \Omega$ is composed by a single point $p_j$, for all $j=1,\dots,N$. The idea is to apply Theorem \ref{main}, as formulated in Remark \ref{0923}, to the translated vortex configuration $\{ \tau\} \cup \{p_j:j \in J\}\subset \Omega$ with $\tau$ as concentration point. The validity of \eqref{pc}-\eqref{nondegenracy} in the translated situation will follow by appropriate assumptions on $\{\tilde p_1,\dots,\tilde p_N\}$.

\medskip \noindent Before stating our first result, let us introduce the notion of even vortex configuration: $-\tilde p_j\in \{\tilde p_1,\dots,\tilde p_N\}+\omega_1 \mathbb{Z}+\omega_2 \mathbb{Z}$ with the same multiplicity of $\tilde p_j$, for all $j=1,\dots,N$. In the periodic case, notice that $\{\tilde p_j: j \in J\}$ is still an even configuration. The validity of \eqref{pc} is discussed in the following:
\begin{prop} \label{propp1}Assume $n$ is even and the periodic vortex configuration is even with $0 \in \{\tilde p_1,\dots,\tilde p_N\}$. Let $\mathcal{H}^\tau$ be the function corresponding to $p=\tau$ and remaining vortex points $\{p_j:\, j \in J\}\subset \Omega$, as given in Remark \ref{1149}.  Then, there holds
$$\frac{d^k \mathcal{H}^\tau}{dz^k}(\tau)=0$$
for all odd number $k$.
\end{prop}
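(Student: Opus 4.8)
The plan is to exploit the symmetry $z \mapsto -z$ built into the hypotheses, combined with the explicit product-and-exponential formula for $\mathcal{H}^\tau$ given in Remark \ref{1149}. Write $\mathcal{H}^\tau(z)$ in the form $\prod_{j}(z-p_j)^{n_j}\,e^{\Phi(z)}$, where $p_j$ ranges over $\{\tau\}\cup\{p_j:j\in J\}$ with multiplicity $n_j$ (so $n_\tau=n$), and $\Phi$ collects all the $H^*$-terms and the linear/quadratic polynomial corrections. I would first translate everything back by $\tau$: set $\widetilde{\mathcal{H}}(w)=\mathcal{H}^\tau(w+\tau)$, so that $\widetilde{\mathcal{H}}$ is the analogue of $\mathcal{H}$ for the \emph{original} configuration $\{0\}\cup\{\tilde p_j:j\in J\}$ (shifted so the concentration point is at the origin). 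The claim $\frac{d^k\mathcal{H}^\tau}{dz^k}(\tau)=0$ for all odd $k$ is equivalent to $\frac{d^k\widetilde{\mathcal{H}}}{dw^k}(0)=0$ for all odd $k$, i.e. that $\widetilde{\mathcal{H}}$ is an \emph{even} function of $w$ near $0$ — or more precisely that its Taylor expansion at $0$ contains only even powers.

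**Key steps.** The argument I would carry out is: (1) Show $|\widetilde{\mathcal{H}}(w)|^2 = |\widetilde{\mathcal{H}}(-w)|^2$ for $w$ near $0$. This follows from the key identity $|\widetilde{\mathcal{H}}(w)|^2 = |w|^{-2n} e^{u_0(w)+8\pi(n+1)H(w)}$ (the translated version of \eqref{keyrelationH}, with $H$ replaced by the appropriate Green-type function centered at the concentration point), together with the evenness of the configuration: since $-\tilde p_j \in \{\tilde p_1,\dots,\tilde p_N\}+\omega_1\mathbb{Z}+\omega_2\mathbb{Z}$ with equal multiplicity, the potential $e^{u_0}$ satisfies $e^{u_0(-w)}=e^{u_0(w)}$ as doubly-periodic functions (this uses $u_0(z)=-4\pi\sum_j G(z,p_j)$ and $G(z,p)=G(-z,-p)$, which holds because the lattice $\omega_1\mathbb{Z}+\omega_2\mathbb{Z}$ is symmetric under $z\mapsto -z$), and likewise $H(-w)=H(w)$ since $H$ is the regular part of the Green function of the (centrally symmetric) torus. (2) From $|\widetilde{\mathcal{H}}(w)|=|\widetilde{\mathcal{H}}(-w)|$ for a holomorphic, nonvanishing-at-$0$ function on a connected neighborhood of $0$, deduce that $\widetilde{\mathcal{H}}(-w) = e^{i\theta}\widetilde{\mathcal{H}}(w)$ for some constant $\theta\in\mathbb{R}$ (the ratio $\widetilde{\mathcal{H}}(-w)/\widetilde{\mathcal{H}}(w)$ is holomorphic near $0$ with constant modulus $1$, hence constant). (3) Evaluate at $w=0$: $\widetilde{\mathcal{H}}(0)=e^{i\theta}\widetilde{\mathcal{H}}(0)$ with $\widetilde{\mathcal{H}}(0)\neq 0$, forcing $e^{i\theta}=1$, so $\widetilde{\mathcal{H}}$ is genuinely even, and all odd-order derivatives at $0$ vanish. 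Finally I would note that the hypothesis ``$n$ is even'' is what makes the \emph{even} configuration compatible with a concentration point of multiplicity $n$ at $0$ while keeping $\{\tilde p_j: j\in J\}$ genuinely even with all $\tilde p_j\neq 0$ — i.e. it guarantees the setup is consistent — and that in particular the case $k=n+1$ gives \eqref{pc} precisely when $n$ is even.

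**Main obstacle.** The delicate point is step (1): making precise the claim that evenness of the \emph{periodic} configuration implies $e^{u_0(-w)}=e^{u_0(w)}$ and $H(-w)=H(w)$ on the relevant (translated) neighborhood. One must be careful that after translating by $\tau$ the vortex points $p_j$ are no longer symmetric about the origin — it is the \emph{original} points $\tilde p_j$, or rather their lattice-orbit, that is symmetric. So the correct statement is about the function $\widetilde{\mathcal{H}}$ attached to the \emph{un}translated (but origin-centered) data, and one should verify that $\mathcal{H}^\tau$ and $w\mapsto\widetilde{\mathcal{H}}(w-\tau)$ really do coincide up to a unimodular constant — which they do because both are holomorphic with the same modulus (by \eqref{keyrelationH} and its $p=\tau$ analogue in Remark \ref{1149}, both equal to $e^{u_0+8\pi(n+1)H(\cdot-\tau)}/|\cdot-\tau|^{2n}$), and a nonvanishing holomorphic function on a simply connected domain is determined by its modulus up to a unimodular constant. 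Once this bookkeeping is set up cleanly, the evenness argument is short; the rest is routine.
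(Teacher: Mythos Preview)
Your proposal is correct and follows essentially the same approach as the paper: both arguments show that $|\mathcal{H}^\tau(\tau+w)|=|\mathcal{H}^\tau(\tau-w)|$ using evenness of $H$ and of $\sum_{j\in J} n_j G(\cdot,\tilde p_j)$ (after undoing the $\tau$-translation via $G(z,p_j)=G(z-\tau,\tilde p_j)$), and then conclude $\mathcal{H}^\tau(\tau+w)=\mathcal{H}^\tau(\tau-w)$ by holomorphy. The paper's write-up is terser---it simply asserts the passage from equal moduli to equality ``since $\mathcal{H}^\tau$ is an holomorphic function''---whereas you spell out the unimodular-constant step and the evaluation at $w=0$; your bookkeeping via $\widetilde{\mathcal{H}}$ is just a notational repackaging of the paper's direct computation.
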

\begin{proof} Since $-\Omega=\Omega$ and the periodic vortex configuration $\{\tilde p_1,\dots,\tilde p_N\}$ is even, we have that $G(z)$, $H(z)$ and $e^{-4\pi \sum_{j \in J} n_j G(z,\tilde p_j)}$ are even functions in view of $G(z,p)=G(z-p,0)$. So, it follows that  $e^{4\pi(n+2)H(z-\tau)-4\pi \sum_{j\in J} n_j G(z,\tilde p_j+\tau)}=e^{4\pi(n+2)H(z-\tau)-4\pi \sum_{j\in J} n_j G(z,p_j)}$ takes the same value at $\pm z+\tau$ for all $z \in \Omega$. The function $\mathcal{H}^\tau$ satisfies $|\mathcal{H}^\tau|(z+\tau)=|\mathcal{H}^\tau|(-z+\tau)$ for all $z \in \Omega$, and then $\mathcal{H}^\tau(z+\tau)=\mathcal{H}^\tau(-z+\tau)$ for all $z$ since $\mathcal{H}^\tau$ is an holomorphic function. By differentiating $k-$times at $\tau$, it yields to $\frac{d^k \mathcal{H}^\tau}{dz^k}(\tau)=0$ when $k$ is odd.\qed
\end{proof}

\medskip \noindent The discussion of \eqref{D0} is more interesting and will make use of the Weierstrass elliptic function $\wp$ to represent $D_0$ in case of an even periodic vortex configuration. Furthermore, when $\Omega$ is a rectangle, the points $p_j$'s are half-periods and all the multiplicities are even numbers, by some ideas in \cite{CLW} we will show that assumption \eqref{D0} holds if and only if $\frac{n_3}{2}$ is an odd number, where $n_3$ is the multiplicity of the half-period $\frac{\omega_1+\omega_2}{2}$. Due to the presence of
high order derivatives ($2(n+1)$th order) in \eqref{nondegenracy}, we will verify the validity of the ``non-degeneracy" condition in the simplest case $n=n_3=2$ and $\Omega$ a square torus. As we will see, the validity of \eqref{nondegenracy} is just a computational matter which could be carried out in very generality for each case of interest.

\medskip \noindent We have the following representation formula:
\begin{prop} \label{1014}
Assume that the periodic vortex configuration is even with $0 \in \{\tilde p_1,\dots,\tilde p_N\}$, and $n_j$ is even when $\tilde p_j \in \{{\omega_1\over 2}, {\omega_2\over 2},{\omega_1+\omega_2\over 2}\}$. Let $D_0^\tau$ be the coefficient corresponding to $p=\tau$ and remaining vortex points $\{p_j:\, j \in J\}\subset \Omega$, as given in Theorem \ref{main}.  Then, for $\tau $small we have that $D_0^\tau$ is given by \eqref{1846}, and does not depend on $\tau$.
\end{prop}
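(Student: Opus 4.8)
The plan is to make $D_0^\tau$ explicit from its definition in Theorem \ref{main}, prove it does not depend on $\tau$ by a translation‑plus‑periodicity argument, and then evaluate it by Stokes' theorem so as to read off the Weierstrass representation \eqref{1846}. By \eqref{D0} and Remark \ref{0923}, using $\int_{\mathbb{R}^2\setminus B_\rho(0)}|y|^{-4}\,dz=\pi\rho^{-2}$, we have
$$\pi D_0^\tau=\int_{\Omega\setminus(\sigma_0^\tau)^{-1}(B_\rho(0))}e^{u_0^\tau+8\pi(n+1)G(z,\tau)}\,dz-\frac{(n+1)\pi}{\rho^2},$$
where $\mathcal{H}_0^\tau$ and $\sigma_0^\tau=-(\int^z\mathcal{H}_0^\tau)^{-1}$ are the objects attached to the vortex configuration translated by $\tau$, so that the integrand equals $|\mathcal{H}_0^\tau(z)|^2$. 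Since $G(z,p)=G(z-p,0)$ and the translated potential satisfies $u_0^\tau(z)=u_0^0(z-\tau)$, this integrand equals $|\mathcal{H}_0^0(z-\tau)|^2$, i.e.\ the $\tau=0$ integrand shifted by $\tau$; in particular it is doubly periodic in its argument.

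For the independence of $\tau$, fix $h$ small and view the $(\tau+h)$‑configuration as the $h$‑translate of the $\tau$‑configuration. Then $z\mapsto\mathcal{H}_0^\tau(z-h)$ is meromorphic with the same modulus as $\mathcal{H}_0^{\tau+h}$, so uniqueness up to rotation forces $\mathcal{H}_0^{\tau+h}(z)=e^{i\vartheta}\mathcal{H}_0^\tau(z-h)$ for some $\vartheta\in\mathbb{R}$, hence $\sigma_0^{\tau+h}(z)=e^{-i\vartheta}\sigma_0^\tau(z-h)$ and $(\sigma_0^{\tau+h})^{-1}(B_\rho(0))=h+(\sigma_0^\tau)^{-1}(B_\rho(0))$ (the residue condition \eqref{pc}, needed to define $\sigma_0^{\tau+h}$, is inherited, e.g.\ via Proposition \ref{propp1} when $n$ is even). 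Writing $K:=(\sigma_0^\tau)^{-1}(B_\rho(0))$, a small neighbourhood of the concentration point contained in $\Omega\cap(\Omega-h)$ for $\rho,h$ small, the substitution $z\mapsto z+h$ gives
$$\pi\big(D_0^{\tau+h}-D_0^\tau\big)=\int_{(\Omega-h)\setminus\Omega}|\mathcal{H}_0^\tau|^2\,dz-\int_{\Omega\setminus(\Omega-h)}|\mathcal{H}_0^\tau|^2\,dz=0,$$
because these two regions avoid $K$, are $\Lambda$‑congruent (both $\Omega$ and $\Omega-h$ being fundamental domains for $\Lambda=\omega_1\mathbb{Z}+\omega_2\mathbb{Z}$), and $|\mathcal{H}_0^\tau|^2$ is $\Lambda$‑periodic. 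Thus $\tau\mapsto D_0^\tau$ is locally constant, and the formula derived next makes its constancy for all small $\tau$ manifest.

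To obtain \eqref{1846} we regularize the divergent integral by Stokes. Set $\Psi:=-1/\sigma_0^\tau=\int^z\mathcal{H}_0^\tau$, meromorphic on $\tilde\Omega$ with a single pole, of order $n+1$ and zero residue, at the concentration point; then $|\mathcal{H}_0^\tau|^2=|\Psi'|^2$ and, by the elementary identity $\int_U|\Psi'|^2\,dz=\oint_{\partial U}\tfrac i2\Psi\,d\bar\Psi$ valid for $\Psi$ holomorphic on $\overline U$, applied on $U=\Omega\setminus K$,
$$\int_{\Omega\setminus K}|\Psi'|^2\,dz=\oint_{\partial\Omega}\tfrac i2\Psi\,d\bar\Psi-\oint_{\partial K}\tfrac i2\Psi\,d\bar\Psi.$$
Since $\sigma_0^\tau$ vanishes to order $n+1$ at the concentration point, $\partial K=\{|\sigma_0^\tau|=\rho\}$ winds $n+1$ times, and parametrizing by $\arg\sigma_0^\tau$ yields $\oint_{\partial K}\tfrac i2\Psi\,d\bar\Psi=-(n+1)\pi\rho^{-2}$, which exactly cancels the counterterm, so that
$$\pi D_0^\tau=\oint_{\partial\Omega}\tfrac i2\,\Psi(z)\,\overline{\mathcal{H}_0^\tau(z)}\,d\bar z.$$
It remains to substitute the Weierstrass $\sigma$‑function representation of $G(\cdot,0)$, hence of $\mathcal{H}_0^\tau$: under the hypotheses (even periodic configuration, even multiplicities at the half‑periods $\tfrac{\omega_1}{2},\tfrac{\omega_2}{2},\tfrac{\omega_1+\omega_2}{2}$), the function $\mathcal{H}_0^\tau$ is, up to a unimodular constant, a product of translated $\sigma$‑functions — a polynomial in $\wp$ in the half‑period case — so $\Psi$ becomes a closed‑form combination of $\zeta$, $\wp$ and a linear term, and evaluating the boundary integral above via the quasi‑periodicity of $\sigma,\zeta$ and the Legendre relation $\eta_1\omega_2-\eta_2\omega_1=2\pi i$ gives the claimed $\wp$‑expression \eqref{1846}, visibly $\tau$‑independent.

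The main obstacle I expect is this last step: matching the regularized boundary integral, with its precise $(n+1)\pi\rho^{-2}$ counterterm, to the exact form \eqref{1846} requires careful Weierstrass bookkeeping — the order of the pole of $\mathcal{H}_0^\tau$, the role of the even multiplicities at the half‑periods, and the constant in the Legendre relation. The translation argument of the second step is robust but hinges on the residue condition \eqref{pc}, whose validity under the present hypotheses has to be tracked (which is why $n$ even, or Proposition \ref{propp1}, enters).
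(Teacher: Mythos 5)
Your translation argument for the $\tau$-independence is correct and is essentially the paper's: substitute $z\mapsto z+h$, observe that the symmetric difference of two fundamental domains is lattice-congruent, and use the double periodicity of $|\mathcal{H}_0^\tau|^2=e^{u_0^\tau+8\pi(n+1)G(z,\tau)}$. The gaps are in the other two ingredients. First, you never actually arrive at \eqref{1846}. That formula is nothing more than
$$\pi D_0^\tau=\hbox{Area}\Big[\tfrac{1}{\sigma_0}\big(\Omega\setminus\sigma_0^{-1}(B_\rho(0))\big)\Big]-(n+1)\,\hbox{Area}\big(B_{1/\rho}(0)\big),$$
with the image area counted according to the multiplicity of pre-images; it follows in one line from the change of variables $y=\frac{1}{\sigma_0}(z)$ in $\int_{\Omega\setminus\sigma_0^{-1}(B_\rho(0))}|(1/\sigma_0)'|^2$ (equivalently, from your own Stokes identity read as a signed area). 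Your proposed continuation --- substituting a Weierstrass $\sigma$-function representation and invoking the Legendre relation --- misidentifies the target: no such computation is needed, and you yourself flag that step as an unresolved obstacle, so the derivation of \eqref{1846} is not complete as written.

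Second, everything hinges on $\mathcal{H}_0^\tau$ having zero residue at the concentration point (i.e.\ $c_0=0$); otherwise $\sigma_0^\tau=-(\int^z\mathcal{H}_0^\tau)^{-1}$ is not defined and $|(1/\sigma_0^\tau)'|^2$ does not equal the integrand in \eqref{D0}. Your fallback via Proposition \ref{propp1} covers only $n$ even, which Proposition \ref{1014} does not assume, and parity alone fails for $n$ odd (an odd meromorphic function may well have a $z^{-1}$ term). The paper settles this by constructing $\mathcal{H}_0$ explicitly as $\lambda_0\prod_{k\in I}(\wp(z)-\wp(\tilde p_k))^{\hat n_k}$: a genuinely doubly periodic meromorphic function with a single pole in $\Omega$, hence of zero residue, whose single-valuedness is exactly where the hypothesis that $n_j$ is even at the half-periods enters. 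Since you never build this representation, that hypothesis is never used in your argument and the residue condition is left unjustified; note that ``doubly periodic up to a unimodular factor,'' which is all the abstract uniqueness-up-to-rotation gives, is not enough for the residue-sum argument --- the genuine ellipticity supplied by the $\wp$-product is what is needed.
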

\begin{proof} The Weierstrass elliptic function
$$\wp(z)=\frac{1}{z^2}+\sum_{(n,m)\not=(0,0)} \left( \frac{1}{(z+n\omega_1+m\omega_2)^2}-\frac{1}{(n\omega_1+m\omega_2)^2}\right)$$
is a doubly-periodic meromorphic function with a single pole in $\Omega$ at $0$ of multiplicity $2$. Moreover, the only branching points of $\wp$ are simple and given by the three half-periods ${\omega_1\over 2}$, ${\omega_2\over 2}$ and $\frac{\omega_3}{2}={\omega_1+\omega_2\over 2}$, i.e. $\wp'(\frac{\omega_j}{2})=0$ and $\wp''(\frac{\omega_j}{2})\not=0$ for $j=1,2,3$.
For $p\in \bar \Omega \setminus \{ 0\}$, note that $2\pi[2G(z,0)-G(z,p)-G(z,-p)]$ is a doubly-periodic harmonic
function in $\Omega$ with a singular behavior $-2\log|z|$ at $z=0$. Moreover, it behaves like $\log|z-p|$ at $z=p$ and $\log|z+p|$ at $z=-p$ when $p \not=\frac{\omega_1}{2},\frac{\omega_2}{2},\frac{\omega_3}{2}$, and like $2\log|z-p|$ if $p\in \{\frac{\omega_1}{2},\frac{\omega_2}{2},\frac{\omega_3}{2}\}$. Thus, we have that
$$2\pi[2G(z,0)-G(z,p)-G(z,-p)]=\log|\wp(z)-\wp(p)|+\text{const.}$$
no matter $p$ is an half-period or not, in view of $\wp(p)=\wp(-p)$, $\wp'(p)=-\wp'(-p)\not=0$ if $p \not=\frac{\omega_1}{2},\frac{\omega_2}{2},\frac{\omega_3}{2}$ and $\wp'(p)=0$, $\wp''(p)\not=0$ if $p\in \{\frac{\omega_1}{2},\frac{\omega_2}{2},\frac{\omega_3}{2}\}$. Since the periodic vortex configuration is even, take $I$ as the minimal subset of $J$ so that $\left(\{\tilde p_k,-\tilde p_k: \, k \in I\}+\omega_1 \mathbb{Z}+\omega_2 \mathbb{Z}\right) \cap \{\tilde p_j:\,j \in J\}=\{\tilde p_j:\,j \in J\}$ and
$$\hat n_k=\left\{ \begin{array}{ll} \frac{n_k}{2} &\hbox{if }\tilde p_k \hbox{ is an half-period}\\
n_k& \hbox{otherwise}. \end{array}\right.$$
Letting $N=n+\sum_{j \in J}  n_j$ and $u_0(z)=-4\pi nG(z,0)-4\pi \sum_{j \in J}n_j G(z,\tilde p_j)$, assumption \eqref{balance} implies that
$$u_0+8\pi(n+1)G(z,0)=4\pi\sum_{k \in I} \hat n_k [2G(z,0)- G(z,\tilde p_k)-G(z,-\tilde p_k)],$$
yielding to
\begin{equation*}
e^{u_0+8\pi(n+1)G(z,0)}=  \hbox{const.}\:  \big| \prod_{k \in I} (\wp(z)-\wp(\tilde p_k))^{\hat n_k} \big|^2.
\end{equation*}
The additional assumption that $n_j$ is even when $\tilde p_j$ is an half-period is crucial to have $(\wp(z)-\wp(\tilde p_j))^{\hat n_j}$ as a single-valued function. The function \begin{equation} \label{explicitH}
\mathcal{H}_0(z)= \lambda_0 \prod_{k \in I} (\wp(z)-\wp(\tilde p_k))^{\hat n_k},\quad \lambda_0=e^{2\pi(n+2)H(0)-2\pi \sum_{j \in J} n_j G(0,\tilde p_j)}
\end{equation}
is an elliptic function with a single pole at $0$ of zero residue, which satisfies 
\begin{equation} \label{1328}
|\mathcal{H}_0|^2=e^{u_0+8\pi(n+1)G(z,0)}.
\end{equation}
Then 
\begin{equation} \label{1010}
\sigma_0(z)=-\left(\int^z   \mathcal{H}_0(w) dw \right)^{-1}
=-\lambda_0^{-1}\left(\int^z  \prod_{k \in I} (\wp(w)-\wp(\tilde p_k))^{\hat n_k} dw \right)^{-1}
\end{equation}
is a well-defined meromorphic function in $2\Omega$ which satisfies
\begin{equation} \label{1345}
\Big|   \Big( \frac{1}{\sigma_0} \Big)'(z)  \Big|^2=|\mathcal{H}_0|^2(z)
= e^{u_0+8\pi(n+1)G(z,0)}.
\end{equation}
Switching now to the translated vortex configuration $\{\tau\}\cup \{p_j:\,j \in J\}$, let us first notice that the total multiplicity is still $N$, and introduce $u_0^\tau=u_0(z-\tau)=-4\pi nG(z,\tau)-4\pi \sum_{j \in J}n_j G(z,p_j)$. We have that $\mathcal{H}_0^\tau(z)=\mathcal{H}_0(z-\tau)$ is a meromorphic function in $\Omega$ with
$$|\mathcal{H}_0^\tau|^2=e^{u_0^\tau+8\pi(n+1)G(z,\tau)}$$
in view of (\ref{1328}). Since such a function $\mathcal{H}_0^\tau$ is unique up to rotations, we can assume that $\mathcal{H}_0^\tau$ coincides with the function $\mathcal{H}_0$ corresponding to $p=\tau$ and remaining vortex points $\{p_j:\, j \in J\}\subset \Omega$, as given in Theorem \ref{main}. Setting $\mathcal{H}(z)=z^{n+2}\mathcal{H}_0(z)$, we also have that
\begin{equation} \label{1344}
\mathcal{H}^\tau(z)=\mathcal{H} (z-\tau)
\end{equation}
for all $z \in \Omega$. Letting 
$$\sigma_0^\tau(z)=-\left(\int^z   \mathcal{H}_0^\tau(w) dw \right)^{-1}$$
with the correct choice of the constant in the integration$\int^z$, we easily deduce that
\begin{equation} \label{0935}
\sigma_0^\tau(z)=\sigma_0(z-\tau)
\end{equation}
for all $z \in \Omega$ in view of $(\frac{1}{\sigma_0^\tau})'(z)=(\frac{1}{\sigma_0})'(z-\tau)$. Since $(\sigma_0^\tau)^{-1}(B_\rho(0))-\tau=(\sigma_0)^{-1}(B_\rho(0))$ in view of \eqref{0935}, according to \eqref{D0} let us re-write $D_0^\tau$ as
\begin{eqnarray*}
\pi D_0^\tau&=&\int_{\Omega \setminus (\sigma_0^\tau)^{-1} (B_\rho(0))}  e^{u_0^\tau+8\pi(n+1)G(z,\tau)} 
-\int_{\mathbb{R}^2 \setminus B_\rho(0)}\frac{n+1}{|y|^4}\\
&=&
\int_{(\Omega-\tau) \setminus (\sigma_0)^{-1}(B_\rho(0))}   e^{u_0+8\pi(n+1)G(z,0)}
-\int_{\mathbb{R}^2 \setminus B_\rho(0)}\frac{n+1}{|y|^4}\\
&=&\int_{\Omega \setminus (\sigma_0)^{-1}(B_\rho(0))}   e^{u_0+8\pi(n+1)G(z,0)}
-\int_{\mathbb{R}^2 \setminus B_\rho(0)}\frac{n+1}{|y|^4}
\end{eqnarray*}
by the double-periodicity of $e^{u_0+8\pi(n+1)G(z,0)}$, once we assume for $\tau$ small that $(\sigma_0)^{-1}(B_\rho(0)) \subset \Omega \cap (\Omega-\tau)$. By \eqref{1345} and the change of variable $z \to \frac{1}{\sigma_0}(z)$ we get that
\begin{eqnarray}
\pi D_0^\tau&=&\pi D_0= \int_{\Omega \setminus (\sigma_0)^{-1}(B_\rho(0)) }  \Big|\left(\frac{1}{\sigma_0}\right)'\Big|^2
-\int_{\mathbb{R}^2 \setminus B_\rho(0)}\frac{n+1}{|y|^4} \nonumber \\
&=&\hbox{Area } \left[ \frac{1}{\sigma_0}\left(\Omega \setminus \sigma_0^{-1} (B_\rho(0)) \right) \right]  - (n+1) \hbox{Area} \left( B_{\frac{1}{\rho}}(0) \right). \label{1846}
\end{eqnarray}
By the Cauchy argument principle the number of pre-images in $\Omega \setminus \sigma_0^{-1}(B_\rho(0))$ through the map $\frac{1}{\sigma_0}$ is constant for all values in each connected component of $\mathbb{C} \setminus \left(\frac{1}{\sigma_0}(\partial \Omega) \cup \partial B_{\frac{1}{\rho}}(0) \right)$, and the area of each of these components has to be counted in \eqref{1846} according to the multiplicity of pre-images. \qed
\end{proof}

\medskip \noindent Thanks to \eqref{1846}, we can now discuss the validity of \eqref{D0}.
\begin{prop} \label{propp2}
Let $\Om$ be a rectangle, and assume that the vortex configuration is the periodic one generated by $\{0,{\omega_1\over
2},{\omega_2 \over 2},{\omega_1+\omega_2 \over 2}\}$ with even multiplicities $n,n_1,n_2,n_3 \geq 0$. Suppose that
\begin{equation}\label{balanceex}
\frac{n_1}{2}+\frac{n_2}{2}+\frac{n_3}{2}=\frac{n}{2}+1.
\end{equation}
Given $D_0^\tau$ as in Propostion \ref{1014}, then $D_0^\tau<0$ $(>0)$when $\frac{n_3}{2}$ is odd $(\hbox{even})$.
\end{prop}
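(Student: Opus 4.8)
The plan is to combine the area identity \eqref{1846} with the argument principle, realizing $\pi D_0$ as the \emph{signed} area enclosed by one explicit closed curve, and then to identify that curve using the reflection symmetries of the rectangular torus. Set $\Psi_0:=\frac1{\sigma_0}=-\int^z\mathcal H_0(w)\,dw$; by Proposition \ref{1014}, $\mathcal H_0=\lambda_0\prod_{k=1}^3(\wp-e_k)^{n_k/2}$ with $e_k=\wp(\omega_k/2)$, an \emph{even} elliptic function with a single pole in $\Omega$ (of order $n+2$, at $0$) and vanishing residue, so $\Psi_0$ is a single-valued meromorphic function on $\tilde\Omega$ with a single pole of order $n+1$ at $0$, and may be normalized to be odd. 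For generic $\zeta\notin\Psi_0(\partial\Omega)$ the argument principle gives $\#\{z\in\Omega:\Psi_0(z)=\zeta\}=(n+1)+W(\zeta)$, where $W(\zeta)$ is the winding number of $\Psi_0\circ\partial\Omega$ about $\zeta$; integrating this over $B_{1/\rho}(0)$ and subtracting $(n+1)\,\text{Area}(B_{1/\rho}(0))$ turns \eqref{1846} into
\[
\pi D_0=\int_{\mathbb{R}^2}W(\zeta)\,d\zeta=\frac1{2i}\oint_{\partial\Omega}\overline{\Psi_0(z)}\,\Psi_0'(z)\,dz,
\]
the signed area of the closed curve $\Psi_0(\partial\Omega)$ — a translation-invariant quantity, hence independent of the integration constant in $\Psi_0$ and, a posteriori, of $\rho$.

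Up to a rotation take $\omega_1=2a$ and $\omega_2=2ib$ with $a,b>0$, so that $\Omega=(-a,a)\times(-b,b)$ and the four sides of $\partial\Omega$ lie on the lines $\{\re z=\pm a\}$ and $\{\im z=\pm b\}$, which for a rectangular lattice are contained in the real locus of $\wp$. Thus $\mathcal H_0$ is real on $\partial\Omega$, and since $\Psi_0'=-\mathcal H_0$, each side of $\partial\Omega$ is mapped by $\Psi_0$ into a line parallel to a coordinate axis. The decisive point is that $\mathcal H_0$ has \emph{constant sign} along each side: on each side $\wp$ varies monotonically between two of the values $e_k$ (with the rectangular ordering $e_1>e_3>e_2$), so every factor $(\wp-e_k)^{n_k/2}$ keeps a fixed sign, the only zeros occurring at half-periods, where $n_k$ even forbids a sign change. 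Hence $\Psi_0$ is strictly monotone on each side. Using that $\Psi_0$ is odd and satisfies $\Psi_0(\bar z)=\overline{\Psi_0(z)}$, one gets $\Psi_0(\omega_1/2)=c\in\mathbb{R}$, $\Psi_0(\omega_2/2)=id$ with $d\in\mathbb{R}$, while the four corners of $\partial\Omega$ are sent to $\{c+id,\ c-id,\ -c+id,\ -c-id\}$; by monotonicity $\Psi_0(\partial\Omega)$ is exactly the boundary of the axis-parallel rectangle with these four vertices, traversed once, whose signed area equals $4cd$ regardless of the signs of $c,d$. Therefore $\pi D_0=4cd$.

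It remains to pin down the signs of $c$ and $d$. On the side $\{\re z=a\}$, $\wp$ ranges over $(e_3,e_1)$, so $\wp-e_1<0$ while $\wp-e_2,\wp-e_3>0$, whence $\operatorname{sign}\mathcal H_0=(-1)^{n_1/2}$; integrating $-\mathcal H_0$ along this side from $\omega_1/2$ to $(\omega_1+\omega_2)/2$ gives $\operatorname{sign}d=(-1)^{n_1/2+1}$, in particular $d\neq0$. On the side $\{\im z=b\}$, $\wp$ ranges over $(e_2,e_3)$, so $\wp-e_1,\wp-e_3<0$ and $\wp-e_2>0$, whence $\operatorname{sign}\mathcal H_0=(-1)^{(n_1+n_3)/2}$; integrating $-\mathcal H_0$ along this side from $\omega_2/2$ to $(\omega_1+\omega_2)/2$ gives $\operatorname{sign}c=(-1)^{(n_1+n_3)/2+1}$, in particular $c\neq0$. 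Hence $D_0\neq0$ and $\operatorname{sign}(\pi D_0)=\operatorname{sign}(cd)=(-1)^{\,n_1/2+(n_1+n_3)/2}=(-1)^{\,n_1+n_3/2}=(-1)^{n_3/2}$, using $n_1$ even. This is exactly the claim: $D_0<0$ when $\tfrac{n_3}{2}$ is odd and $D_0>0$ when $\tfrac{n_3}{2}$ is even. (The balance \eqref{balanceex} and the parities of $n,n_1,n_2$ are used only through Proposition \ref{1014}, to guarantee that $\mathcal H_0$ is the stated single-valued elliptic function with the prescribed zero/pole pattern.)

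The real work lies in the second paragraph: accurately describing the real locus of $\wp$ and the monotone interval it sweeps on each side of $\partial\Omega$ — in particular the sign reversal forced by the double pole $\wp(z)\sim z^{-2}$ at $0$ (so that $\wp(it)\sim -t^{-2}$), which is what fixes the ordering $e_1>e_3>e_2$ and the intervals above — together with keeping the orientations straight so as to land on the sign-coherent value $4cd$. Once that geometric picture is in place, the parity count in the third paragraph is routine.
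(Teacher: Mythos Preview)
Your proof is correct and follows essentially the same route as the paper: both start from the area identity \eqref{1846}, recognize via the argument principle that $\pi D_0$ is the (multiplicity-weighted) area of $\Psi_0(\Omega\setminus\sigma_0^{-1}(B_\rho(0)))$ minus $(n+1)\,\mathrm{Area}(B_{1/\rho}(0))$, and exploit that $\mathcal{H}_0$ is real with constant sign on each side of $\partial\Omega$ (the paper's inequalities \eqref{2026}), so that $\Psi_0(\partial\Omega)$ is an axis-parallel rectangle.

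The one place where you genuinely streamline the argument is in determining the \emph{orientation} of that image rectangle. The paper pins down the pre-image multiplicity $k$ for points inside $\Omega'$ by a local expansion of $\Psi_0$ near the corner $\tilde p_3$ (using $\wp''(\tilde p_3)<0$ and an angular-sector count in $(z-\tilde p_3)^{n_3+1}$), concluding $k=n$ or $k=n+2$ according to the parity of $\tfrac{n_3}{2}$. You bypass this branch-point analysis entirely: once you observe that (with the odd, conjugation-symmetric normalization of $\Psi_0$) the corners go to $\pm c\pm id$ and the curve is traversed as $(c,-d)\to(c,d)\to(-c,d)\to(-c,-d)\to(c,-d)$, Green's formula gives signed area exactly $4cd$, and the signs of $c,d$ are read off directly from the sign of $\mathcal{H}_0$ on two adjacent sides. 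This is more elementary and more robust—no Taylor expansion at the corner, no sector counting—at the cost of having to justify the two symmetries of $\Psi_0$ (which you do correctly: $\mathcal{H}_0$ even forces $\Psi_0$ odd up to a constant that can be killed, and $\lambda_0\in\mathbb{R}_{>0}$ together with $\wp(\bar z)=\overline{\wp(z)}$ on the rectangular lattice gives $\Psi_0(\bar z)=\overline{\Psi_0(z)}$). Both arguments of course land on the same parity criterion $(-1)^{n_3/2}$.
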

\begin{proof}
The balance condition \eqref{balance} is satisfied in view of \eqref{balanceex}. Let $\tilde p_1={\omega_1\over 2}$, $\tilde p_2={\omega_2\over 2}$ and $\tilde p_3={\omega_1+\omega_2\over 2}$ be the three half-periods. When $\Omega$ is a rectangle, the function $\wp$ takes real values on $\partial \Omega$ and $\wp''(\tilde p_j)>0$ for $j=1,2$, $\wp''(\tilde p_3)<0$. As a consequence, we have that
\begin{equation} \label{2026}
\wp(\tilde p_1)-\wp(z),\:  \wp(z)-\wp(\tilde p_2),\: \wp(\pm \tilde p_1+it)-\wp(\tilde p_3) ,\:  \wp(\tilde p_3)-\wp(\pm \tilde p_2+t) \ge 0 \end{equation}
for all $z\in\ptl\Om$ and $t\in\R$. Write $\sigma_0(z)$ in \eqref{1010} as
$$ \sigma_0(z)=(-1)^{\frac{n+n_2}{2}}\lambda_0^{-1} \left(\int^z  (\wp(\tilde p_1)-\wp(w))^{\frac{n_1}{2}}(\wp(w)-\wp(\tilde p_2))^{\frac{n_2}{2}}(\wp(\tilde p_3)-\wp(w))^{\frac{n_3}{2}} dw \right)^{-1}$$
in view of \eqref{balanceex}. Since
$$\frac{d}{dt}\left[ \frac{(-1)^{\frac{n+n_2}{2}}}{\sigma_0 (\pm \tilde p_2+t)} \right]= \lambda_0 (\wp(\tilde p_1)-\wp(\pm \tilde p_2+t))^{\frac{n_1}{2}}(\wp(\pm \tilde p_2+t)-\wp(\tilde p_2))^{\frac{n_2}{2}}(\wp(\tilde p_3)-\wp(\pm \tilde p_2+t))^{\frac{n_3}{2}} \geq 0$$
in view of \eqref{2026}, the function $\frac{(-1)^{\frac{n+n_2}{2}}}{\sigma_0}$ maps the horizontal sides of $\partial \Omega$ into horizontal segments with same orientation. In the same way, the vertical sides of $\partial \Omega$ are mapped into vertical segments with same/opposite orientation depending on whether $\frac{n_3}{2}$ is an even/odd number. So, $T:=\frac{(-1)^{\frac{n+n_2}{2}}}{\sigma_0} (\partial \Omega)$ is still a rectangle with same/opposite orientation and $\frac{(-1)^{\frac{n+n_2}{2}}}{\sigma_0 (\tilde p_3)}$ is the right upper/lower corner of $T$ depending on whether $\frac{n_3}{2}$ is an even/odd number. For $\rho$ small, we then have that $\mathbb{C} \setminus \left(\frac{1}{\sigma_0}(\partial \Omega) \cup \partial B_{\frac{1}{\rho}}(0) \right)$ has three connected components: the interior $\Omega'$ of $(-1)^{\frac{n+n_2}{2}} T$, $B_{\frac{1}{\rho}}(0) \setminus \overline{\Omega'}$ and $\mathbb{C} \setminus \overline{B_{\frac{1}{\rho}}(0)}$. By Lemma \ref{gomme} we have that values in $B_{\frac{1}{\rho}}(0) \setminus \overline{\Omega'}$, $\mathbb{C} \setminus \overline{B_{\frac{1}{\rho}}(0)}$ have exactly $n+1$, $0$ pre-images in $\Omega \setminus \sigma_0^{-1}(B_\rho(0))$ through the map $\frac{1}{\sigma_0}$, respectively. By \eqref{1846} we have that $\pi D_0^\tau=[k - (n+1)] \hbox{Area} (\Omega')$, where $k$ is the
number of pre-images corresponding to values in $\Omega'$.

\medskip \noindent Since $\wp(z)-\wp(\tilde p_3)={\wp''(\tilde p_3)\over 2}(z-\tilde p_3)^2+O(|z-\tilde p_3|^3)$ as $z \to \tilde p_3$, we obtain that
$$\lf[\frac{(-1)^{\frac{n+n_2}{2}}}{\sigma_0}\rg]'(z)= \mu (z-\tilde p_3)^{n_3}+O(|z-\tilde p_3|^{n_3+1})$$
and
$$\frac{(-1)^{\frac{n+n_2}{2}}}{\sigma_0(z)}-\frac{(-1)^{\frac{n+n_2}{2}}}{\sigma_0(\tilde p_3)}=\mu {(z-\tilde p_3)^{n_3+1}\over
n_3+1}+O(|z-\tilde p_3|^{n_3+2})$$
as $z \to \tilde p_3$, where $\mu:=\lambda_0 \lf(-{\wp''(\tilde p_3)\over
2}\rg)^{\frac{n_3}{2}} [\wp(\tilde p_1)-\wp(\tilde p_3)]^{\frac{n_1}{2}}[\wp(\tilde p_3)-\wp(\tilde p_2)]^{\frac{n_2}{2}}>0$. When $\frac{n_3}{2}$ is an odd number, $\frac{(-1)^{\frac{n+n_2}{2}}}{\sigma_0 (\tilde p_3)}$ is the right lower corner of $T$ and the function $\frac{(-1)^{\frac{n+n_2}{2}}}{\sigma_0}$ maps $\{z=\tilde p_3+\rho e^{i\theta}\mid
\pi\le\theta\le {3\pi\over 2}, 0\le\rho<\rho_0\}$ onto a
region whose part inside/outside $T$ is covered ${n_3-2\over 4}$/${n_3-2\over 4}+1$ times, respectively, in view of
$$(n_3+1)\pi\le(n_3+1)\theta\le(n_3+1){3\pi\over
2}=(n_3+1)\pi+2\pi {n_3-2\over 4}+\pi+{\pi\over 2}.$$
Hence, near $\tilde p_3$ the map $\frac{1}{\sigma_0}$ covers ${n_3-2\over 4}$/${n_3-2\over 4}+1$ times the interior/exterior part of $\Omega'$ near $\frac{1}{\sigma_0(\tilde p_3)}$. Since $\frac{1}{\sigma_0}$ covers $n+1$ times every values in
$B_{\frac{1}{\rho}}(0)\sm \overline{\Omega'}$, there should be
$n-{n_3-2\over 4}$ distinct points $x\in \Om\sm \sigma_0^{-1}(B_\rho(0))$, away from $\tilde p_1,\tilde p_2, \tilde p_3$, so that $\sigma_0(x)=\sigma_0(\tilde p_3)$. Since
$\sigma_0'(x) \not= 0$ if $x\not= \tilde p_1,\tilde p_2,\tilde p_3$, it follows that around any such $x$
$\frac{1}{\sigma_0}$ is a local homeomorphism, and then $\frac{1}{\sigma_0}$ covers exactly $n$/$n+1$ times the interior/exterior part of $\Omega'$ near $\frac{1}{\sigma_0(\tilde p_3)}$. Hence, it
follows that $k=n$ and $\pi D_0^\tau=-\hbox{Area} (\Omega')<0$. When $\frac{n_3}{2}$ is even, in a similar way we get that $k=n+2$ and $\pi D_0^\tau=\hbox{Area} (\Omega')>0$.\qed
\end{proof}

\medskip \noindent Now, to discuss \eqref{nondegenracy} we further restrict the attention to the case $n=n_3=2$ to get
\begin{prop} \label{propp3}
Let $\Om$ be a square of side $a$, $a>0$, and assume that the vortex configuration is the periodic one generated by $\{0,{a\over
2},{ia \over 2},{a+ia \over 2}\}$ with multiplicities $2,n_1,n_2,2$ and $(n_1,n_2)=(2,0)$ (or viceversa).
Then, for $\tau \in \Omega$ assumption \eqref{nondegenracy} does hold for the vortex configuration $\{\tau\}\cup \{p_j:\, j \in J\}\subset \Omega$.
\end{prop}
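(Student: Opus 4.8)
The plan is to verify the single inequality $|\Gamma| \neq |\Upsilon + \tfrac{n(2n+3)}{n+1} D_0|$ in the special case $n = 2$, $n_3 = 2$, where $D_0 = D_0^\tau < 0$ is already guaranteed by Proposition \ref{propp2} (since $\tfrac{n_3}{2} = 1$ is odd), and $\Gamma$, $\Upsilon$ are the coefficients from Lemma \ref{derivca} measuring the linear dependence of $\mathcal{H}(0) c_a$ on $a$ and $\bar a$ near $a = 0$. With $n = 2$ the relevant quantity $c_a = c_{a,\sigma_a}$ from \eqref{ca} involves the third derivative $\tfrac{d^3}{dz^3}$ at $0$ of an explicit rational-in-$\wp$ expression, so the whole computation reduces to evaluating a handful of Taylor coefficients of $\sigma_0$ (equivalently of $\mathcal{H}_0$, given in \eqref{explicitH}) and of the auxiliary functions $g_{a,\sigma}$, $\mathcal{H}_{a,\sigma}$ at the origin. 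First I would record that by Proposition \ref{propp1} (applicable since $n=2$ is even and the configuration generated by half-periods is even with $0$ among the vortices) all odd derivatives of $\mathcal{H}^\tau$ at $\tau$ vanish, in particular $\tfrac{d^3\mathcal{H}^\tau}{dz^3}(\tau) = 0$, so \eqref{pc} holds and $c_0 = 0$; this is what makes Lemma \ref{derivca} and the expansion $\mathcal{H}(0) c_a = \Gamma a + \Upsilon \bar a + o(|a|)$ meaningful.

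The key computational steps, in order, are: (i) write $\sigma_0$ for the translated square torus configuration via \eqref{1010}, using $\wp$ for the square lattice, where by the symmetry of the square $\wp(\tfrac{a}{2}) = e_1$, $\wp(\tfrac{ia}{2}) = e_2 = -e_1$, $\wp(\tfrac{a+ia}{2}) = e_3 = 0$ (the standard normalization $e_1 + e_2 + e_3 = 0$ together with the extra $i$-symmetry of the square forces $e_3 = 0$ and $e_2 = -e_1$), so that $\mathcal{H}_0(z) = \lambda_0 (\wp(z) - e_1)^{n_1/2}(\wp(z) - e_2)^{n_2/2} \wp(z)$ up to the sign in front; with $(n_1, n_2) = (2,0)$ this is $\lambda_0 (\wp(z)^2 - e_1^2)\wp(z)$, a concrete elliptic function with a pole of order $6 = n+4$ at $0$ — wait, it must have a pole of order $n+2 = 4$, so in fact $\mathcal{H}_0$ has a pole of order $4$ at $0$, consistent with $\wp \cdot (\wp^2 - e_1^2)$ having a pole of order $6$; the balance \eqref{balanceex} with $n=n_3=2$, $(n_1,n_2)=(2,0)$ gives $\tfrac{n_1}{2} + \tfrac{n_3}{2} = 2 = \tfrac n2 + 1$, so the total pole order of the $\wp$-product is $2(\tfrac{n_1}{2} + \tfrac{n_2}{2} + \tfrac{n_3}{2}) = 4 = n+2$, matching. (ii) Invert the integral in \eqref{1010} to get the Laurent expansion of $\sigma_0$ at $0$ through enough terms to read off $\alpha_0 = \lim_{z\to 0} z^{n+1}/\sigma_0(z)$ and the next coefficients; (iii) feed these, along with the Taylor data of $\mathcal{H}_{a,\sigma}$ from \eqref{Hasigma} and of $g_{a,\sigma}$, into the formula of Lemma \ref{derivca} for $\Gamma$ and $\Upsilon$, exploiting the even symmetry (which typically forces one of $\Gamma,\Upsilon$ to vanish or to take a specific rational multiple of $D_0$); (iv) compare magnitudes. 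The square symmetry is a major simplification: the $i$-rotation $z \mapsto iz$ maps the lattice to itself and permutes $\{\tfrac a2, \tfrac{ia}{2}\}$, forcing $\wp(iz) = -\wp(z)$, which makes the relevant expressions homogeneous under $z \mapsto iz$ and should pin down the phases of $\Gamma$ and $\Upsilon$ exactly, reducing \eqref{nondegenracy} to a comparison of two explicitly computable real numbers.

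The main obstacle I anticipate is purely the bookkeeping in step (iii): Lemma \ref{derivca} presumably differentiates the defining relation \eqref{sigmaa}–\eqref{ca} in $a$ at $a = 0$, and with $n = 2$ the coefficient $c_{a,\sigma}$ is a third-order Taylor coefficient, so $\Gamma$ and $\Upsilon$ will be expressed through derivatives of $\sigma_0$, $\wp$, and $H^*$ up to rather high order at $0$; organizing these without sign errors, and correctly tracking how the points $a_0, a_1, a_2$ (the three preimages $\sigma(z) = a$) depend on $a$ — they are the cube roots $a_k \sim \alpha_0^{1/3}\,\omega^k\, a^{1/(n+1)}$ up to corrections — is where care is needed, since the holomorphic/anti-holomorphic structure emphasized after \eqref{keyrelation} is exactly what keeps $\Gamma$ attached to $a$ and $\Upsilon$ to $\bar a$. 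Once the symbolic expressions for $\Gamma$ and $\Upsilon$ are in hand, the final inequality should either hold automatically (e.g.\ if $\Gamma$ and $\Upsilon$ have incompatible phases, or if $\Upsilon = 0 \neq \Gamma$ while $D_0 \neq 0$ with the right sign) or reduce to checking that a specific algebraic combination of $e_1$, $\lambda_0$, $\wp''(\tfrac{a+ia}{2})$ and the area integral $D_0$ does not accidentally vanish, which one verifies by a direct numerical estimate for the square lattice. I would close by remarking, as the paper does, that this is "just a computational matter" and the same scheme applies verbatim to other even configurations on a rectangle.
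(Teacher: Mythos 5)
Your overall strategy coincides with the paper's: reduce \eqref{nondegenracy} to an explicit inequality between Taylor coefficients of $\mathcal{H}$, $\sigma_0$ and $H^*$ at the origin, exploit the square symmetry, and finish numerically. But what you have written is a roadmap, not a proof, and the decisive step is missing. You assert that after the bookkeeping the inequality ``should either hold automatically (e.g.\ if $\Gamma$ and $\Upsilon$ have incompatible phases, or if $\Upsilon=0\neq\Gamma$)'' or reduce to checking that some combination does not accidentally vanish. Neither happy scenario occurs: with $n=2$ one has $\Upsilon=-\frac{2\pi(n+1)}{|\Omega|\,n!}\overline{b_{3}}\,\mathcal{H}''(0)$ with $b_3=\mathcal{H}''(0)/6$ and $\mathcal{H}''(0)=2e_2\neq0$ on the square, so $\Upsilon\neq0$; and $\Gamma$ involves $f_3$, built from $\sigma_0$, $q_0^{-1}$ and $H^*$ through sixth-order derivatives at $0$. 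The square symmetry only kills $(H^*)''(0)$, $(H^*)^{(6)}(0)$, $e_3$ and $G_6$, leaving a genuine quantitative two-sided comparison. One must then (i) expand $\mathcal{H}=\lambda_0 z^4(\wp-e_1)\wp$ via the Eisenstein series $G_4,G_6$, (ii) compute $(H^*)^{(4)}(0)$ from the explicit product formula for the Green function of the square torus, (iii) evaluate $e_1^2=15G_4$ through the classical identity for $\sum(n+mi)^{-4}$, and (iv) verify numerically the resulting inequality \eqref{ceae5}. None of this is carried out in your proposal, and without it the statement is not established: the non-degeneracy here is a numerical fact, not a structural one.

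A smaller but concrete error: for $(n_1,n_2)=(2,0)$ formula \eqref{explicitH} gives $\mathcal{H}_0=\lambda_0(\wp-e_1)^{n_1/2}(\wp-e_2)^{n_2/2}(\wp-e_3)^{n_3/2}=\lambda_0(\wp-e_1)\wp$, with a pole of order $4=n+2$, not $\lambda_0(\wp^2-e_1^2)\wp$ as you write; the latter inserts a spurious factor $(\wp-e_2)$ and produces exactly the pole-order mismatch you notice mid-sentence but never resolve. Finally, your computation is meant to take place ``at the origin'' while the concentration point of the translated configuration is $\tau\neq0$; the reduction that justifies this, namely $\sigma_0^\tau(z)=\sigma_0(z-\tau)$ and $\mathcal{H}^\tau(z)=\mathcal{H}(z-\tau)$ from \eqref{0935} and \eqref{1344}, is needed and should be stated rather than left implicit.
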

\begin{proof}
We are restricting the attention to the cases $(n_1,n_2)=(2,0),\, (0,2)$ for they are the only possibilities to have even multiplicities satisfying \eqref{balanceex} for $2,n_1,n_2,2$. Letting $\tilde p_1={a \over 2}$, $\tilde p_2={ia \over 2}$ and $\tilde p_3={a+ia \over 2}$ be the three half-periods, the ``non-degeneracy condition" reads as
\begin{equation}\label{ceae2}
\bigg| 3 (\ml{H}^\tau)''(\tau )f_3'(\tau)+\ml{H}^\tau(\tau) f_3'''(\tau) \bigg|\ne
\lf|{6\pi\over a^2} \overline{b_{3}} (\mathcal{H}^\tau)''(\tau)-{28\over 3} D_0^\tau \rg|
\end{equation}
in view of $(\ml{H}^\tau)'(\tau)=(\ml{H}^\tau)'''(\tau)=0$ by Proposition \ref{propp1},
where
$$f_l(z)=\frac{1}{l!}\frac{d^l}{dw^l} \left[2\log \frac{w-q_0^\tau(z)}{(q_0^\tau)^{-1}(w)-z}+4\pi
H^*(z-(q_0^\tau)^{-1}(w))\right](0)\:,\quad b_l=\frac{1}{l!}\frac{d^l (q_0^\tau)^{-1}}{dw^l}(0).$$
Since  $\sigma_0^\tau(z)=\sigma_0(z-\tau)$ by \eqref{0935}, we deduce that $q_0^\tau(z)=q_0(z-\tau)$ and $(q_0^\tau)^{-1}=\tau +q_0^{-1}$, where $q_0=z [\frac{\sigma_0(z)}
{z^{n+1}}]^{\frac{1}{n+1}}$ is defined out of $\sigma_0$ as in Appendix A. Since $\ml{H}^\tau(z)=\ml{H}(z-\tau)$ in view of (\ref{1344}), by \eqref{1846} the ``non-degeneracy condition"  \eqref{ceae2} gets re-written in the original variables as:
\begin{equation}\label{ceae21357}
\bigg| 3 \ml{H}''(0)f_3'(0)+\lambda_0 f_3'''(0) \bigg|\ne
\lf|{6\pi\over a^2} \overline{b_{3}} \mathcal{H}''(0)-{28\over 3} D_0 \rg|
\end{equation}
in view of $\mathcal{H}(0)=\lambda_0$ (see \eqref{explicitH}),  where
$$f_l(z)=\frac{1}{l!}\frac{d^l}{dw^l} \left[2\log \frac{w-q_0(z)}{q_0^{-1}(w)-z}+4\pi
H^*(z-q_0^{-1}(w))\right](0)\:,\quad b_l=\frac{1}{l!}\frac{d^l q_0^{-1}}{dw^l}(0).$$
Since ${d^k \ml{H} \over dz^k}(0)=0$ for all odd $k\in\N$, we have that
\begin{eqnarray*}
\frac{z^3}{\sigma_0(z)}=\frac{\lambda_0}{3}+\frac{\ml{H}''(0)}{2}  z^2-\frac{\ml{H}^{(4)}(0)}{24}z^4 -\frac{\ml{H}^{(6)}(0)}{2160 }z^6+
O(z^8),
\end{eqnarray*}
and then
$$\sigma_0(z)=\frac{3}{\lambda_0}z^3 -\frac{9\ml{H}''(0)}{2\lambda_0^2 }z^5+O(z^7),\quad
q_0(z)=\frac{3^{\frac{1}{3}}}{\lambda_0^{\frac{1}{3}}}z-\frac{3^{\frac{1}{3}}\ml{H}''(0)}{2 \lambda_0^{\frac{4}{3}}}z^3+O(z^5),\quad
q_0^{-1}(w)=\frac{\lambda_0^{\frac{1}{3}}}{3^{\frac{1}{3}}}w+\frac{\ml{H}''(0)}{6}w^3+O(w^5)$$
as $z,w \to 0$. Direct computation shows that $b_3=\frac{\ml{H}''(0)}{6}$
and
\begin{eqnarray*}
f_3(z)&=&-\frac{2}{3\sigma_0(z)}+\frac{2\lambda_0 }{9 z^3}+\frac{2b_3}{z}-\frac{2\pi \lambda_0}{9}(H^*)'''(z)-4\pi b_3 (H^*)'(z)\\
&=&\frac{\ml{H}^{(4)}(0)}{36}z +\frac{\ml{H}^{(6)}(0)}{3240}z^3-\frac{2\pi \lambda_0}{9}(H^*)'''(z)-\frac{2\pi}{3} \ml{H}''(0) (H^*)'(z)+O(z^5)
\end{eqnarray*}
as $z \to 0$. Since then
$$f_3'(0)=\frac{\ml{H}^{(4)}(0)}{36}-\frac{2\pi \lambda_0}{9}(H^*)^{(4)}(0)-\frac{2\pi}{3} \ml{H}''(0) (H^*)''(0),\quad f_3'''(0)=\frac{\ml{H}^{(6)}(0)}{540} -\frac{2\pi \lambda_0}{9}(H^*)^{(6)}(0)-\frac{2\pi}{3} \ml{H}''(0) (H^*)^{(4)}(0),$$
condition \eqref{ceae21357} is equivalent to
\begin{eqnarray*}
&&\bigg| \frac{\ml{H}''(0)\ml{H}^{(4)}(0)}{12}+\frac{\lambda_0 \ml{H}^{(6)}(0)}{540}
-2\pi (\ml{H}''(0))^2 (H^*)''(0)-\frac{4 \pi \lambda_0}{3}  \ml{H}''(0) (H^*)^{(4)}(0)-\frac{2\pi \lambda_0^2}{9}(H^*)^{(6)}(0)\bigg|\\
&&\ne
\lf|{\pi\over a^2}  |\mathcal{H}''(0)|^2-{28\over 3} D_0\rg|.
\end{eqnarray*}
By the explicit expression \eqref{explicitH} of $\ml{H}_0$ we have that
$$\mathcal{H}(z)= \lambda_0 z^4 (\wp(z)-\wp(\tilde p_1)) (\wp(z)-\wp(\tilde p_3)).$$
Replacing $\mathcal{H}$ with $\frac{\mathcal{H}}{\lambda_0}$, we can assume $\lambda_0=1$ and simply study the stronger condition
\begin{eqnarray}\label{yyy}
\hspace{-0.2cm} \bigg| \frac{\ml{H}''(0)\ml{H}^{(4)}(0)}{4}+\frac{\ml{H}^{(6)}(0)}{180}
-6\pi (\ml{H}''(0))^2 (H^*)''(0)- 4 \pi \ml{H}''(0) (H^*)^{(4)}(0)-\frac{2\pi}{3}(H^*)^{(6)}(0)\bigg|< {3 \pi\over a^2}  |\mathcal{H}''(0)|^2\end{eqnarray}
in view of Proposition \ref{1014} and \eqref{1846}. Letting $G_l=\displaystyle \sum_{(n,m) \not= (0,0)}{1\over (n\omega_1+m\omega_2)^l}$, $l\geq 3$, be the Eisenstein series, the Laurent expansion of $\wp$ near $0$ simply re-writes as
$$\wp(z)={1\over
z^2}+\sum_{l=1}^\infty(2l+1)G_{2l+2}z^{2l},$$
and then
\begin{eqnarray*}
\mathcal{H}(z)=1-(\wp(\tilde p_1)+\wp(\tilde p_3))z^2+\left(\wp(\tilde p_1)\wp(\tilde p_3)+6 G_4 \right) z^4
+\left(10 G_6 -3G_4 \wp(\tilde p_1)-3G_4 \wp(\tilde p_3)\right) z^6+O(z^8)
\end{eqnarray*}
as $z \to 0$. Letting $e_j=\wp(\tilde p_j)$ for $j=1,2,3$, recall that
\begin{equation} \label{propej}
e_2<e_3\le0<e_1,\quad e_1+e_2+e_3=0,\quad 15 G_4=-(e_1e_2+e_1e_3+e_2e_3),\quad 35 G_6=e_1 e_2 e_3,
\end{equation}
with $e_3=0$ if and only if $\Omega$ is a square (see \cite{AbSte}). By the expansion of $\ml{H}$ and \eqref{propej}, we deduce that
$$\ml{H}''(0)=2 e_2,\:\ml{H}^{(4)}(0)=24(e_1 e_3 +6 G_4),\:\ml{H}^{(6)}(0)= 720(10 G_6 +3G_4 e_2),$$
and condition \eqref{yyy} gets re-written as
\begin{eqnarray} \label{ceae3}
\bigg| 460 G_6 +84  G_4e_2-24\pi e_2^2 (H^*)''(0)-8\pi e_2 (H^*)^{(4)}(0) -\frac{2\pi}{3} (H^*)^{(6)}(0)\bigg|<
{12\pi\over a^2}   e_2^2
\end{eqnarray}
in view of \eqref{propej}.

\medskip \noindent From an explicit formula for the Green's function (see \cite{ChO}) we have that
\begin{equation*}
\begin{split}
H(z)-{|z|^2\over 4|\Omega|}
=\re\lf(-{z^2\over 4 a^2}+{iz\over 2a}+{1\over 12}\rg)-\frac{1}{2\pi}\log\lf|{1-e\lf(\frac{z}{a}\rg)\over z}
\times\prod_{k=1}^\infty\lf(1-e\lf(\frac{kai+z}{a}\rg)\rg)\lf(1-e\lf(\frac{kai-z}{a}\rg)\rg)\rg|,
\end{split}
\end{equation*}
where $e(z)=e^{2\pi iz}$, yielding to
\begin{equation*}
H^*(z)= -{z^2\over 4 a^2}+{iz\over 2a}+{1\over 12}-\frac{1}{2\pi}
\log \left[\lf({1-e\lf(\frac{z}{a}\rg)\over z}\rg)
\times\prod_{k=1}^\infty\lf(1-e\lf(\frac{kai+z}{a}\rg)\rg)\lf(1-e\lf(\frac{kai-z}{a}\rg)\rg) \right].\end{equation*}
Direct, but tedious, computations show that
\begin{eqnarray*}
&&(H^*)''(0)=-{1 \over 2 a^2}+{\pi\over 6a^2}-{4\pi \over a^2}\sum_{k=1}^\infty \lambda_k(\lambda_k+1),\quad (H^*)^{(4)}(0)={\pi^3\over 15a^4}+{16\pi^3\over a^4}\sum_{k=1}^\infty \lambda_k(\lambda_k+1)(6\la_k^2+6\la_k+1)\\
&& (H^*)^{(6)}(0)={8\pi^5\over 63a^6}-{64\pi^5\over a^6}\sum_{k=1}^\infty \lambda_k(\lambda_k+1)(120\la_k^4+240\la_k^3+150\la_k^2+30\la_k+1),
\end{eqnarray*}
where $\la_k:={1\over e^{2\pi k}-1}$. On a square torus the Green function $G(z,0)$ has an additional symmetry, the invariance under $\frac{\pi}{2}-$rotations. Therefore, $H^*(iz)=H^*(z)$ for all $z\in\Om$, and then $(H^*)''(0)=(H^*)^{(6)}(0)=0$. Since $e_3=G_6=0$, condition \eqref{ceae3} becomes
\begin{eqnarray} \label{ceae4}
\bigg| \frac{28}{5}e_1^2 -8\pi  (H^*)^{(4)}(0) \bigg|< {12\pi\over a^2}   e_1
\end{eqnarray}
in view of \eqref{propej} and $e_1=-e_2>0$. From the study of the Weierstrass function $\wp$ it is known that (see \cite{Ap})
\begin{equation*}
\sum_{(n,m)\ne
(0,0)} {1\over (n+m\tau)^4}={\pi^4\over
45}+{16\pi^4\over 3}\sum_{m,k=1}^\infty k^3e^{2\pi
i km\tau} \end{equation*}
for $\tau\in\C$ with $\im \tau >0$. The choice $\tau=i$ yields to
$$15 a^4 G_4=a^4 e_1^2={\pi^4\over
3}+80 \pi^4\sum_{m,k=1}^\infty k^3e^{-2\pi
km}$$
in view of \eqref{propej}, which turns \eqref{ceae4} into
\begin{eqnarray} \label{ceae5}
\hspace{-0.3cm}\bigg| {\pi^4 \over
3}+112 \pi^4\sum_{m,k=1}^\infty k^3e^{-2\pi
km} -32\pi^4 \sum_{k=1}^\infty \lambda_k(\lambda_k+1)(6\la_k^2+6\la_k+1) \bigg| <
3\pi   \sqrt{{\pi^4\over
3}+80 \pi^4\sum_{m,k=1}^\infty k^3e^{-2\pi
km}}.
\end{eqnarray}
Since numerically we can approximately compute
$$32 \pi^4\sum_{k=1}^\infty
\lambda_k(\lambda_k+1)(6\la_k^2+6\la_k+1)\approx 5,9194
\qquad 80 \pi^4 \sum_{m,k=1}^\infty k^3e^{-2\pi
km} \approx 14,7985,$$
we get the validity of \eqref{ceae5}, or equivalently \eqref{nondegenracy} for the vortex configuration $\{\tau\}\cup \{p_j:\, j \in J\}\subset \Omega$. \qed
\end{proof}

\medskip \noindent As a combination of Propositions \ref{propp1}, \ref{propp2} and \ref{propp3} we finally get that
\begin{thm} \label{thmexample}
Let $\Om$ be a square of side $a$, $a>0$, and assume that the vortex configuration is the periodic one generated by $\{0,{a\over
2},{ia \over 2},{a+ia \over 2}\}$ with multiplicities $2,n_1,n_2,2$ and $(n_1,n_2)=(2,0)$ (or viceversa).
Then, for $\tau$ small the assumption of Theorem \ref{main} do hold for the slightly translated vortex configuration $\{-\tau(1+i), -\tau(1+i)+\frac{a}{2}, -\tau(1+i)+\frac{ia}{2}, -\tau(1+i)+\frac{a+ia}{2} \}$. In particular, for $\e>0$ small we can find $N-$condensate $(\mathcal{A}_\e,\phi_\e)$ so that $|\phi_\e| \to 0$ in $C(\bar
\Omega)$ and
\begin{equation} \label{magconc}
(F_{12})_\e \rightharpoonup 12\pi \delta_0
\end{equation}
weakly in the sense of measures, as $\e \to 0$, where $\{0,{a\over 2},{ia \over 2},{a+ia \over 2}\}$ are the zeroes of $\phi_\e$ with multiplicities $2,n_1,n_2,2$ and $(n_1,n_2)=(2,0)$ (or viceversa).
\end{thm}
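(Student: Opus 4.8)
The plan is to obtain Theorem \ref{thmexample} by feeding Propositions \ref{propp1}, \ref{propp2} and \ref{propp3} into Theorem \ref{main} (used in the form of Remark \ref{0923}, since the concentration point will not be the origin), and then transporting the resulting condensate back to the stated configuration. First I would record the bookkeeping: for $(n_1,n_2)=(2,0)$ one has $n=2$, $n_3=2$ and $N=n+n_1+n_2+n_3=6$, and \eqref{balanceex} reads $\frac{n_1}{2}+\frac{n_2}{2}+\frac{n_3}{2}=2=\frac n2+1$, so the balance condition \eqref{balance} (equivalently $2\pi N=4\pi(n+1)$, i.e. $12\pi=12\pi$) holds; the case $(n_1,n_2)=(0,2)$ is symmetric. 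The periodic configuration generated by $\{0,\frac a2,\frac{ia}2,\frac{a+ia}2\}$ is even, since $0$ and the three half-periods are fixed modulo $a\Z+ia\Z$ by $z\mapsto-z$ and all multiplicities are even. Because the half-periods lie on $\partial\Omega$, Theorem \ref{main} cannot be applied directly; following the general strategy of this section I would use the translation vector $\tau':=-\tau(1+i)$, which for small $\tau>0$ carries $\{0,\frac a2,\frac{ia}2,\frac{a+ia}2\}$ to the four points $\{\tau',\tau'+\frac a2,\tau'+\frac{ia}2,\tau'+\frac{a+ia}2\}$ lying strictly inside $\Omega$, and apply Theorem \ref{main} to this translated configuration with $\tau'$ as concentration point. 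Here $\tau'$ plays the role of the parameter ``$\tau$'' appearing in Propositions \ref{propp1}, \ref{1014}, \ref{propp2} and \ref{propp3}.

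Next I would verify the three hypotheses of Theorem \ref{main} for this configuration. Assumption \eqref{pc}, i.e. $\frac{d^{n+1}\mathcal{H}^{\tau'}}{dz^{n+1}}(\tau')=0$, follows from Proposition \ref{propp1}: $n=2$ is even and the periodic configuration is even with $0$ among its points, hence every odd-order derivative of $\mathcal{H}^{\tau'}$ vanishes at $\tau'$, in particular the one of order $n+1=3$. Assumption \eqref{D0}, i.e. $D_0^{\tau'}<0$, follows from Proposition \ref{propp2}, since $\Omega$ is a rectangle, the configuration is the one treated there, \eqref{balanceex} holds, and $\frac{n_3}{2}=1$ is odd; by Proposition \ref{1014} this coefficient is independent of $\tau'$ and equals $D_0$. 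Assumption \eqref{nondegenracy}, i.e. $|\Gamma|\neq|\Upsilon+\frac{n(2n+3)}{n+1}D_0|$, is precisely what Proposition \ref{propp3} establishes in the case $n=n_3=2$, $\Omega$ a square and $(n_1,n_2)=(2,0)$ (or $(0,2)$), having been reduced there to the numerically checked inequality \eqref{ceae5}.

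With the hypotheses in force, Theorem \ref{main} (via Remark \ref{0923}) yields, for each small $\e>0$, parameters $\delta(\e),a(\e)$ with $a(\e)=O(\delta(\e))$ and $\delta(\e)\sim\e^{(n+1)/(n+2)}$, and a solution $w_\e=PU_{\delta(\e),a(\e),\sigma_{a(\e)}}+\phi(\delta(\e),a(\e))$ of \eqref{3} whose full right-hand side (mean-field term plus $\e^2$-term) converges weakly, as measures, to $8\pi(n+1)\delta_{\tau'}=24\pi\delta_{\tau'}$. I would then set $v_\e=w_\e+c_-(w_\e)$, so that $v_\e$ solves \eqref{2}, $u_\e=u_0+v_\e$ solves \eqref{1}, and \eqref{1917} produces an $N$-condensate $(\mathcal A_\e,\phi_\e)$ with $N=6$. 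Using $-\Delta v_\e=-\Delta w_\e$ and comparing \eqref{2} with \eqref{3} one sees that the quantity in the above weak limit equals $\frac{1}{\e^2}e^{u_0+v_\e}(1-e^{u_0+v_\e})=2(F_{12})_\e$, where $F_{12}=\frac{1}{2\e^2}|\phi|^2(1-|\phi|^2)$ comes from \eqref{CSeqs} with $\e=\frac k2$; hence $(F_{12})_\e\rightharpoonup12\pi\delta_{\tau'}$. For the non-topological decay $|\phi_\e|\to0$ in $C(\bar\Omega)$ I would write $|\phi_\e|^2=e^{u_0+w_\e}e^{c_-(w_\e)}$ and combine $e^{c_-(w_\e)}=O(\e^2\delta(\e)^{2})$ (from \eqref{cc} with the minus sign, $\int_\Omega e^{u_0+w_\e}\sim\pi(n+1)\delta(\e)^{-2}$ and $\e^2B(W)\to0$) with $\sup_\Omega e^{u_0+w_\e}=O(\delta(\e)^{-2(n+2)/(n+1)})$, which comes from the pointwise shape of $PU_{\delta,a,\sigma}$ near its spike, obtaining $\sup_\Omega|\phi_\e|^2=O(\e^2\delta(\e)^{-2/(n+1)})=O(\e^{2(n+1)/(n+2)})\to0$. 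Finally, since \eqref{1} is translation invariant and a shift merely relabels the lattice of vortex points, I would translate $(\mathcal A_\e,\phi_\e)$ back by $+\tau(1+i)$: this moves $\tau'$ to $0$ and the vortex points to $\{0,\frac a2,\frac{ia}2,\frac{a+ia}2\}$ with multiplicities $2,n_1,n_2,2$, which is exactly the condensate claimed, with $(F_{12})_\e\rightharpoonup12\pi\delta_0$.

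All the substance sits in the three propositions; among them the verification of \eqref{nondegenracy} in Proposition \ref{propp3} is the main obstacle, since it forces one to compute $2(n+1)$-st order Taylor coefficients of $\mathcal{H}$, $H^*$ and $\sigma_0$ at the concentration point and to reduce matters to the explicit inequality \eqref{ceae5}, whose validity is ultimately established numerically. Once those propositions are granted, Theorem \ref{thmexample} is bookkeeping: picking a translate that places all vortex points strictly inside a fundamental domain, matching the constants ($N=6$, $8\pi(n+1)=24\pi$, hence $(F_{12})_\e\rightharpoonup12\pi\delta_0$), and carrying out the harmless translation-back step.
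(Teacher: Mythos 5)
Your proposal is correct and follows exactly the route the paper takes: Theorem \ref{thmexample} is obtained by combining Propositions \ref{propp1}, \ref{propp2} and \ref{propp3} to verify \eqref{pc}, \eqref{D0} and \eqref{nondegenracy} for the translated configuration, and then invoking Theorem \ref{main} via Remark \ref{0923}. The extra bookkeeping you supply (the identification $2(F_{12})_\e=\frac{1}{\e^2}e^{u_0+v_\e}(1-e^{u_0+v_\e})$ giving mass $12\pi$, the decay $\sup_\Omega|\phi_\e|^2=O(\e^{2(n+1)/(n+2)})$, and the harmless translation back) is accurate and only makes explicit what the paper leaves implicit.
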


\medskip \noindent As a final remark, observe that for $n=0$ Theorem \ref{main} essentially recovers the result in
\cite{LinYan1} concerning single-point concentration in any torus
$\Om$ (see also \cite{EsFi}). Notice that $n=0$ corresponds to
have that the concentration point $0$ is not really a singular
point and a more simple approach is possible as in the
above-mentioned papers. By \eqref{balance} the total multiplicity $N$
is $2$ produced by two vortex-points $p_1,p_2\in \Omega \setminus \{0 \}$. Assumption
\eqref{pc} is equivalent to have $(\log \mathcal{H})'(0)=0$. By
the Cauchy-Riemann equations, the last condition can be just
re-written as
$$ \nabla [2 \re \log \mathcal{H}](0)=\nabla \log |\mathcal{H}|^2(0)=\nabla [8\pi H+u_0](0)=0.$$
Since $\nabla H(0)=0$ in view of $H(z)=H(-z)$, we have that \eqref{pc} simply reads as: $0$ is a critical point of $u_0$. As far as \eqref{D0}, notice that $D_0$ does not depend on $\rho>0$ small for
\begin{eqnarray*}
\int_{\sigma_0^{-1} (B_\rho(0)) \setminus \sigma_0^{-1} (B_r(0))}
e^{u_0+8\pi G(z,0)} -
\int_{B_\rho(0) \setminus B_r(0)}
\frac{dy}{|y|^4}=
\hbox{Area}\left( B_{\frac{1}{r}}(0) \setminus B_{\frac{1}{\rho}}(0)   \right)-\pi \Big(\frac{1}{r^2}-\frac{1}{\rho^2}\Big)=0
\end{eqnarray*}
for all $0<r\leq \rho$, in view of \eqref{eq sigma0} with $c_0=0$. Therefore, $D_0$ can be re-written as
\begin{eqnarray*}
D_0 = \frac{1}{\pi}\left[\int_{\Omega \setminus \sigma_0^{-1} (B_\rho(0))}
e^{u_0+8\pi G(z,0)} -
\int_{\mathbb{R}^2 \setminus B_\rho(0)}
\frac{dy}{|y|^4}\right]=\frac{1}{\pi}\lim_{r\to0} \bigg[\int_{\Omega \setminus \sigma_0^{-1}(B_r(0))}
{e^{8\pi H(z,0)+u_0} \over |z|^4} -
\int_{\mathbb{R}^2 \setminus B_{r}(0)} \frac{1}{|y|^4}\bigg].
\end{eqnarray*}
Since $\sigma_0(z)=\frac{z}{\lambda_0}+\frac{\mathcal{H}''(0)}{2\lambda_0^2}z^3+O(|z|^5)$ and $\sigma_0^{-1}(z)=\lambda_0z+O(|z|^3)$ with $\lambda_0=e^{4\pi H(0)-\frac{u_0(0)}{2}}$, notice that $B_{\lambda_0r-Cr^3}(0)\subset \sigma_0^{-1}(B_r(0)) \subset B_{\lambda_0 r+Cr^3}(0)$ for all $r>0$ small, for some constant $C>0$. Thus,  there holds
\begin{eqnarray*}
&&\bigg|\int_{\Omega \setminus \sigma_0^{-1}(B_r(0))}
{1\over |z|^4}e^{8\pi[H(z,0)-H(0,0)]+[u_0(z)-u_0(0)]}-\int_{\Omega \setminus B_{\lambda_0 r}(0)}
{1\over |z|^4}e^{8\pi[H(z,0)-H(0,0)]+[u_0(z)-u_0(0)]}\bigg|\\
&&=O\left( \int_{ B_{\lambda_0 r+Cr^3}(0) \setminus B_{\lambda_0  r-Cr^3}(0)} \frac{1}{|z|^2}\right) =o(1)
\end{eqnarray*}
as $r \to 0$ in view of $\nabla [8\pi H+u_0](0)=0$, yielding to the same expression for $D_0$ as in \cite{EsFi,LinYan1}:
$$D_0=\frac{\lambda_0^2}{\pi}\lim_{r\to0} \bigg[\int_{\Omega \setminus B_r(0)}
{1\over |z|^4}e^{8\pi[H(z,0)-H(0,0)]+[u_0(z)-u_0(0)]} -
\int_{\mathbb{R}^2 \setminus B_{r}(0)} \frac{1}{|y|^4}\bigg].$$
The ``non-degeneracy condition" \eqref{nondegenracy} reads as
$$\lf| \frac{\ml{H}''(0)}{\ml{H}(0)}-4\pi (H^*)''(0)\rg|=\lf| (\log \ml{H})''(0)-4\pi (H^*)''(0)\rg|\ne {2\pi\over |\Omega|},$$
in view of $\sigma_0=q_0$, $b_1=\lambda_0$, $f_1(z)=-4\pi \lambda_0 (H^*)'(z)+\frac{2 \lambda_0}{z}-\frac{2}{\sigma_0(z)}$ and $\ml{H}'(0)=0$. Setting
$\mathcal{H}_1(z)=e^{-4\pi H^*(z)} \mathcal{H}(z)$, we have that $|\mathcal{H}_1(z)|^2=e^{u_0+\frac{2\pi}{|\Omega|}|z|^2}$
and
\begin{eqnarray*}(\log \ml{H})''(0)-4\pi (H^*)''(0)&=&(\log \ml{H}_1)''(0)=2 (\re \log \ml{H}_1)''(0)= (\log |\ml{H}_1|^2 )''(0)=\Big(u_0+\frac{2 \pi}{|\Omega|} |z|^2\Big)''(0)\\
&=& \frac{1}{4}[(u_0)_{xx}(0)-(u_0)_{yy}(0)-2i (u_0)_{xy}(0)]
\end{eqnarray*}
in view of \eqref{definitionH}-\eqref{keyrelationH}, and the above condition turns into
\begin{eqnarray*} 0 &\not=&\frac{1}{16}\lf| (u_0)_{xx}(0)-(u_0)_{yy}(0)-2i (u_0)_{xy}(0) \rg|^2 -{4\pi^2\over |\Omega|^2}=
\frac{1}{16} \left((u_0)_{xx}(0)-(u_0)_{yy}(0)\right)^2+\frac{1}{4}(u_0)^2_{xy}(0) -{4\pi^2\over|\Omega|^2}\\
&=&\frac{1}{16}(\Delta u_0)^2 (0)-\frac{1}{4}\hbox{det}\,D^2 u_0(0)-{4\pi^2\over |\Omega|^2}=-\frac{1}{4}\hbox{det}\,D^2 u_0(0).
\end{eqnarray*}
In conclusion, when $n=0$ the assumptions in Theorem \ref{main} are equivalent to have $0$ as a non-degenerate critical point of $u_0(z)=-4\pi G(z,p_1)-4\pi G(z,p_2)$ with $D_0<0$.


\section{A more general result}\label{general}
In this section we deal with the case $m\geq 2$ in Theorem \ref{mainbb}. For more clearness, let us denote the concentration points as $\xi_l$, $l=1,\dots,m$, the remaining points in the vortex set as $p_j$, and by $n_l,n_j$ the corresponding multiplicities.

\medskip \noindent From section $2$ recall that $H(z)= G(z,0)+\frac{1}{2\pi} \log|z|$ is a smooth function in $2\Omega$ with $\Delta H=\frac{1}{|\Omega|}$, and $H^*$ is an holomorphic function  in $2 \Omega$ with $\re H^*=H-\frac{|z|^2}{4|\Omega|}$. Up to a translation, we are assuming that $p_j \in \Omega$ for all $j=1,\dots,N$, and taking $\tilde \Omega$ close to $\Omega$ so that $\tilde \Omega -p_j \subset 2 \Omega$ for all $j=1,\dots,N$. Arguing as for \eqref{definitionH}, the function
\begin{eqnarray*}
&& \mathcal{H}(z)= \prod_j (z-p_j)^{n_j} \hbox{exp}\left( 4\pi \sum_{l=1}^m (n_l+1) H^*(z-\xi_l)  -2\pi\sum_{j=1}^N H^*(z-p_j)\right.\\
&&\left. +\frac{\pi}{|\Omega|} \sum_{l=1}^m (n_l+1)(\xi_l-2z) \overline{\xi_l} -\frac{\pi}{2|\Omega|}\sum_{j=1}^N |p_j|^2+\frac{\pi}{|\Omega|}z \overline{\sum_{j=1}^N p_j} \right)
\end{eqnarray*}
is holomorphic in $\tilde \Omega$ and satisfies
$$|\mathcal{H}(z)|^2=\left(\prod_{l=1}^m |z-\xi_l|^{-2n_l}\right) \hbox{exp}\left(u_0+8\pi \sum_{l=1}^m (n_l+1) H(z-\xi_l)\right)$$
in view of \eqref{hhh}. For $l=1,\dots,m$ the function
$$\mathcal{H}^l(z)= \mathcal{H}(z) \prod_{l' \not= l} (z-\xi_{l'})^{-(n_{l'}+2)}$$
is holomorphic near $\xi_l$ and satisfies
\begin{eqnarray}
|\mathcal{H}^l(z)|^2=\hbox{exp}\left(4\pi (n_l+2)H(z-\xi_l)+4\pi \sum_{l'\not=l} (n_{l'}+2) G(z, \xi_{l'})-4\pi \sum_j n_j G(z,p_j) \right).  \label{1849}
\end{eqnarray}

\medskip \noindent To be more clear, let us spend few words to compare the case $m=1$ and $m\geq 2$. When $m=1$ notice that $\mathcal{H}$ satisfies $|\mathcal{H}|^2=e^{u_0+8\pi(n+1)H(z)-2n \log |z|}$ in view of \eqref{keyrelationH}. The function $e^{u_0+8\pi(n+1)H(z)-2n \log |z|}$ is a sort of effective potential for \eqref{3} at $0$, where $e^{u_0-2n \log |z|}$ is the non-vanishing part of $e^{u_0}$ and $e^{8\pi(n+1) H(z)}$ is the self-interaction of the concentration point $0$ driven by $PU_{\delta,0,\sigma_0}$ through \eqref{1138}. When $m\geq 2$, \eqref{1849} can be re-written as
$$|\mathcal{H}^l(z)|^2=\hbox{exp} \left(u_0+8\pi(n_l+1)H(z-\xi_l)+8\pi \sum_{l'\not=l} (n_{l'}+1) G(z, \xi_{l'})-2n_l \log |z-\xi_l|\right)$$
for $l=1,\dots,m$, yielding to an effective potential for \eqref{3} at $\xi_l$ exhibiting an additional interaction term $e^{8\pi \sum_{l'\not=l} (n_{l'}+1) G(z, \xi_{l'})}$ generated by the effect of the concentration points $\xi_{l'}$, $l'\not=l$, through \eqref{1139}.

\medskip \noindent Setting $\mathcal{H}_{0}=\frac{\mathcal{H}}{(z-\xi_1)^{n_1+2}\dots (z-\xi_m)^{n_m+2}}$, we now define $\sigma_0$ as
\begin{equation} \label{1143}
\sigma_0(z)=-\left( \int^z \mathcal{H}_{0}(w) \hbox{exp}\left[-\sum_{l=1}^m c_0^l (w-\xi_l)^{n_l+1} \prod_{l' \not=l}(w-\xi_{l'})^{n_{l'}+2}\right] dw \right)^{-1},
\end{equation}
where
$$c_0^l=\frac{1}{\mathcal{H}_0(\xi_l) (n_l+1)!}  \frac{d^{n_l+1} \mathcal{H}^l }{dz^{n_l+1}}(\xi_l),\quad l=1,\dots,m,$$
guarantee that all the residues of the integrand function in the definition of $\sigma_0$ vanish. The presence of the term $ \prod_{l' \not=l}(w-\xi_{l'})^{n_{l'}+2}$ is crucial to compute explicitly the $c_0^l$'s for
$$c_0^l (w-\xi_l)^{n_l+1} \prod_{l' \not=l}(w-\xi_{l'})^{n_{l'}+2}=O((w-\xi_{l'})^{n_{l'}+2})$$
has an high-order effect near any other $\xi_{l'}$, $l' \not= l$. By construction $\sigma_0 \in \mathcal{M}(\overline{\Omega})$ vanishes only at the $\xi_l$'s with multiplicity $n_l+1$ and
$$\lim_{z \to \xi_l} \frac{(z-\xi_l)^{n_l+1}}{\sigma_0(z)}=\frac{\mathcal{H}^l(\xi_l)}{n_l+1},$$
and satisfies
$$|\sigma_0'(z)|^2= |\sigma_0(z)|^4 \hbox{exp}\left(u_0+8\pi\sum_{l=1}^m (n_l+1)G(z,\xi_l) -2 \sum_{l=1}^m \re \bigg[c_0^l (z-\xi_l)^{n_l+1} \prod_{l'\not=l}(z-\xi_{l'})^{n_{l'}+2}\bigg] \right).$$
Under the assumptions of Theorem \ref{mainbb}, notice that $c_0^l=0$ for all $l=1,\dots,m$ and 
 $$\left|\left(1\over\sigma_0\right)'(z)\right|^2=|\mathcal{H}_0(z)|^2=e^{u_0+8\pi \sum_{l=1}^m (n_l+1) G(z,\xi_l)}.$$

\medskip \noindent Since each $\xi_l$ gives a contribution to the dimension of the kernel for the linearized operator \eqref{ol}, the parameters $\delta$ and $a$ are no longer enough to recover all the degeneracies induced by the ansatz $PU_{\delta,a,\sigma}$, for $\sigma \in \mathcal{M}(\overline{\Omega})$ a function which vanishes only at the points $\xi_l$, $l=1,\dots,m$, with multiplicity $n_l+1$. In our construction, the correct number of parameters to use is $2m+1$, given by $m$ small complex numbers $a_1,\dots,a_m$ and $\delta>0$ small, where the latter gives rise to the concentration parameter $\delta_l$ at $\xi_l$, $l=1,\dots,m$, by means of \eqref{repla2}. The request that all the $\delta_l$'s tend to zero with the same rate
is necessary as we will discuss later.

\medskip \noindent We need to construct an ansatz that looks as $PU_{\delta_l,a_l,\sigma_{a,l}}$ near each $\xi_l$, for a suitable $\sigma_{a,l}$ which makes the approximation near $\xi_l$ good enough. In order to localize our previous construction, let us define $PU_{\delta_l,a_l,\sigma}$ as the solution of
$$\left\{ \begin{array}{ll} -\Delta PU_{\delta_l,a_l,\sigma} =
\chi(|z-\xi_l|) |\sigma'(z)|^2 e^{U_{\delta_l,a_l,\sigma}} -\frac{1}{|\Omega|} \int_\Omega \chi(|z-\xi_l|) |\sigma'(z)|^2 e^{U_{\delta_l,a_l,\sigma}}& \hbox{in }\Omega\\
\int_\Omega PU_{\delta_l,a_l,\sigma}=0,&
\end{array} \right.$$
where $\chi$ is a smooth radial cut-off function so that $\chi=1$ in $[-\eta,\eta]$, $\chi=0$ in $(-\infty,-2\eta]\cup [2\eta,+\infty)$, $0<\eta<\frac{1}{2} \min\{|\xi_l-\xi_{l'}|, \hbox{dist }(\xi_l,\partial \Omega): l,l'=1,\dots,m,\, l\not= l' \}$. The approximating function is then built as $W=\displaystyle \sum_{l=1}^m PU_l$, where $U_{\delta_l,a_l,\sigma_{a,l}}$ and $PU_{\delta_l,a_l,\sigma_{a,l}}$ will be simply denoted by $U_l$ and $PU_l$.

\medskip \noindent Let us now explain how to find the functions $\sigma_{a,l}$, $l=1,\dots,m$. Setting
$$\mathcal{B}_r^l=\bigg\{ \sigma \hbox{ holomorphic in }B_{2\eta}(\xi_l):\:\Big\| \frac{\sigma}{\sigma_0}-1\Big\|_{\infty,B_{2\eta}(\xi_l)} \leq r  \bigg\}$$
for $l=1,\dots,m$, Lemma \ref{gomme} still holds in this context for all $\sigma \in  \mathcal{B}_r^l$, by simply replacing $0$, $n$ with $\xi_l$, $n_l$ and $\tilde \Omega$ with $B_{2\eta}(\xi_l)$.
Then, for all $\sigma=(\sigma_1,\dots,\sigma_m) \in \mathcal{B}_r:=\mathcal{B}_r^1 \times \dots \times \mathcal{B}_r^m$ and $a=(a_1,\dots,a_m) \in \mathbb{C}^m$ with $\|a\|_\infty <\rho$ there exist points $a_i^l$, $l=1,\dots,m$ and $i=0,\dots,n_l$, so that $\{z \in B_{2\eta}(\xi_l): \, \sigma_l(z)=a_l \}=\{\xi_l+a_0^l,\dots,\xi_l+a_{n_l}^l\}$ for all $l=1,\dots,m$. Arguing as for \eqref{Hasigma}, for $l=1,\dots,m$ the function
\begin{eqnarray*}
&& \hspace{-0.3cm} \mathcal{H}_{a,\sigma}^l(z)= \prod_j (z-p_j)^{n_j} \prod_{l' \not= l} (z-\xi_{l'})^{n_{l'}} \prod_{l' \not= l} \prod_{i=0}^{n_{l'}} (z-\xi_{l'}-a_i^{l'})^{-2} \hbox{exp}\left( 4\pi \sum_{l'=1}^m \sum_{i=0}^{n_{l'}} H^*(z-\xi_{l'}-a_i^{l'})  \right.\\
&&\hspace{-0.3cm} \left. -2\pi\sum_{j=1}^N H^*(z-p_j)+\frac{\pi}{|\Omega|} \sum_{l'=1}^m (n_{l'}+1) (\xi_{l'}-2z) \overline{ \xi_{l'}}
-\frac{\pi}{2|\Omega|}\sum_{j=1}^N |p_j|^2-\frac{2\pi}{|\Omega|}\sum_{l'=1}^m (z-\xi_{l'}) \overline{\sum_{i=0}^{n_{l'}} a_i^{l'}}
+\frac{\pi}{|\Omega|}z \overline{\sum_{j=1}^N p_j} \right)
\end{eqnarray*}
is holomorphic near $\xi_l$ and satisfies
\begin{equation} \label{keyrelationgen}
|\mathcal{H}_{a,\sigma}^l(z)|^2= |z-\xi_l|^{-2n_l}
\exp[u_0+8\pi \sum_{i=0}^{n_l} H(z-\xi_l-a_i^l)+8\pi
\sum_{l'\not= l} \sum_{i=0}^{n_{l'}}
G(z,\xi_{l'}+a_i^{l'})-\frac{2\pi}{|\Omega|} \sum_{l'=1}^m
\sum_{i=0}^{n_{l'}} |a_i^{l'}|^2]
\end{equation}
in view of \eqref{hhh}. Setting
$$g_{a_l,\sigma_l}^l(z)=\frac{\sigma_l(z)-a_l}{\prod_{i=0}^{n_l}(z-\xi_l-a_i^l)},\quad z \in B_{2\eta}(\xi_l),$$
and
\begin{equation} \label{cagen}
c_{a,\sigma}^l=\frac{\prod_{l'
\not=l}(\xi_l-\xi_{l'})^{-(n_{l'}+2)}}{(n_l+1)!}\frac{d^{n_l+1}}{dz^{n_l+1}}\left[
\Big(\frac{g^l_{a_l,\sigma_l}(z)
g^l_{0,\sigma_l}(\xi_l)}{g^l_{a_l,\sigma_l}(\xi_l)
g^l_{0,\sigma_l}(z)}\Big)^2
\frac{\mathcal{H}_{a,\sigma}^l(z)}{\mathcal{H}_{a,\sigma}^l(\xi_l)
} \right](\xi_l),
\end{equation}
the aim is to find a solution $\sigma_a =(\sigma_{a,1},\dots, \sigma_{a,m})\in \mathcal{B}_r$ of the system $(l=1,\dots,m)$:
\begin{equation} \label{sigmaagen}
\sigma_l(z)= -\left( \int^z
\Big(\frac{g^l_{a_l,\sigma_l}(w)}{g^l_{0,\sigma_l}(w)}\Big)^2
\frac{\mathcal{H}^l_{a,\sigma}(w)}{(w-\xi_l)^{n_l+2}}
\hbox{exp}\left[-\sum_{l'=1}^m c_{a,\sigma}^{l'}
(w-\xi_{l'})^{n_{l'}+1} \prod_{l''
\not=l'}(w-\xi_{l''})^{n_{l''}+2}\right] dw\right)^{-1},
\end{equation}
where the definition of $c_{a,\sigma}^l$ makes null the residue at $\xi_l$ of the integrand function in \eqref{sigmaagen}. The function $\sigma_{a,l}$ will vanish only at $\xi_l$ with multiplicity $n_l+1$ and satisfy
\begin{eqnarray} \label{eq sigmaagen}
|\sigma_{a,l}'(z)|^2&=&  |\sigma_{a,l}(z)-a_l|^4  \hbox{exp}\left(u_0+8\pi \sum_{l'=1}^m \sum_{i=0}^{n_{l'}} G(z,\xi_{l'}+a_i^{l'})-\frac{2\pi}{|\Omega|} \sum_{l'=1}^m \sum_{i=0}^{n_{l'}} |a_i^{l'}|^2\right.\\
&&\left. -2\sum_{l'=1}^m \re \Big[c^{l'}_{a,\sigma_{a}}
(z-\xi_{l'})^{n_{l'}+1}
\prod_{l''\not=l'}(z-\xi_{l''})^{n_{l''}+2}\Big] \right) \nonumber
\end{eqnarray}
in view of \eqref{keyrelationgen}.

\medskip \noindent Since $\mathcal{H}_{0,\sigma}^l=\mathcal{H}^l$ and $c^l_{0,\sigma}=c_0^l$ for all $l=1,\dots,m$, when $a=0$ the system \eqref{sigmaagen} reduces to $m$-copies of \eqref{1143} in each $B_{2\eta}(\xi_l)$, $l=1,\dots,m$, and it is natural to find $\sigma_a$ branching off $(\sigma_0,\dots,\sigma_0)$ for $a$ small by IFT. Let us emphasize that each $\sigma_{a,l}$, $l=1,\dots,m$, is close to $\sigma_0\Big|_{B_{2\eta}(\xi_l)}$, a crucial property to have $D_0$ defined in terms of a unique $\sigma_0$ (see \eqref{ggg}). Letting $q_{0,l}$ be the function so that $\sigma_0=q_{0,l}^{n_l+1}$ near $\xi_l$, arguing as in Lemma \ref{derivca} we have that
\begin{lem}\label{derivcagen}
Up to take $\rho$ smaller, there exists a $C^1-$map $a \in
B_\rho(0) \to \sigma_a \in \mathcal{B}_r$ so that $\sigma_a$ solves the system \eqref{cagen}-\eqref{sigmaagen}.
Moreover, the map $a \in B_\rho(0) \to c_a^l:=c^l_{a,\sigma_a}$ is $C^1$
with
\begin{eqnarray}
&& \Gamma^{ll}:=\mathcal{H}(\xi_l) \partial_{a_l} c_a^l
\Big|_{a=0}=\frac{1}{n_l !}\frac{d^{n_l+1}}{dz^{n_l+1}}\bigg[
\mathcal{H}^l(z)f_{n_l+1}^l(z)\bigg]  (\xi_l) \label{primo}\\
&&\Upsilon^{ll}:=\mathcal{H}(\xi_l) \partial_{\bar a_l} c_a^l \Big|_{a=0}=-{2\pi(n_l+1)\over |\Om|
n_l!}\overline{b_{n_l+1}^l}\,\frac{d^{n_l} \mathcal{H}^l }{dz^{n_l}}(\xi_l) \label{secondo}
\end{eqnarray}
and for $j \not= l$
\begin{eqnarray}
&&\Gamma^{lj}:=\mathcal{H}(\xi_l) \partial_{a_j} c_a^l
\Big|_{a=0}=\frac{n_j+1}{(n_l+1)!}\frac{d^{n_l+1}}{dz^{n_l+1}}\bigg[
\mathcal{H}^l(z) \ti f_{n_j+1}^j(z) \bigg]  (\xi_l) \label{terzo}\\
&&\Upsilon^{lj}:=\mathcal{H}(\xi_l) \partial_{\bar a_j} c_a^l \Big|_{a=0}=-{2\pi(n_j+1)\over |\Om| n_l!}\overline{b_{n_j+1}^j}\,\frac{d^{n_l} \mathcal{H}^l}{dz^{n_l}}(\xi_l) \label{quarto},
\end{eqnarray}
where
$$f_{n+1}^l(z)=\frac{1}{(n+1)!} \frac{d^{n+1}}{dw^{n+1}}\left[2\log \frac{w-q_{0,l}(z)}{q_{0,l}^{-1}(w)-z}+4\pi
H^*(z-q_{0,l}^{-1}(w))\right] (0)\:,\qquad b_{n+1}^l=\frac{1}{(n+1)!}\frac{d^{n+1} q_{0,l}^{-1}}{dw^{n+1}}(0)$$
and for $j \not=l$
$$\tilde f_{n+1}^j(z)=\frac{1}{(n+1)!} \frac{d^{n+1}}{dw^{n+1}} \bigg[-2\log(z-q_{0,j}^{-1}(w))+  4\pi H^*(z-q_{0,j}^{-1}(w))\bigg](0).$$
\end{lem}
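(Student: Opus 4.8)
This is the natural multi-point extension of the single-point case (Lemma \ref{derivca}), so I will describe the structure and indicate where the $m$ concentration points interact.

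\emph{Step 1 (existence of $\sigma_a$).} I work in the Banach space $\mathcal{X}$ obtained as the product over $l=1,\dots,m$ of the spaces of functions holomorphic in $B_{2\eta}(\xi_l)$ normed by $\|\sigma_l/\sigma_0\|_{\infty,B_{2\eta}(\xi_l)}$, and I view the right-hand sides of \eqref{sigmaagen} as a map $\mathcal{T}(a,\cdot)=(\mathcal{T}_1,\dots,\mathcal{T}_m)\colon \mathcal{B}_r\to\mathcal{X}$, so that solving \eqref{sigmaagen} amounts to finding a fixed point $\sigma=\mathcal{T}(a,\sigma)$. Three facts make the scheme run. First, at $a=0$ one has $\mathcal{H}^l_{0,\sigma}=\mathcal{H}^l$, $g^l_{0,\sigma_l}/g^l_{0,\sigma_l}\equiv 1$ and $c^l_{0,\sigma}=c_0^l$, so $\mathcal{T}_l(0,(\sigma_0,\dots,\sigma_0))$ equals $\sigma_0$ restricted to $B_{2\eta}(\xi_l)$ by the very definition \eqref{1143}; hence $(\sigma_0,\dots,\sigma_0)$ is the fixed point when $a=0$, unique once the vanishing order $n_l+1$ at $\xi_l$ is prescribed. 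Second, the coefficient $c^l_{a,\sigma}$ in \eqref{cagen} is tailored precisely so that the integrand of $\mathcal{T}_l$ has zero residue at $\xi_l$, which is its only pole inside $B_{2\eta}(\xi_l)$ since the poles of $\mathcal{H}^l_{a,\sigma}$ at $\xi_{l'}+a_i^{l'}$, $l'\ne l$, lie outside $B_{2\eta}(\xi_l)$; here the factors $\prod_{l''\ne l'}(w-\xi_{l''})^{n_{l''}+2}$ are what decouple the points, as $c^{l'}_{a,\sigma}(w-\xi_{l'})^{n_{l'}+1}\prod_{l''\ne l'}(w-\xi_{l''})^{n_{l''}+2}$ vanishes to order $n_{l''}+2$ at every $\xi_{l''}$. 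Third, $\mathcal{T}$ depends in a $C^1$ way on $(a,\bar a,\sigma)$: although the preimages $a^l_i$ of $a_l$ under $\sigma_l$ are in general only Hölder-$\tfrac1{n_l+1}$ in $a_l$, the function $\mathcal{H}^l_{a,\sigma}$ was built so as to involve them only through $\prod_i(z-\xi_l-a^l_i)$, $\prod_i e^{4\pi H^*(z-\xi_l-a^l_i)}$ (holomorphic in $a_l$) and $\overline{\sum_i a^l_i}$ (anti-holomorphic in $a_l$), each of which is a genuine analytic function of $(a_l,\bar a_l)$ because the elementary symmetric functions of the roots of $\sigma_l(\xi_l+\cdot)=a_l$ are; the same applies to $c^l_{a,\sigma}$, a finite combination of values and derivatives of such quantities. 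With these in hand a contraction/implicit-function argument, run exactly as in Lemma \ref{derivca} but on $\mathcal{X}$, yields for $\|a\|<\rho$ (after shrinking $\rho$) a unique $\sigma_a\in\mathcal{B}_r$ solving \eqref{cagen}--\eqref{sigmaagen}, of class $C^1$ in $a$, still vanishing at each $\xi_l$ to order $n_l+1$ and satisfying \eqref{eq sigmaagen}.

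\emph{Step 2 (regularity of $c_a^l$).} Since $c_a^l=c^l_{a,\sigma_a}$ is the composition of the $C^1$ map $a\mapsto\sigma_a$ with the $C^1$ map $(a,\sigma)\mapsto c^l_{a,\sigma}$ given by \eqref{cagen}, it is $C^1$ in $a$.

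\emph{Step 3 (the derivatives at $a=0$).} I differentiate \eqref{cagen} at $a=0$. There the prefactor ratios satisfy $g^l_{a_l,\sigma_l}g^l_{0,\sigma_l}(\xi_l)\big/\big(g^l_{a_l,\sigma_l}(\xi_l)g^l_{0,\sigma_l}\big)=1+O(|a|)$ and $\mathcal{H}^l_{a,\sigma}(z)/\mathcal{H}^l_{a,\sigma}(\xi_l)=\mathcal{H}^l(z)/\mathcal{H}^l(\xi_l)+O(|a|)$, so $\Gamma^{lj},\Upsilon^{lj}$ are read off from the part of these expressions linear in $a_j$, resp.\ $\bar a_j$. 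The key computation is the following: writing $\sigma_0=q_{0,l}^{\,n_l+1}$ near $\xi_l$ and parametrizing the roots of $\sigma_l(\xi_l+\cdot)=a_l$ as $q_{0,l}^{-1}(t\omega^i)-\xi_l$ with $\omega=e^{2\pi i/(n_l+1)}$ and $t^{n_l+1}=a_l$, one finds $\sum_i a_i^l=(n_l+1)\sum_{k\ge 1}b^l_{k(n_l+1)}a_l^k$, whence $\partial_{\bar a_l}\overline{\sum_i a_i^l}\big|_{a=0}=(n_l+1)\overline{b^l_{n_l+1}}$ and $\partial_{a_l}\overline{\sum_i a_i^l}\big|_{a=0}=0$, while the holomorphic symmetric combinations contribute through $\partial_{a_l}\big|_{a=0}$. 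Feeding this into the logarithmic derivative of $\mathcal{H}^l_{a,\sigma}$ and collecting terms gives $\Gamma^{ll}=\tfrac1{n_l!}\tfrac{d^{n_l+1}}{dz^{n_l+1}}\big[\mathcal{H}^l f^l_{n_l+1}\big](\xi_l)$, and the $-\tfrac{2\pi}{|\Omega|}\sum|a_i^{l'}|^2$ term of \eqref{keyrelationgen} produces $\Upsilon^{ll}=-\tfrac{2\pi(n_l+1)}{|\Omega|\,n_l!}\overline{b^l_{n_l+1}}\,\tfrac{d^{n_l}\mathcal{H}^l}{dz^{n_l}}(\xi_l)$, exactly as when $m=1$. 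The genuinely new feature for $m\ge 2$ is the dependence of $\mathcal{H}^l_{a,\sigma}$, near $\xi_l$, on the far-away parameters $a_j$, $j\ne l$, entering through $\prod_i(z-\xi_j-a_i^j)^{-2}$, $\prod_i e^{4\pi H^*(z-\xi_j-a_i^j)}$ and $\overline{\sum_i a_i^j}$; repeating the same bookkeeping, but now using the ``interaction'' functions $\tilde f^j_{n_j+1}$, yields the cross-terms $\Gamma^{lj}$ and $\Upsilon^{lj}$ with the stated $(n_j+1)$-prefactors.

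\emph{Main obstacle.} The delicate point is not the final computation but Step 1: proving the $C^1$ dependence of $\sigma_a$ (hence of $c_a^l$) on $a$ across the branch point, where the individual preimages $a_i^l$ are merely Hölder. This is exactly what the holomorphic/anti-holomorphic design of $\mathcal{H}^l_{a,\sigma}$ (dependence on the $a_i^l$ only via symmetric functions) is for, while the decoupling factors $\prod_{l''\ne l'}(w-\xi_{l''})^{n_{l''}+2}$ keep the $m$ fixed-point equations essentially uncoupled, so that the $a=0$ solution is genuinely $(\sigma_0,\dots,\sigma_0)$ and the contraction closes uniformly in all the $\xi_l$'s.
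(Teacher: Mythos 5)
Your proposal is correct and follows essentially the route the paper itself takes (the paper proves this lemma only by the remark ``arguing as in Lemma \ref{derivca}''): an implicit-function argument on the product of the local spaces, hinging on the fact that $\mathcal{H}^l_{a,\sigma}$ and $c^l_{a,\sigma}$ depend on the preimages $a_i^{l'}$ only through symmetric combinations (hence analytically in $(a_{l'},\bar a_{l'})$), with the factors $\prod_{l''\ne l'}(w-\xi_{l''})^{n_{l''}+2}$ decoupling the $m$ residue conditions, followed by differentiation of \eqref{cagen} at $a=0$. Two small points to tidy up: in Step 3 you should record, as in the proof of Lemma \ref{derivca}, that $\partial_\sigma c^l_{0,\sigma_0}=0$ (so the chain rule through $a\mapsto\sigma_a$ contributes nothing) and that the terms proportional to $\frac{d^{n_l+1}\mathcal{H}^l}{dz^{n_l+1}}(\xi_l)$ drop out precisely because $c_0^l=0$; moreover $\Upsilon^{ll}$ does not come from the $z$-independent term $-\frac{2\pi}{|\Omega|}\sum_i|a_i^{l'}|^2$ (which cancels in the ratio $\mathcal{H}^l_{a,\sigma}(z)/\mathcal{H}^l_{a,\sigma}(\xi_l)$), but from the anti-holomorphic term $-\frac{2\pi}{|\Omega|}\sum_{l'}(z-\xi_{l'})\overline{\sum_i a_i^{l'}}$ in the exponent defining $\mathcal{H}^l_{a,\sigma}$.
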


\noindent Letting $n=\min\{n_l :\: l=1,\dots,m\}$, up to re-ordering, assume that $n=n_1=\dots=n_{m'}<n_l$ for all $l=m'+1,\dots,m$, where $1\leq m'\leq m$. The matrix $A$ in Theorem \ref{mainbb} is the $2m \times 2m-$matrix in the form
\begin{equation} \label{matrixA}
A=\left( \begin{array}{ccc} A_{1,2}^{1,2}& \dots & A_{1,2}^{2m-1,2m}\\
\vdots& \vdots &\vdots\\
A_{2m-1,2m}^{1,2}& \dots& A_{2m-1,2m}^{2m-1,2m} \end{array} \right),
\end{equation}
where the $2\times 2$-blocks are given by
$$A_{2l-1,2l}^{2l'-1,2l'}=\left(\begin{array}{cc} \re [\Gamma^{ll'}+\Upsilon^{ll'}+\frac{n(2n+3)}{n+1} D_0 \frac{|\mathcal{H}^l(\xi_l)|^{-\frac{2}{n+1}}}{\sum_{j=1}^{m'}|\mathcal{H}^j(\xi_j)|^{-\frac{2}{n+1}}}  \delta_{ll'}]& \im [\Upsilon^{ll'}-\Gamma^{ll'}]\\
\im [\Gamma^{ll'}+\Upsilon^{ll'}]& \im [\Gamma^{ll'}-\Upsilon^{ll'}-\frac{n(2n+3)}{n+1} D_0 \frac{|\mathcal{H}^l(\xi_l)|^{-\frac{2}{n+1}}}{ \sum_{j=1}^{m'}|\mathcal{H}^j(\xi_j)|^{-\frac{2}{n+1}}} \delta_{ll'}] \end{array}\right)$$
when $l=1,\dots,m'$ and by
$$A_{2l-1,2l}^{2l'-1,2l'}=\left(\begin{array}{cc} \re [\Gamma^{ll'}+\Upsilon^{ll'}]& \im [\Upsilon^{ll'}-\Gamma^{ll'}]\\
\im [\Gamma^{ll'}+\Upsilon^{ll'}]& \im [\Gamma^{ll'}-\Upsilon^{ll'}] \end{array}\right)$$
when $l=m'+1,\dots,m$, with $\Gamma^{ll'}$ and $\Upsilon^{ll'}$ given by \eqref{primo}, \eqref{terzo} and \eqref{secondo}, \eqref{quarto}, respectively, and $\delta_{ll'}$ the Kronecker's symbol.

\medskip \noindent Arguing as in Lemma \ref{expPU}, for $l=1,\dots,m$ we have that
\begin{eqnarray*}
PU_{\delta_l,a_l,\sigma_l}&=&\chi(|z-\xi_l|) \left[U_{\delta_l,a_l,\sigma_l}-\log (8 \delta^2_l)+4 \log |g_{a_l,\sigma_l}^l| \right]\\
&&+8\pi \sum_{i=0}^{n_l} \left[ \frac{1}{2\pi} (\chi(|z-\xi_l|)-1) \log |z-\xi_l-a_i^l|+H(z-\xi_l-a_i^l)\right]+\Theta_{\delta_l,a_l,\sigma_l}+2\delta^2_l f_{a_l,\sigma_l}+O(\de^4_l)
\end{eqnarray*}
and
\begin{eqnarray} \label{1139}
PU_{\delta_l,a_l,\sigma_l}=8\pi \sum_{i=0}^{n_l} G(z,\xi_l+a_i^l)+\Theta_{\delta_l,a_l,\sigma_l}+2\delta^2_l \lf( f_{a_l,\sigma_l}-{\chi(|z-\xi_l|)\over|\sigma_l(z)-a_l|^2}\rg)+O(\delta^4_l)
\end{eqnarray}
do hold in $C(\overline{\Omega})$ and $C_{\text{loc}}(\overline{\Omega} \setminus\{\xi_l\})$, respectively, uniformly for $|a|< \rho$ and $\sigma_l \in \mathcal{B}_r^l$, where
$$\Theta_{\de_l,a_l,\sigma_l}=-\frac{1}{|\Omega|}\int_\Om \chi(|z-\xi_l|) \log {|\sigma_l(z)-a_l|^4\over
(\de_l^2+|\sigma_l(z)-a_l|^2)^2}$$ and $f_{a_l,\sigma_l}$ is a smooth function in $z$ (with a uniform control in $a_l$ and $\sigma_l$ of it and its derivatives in $z$).
Choosing $\sigma_l=\sigma_{a,l}$ and summing up over $l=1,\dots,m$, by \eqref{eq sigmaagen} for our approximating function there hold
\begin{eqnarray}\label{ieagr}
W&=& U_{\delta_l,a_l,\sigma_l}-\log (8 \delta^2_l)+\log |\sigma_l'|^2
-u_0+\frac{2\pi}{|\Omega|} \sum_{l'=1}^m \sum_{i=0}^{n_{l'}} |a_i^{l'}|^2+\Theta^l(a,\delta) \\
&&+2 \re \Big[c^l_{a,\sigma_l} (z-\xi_l)^{n_l+1}
\prod_{l'\not=l}(z-\xi_{l'})^{n_{l'}+2}\Big]
+O(|z-\xi_l|^{n_l+2}\sum_{l'\ne l}|c^{l'}_{a,\sigma_{l'}}|)+\sum_{l'=1}^m O(\delta^2_{l'}
|z-\xi_l|+\de^4_{l'})\nonumber
\end{eqnarray}
and
$$W= 8\pi \sum_{l=1}^m \sum_{i=0}^{n_l} G(z,\xi_l+a_i^l)+O\bigg(
\sum_{l'=1}^m \delta^2_{l'} \log|\delta_{l'}|\bigg)$$
uniformly in $B_\eta(\xi_l)$ and in $\Omega \setminus \cup_{l=1}^m B_\eta(\xi_l)$, respectively, where
$$\Theta^l(a,\delta):=\sum_{l'=1}^m [\Theta_{\delta_{l'},a_{l'},\sigma_{l'}}+\delta^2_{l'} f_{a_{l'},\sigma_{l'}}(\xi_l)].$$
As a consequence, we have that
$$\int_\Omega e^{u_0+W}= \sum_{l'=1}^m \left[\int_{B_\rho(0)} \frac{n_{l'}+1}{(\delta_{l'}^2+|y-a_{l'}|^2)^2} +o\Big(\frac{1}{\delta_{l'}^2}\Big)\right]= \pi \sum_{l'=1}^m \frac{n_{l'}+1}{\delta_{l'}^2} [1+o(1)],$$
and then near $\xi_l$ there holds
$$4\pi N \frac{e^{u_0+W}}{\int_\Omega e^{u_0+W}}=4\pi N \frac{|\sigma_l'|^2 e^{U_{\delta_l,a_l,\sigma_l}+O(|z-\xi_l|^{n_l+1})+o(1)}}{8\pi \sum_{l'=1}^m (n_{l'}+1) \delta_l^2 \delta_{l'}^{-2}(1+o(1))}.$$
In order to construct a $N-$condensate $(\mathcal{A}_\e,\phi_\e)$ which satisfies
\eqref{magconc} as $\e \to 0$, we look for a solution $w_\e$ of \eqref{3} in the form $w_\e=\displaystyle \sum_{l=1}^m PU_{\delta_l,a_l,\sigma_l}+\phi$, where $\phi$ is a small remainder term and $\delta_l=\delta_l(\e)$, $a_l=a_l(\e)$ are suitable small parameters, so that
\begin{eqnarray*}   &&4\pi N \frac{e^{u_0+w_\e}}{\int_\Omega e^{u_0+w_\e}}+
\frac{64 \pi^2N^2 \epsilon^2 \int_\Omega
e^{2u_0+2w_\e}}{(\int_\Omega e^{u_0+w_\e}+\sqrt{(\int_\Omega
e^{u_0+w_\e})^2-16\pi N\epsilon^2\int_\Omega
e^{2u_0+2w_\e}})^2}\left(\frac{e^{u_0+w_\e}}{\int_\Omega
 e^{u_0+w_\e}} -\frac{e^{2u_0+2w_\e}}{\int_\Omega
e^{2u_0+2w_\e}}\right) \\
&&\hspace{3cm} \rightharpoonup 8\pi \sum_{l=1}^m (n_l+1) \delta_{\xi_l}
\end{eqnarray*}
in the sense of measures as $\epsilon \to 0$. Since $|\sigma_l'|^2 e^{U_{\delta_l,a_l,\sigma_l}} \rightharpoonup 8\pi (n_l+1) \delta_{\xi_l}$ as $\delta_l,a_l \to 0$, to have the correct concentration property we need that
$$8\pi \sum_{l'=1}^m (n_{l'}+1) \delta_l^2 \delta_{l'}^{-2} \to 4\pi N$$
for all $l=1,\dots,m$, and then $\frac{\delta_l}{\delta_{l'}} \to 1$ for all $l,l'=1,\dots,m$ in view of \eqref{hhh}. It is then natural to introduce just one parameter $\delta$ and to chose the $\delta_l$'s as
\begin{equation}\label{repla2}
\de_l=\de \qquad l=1,\dots,m.
\end{equation}

\bigskip \noindent We restrict our attention to the case $c_0^l=0$ for all $l=1,\dots,m$, which is necessary in our context and is simply a re-formulation of the assumption that $\mathcal{H}_0$ has zero residues at $p_1,\dots,p_m$. As in Theorem \ref{main}, we will work in the parameter's range:
$$a_l=o(\delta),\qquad \delta \sim \e^{n+1\over n+2}$$
as $\e\to 0^+$. Since then
$$K^{-1} \le \frac{\de^2+|z-\xi_l|^{2n_l+2}}{\de^2+\big|\sigma_l(z)
-a_l|^2} \le K, \qquad  K^{-1}|z-\xi_l|^{2n_l}\leq |\sigma_l'(z)|^2 \leq  K |z-\xi_l|^{2n_l}$$
in $B_{2\eta}(\xi_l)$ for all $\sigma_l \in \mathcal{B}_r^l$ and $l=1,\dots,m$, where $K>1$, the norm \eqref{wn} can be now simply defined as
$$\| h \|_*=\sup_{z\in \Om}\lf[ \sum_{l=1}^m\frac{\delta^{\gamma}
\lf(|z-\xi_l|^{2n_l}+\delta^{\frac{2n_l}{n_l+1}}\rg)}{(\delta^2+|z-\xi_l|^{2n_l+2})^{1+\frac{\gamma}{2}}}\rg]^{-1}\;
|h(z)|$$
for any $h\in L^\infty(\Om)$, where $0<\gamma<1$ is a small
fixed constant. In order to simplify notations, we set $U_l=U_{\de_l,a_l,\sigma_l}$, $c_a^l=c_{a,\sigma_l}^l$, $\Theta_l=\Theta_{\de_l,a_l,\sigma_l}$ and $f_l=f_{a_l,\sigma_l}$. We have that
\begin{lem}\label{estrrm}
There exists a constant $C>0$ independent of $\de$ \st
\begin{equation}\label{erem}
\|R\|_*\le C\de^{2-\gamma}.
\end{equation}
\end{lem}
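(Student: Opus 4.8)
The plan is to carry out, region by region, the same scheme already used for the single-point case in Theorem \ref{estrr01550} and Corollary \ref{estrr0cor}, exploiting that now $\delta_l=\delta$ for all $l$ and $c_0^l=0$. Inside each ball $B_\eta(\xi_l)$ the expansion \eqref{ieagr} for $W$, combined with the identity \eqref{eq sigmaagen} for $|\sigma_{a,l}'|^2$, gives the local analogue of \eqref{cinema}: namely, in $C(\overline{B_\eta(\xi_l)})$,
$$W=U_l-\log(8\delta^2)+\log|\sigma_l'|^2-u_0+\mathrm{const}+2\re\Big[c_a^l(z-\xi_l)^{n_l+1}\prod_{l'\ne l}(z-\xi_{l'})^{n_{l'}+2}\Big]+O\Big(|z-\xi_l|^{n_l+2}\!\!\sum_{l'\ne l}|c_a^{l'}|+\delta^2|z-\xi_l|+\delta^4\Big),$$
where the constant gathers $\Theta^l(a,\delta)$ and the $|a_i^{l'}|^2$ terms. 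Away from all the $\xi_l$'s, \eqref{ieagr} instead gives $W=8\pi\sum_{l,i}G(z,\xi_l+a_i^l)+O(\delta^2|\log\delta|)$, so that $e^{u_0+W}$ stays bounded there (it vanishes like $|z-p_j|^{2n_j}$ at the frozen vortex points). Since the localizing cut-offs $\chi(|\cdot-\xi_l|)$ are supported in disjoint annuli and the product factor $\prod_{l'\ne l}(z-\xi_{l'})^{n_{l'}+2}$ built into \eqref{1143}, \eqref{cagen}, \eqref{sigmaagen} vanishes to high order at every $\xi_{l'}$ with $l'\ne l$, the contributions of the bubbles $PU_{l'}$, $l'\ne l$, to the behaviour near $\xi_l$ are smooth with controlled derivatives and only modify the constant and the lower-order remainder above.

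The next step is to evaluate the normalizing integrals. Near $\xi_l$ one uses the local structure $\sigma_l=q_{l}^{n_l+1}$ with $q_l\sim q_{0,l}$ (Lemma \ref{gomme} adapted to $\xi_l$), performs the changes of variable $y=q_l(z)$, then $y\mapsto y^{n_l+1}$, then $y\mapsto(y-a_l)/\delta$, and invokes the rotational-symmetry cancellation \eqref{symmetryint} centred at $\xi_l$ (the $c_a^{l'}$-terms with $l'\ne l$ being absorbed into the $\dots$-type terms locally). Exactly as in \eqref{const} and \eqref{const1517} this yields
$$\int_\Omega e^{u_0+W}=\pi\sum_{l'=1}^m\frac{n_{l'}+1}{\delta^2}\big(1+o(1)\big),\qquad \int_\Omega e^{2u_0+2W}=O\big(\delta^{-2-\frac{2}{n+1}}\big),$$
so that $16\pi N\epsilon^2\int e^{2u_0+2W}/(\int e^{u_0+W})^2\sim\epsilon^2\delta^{-\frac{2}{n+1}}$ is small in the chosen range. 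Plugging these into \eqref{R} and using \eqref{ieagr}, the two pieces of $R$ have near $\xi_l$ the same structure as in \eqref{imp} and \eqref{eps4}: writing $\eta_l=\epsilon^2\delta^{-\frac{2}{n_l+1}}$ (with $\max\{1,|a_l|/\delta\}=1$ since $a_l=o(\delta)$) and using $c_0^l=0$ together with Lemma \ref{derivcagen} to get $|c_a^l|=O(\|a\|_\infty)$,
$$R=|\sigma_l'|^2e^{U_l}\,O\big(|c_a^l|\,|\sigma_l(z)-a_l|+|c_a^l|\,|z-\xi_l|^{n_l+2}+|c_a^l|\,|a_l|^2+\delta^2|\log\delta|+\eta_l+\eta_l^2\big)+\epsilon^2|\sigma_l'|^4e^{2U_l}(1+O(\eta_l))+O(\delta^2)$$
in $B_\eta(\xi_l)$, while away from all the $\xi_l$'s one has $R=O(\delta^2)$ directly (the divergence of $\int e^{u_0+W}$ and \eqref{hhh} make the bubble part and the $1/|\Omega|$ part cancel up to $O(\delta^2)$, and the $\epsilon^2$-term is even smaller there).

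It then remains to take the $\|\cdot\|_*$-norm. Near each $\xi_l$ the change of variables $y=\sigma_l(z)$, with $|z-\xi_l|=O(|\sigma_l(z)|^{1/(n_l+1)})$ and $|\sigma_l'(z)|^2=O(|\sigma_l(z)|^{2n_l/(n_l+1)})$, reduces every term above to a supremum of precisely the type estimated in the proof of Corollary \ref{estrr0cor}, giving the bound $C(\delta|c_a|+\delta^{2-\gamma}+\delta^{\frac{2}{n_l+1}-\gamma}|a_l|^{2+\gamma}+|c_a^l||a_l|^{\frac{n_l+2}{n_l+1}}+\eta_l+\eta_l^2)$ on the $B_\eta(\xi_l)$-part; away from the $\xi_l$'s the weight is bounded below by $c\,\delta^\gamma$, so the $O(\delta^2)$ part there contributes $O(\delta^{2-\gamma})$. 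Summing over $l$, and using the parameter range $a_l=o(\delta)$, $\delta\sim\epsilon^{\frac{n+1}{n+2}}$ (hence $\epsilon^2\sim\delta^{\frac{2(n+2)}{n+1}}$, so that $\eta_l\le\eta_n\sim\delta^2$, $\delta|c_a|=o(\delta^2)$, and the remaining terms are $o(\delta^2)$ since $\frac{n_l+2}{n_l+1}>1$), all contributions collapse into $C\delta^{2-\gamma}$, as claimed (note $\delta^2|\log\delta|$ and $\eta_l=O(\delta^2)$ are $o(\delta^{2-\gamma})$ because $\gamma>0$). The main obstacle I expect is purely bookkeeping: one has to verify uniformly that the tails of the other $m-1$ bubbles near $\xi_l$, the product factors $\prod_{l'\ne l}(z-\xi_{l'})^{n_{l'}+2}$ appearing in the $c_a^l$-terms, and the cut-off errors from $\chi$ are genuinely of the orders written above — this is exactly where the high-order vanishing designed into \eqref{1143}, \eqref{cagen}, \eqref{sigmaagen} and the separation of supports of the $\chi(|\cdot-\xi_l|)$ do the work, but making the estimates uniform in $a\in B_\rho(0)$ and $\sigma\in\mathcal{B}_r$ requires care.
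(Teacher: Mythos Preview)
Your proposal is correct and follows essentially the same approach as the paper's own proof: both use the expansion \eqref{ieagr} near each $\xi_l$, compute the normalizing integrals $\int_\Omega e^{u_0+W}$ and $\int_\Omega e^{2u_0+2W}$ via the local change of variable $y=\sigma_l(z)$, derive the local structure of $R$ as in \eqref{impm}--\eqref{eps5m}, and then pass to the $\|\cdot\|_*$-norm exactly as in Corollary~\ref{estrr0cor}, with the parameter range $a_l=o(\delta)$, $\delta\sim\epsilon^{\frac{n+1}{n+2}}$ collapsing all contributions to $O(\delta^{2-\gamma})$. Your write-up is in fact somewhat more explicit than the paper's sketch about how the single-point machinery is being re-used and about why the cross-terms from the other bubbles are harmless (the high-order vanishing built into $\prod_{l'\ne l}(z-\xi_{l'})^{n_{l'}+2}$ and the disjoint supports of the $\chi_l$).
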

\begin{proof}
We shall sketch the proof of \eqref{erem}, by following ideas used in the proof of Theorem \ref{estrr01550}. Through the change of variable $y=\sigma_l(z)$ in $\sigma_l^{-1}(B_\rho(0))$, by Lemma \ref{derivcagen}, \eqref{ieagr}, \eqref{repla2} and $c_0^l=0$ for all $l=1,\dots,m$ we find that
\begin{eqnarray*}
&&{8\de^2\over e^{{2\pi\over|\Om|}\sum_{l'=1}^m\sum_{i=0}^{n_{l'}}|a_i^{l'}|^2+\Theta^l(a,\de)}}\int_{\sigma^{-1}_l(B_{\rho}(0))} e^{u_0+W}
=\int_{\sigma^{-1}_l(B_{\rho}(0))} |\sigma_l'|^{2}
e^{U_{l}+O(|z-\xi_l|^{n_l+1} \sum_{l'=1}^m |c_{a}^{l'}|+\delta^2|z-\xi_l|+\delta^4)}\\
&&=8\pi(n_l+1)
- \int_{\mathbb{R}^2 \setminus B_{\rho}(0)} \frac{8(n_l+1) \delta^2}{|y|^4}+ O\Big(\|a\|^2+\de\|a\|+\de^{2n_l+3\over n_l+1}\Big),
\end{eqnarray*}
where $\|a\|^2=\displaystyle \sum_{l=1}^m|a_l|^2$. Setting $\Om_\rho=\cup_{l=1}^m \sigma_l^{-1}(B_\rho(0))$ we get that
\begin{eqnarray*}
&&
{8\de^2\over e^{{2\pi\over|\Om|}\sum_{l'=1}^m\sum_{i=0}^{n_{l'}}|a_i^{l'}|^2+\sum_{l'=1}^m\Theta_{l'}}}\,\int_\Omega e^{u_0+W} = \sum_{l=1}^m e^{\de^2\sum_{l'=1}^mf_{l'}(\xi_l)}\bigg[8\pi(n_l+1)-\int_{\mathbb{R}^2 \setminus B_{\rho}(0)}
\frac{8(n_l+1) \delta^2}{|y|^4}\\
&&+O(\|a\|^2+\de\|a\|+\de^{2n_l+3\over n_l+1}\Big)\bigg]+8\delta^2 \int_{\Omega \setminus\Om_\rho} e^{u_0+8\pi \sum_{l=1}^m\sum_{i=0}^{n_l} G(z,\xi_l+a_{i}^l)}+O(\de^4|\log \delta| +\delta^2\|a\|^{\frac{2}{\max_l n_l+1}})\\
&&=\sum_{l=1}^m\bigg[8\pi(n_l+1)+8\pi(n_l+1)\de^2\sum_{l'=1}^m f_{l'}(\xi_l)-8(n_l+1)\de^2\int_{\mathbb{R}^2 \setminus B_{\rho}(0)} \frac{1}{|y|^4}\bigg]\\
&&+8\delta^2 \int_{\Omega \setminus \Om_\rho} e^{u_0+8\pi \sum_{l=1}^m\sum_{i=0}^{n_l} G(z,\xi_l+a_i^l)}+o(\de^2)=\;4\pi N\lf[1+{2\over N}\de^2D_a+{2\over
N}\de^2\sum_{l,l'=1}^m(n_l+1)f_{l'}(\xi_l)+o(\de^2)\rg]
\end{eqnarray*}
in view of \eqref{hhh}, where $D_a$ is given by
$$\pi D_a=\int_{\Omega \setminus \Omega_\rho} e^{u_0+8\pi \sum_{l=1}^m\sum_{i=1}^{n_l}G(z,\xi_l+a_i^l)} -
\sum_{l=1}^m (n_l+1)\int_{\mathbb{R}^2 \setminus B_{\rho}(0)} \frac{1}{|y|^4}.$$
Hence, for $|z-\xi_l| \leq \eta$ we have that
\begin{eqnarray}\label{impm}
&&\Delta W+4\pi N\left( \frac{e^{u_0+W}}{\int_\Omega e^{u_0+W}}-\frac{1}{|\Omega|}\right)=|\sigma_l'|^2 e^{U_l} \bigg[2\hbox{Re }\Big[c_a^l(z-\xi_l)^{n_l+1}\prod_{l' \ne l}(z-\xi_{l'})^{n_{l'}+2}\Big]\\
&&+\de^2\sum_{l'=1}^m f_{l'}(\xi_l)-{2D_a\over N}\delta^2-{2\de^2\over N}\sum_{j,l'=1}^m(n_j+1)
f_{l'}(\xi_j)+O(\|a\| |z-\xi_l|^{n_l+2}+\de^2|z-\xi_l|)+o(\delta^2)\bigg]
+O(\delta^2)\nonumber 
\end{eqnarray}
as $\de  \to 0$, in view of \eqref{hhh} and $\int_\Omega \chi_l
|\sigma_l'|^{2} e^{U_{l}}=8\pi(n_l+1)+O(\delta^2)$ for all $l=1,\dots,m$. For $z \in
\Omega \setminus \cup_{l=1}^mB_\eta(\xi_l)$, we have that
\begin{eqnarray}
\Delta W+4\pi N\left( \frac{e^{u_0+W} }{\int_\Omega e^{u_0+W}
}-\frac{1}{|\Omega|}\right)=O(\delta^2). \label{impextm}
\end{eqnarray}
On the other hand, arguing as in \eqref{const1}, we have that
\begin{eqnarray*}
{64\delta^{4}\over
e^{{4\pi\over|\Om|}\sum_{l'=1}^m\sum_{i=1}^{n_{l'}}|a_i^{l'}|^2+2\sum_{l'=1}^m\Theta_{l'}}}\int_\Omega
e^{2u_0+2W}
=64\sum_{l=1}^{m'}{(n+1)^3\over |\alpha_{a,l}|^{{2\over
n+1}}\de^{{2\over n+1}} }\int_{\mathbb{R}^2}
\frac{|y+a_l \delta^{-1} |^{\frac{2n}{n+1}}}{(1+|y|^2)^4}
+O(\de^{-\frac{1}{n+1}}),
\end{eqnarray*}
where $\ds\alpha_{a,l}=\lim_{z\to \xi_l}{(z-\xi_l)^{n_l+1}\over \sigma_l(z)}$.
Recall that $n=\min\{n_l: l=1,\dots,m\}=n_1=\dots=n_{m'}<n_l$ for all $l=m'+1,\dots,m$. Setting
$$\ds\ti
D_{a,\de}=\sum_{l=1}^{m'}{(n+1)^3\over|\alpha_{a,l}|^{{2\over
n+1}}\de^{{2\over n+1}}}\int_{\mathbb{R}^2}
\frac{|y+a_l\delta^{-1} |^{\frac{2n}{n+1}}}{(1+|y|^2)^4}\,dy,$$
we have that
$$\frac{4\pi N\e^2B(W)}{(1+\sqrt{1-\e^2B(W)})^2}=64\e^2\ti D_{a,\de}
+o(\e^2\de^{-\frac{2}{n+1}}),$$
and there hold
\begin{equation}\label{eps4m}
\frac{4\pi
N\e^2B(W)}{(1+\sqrt{1-\e^2B(W)})^2}\left(\frac{e^{u_0+W}}{\int_\Omega
e^{u_0+W}}-\frac{e^{2u_0+2W}}{\int_\Omega
e^{2u_0+2W}}\right)=|\sigma_l'|^{2}e^{U_l}\bigg[{16\e^2\over
\pi N}\ti
D_{a,\de}-\e^2|\sigma_l'|^{2}e^{U_l}+o(\e^2\de^{-2\over
n+1})\bigg] \end{equation}  
in $B_\eta(\xi_l)$, $l=1,\dots,m$, and
\begin{equation}\label{eps5m}
\frac{4\pi
N\e^2B(W)}{(1+\sqrt{1-\e^2B(W)})^2}\left(\frac{e^{u_0+W}}{\int_\Omega
e^{u_0+W}}-\frac{e^{2u_0+2W}}{\int_\Omega
e^{2u_0+2W}}\right)=O(\epsilon^2 \delta^{\frac{2n}{n+1}})
\end{equation} 
in $\Omega \setminus \cup_{l=1}^m B_\eta(\xi_l)$. Therefore, we conclude that
$\|R\|_*=O(\de^{2-\gamma}+\|a\|^2+\e^2\de^{-{2\over n+1}})$ and
\eqref{erem} follows. \qed
\end{proof}

\medskip \noindent As mentioned in section $4$, when we look for a solution of \eqref{3} in the
form $w=W+\phi$, we are led to study \eqref{ephi}. In order to state the invertibility of the linear
operator $L$ in a suitable functional setting, for $l=1,\dots,m$ let us introduce
the functions:
$$Z_{0l}(z)=\frac{\delta^2-|\sigma_l(z)-a_l|^2}{\delta^2+|\sigma_l(z)-a_l|^2},\quad Z_l(z)=
\frac{\delta(\sigma_l(z)-a_l)}{\delta^2+|\sigma_l(z)-a_l|^2}\qquad z\in B_{2\eta}(\xi_l).$$
Also, let $PZ_{0l}$ and $PZ_l$ be the unique solutions with zero average of
$$\Delta PZ_{0l} =\chi_l \Delta
Z_{0l}-\frac{1}{|\Om|}\int_\Om \chi_l \Delta Z_{0l},\qquad \Delta
PZ_l =\chi_l \Delta Z_l-\frac{1}{|\Om|}\int_\Om \chi_l \Delta Z_l$$
where $\chi_l(z):=\chi(|z-\xi_l|)$, and set $PZ_0=\displaystyle \sum_{l=1}^m
PZ_{0l}$. As in Propositions \ref{prop4.1}-\ref{nlp}, it is possible to prove:
\begin{prop} Let $M_0>0$. There exists $\eta_0>0$ small such that for any $0<\de\leq\eta_0$,
$|\log\de|^2\e^2\leq \eta_0 \de^{2\over n+1}$ and $\|a\|\leq M_0 \de$ there is a unique solution
$\phi=\phi(\de,a)$, $d_0=d_0(\de,a)\in\R$
and $d_l=d_l(\de,a)\in\C$, $l=1,\dots,m$, to
$$\left\{\begin{array}{ll}
L(\phi) =-[R+N(\phi)]  + d_0 \Delta PZ_{0}+\displaystyle \sum_{l=1}^m\re[d_l \lap PZ_l] &\text{in }\Om\\
\int_\Om\phi=\int_{\Omega } \phi  \Delta PZ_l= 0&l=0,\dots,m.
\end{array} \right. $$
Moreover, the map $(\de,a)\mapsto \phi(\de,a)$ is $C^1$ with
\begin{equation}\label{estphim}
\|\phi\|_\infty\le C \delta^{2-\sigma}|\log \delta|.
\end{equation}
\end{prop}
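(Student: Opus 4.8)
The plan is to follow the scheme that produced Propositions \ref{prop4.1} and \ref{nlp} in the case $m=1$, the only genuinely new feature being the $m$ well-separated concentration points $\xi_1,\dots,\xi_m$ and the corresponding $m$-fold enlargement of the approximate kernel. First I would establish the multi-bubble analogue of Proposition \ref{prop4.1}: for $h\in L^\infty(\Omega)$ with $\int_\Omega h=0$ the projected linear problem
\[
L(\phi)=h+d_0\Delta PZ_0+\sum_{l=1}^m\re[d_l\Delta PZ_l],\qquad \int_\Omega\phi=\int_\Omega\phi\,\Delta PZ_l=0\ \ (l=0,\dots,m)
\]
has a unique solution $(\phi,d_0,d_1,\dots,d_m)$ with $\|\phi\|_\infty\le C|\log\delta|\,\|h\|_*$ and $|d_0|+\sum_l|d_l|\le C\|h\|_*$. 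The a priori bound is proved by contradiction: assume $\delta_k\to0$, with $\epsilon_k,a_k$ in the admissible range, $\|h_k\|_*\to0$ and solutions normalized by $\|\phi_k\|_\infty=1$. Testing against $PZ_{0l}$, $PZ_l$, $\overline{PZ_l}$ and using the localized versions of \eqref{deltaZ0}--\eqref{deltaZ} and \eqref{pzij} forces $d_0,d_l\to0$ faster than the relevant scales, so these terms are negligible. Blowing up $\phi_k$ around each $\xi_l$ at scale $\delta$ in the variable $y=(\sigma_l(z)-a_l)/\delta$, the rescaled functions converge in $C^1_{\mathrm{loc}}(\mathbb{R}^2)$ to a bounded solution of $-\Delta\psi=\tfrac{8}{(1+|y|^2)^2}\psi$, hence to a linear combination of $\tfrac{1-|y|^2}{1+|y|^2}$, $\tfrac{y}{1+|y|^2}$, $\tfrac{\bar y}{1+|y|^2}$ by non-degeneracy of the standard bubble; the three orthogonality conditions at $\xi_l$ kill all of these, so $\phi_k\to0$ in each inner region. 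In the outer region $\Omega\setminus\cup_l B_\eta(\xi_l)$, where $\mathcal{K}$ and the weight $\epsilon^2B(W)$ are uniformly small (here the range $|\log\delta|^2\epsilon^2\le\eta_0\delta^{2/(n+1)}$ keeps $\epsilon^2B(W)=O(1)$ in view of \eqref{BW}), $L$ behaves like $\Delta$ on zero-average doubly-periodic functions, which is invertible, and an elliptic estimate gives $\phi_k\to0$ there too, contradicting $\|\phi_k\|_\infty=1$. Existence of $(\phi,d_0,d_l)$ and the bound on the multipliers then follow from the a priori estimate by a standard Fredholm argument in the orthogonal complement; denote by $\mathcal{T}$ the resulting solution operator $h\mapsto\phi$.

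Next I would set up the nonlinear reduction. Imposing the orthogonality constraints, the projected version of \eqref{ephi} is equivalent to the fixed-point equation $\phi=\mathcal{A}(\phi):=-\mathcal{T}\big(R+N(\phi)\big)$ on $\{\phi\in L^\infty(\Omega):\int_\Omega\phi=0\}$, with $d_0,d_l$ read off from $\mathcal{T}$. Two inputs are needed: the error bound $\|R\|_*\le C\delta^{2-\gamma}$ from Lemma \ref{estrrm}, and a quadratic estimate for the nonlinearity, namely $N(0)=0$ together with
\[
\|N(\phi_1)-N(\phi_2)\|_*\le C\big(\|\phi_1\|_\infty+\|\phi_2\|_\infty+o(1)\big)\,\|\phi_1-\phi_2\|_\infty ,
\]
obtained term by term from \eqref{nlt} using $e^x=1+x+O(x^2e^{|x|})$, the uniform orders $\int_\Omega e^{u_0+W}\sim\delta^{-2}$ and $\int_\Omega e^{2u_0+2W}\sim\delta^{-2-2/(n+1)}$ already appearing in the computation of $B(W)$, and once more the admissible range to tame the $\epsilon^2B(W)$ prefactors and the non-local corrections $\gamma(\phi)=O((1+\epsilon^2B(W))\|\phi\|_\infty)$. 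On the ball $\mathcal{F}=\{\phi:\int_\Omega\phi=0,\ \|\phi\|_\infty\le C_1\delta^{2-\gamma}|\log\delta|\}$ with $C_1$ large, for $\delta$ small one has $\|\mathcal{A}(\phi)\|_\infty\le C|\log\delta|(\|R\|_*+\|\phi\|_\infty^2)\le C_1\delta^{2-\gamma}|\log\delta|$ and $\|\mathcal{A}(\phi_1)-\mathcal{A}(\phi_2)\|_\infty\le C|\log\delta|\,\|N(\phi_1)-N(\phi_2)\|_*\le\tfrac12\|\phi_1-\phi_2\|_\infty$, so the Banach fixed-point theorem yields a unique $\phi=\phi(\delta,a)\in\mathcal{F}$, hence $d_0=d_0(\delta,a)$ and $d_l=d_l(\delta,a)$, and the bound \eqref{estphim}.

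Finally, for the $C^1$-dependence I would apply the implicit function theorem to $\mathbb{G}(\phi,\delta,a):=\phi+\mathcal{T}(R+N(\phi))$: the maps $(\delta,a)\mapsto R$ and $(\delta,a)\mapsto\sigma_a$ (the latter by Lemma \ref{derivcagen}), as well as the coefficients of $L$ and $N$, are $C^1$ in the parameters, and $\partial_\phi\mathbb{G}$ at the solution is $I+o(1)$, hence invertible; differentiating the fixed-point identity and invoking the linear estimate of the first step reproduces the size control for the derivatives. The step I expect to be the main obstacle is the linear theory above, specifically the blow-up analysis with several bubbles: one must verify that the $m$ rescaled limits decouple (the interaction between distinct $\xi_l$'s is $O(\delta^2)$ thanks to the separation and the cut-offs $\chi_l$, hence harmless), and that the large weight $\epsilon^2B(W)\sim\epsilon^2\delta^{-2/(n+1)}$ appearing in $\mathcal{K}$ and in $\gamma(\phi)$ spoils neither the inner non-degeneracy nor the outer invertibility — which is precisely why the hypothesis $|\log\delta|^2\epsilon^2\le\eta_0\delta^{2/(n+1)}$ is imposed. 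Everything else is a routine adaptation of the arguments of Appendices B and C to the present $m$-bubble configuration.
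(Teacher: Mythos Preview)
Your proposal is essentially correct and follows the same route the paper indicates: the paper itself does not prove this proposition but simply says ``As in Propositions \ref{prop4.1}--\ref{nlp}, it is possible to prove'' it, and your sketch is precisely the natural $m$-bubble adaptation of Appendices B and C (projected linear theory by contradiction/blow-up, then Banach fixed point on a small ball using $\|R\|_*\le C\delta^{2-\gamma}$ from Lemma \ref{estrrm}, then IFT for $C^1$-dependence).

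One technical correction is worth flagging. Your blow-up ``in the variable $y=(\sigma_l(z)-a_l)/\delta$'' is not a change of coordinates near $\xi_l$, since $\sigma_l$ is $(n_l+1)$-to-$1$ there; consequently the limiting problem is not the regular bubble linearization $-\Delta\psi=\tfrac{8}{(1+|y|^2)^2}\psi$. The correct rescaling (as in Appendix B) is $\Psi_k(y)=\psi_k(\xi_l+\delta_k^{1/(n_l+1)}y)$, and the limit solves $L_0\Psi=0$ with the \emph{singular} potential $\tfrac{8(n_l+1)^2|y|^{2n_l}}{(1+|y^{n_l+1}|^2)^2}$; its bounded kernel is still three-dimensional (spanned by $Y_0,Y_1,Y_2$ in the paper's notation), so your conclusion---the three orthogonality conditions at $\xi_l$ kill the limit---remains valid. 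Also, the outer estimate in the paper is obtained not by a direct elliptic bound but via the Green representation and a barrier (Claims 3--5 of Appendix B); your ``$L$ behaves like $\Delta$'' heuristic is morally right but would need those ingredients to be made rigorous.
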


\medskip \noindent The function $W+\phi$ is a solution of (\ref{3}) if we adjust $\delta$ and $a$ so to have
$d_l(\de,a)=0$ for all $l=0,1,\dots,m$. Similarly to Lemma \ref{1039}, we have that
\begin{lem}
There exists $\eta_0>0$ \st if $0<\de\leq \eta_0$, $\|a\|\leq \eta_0 \delta$ and
\begin{equation}  \label{solvem}
\int_\Omega (L(\phi)+N(\phi)+R) PZ_l=0
\end{equation}
does hold for all $l=0,\dots,m$, then $W+\phi$ is a solution of \eqref{3}, i.e.
$d_l(\de,a)=0$ for all $l=0,\dots,m$.
\end{lem}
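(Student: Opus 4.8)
The plan is to mimic the proof of Lemma \ref{1039} in the case $m=1$, now keeping track of the $m+1$ degrees of freedom $(d_0,d_1,\dots,d_m)$. The key point is to test the defining equation $L(\phi)=-[R+N(\phi)]+d_0\lap PZ_0+\sum_{l=1}^m\re[d_l\lap PZ_l]$ against each of the functions $PZ_0$ and $PZ_l$, $l=1,\dots,m$, and to show that the resulting linear system in the $d_l$'s has only the trivial solution. The left-hand sides vanish by hypothesis \eqref{solvem}, so what remains is to compute the ``mass matrix'' $\big(\int_\Omega \lap PZ_j\, PZ_l\big)_{j,l=0}^m$ and show it is invertible (indeed diagonally dominant) for $\delta$ small.

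First I would record the local nature of the functions involved: since $\chi_l$ is supported in $B_{2\eta}(\xi_l)$ with the balls $B_{2\eta}(\xi_l)$ pairwise disjoint, the concentrated measures $\chi_l|\sigma_l'|^2e^{U_l}$ have disjoint supports. Using \eqref{pzij}-type expansions (which hold here blockwise, i.e. $PZ_{0l}=Z_{0l}-\frac1{|\Omega|}\int_\Omega\chi_l Z_{0l}+O(\delta^2)$ and $PZ_l=Z_l-\frac1{|\Omega|}\int_\Omega\chi_l Z_l+O(\delta)$ in $C(\bar\Omega)$, uniformly in the relevant range), together with $\|Z_{0l}\|_\infty+\|Z_l\|_\infty\le 2$, I would compute, exactly as in Lemma \ref{1039} via the change of variables $y=\sigma_l(z)$ then $y\mapsto\frac{y-a_l}{\delta}$ and the identities $\int_{\R^2}\frac{1-|y|^2}{(1+|y|^2)^4}=\frac{\pi}{6}$, $\int_{\R^2}\frac{|y|^2}{(1+|y|^2)^4}=\frac{\pi}{12}$:
\begin{eqnarray*}
\int_\Omega \lap PZ_{0l}\,PZ_{0j}&=&-\frac{8\pi}{3}(n_l+1)\,\delta_{lj}+O(\delta^2),\\
\int_\Omega \lap PZ_l\,PZ_{0j}&=&O(\delta^2),\qquad \int_\Omega \lap PZ_{0l}\,PZ_j=O(\delta^2),\\
\int_\Omega \lap PZ_l\,\overline{PZ_j}&=&-4(n_l+1)\,\delta_{lj}+O(\delta),\qquad \int_\Omega \lap PZ_l\,PZ_j=O(\delta),
\end{eqnarray*}
where the off-diagonal ($l\neq j$) entries are even smaller since the supports are disjoint — they are $O(\delta^2)$ from the $PZ$-averages alone. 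The essential structural fact is that the leading matrix is block-diagonal with nonzero diagonal blocks $-\frac{8\pi}{3}(n_l+1)$ (for the $Z_{0l}$ component) and $-4(n_l+1)\mathrm{Id}_\C$ (for the $Z_l$ component).

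Plugging these into the $m+1$ testing identities, the system \eqref{solvem} becomes
$$-\tfrac{8\pi}{3}(n_0')\,d_0+O\Big(\delta^2\sum_{l=0}^m(|d_0|+|d_l|)\Big)=0,\qquad -4(n_l+1)\,d_l+O\Big(\delta^2|d_0|+\delta\sum_{j=1}^m|d_j|\Big)=0,$$
for $l=1,\dots,m$ (with the obvious convention for the $Z_0$-row coefficient). Summing absolute values over all rows yields $\sum_{l=0}^m(|d_0|+|d_l|)\le C\delta\sum_{l=0}^m(|d_0|+|d_l|)$, which for $\delta\le\eta_0$ small forces $d_0=d_1=\dots=d_m=0$. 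Hence $W+\phi$ solves \eqref{3}. The main obstacle — really the only delicate point — is making sure the cross terms $\int_\Omega\lap PZ_{0l}\,PZ_j$ and $\int_\Omega\lap PZ_l\,PZ_{0j}$, and the off-diagonal $l\neq j$ terms, are genuinely of lower order: this uses both the disjointness of the $\chi_l$-supports (so that $\chi_l$ times a quantity concentrated near $\xi_j$, $j\neq l$, is $O(\delta^k)$) and the vanishing of the relevant profile integrals over $\R^2$; everything else is a routine repetition of the $m=1$ computation carried out blockwise.
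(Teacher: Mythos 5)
Your proposal is correct and follows essentially the same route as the paper, which proves this lemma exactly as in Lemma \ref{1039}: testing against $PZ_0$ and the $PZ_l$'s, exploiting the disjoint supports of the $\chi_l$'s so that the mass matrix is a small perturbation of an invertible block-diagonal one, and concluding by diagonal dominance. The only blemishes are harmless numerical slips (e.g. $\int_{\R^2}|y|^2(1+|y|^2)^{-4}=\pi/6$, not $\pi/12$, and the diagonal $Z_l$-block carries this factor), which do not affect the nondegeneracy of the leading matrix.
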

\noindent 
Since there hold the expansions
\begin{equation*}
PZ_{0}=\sum_{l=1}^m\bigg[\chi_l(Z_{0l}+1)-{1\over|\Om|}\int_\Om
\chi_l(Z_{0l}+1)\bigg]+O(\de^2)
\:,\quad PZ_l=\chi_l Z_l-{1\over|\Om|}\int_\Om \chi_l Z_l+O(\de)\:\:l=1,\dots,m \end{equation*}
in $C(\bar \Om)$, arguing as in Proposition \ref{1219}, by \eqref{hhh} and \eqref{impm}-\eqref{estphim} we can deduce the following expansion for \eqref{solvem}:
\begin{lem}
Assume $c_0^l=0$ for all $l=1,\dots,m$ and $\|a\|\leq \eta_0 \de$. The following
expansions do hold as $\epsilon \to 0$
\begin{eqnarray*}
\int_\Omega (L(\phi)+N(\phi)+R) PZ_0&=& -8\pi D_0 \de^2
+64(n+1)^{\frac{3n+5}{n+1}} \e^2 \de^{-{2 \over n+1}}
\sum_{l=1}^{m'} |\mathcal{H}^l(\xi_l)|^{-\frac{2}{n+1}}
\int_{\mathbb{R}^2}
\frac{(|y|^2-1)|y+\frac{a_l}{\delta}|^{\frac{2n}{n+1}}}{(1+|y|^2)^5} dy\\
&&+ o(\de^2+\e^2\de^{-{1\over n+1}})+O(\epsilon^4
\delta^{-\frac{2}{n+1}}|\log \delta|^2+\e^8 \delta^{-\frac{4}{n+1}}|\log \delta|^2 )\end{eqnarray*}
and
\begin{eqnarray*}
\int_\Omega (R+L(\phi)+N(\phi)) PZ_l &=& 4 \pi \delta
\sum_{l'=1}^m (\overline{\Upsilon^{ll'}} a_{l'}+ \overline{\Gamma^{ll'}} \bar
a_{l'})-64 (n+1)^{\frac{3n+5}{n+1}} \e^2 \de^{-{2\over n+1}} |\mathcal{H}^l(\xi_l)|^{-\frac{2}{n+1}}
\chi_M(l) \int_{\mathbb{R}^2}
\frac{|y+\frac{a_l}{\delta}|^{\frac{2n}{n+1}}y}{(1+|y|^2)^5} dy \\
&&+o(\de^2+\epsilon^2 \delta^{-\frac{2}{n+1}})+O(\epsilon^4
\delta^{-\frac{2}{n+1}}|\log \delta|^2+\e^8 \delta^{-\frac{4}{n+1}}|\log \delta|^2 ),
\end{eqnarray*}
where $D_0$ is defined in \eqref{ggg} and $\chi_M$ is the
characteristic function of the set $M=\{1,\dots,m'\}$.
\end{lem}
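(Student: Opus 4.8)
The plan is to follow the scheme of Proposition~\ref{1219}, carried over to the multi-bubble setting. \emph{Step 1 (reduction to the reduced quantities).} Using that $|\gamma(\phi)|=O((1+\eta)\|\phi\|_\infty)$ with $\eta=\epsilon^2\delta^{-2/(n+1)}$ (a consequence of the control on $B(W)$ obtained in the proof of Lemma~\ref{estrrm}), the bound $\|\phi\|_\infty=O(\delta^{2-\gamma}|\log\delta|)$ of \eqref{estphim}, the estimate $\|R\|_*=O(\delta^{2-\gamma})$ of Lemma~\ref{estrrm}, the $L^\infty$-bounds $PZ_0=O(1)$, $PZ_l=O(1)$, and the appendix estimates on $N(\phi)$ and on $\tilde L$ applied to the shifted test functions, one gets, exactly as in \eqref{zerotermbis}--\eqref{zeroterm} and using $\int_\Omega R=\int_\Omega L(\phi)=\int_\Omega N(\phi)=0$,
$$\int_\Omega (L(\phi)+N(\phi)+R)\,PZ_l=\int_\Omega R\,PZ_l+o(\delta^2+\eta)+O(\eta^2+\eta^4),\qquad l=0,1,\dots,m.$$
Hence, after inserting \eqref{estphim} and $\|R\|_*$, the only contribution of $\phi$ to the final expansions comes through the remainders $O(\epsilon^4\delta^{-2/(n+1)}|\log\delta|^2+\epsilon^8\delta^{-4/(n+1)}|\log\delta|^2)$, and everything reduces to computing $\int_\Omega R\,PZ_l$.

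\emph{Step 2 (the equations $\int_\Omega R\,PZ_l$, $l\geq1$).} By the expansion of $PZ_l$ recorded just before the statement, $PZ_l$ equals $\chi_l Z_l$ up to a constant and an $O(\delta)$-error, so only the part of $R$ localized near $\xi_l$ contributes. Plugging \eqref{impm} and \eqref{eps4m}, changing variables $y=\sigma_l(z)$ on $\sigma_l^{-1}(B_\rho(0))$ and then $y\mapsto(y-a_l)/\delta$, and using the analogue of the symmetry cancellations \eqref{symmetryintbis} (so that the quadratic-in-$c_a^l$ and the $\delta^2$-terms are of lower order), the curvature part of $R$ produces $4\pi\delta\,\overline{\mathcal{H}(\xi_l)c_a^l}$ modulo $o(\delta|c_a^l|+\delta|a_l|+\delta^2)$ — the factor $\prod_{l'\neq l}(z-\xi_{l'})^{n_{l'}+2}$ in \eqref{ieagr} combining with $\alpha_{0,l}=\mathcal{H}^l(\xi_l)/(n_l+1)$ to reconstitute $\mathcal{H}(\xi_l)$ — while the $\epsilon^2$-part produces $-64(n_l+1)^3|\alpha_{a,l}|^{-2/(n_l+1)}\eta_l\int_{\mathbb{R}^2}|y+a_l/\delta|^{2n_l/(n_l+1)}y(1+|y|^2)^{-5}dy$ with $\eta_l=\epsilon^2\delta^{-2/(n_l+1)}$. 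In the postulated range $a_l=o(\delta)$, $\delta\sim\epsilon^{(n+1)/(n+2)}$ one has $\eta_l=o(\eta)=o(\delta^2)$ whenever $n_l>n$, so this last term survives at leading order only for $l\in\{1,\dots,m'\}$, where $n_l=n$ and $|\alpha_{0,l}|^{-2/(n+1)}=(n+1)^{2/(n+1)}|\mathcal{H}^l(\xi_l)|^{-2/(n+1)}$, turning the prefactor into $64(n+1)^{(3n+5)/(n+1)}|\mathcal{H}^l(\xi_l)|^{-2/(n+1)}$; this accounts for the factor $\chi_M(l)$. Passing to $a\to0$ and invoking Lemma~\ref{derivcagen} with $c_0^l=0$ to write $\mathcal{H}(\xi_l)c_a^l=\sum_{l'=1}^m(\Gamma^{ll'}a_{l'}+\Upsilon^{ll'}\bar a_{l'})+o(\|a\|)$ gives the stated expansion for $\int_\Omega(R+L(\phi)+N(\phi))PZ_l$.

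\emph{Step 3 (the equation $\int_\Omega R\,PZ_0$).} Since $PZ_0=\sum_{l=1}^m PZ_{0l}$ and $PZ_{0l}$ equals $\chi_l(Z_{0l}+1)$ up to a constant and an $O(\delta^2)$-error, the same change of variables is carried out near each $\xi_l$. From \eqref{impm}, after the symmetry cancellations and the Taylor expansions of $F_a$, $\re G_a$ around $a_l$ as in \eqref{1156}, the curvature part contributes terms that are $o(\delta^2)$ under $c_0^l=0$ (since then $c_a^l=O(\|a\|)=O(\delta)$) together with the ``$D_a$''-terms, which sum to $-8\pi D_a\delta^2$; from \eqref{eps4m}, the $\epsilon^2$-part contributes $64(n+1)^{(3n+5)/(n+1)}\eta\sum_{l=1}^{m'}|\mathcal{H}^l(\xi_l)|^{-2/(n+1)}\int_{\mathbb{R}^2}(|y|^2-1)|y+a_l/\delta|^{2n/(n+1)}(1+|y|^2)^{-5}dy$, again with only $l\le m'$ surviving. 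Using $D_a\to D_0$ as $a\to0$ (with $D_0$ as in \eqref{ggg}) and collecting the lower-order remainders gives the claimed expansion for $\int_\Omega(L(\phi)+N(\phi)+R)PZ_0$.

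The main technical obstacle is the bookkeeping of the interactions between distinct bubbles. One must verify that the cross terms $c_a^l(z-\xi_l)^{n_l+1}\prod_{l'\neq l}(z-\xi_{l'})^{n_{l'}+2}$ in \eqref{ieagr}, the contributions of $G(z,\xi_{l'}+a_i^{l'})$ for $l'\neq l$ hidden inside $\mathcal{H}^l_{a,\sigma}$ through \eqref{keyrelationgen}, and the interaction functions $\tilde f^j_{n_j+1}$ entering $\Gamma^{ll'},\Upsilon^{ll'}$ for $l'\neq l$ appear only through the off-diagonal blocks of the matrix $A$ in \eqref{matrixA} and do not corrupt the leading-order terms computed near each $\xi_l$; the factor $\prod_{l'\neq l}(z-\xi_{l'})^{n_{l'}+2}$ is precisely what forces any such cross term to vanish to order $n_{l'}+2$ at every other $\xi_{l'}$. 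At the same time, the single scaling parameter $\delta$ (imposed by \eqref{repla2}) is what confines the non-minimal bubbles $l>m'$ to the subcritical scale $\eta_l=o(\eta)$, so that only the $m'$ minimal bubbles enter the $\epsilon^2$-leading order, as recorded by $\chi_M$. Keeping $a_l=o(\delta)$ and $\delta\sim\epsilon^{(n+1)/(n+2)}$ throughout is what collapses every remainder into the asserted $o(\cdot)$ and $O(\cdot)$.
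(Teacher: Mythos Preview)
Your proposal is correct and follows exactly the approach the paper itself indicates (``arguing as in Proposition~\ref{1219}, by \eqref{hhh} and \eqref{impm}--\eqref{estphim}''): you reduce to $\int_\Omega R\,PZ_l$ via the analogue of \eqref{zerotermbis}--\eqref{zeroterm}, then localize near each $\xi_l$ using the expansions \eqref{impm} and \eqref{eps4m}, perform the change of variable $y=\sigma_l(z)$, and invoke Lemma~\ref{derivcagen} together with $c_0^l=0$. Your identification of $(n_l+1)\alpha_{0,l}\prod_{l'\neq l}(\xi_l-\xi_{l'})^{n_{l'}+2}=\mathcal H(\xi_l)$, the cancellation of the $f_{l'}(\xi_l)$--terms via \eqref{hhh}, and the mechanism producing $\chi_M(l)$ (namely $\epsilon^2\delta^{-2/(n_l+1)}=o(\epsilon^2\delta^{-2/(n+1)})$ for $n_l>n$) are all on point.
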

\noindent Finally, arguing as in the proof of Theorem \ref{main}, we can establish Theorem \ref{mainbb} thanks to $D_0<0$ and the invertibility of the matrix $A$. 

\medskip \noindent Let us now discuss some examples with $m\geq 2$. As already explained at the beginning of section \ref{examples}, we can consider the case $\xi_1,\dots,\xi_m \in \Omega$ and $p_j \in \bar \Omega$ for all $j$. In general, it is very difficult to establish the sign of $D_0$ as required in \eqref{ggg}. The key idea is to start from a configuration of the vortex points $\{p_1,\dots,p_N\}$ which is obtained in a periodic way by a simpler configuration having just one concentration point. In this case, \eqref{ggg} easily follows but Theorem \ref{mainbb} is not really needed. One can use Theorem \ref{main} to obtain a solution with such a simpler configuration and then repeat it periodically. We then slightly move some of the vortex points in order to:
\begin{itemize}
\item keep zero residue of the corresponding $\mathcal{H}_0$ at each concentration point;
\item break down the periodicity of the configuration.
\end{itemize}
In this way, assumption \eqref{ggg} is still valid but Theorem \ref{main} is no longer applicable in the trivial way we explained above. We now really need to resort to Theorem \ref{mainbb}. To exhibit some concrete examples, let us focus for simplicity on the case $m=2$ but the general situation can be dealt in the same way. Let $\Omega$ be a rectangle generated by $\omega_1=a$ and $\omega_2=ib$, $a,b>0$, and let $p_1,p_2,p_3$ be the three half-periods. Assume that the vortex set is $\{-\frac{p_1}{2}, \frac{p_1}{2},0,p_1,p_2,p_3\}$, and the concentration points are $\xi_1=-\frac{p_1}{2}$, $\xi_2=\frac{p_1}{2}$ with multiplicity $n$. Supposing that $0$, $p_1$ have even multiplicity $n_1$ and $p_2,p_3$ have even multiplicity $n_2$ with $n_1+n_2=n+2$, we have that such a configuration is not only $\omega_1=2p_1$ periodic but also $p_1$ periodic: it can be tought as a double repetition (in a $p_1$-periodic way) of the vortex configuration $\{-\frac{p_1}{2}, 0,p_2\}$ in $\Omega_-:=[-\frac{a}{2},0]\times [-\frac{b}{2},\frac{b}{2}]$ with corresponding multiplicities $n$, $n_1$ and $n_2$. If $n$ is even, it is easy to see that $\frac{d^{n+1} \mathcal{H}^i}{d z^{n+1}}(\xi_i)=0$ for $i=1,2$ since the given vortex configuration is even with respect to $\xi_1$ and $\xi_2$. Notice that this is still true if we replace $0$ and $p_1$ by $-it$ and $p_1+it$, respectively, for $t \in \mathbb{R}$, provided they keep the same multiplicity $n_1$. Arguing as in \eqref{1846}, notice that $D_0$ can be written as
$$\pi D_0=\hbox{Area } \left[ \frac{1}{\sigma_0}\left(\Omega_- \setminus \sigma_0^{-1} (B_\rho(0)) \right) \right]+\hbox{Area } \left[ \frac{1}{\sigma_0}\left(\Omega_+ \setminus \sigma_0^{-1} (B_\rho(0)) \right) \right]  - 2(n+1) \hbox{Area} \left( B_{\frac{1}{\rho}}(0) \right),$$
where $\Omega_+:=[0,\frac{a}{2}]\times [-\frac{b}{2},\frac{b}{2}]$. Since 
$$u_0+8\pi(n+1)G(z,\xi_1)+8\pi(n+1)G(z,\xi_2)=-4\pi n_1 \tilde G(z,0)-4\pi n_2 \tilde G(z,p_2)+4\pi(n+2) \tilde G(z,\xi_1)$$
in $\Omega_-$, where $\tilde G(z,p)$ is the Green function in the torus $\Omega_-$ with pole at $p$, the function $\mathcal{H}_0$ can be expressed as in \eqref{explicitH}  in terms of the Weierstrass function of $\Omega_+$ and the points  $-\frac{p_1}{2}$, $0$ and $p_2$. Arguing exactly as in section \ref{examples}, we have that
$$\hbox{Area } \left[ \frac{1}{\sigma_0}\left(\Omega_- \setminus \sigma_0^{-1} (B_\rho(0)) \right) \right]-(n+1) \hbox{Area} \left( B_{\frac{1}{\rho}}(0) \right)<0$$
provided the multiplicity $n_2$ for the corner of $\Omega_-$ is so that $\frac{n_2}{2}$ is odd. Arguing similarly in $\Omega_+$, we get that $D_0<0$ as soon as $\frac{n_2}{2}$ is an odd number. The example then follows by replacing $0$, $p_1$ with $-it$, $p_1+it$ with $t$ small for the corresponding $D_{0,t} \to D_0$ as $t \to 0$.


\begin{appendices}
\section{\hspace{-0.5cm}: The construction of $\sigma_a$}
Letting $\sigma_0$ be the solution of \eqref{eq sigma0} of the form \eqref{sigma0}, where $c_0$ is given by \eqref{c0}, we have that $Q_0(z)=\frac{\sigma_0(z)}{z^{n+1}}$ is an holomorphic function near $z=0$ so that $Q_0(0)=\frac{n+1}{\mathcal{H}(0)}$ (see \eqref{0942}). Since $Q_0(0)\not=0$, the $(n+1)-$root $Q_0^{\frac{1}{n+1}}$ of $Q_0$ is a well-defined holomorhpic function locally at $z=0$, and it makes sense to define $q_0(z)=z Q_0^{\frac{1}{n+1}}(z)$ near $z=0$.

\medskip \noindent For $\sigma \in \mathcal{B}_r$, where $\mathcal{B}_r$ is given in \eqref{setB}, in a similar way we have that $Q(z)=\frac{\sigma(z)}{z^{n+1}}$ is an holomorphic function near $z=0$ with $|\frac{Q(z)}{Q_0(z)}-1| \leq r$ for all $z$. Since in particular $|Q(z)-\frac{n+1}{\mathcal{H}(0)}|\leq r|Q_0(z)|+|Q_0(z)-\frac{n+1}{\mathcal{H}(0)}|$, we can find $r$ and $\eta>0$ small so that $q(z)=z Q^{\frac{1}{n+1}}(z)$ is a well-defined holomorphic function in $B_{3\eta}(0)$ for all $\sigma \in \mathcal{B}_r$, with $\sigma(z)=q^{n+1}(z)$ for all $z \in B_{3\eta}(0)$. Since $q'(0)=Q^{\frac{1}{n+1}}(0)$ satisfies $|q'(0)|\geq [\frac{(1-r)(n+1)}{|\mathcal{H}(0)|}]^{\frac{1}{n+1}}>0$, then $q$ is locally bi-holomorphic at $0$. In order to have uniform invertibility of $q$ for all $\sigma \in \mathcal{B}_r$, let us evaluate the following quantity:
\begin{eqnarray*}
|1-\frac{q'(z)}{q'(0)}|&\leq& \frac{\sup_{B_{\eta}(0)}|q''|}{|q'(0)|} |z|\leq
 \frac{2}{\eta^2}[\frac{(1-r)(n+1)}{|\mathcal{H}(0)|}]^{-\frac{1}{n+1}} \left(\sup_{B_{2\eta}(0)}|q|\right) |z| \\
&\leq& \frac{2}{\eta^2} \left(\frac{|\mathcal{H}(0)|}{n+1}\right)^{\frac{1}{n+1}} (\frac{1+r}{1-r})^{\frac{1}{n+1}}  \left(\sup_{B_{2\eta}(0)}|q_0|\right) |z|
\end{eqnarray*}
for all $z \in B_\eta(0)$, in view of the Cauchy's inequality and $|\frac{\sigma(z)}{\sigma_0(z)}-1|= |\frac{q^{n+1}(z)}{q^{n+1}_0(z)}-1| \leq r$ for all $z \in B_{3\eta}(0)$. Therefore, we can find $\rho_1$ small so that $|1-\frac{q'(z)}{q'(0)}|\leq \frac{1}{2}$ for all $z \in B_{\rho_1^{\frac{1}{n+1}}}(0)$ and $2\rho_1^{\frac{1}{n+1}} |Q(0)|^{-\frac{1}{n+1}}\leq 2\rho_1^{\frac{1}{n+1}}[\frac{|\mathcal{H}(0)|}{n+1}]^{\frac{1}{n+1}} (1-r)^{-\frac{1}{n+1}}\leq 2 \eta$, uniformly for $\sigma\in \mathcal{B}_r$. Hence, the inverse map $q^{-1}$ of $q$ is defined from $B_{\rho_1^{\frac{1}{n+1}}}(0)$ into $B_{2\rho_1^{\frac{1}{n+1}} |Q(0)|^{-\frac{1}{n+1}}}(0)$: for all $y \in B_{\rho_1^{\frac{1}{n+1}}}(0)$ there exists a unique $z \in B_{2\rho_1^{\frac{1}{n+1}} |Q(0)|^{-\frac{1}{n+1}}}(0)$ so that $q(z)=y$,  given by $z=q^{-1}(y)$. Since $\sigma=q^{n+1}$ in $B_{3\eta}(0)$, we have that
$$\hbox{Card } \{z\in B_{2\rho_1^{\frac{1}{n+1}} |Q(0)|^{-\frac{1}{n+1}}}(0): \sigma(z)=y\}=n+1 \qquad \forall\: y \in B_{\rho_1}(0)\setminus \{0\},$$
for all $\sigma\in \mathcal{B}_r$. Since
$$|\sigma(z)|\geq (1-r) \inf_{\tilde \Omega \setminus B_{2\rho_1^{\frac{1}{n+1}} |Q(0)|^{-\frac{1}{n+1}}}(0)} |\sigma_0(z)|
\geq (1-r) \inf_{\tilde \Omega \setminus B_{2\rho_1^{\frac{1}{n+1}} [\frac{|\mathcal{H}(0)|}{n+1}]^{\frac{1}{n+1}} (1+r)^{-\frac{1}{n+1}}}(0)} |\sigma_0(z)|>0 $$
for all $z \in \tilde \Omega \setminus B_{2\rho_1^{\frac{1}{n+1}} |Q(0)|^{-\frac{1}{n+1}}}(0)$ we can find $\rho$ ($\leq \rho_1$) small so that
$$\hbox{Card } \{z\in \tilde \Omega: \sigma(z)=y\}=\hbox{Card } \{z\in B_{2\rho_1^{\frac{1}{n+1}} |Q(0)|^{-\frac{1}{n+1}}}(0): \sigma(z)=y\}=n+1 \qquad \forall\: y \in B_{\rho}(0)\setminus \{0\},$$
for all $\sigma\in \mathcal{B}_r$. Since
$$\sigma^{-1}(B_\rho(0)) \subset  B_{2\rho_1^{\frac{1}{n+1}} |Q(0)|^{-\frac{1}{n+1}}} (0) \subset  B_{2\rho_1^{\frac{1}{n+1}}[\frac{|\mathcal{H}(0)|}{n+1}]^{\frac{1}{n+1}} (1-r)^{-\frac{1}{n+1}} }(0) \subset B_{2\eta}(0),$$
for all $z\in \partial \sigma^{-1}(B_\rho(0))=\sigma^{-1}(\partial B_\rho(0))$ and $\sigma \in \mathcal{B}_r$ we have that
$$\frac{|z|^{n+1}}{\rho}=\frac{|z|^{n+1}}{|\sigma(z)|}=\frac{1}{|Q(z)|}\geq \frac{1}{(1+r)} \inf_{ B_{2\eta}(0)}  |Q_0(z)|^{-1}  >0$$
for $q_0$ is well-defined in $B_{3\eta}(0)$. We can summarize the above discussion as follows:
\begin{lem} \label{gomme}
There exist $r,\:\rho >0$ such that $q(z)=z Q(z)^{\frac{1}{n+1}}$ is a locally bi-holomorphic map with $\sigma=q^{n+1}$ and inverse $q^{-1}$ defined on $B_{\rho^{\frac{1}{n+1}}}(0)$, for all $\sigma\in \mathcal{B}_r$. In particular, there exists a neighborhhod $V$ of $0$ so that, for all $\sigma \in \mathcal{B}_r$, there hold $V \subset \sigma^{-1}(B_\rho(0))$ and $\sigma:\sigma^{-1}(B_\rho(0)) \to B_\rho(0)$ is a $(n+1)-1$ map in the following sense:
$$\hbox{Card } \{z\in \tilde \Omega:\sigma(z)=y\}=n+1 \qquad \forall\: y \in B_\rho(0)\setminus \{0\}.$$
\end{lem}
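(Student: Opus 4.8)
The plan is to exploit that every $\sigma\in\mathcal{B}_r$ is only a small multiplicative perturbation of $\sigma_0$. First I would fix a radius $\eta>0$ so small that $\overline{B_{3\eta}(0)}\subset\tilde\Omega$ and the holomorphic function $Q_0(z)=\sigma_0(z)/z^{n+1}$ is non-vanishing on $B_{3\eta}(0)$; this is possible since $Q_0(0)=\frac{n+1}{\mathcal{H}(0)}\neq0$ by \eqref{0942}. For $\sigma\in\mathcal{B}_r$ the defining bound $\|\sigma/\sigma_0-1\|_{\infty,\tilde\Omega}\leq r$ gives $|Q(z)/Q_0(z)-1|\leq r$ for $Q(z)=\sigma(z)/z^{n+1}$, so for $r$ small $Q$ is holomorphic and bounded away from $0$ and from $\infty$ on $B_{3\eta}(0)$, uniformly in $\sigma$. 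Since this disk is simply connected, the branch $Q^{1/(n+1)}$ is a well-defined holomorphic function, and $q(z)=zQ(z)^{1/(n+1)}$ satisfies $q^{n+1}=\sigma$ on $B_{3\eta}(0)$, with $q(0)=0$ and $|q'(0)|=|Q(0)|^{1/(n+1)}\geq\left[\frac{(1-r)(n+1)}{|\mathcal{H}(0)|}\right]^{1/(n+1)}>0$, a lower bound uniform over $\mathcal{B}_r$.

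Next I would make the local invertibility of $q$ quantitative and uniform. From $q^{n+1}=\sigma$ and $|\sigma|\leq(1+r)|\sigma_0|$ one controls $\sup_{B_{2\eta}(0)}|q|$ uniformly, hence by the Cauchy inequality $\sup_{B_\eta(0)}|q''|$ as well; combined with the uniform lower bound on $|q'(0)|$ this yields $\left|1-\frac{q'(z)}{q'(0)}\right|\leq C|z|$ on $B_\eta(0)$ for a constant $C$ independent of $\sigma$. Choosing $\rho_1>0$ small enough that $C|z|\leq\frac12$ on $B_{\rho_1^{1/(n+1)}}(0)$ and that $2\rho_1^{1/(n+1)}|Q(0)|^{-1/(n+1)}\leq2\eta$, the Noshiro--Warschawski criterion ($\re\left(\frac{q'(z)}{q'(0)}\right)\geq\frac12$ on a convex set) shows $q$ is injective there, so $q^{-1}$ is well-defined on $B_{\rho_1^{1/(n+1)}}(0)$ with image inside $B_{2\rho_1^{1/(n+1)}|Q(0)|^{-1/(n+1)}}(0)\subset B_{2\eta}(0)$. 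Since $\sigma=q^{n+1}$ there, each $y\in B_{\rho_1}(0)\setminus\{0\}$ has exactly the $n+1$ pullbacks $q^{-1}(\zeta)$, $\zeta^{n+1}=y$, as its preimages inside that small disk.

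Finally I would rule out preimages of small values of $\sigma$ lying outside the small disk and extract $V$. Because $\sigma_0$ vanishes only at $0$ in $\tilde\Omega$, $\inf_{\tilde\Omega\setminus B_{2\eta'}(0)}|\sigma_0|>0$ for every $\eta'>0$, so $|\sigma|\geq(1-r)\inf|\sigma_0|>0$ on the complement of the relevant small disk; shrinking $\rho\leq\rho_1$ below this infimum forces every solution of $\sigma(z)=y$ with $|y|<\rho$ into the small disk, giving $\mathrm{Card}\{z\in\tilde\Omega:\sigma(z)=y\}=n+1$ for $y\in B_\rho(0)\setminus\{0\}$, and in particular $\sigma^{-1}(B_\rho(0))\subset B_{2\eta}(0)$. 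For $V$: the uniform bound $|q(z)|\leq C'|z|$ on $B_\eta(0)$ gives $|\sigma(z)|=|q(z)|^{n+1}\leq(C')^{n+1}|z|^{n+1}$, so $V=B_{\rho^{1/(n+1)}/C'}(0)$ lies in $\sigma^{-1}(B_\rho(0))$ for every $\sigma\in\mathcal{B}_r$. The only real difficulty is one of uniformity: the radii $r,\eta,\rho_1,\rho$ and the constants $C,C'$ must all be fixed once and for all so that the Cauchy estimates and the lower bound on $|\sigma_0|$ hold simultaneously for every $\sigma$ in the ball $\mathcal{B}_r$, not merely for an individual $\sigma$.
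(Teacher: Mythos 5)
Your argument is correct and follows essentially the same route as the paper's Appendix A: the uniform non-vanishing of $Q=\sigma/z^{n+1}$ near $0$ for $\sigma\in\mathcal{B}_r$, the Cauchy estimate on $q''$ combined with the uniform lower bound on $|q'(0)|$ to get $|1-q'(z)/q'(0)|\leq\frac12$ on a fixed small disk (hence uniform invertibility of $q$), the factorization $\sigma=q^{n+1}$ to count the $n+1$ preimages, and the lower bound on $|\sigma_0|$ away from the origin to exclude preimages elsewhere in $\tilde\Omega$. The only cosmetic difference is that you obtain $V$ from the uniform upper bound $|q(z)|\leq C'|z|$, whereas the paper reads off the equivalent bound $1/|Q|\geq c$ on $\sigma^{-1}(\partial B_\rho(0))$; both rest on the same two-sided control of $Q$.
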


\medskip \noindent For $|a|<\rho$ and $\sigma\in \mathcal{B}_r$, by Lemma \ref{gomme} we have that
$$\sigma^{-1}(a)=\{z \in \tilde \Omega: \: \sigma(z)=a\}=\{a_0,\dots,a_n \},$$
where $a_k=q^{-1}(\hat a_k)$ and $\hat a_k$, $k=0,\dots,n$, are the $(n+1)-$roots of $a$, and then
$g_{a,\sigma}(z):=\displaystyle \frac{\sigma(z)-a}{\prod_{k=0}^{n}(z-a_k)} \in \mathcal{M}(\overline{\Omega})$ is a non-vanishing function. We are now in position to prove the following.
\begin{lem}\label{derivca}
Up to take $\rho$ smaller, there exists a $C^1-$map $a \in
B_\rho(0) \to \sigma_a \in \mathcal{B}_r$ so that $\sigma_a$ solves \eqref{sigmaa}-\eqref{ca}.
Moreover, the map $a \in B_\rho(0) \to c_a=c_{a,\sigma_a}$ is $C^1$
with
\begin{eqnarray*}
&& \Gamma:=\mathcal{H}(0) \partial_a c_a
\Big|_{a=0}=\frac{1}{n!}\frac{d^{n+1}}{dz^{n+1}}\bigg[
\mathcal{H}(z)f_{n+1}(z)\bigg]  (0)\\
&&\Upsilon:=\mathcal{H}(0) \partial_{\bar a} c_a \Big|_{a=0}=-{2\pi(n+1)\over |\Om|
n!}\overline{b_{n+1}}\,\frac{d^n \mathcal{H} }{dz^n}(0),
\end{eqnarray*}
where
$$f_{n+1}(z)=\frac{1}{(n+1)!} \frac{d^{n+1}}{dw^{n+1}}\left[2\log \frac{w-q_0(z)}{q_0^{-1}(w)-z}+4\pi
H^*(z-q_0^{-1}(w))\right] (0)\:,\qquad b_{n+1}=\frac{1}{(n+1)!}\frac{d^{n+1} q_0^{-1}}{dw^{n+1}}(0).$$
\end{lem}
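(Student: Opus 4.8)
The plan is to produce $\sigma_a$ by a contraction argument that exploits the fact that at $a=0$ the right-hand side of \eqref{sigmaa} does not depend on $\sigma$ at all, and then to read off $\Gamma$ and $\Upsilon$ by differentiating the closed formula \eqref{ca} in $a$ and $\bar a$. Throughout, $[w^m]\phi$ denotes the coefficient of $w^m$ in the Taylor expansion of a function $\phi$ holomorphic at $0$.

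Denote by $T(a,\sigma)$ the right-hand side of \eqref{sigmaa}, with $c_{a,\sigma}$ given by \eqref{ca}. One first checks that $T$ is a well-defined map $B_\rho(0)\times\mathcal B_r\to\mathcal M'(\overline\Omega)$: by Lemma \ref{gomme} the points $a_k=q_\sigma^{-1}(\hat a_k)$ ($\hat a_0,\dots,\hat a_n$ the $(n+1)$-st roots of $a$) are well defined for $\sigma\in\mathcal B_r$ and $|a|<\rho$; the ratio $g_{a,\sigma}^2/g_{0,\sigma}^2$ is holomorphic and non-vanishing near $0$ since $g_{0,\sigma}=\sigma/z^{n+1}$ never vanishes for $\sigma\in\mathcal B_r$, while $\mathcal H_{a,\sigma}(w)/w^{n+2}$ has a single pole (of order $n+2$) at $0$ with $\mathcal H_{a,\sigma}(0)\ne 0$; the coefficient \eqref{ca} is precisely designed so that the residue of the integrand at $0$ vanishes, so $\int^z(\cdots)\,dw$ is a genuine meromorphic function in $\tilde\Omega$ whose inverse, up to sign, defines $T(a,\sigma)$. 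The key observation is that for $a=0$ one has $a_k=0$ for all $k$, hence $g_{0,\sigma}^2/g_{0,\sigma}^2\equiv 1$, $\mathcal H_{0,\sigma}=\mathcal H$ and $c_{0,\sigma}=c_0$, so that $T(0,\sigma)=-\big(\int^z \mathcal H(w)\,w^{-n-2}e^{-c_0 w^{n+1}}\,dw\big)^{-1}=\sigma_0$ for \emph{every} $\sigma\in\mathcal B_r$; in particular $D_\sigma T(0,\cdot)\equiv 0$. Granting the joint regularity of $T$ discussed next, one gets $\|D_\sigma T(a,\sigma)\|\le\frac12$ for $|a|$ small and $\sigma\in\mathcal B_r$, while $\|T(a,\sigma_0)-\sigma_0\|=\|T(a,\sigma_0)-T(0,\sigma_0)\|\to 0$ as $a\to 0$; hence $T(a,\cdot)\colon\mathcal B_r\to\mathcal B_r$ is a uniform contraction, producing a unique fixed point $\sigma_a\in\mathcal B_r$ solving \eqref{sigmaa}--\eqref{ca}, which then vanishes only at $0$ with multiplicity $n+1$.

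The point requiring care --- and what I expect to be the main technical burden --- is that $T$ is jointly $C^1$ (in fact real-analytic) in $(a,\bar a,\sigma)$, for then the fixed point $\sigma_a$ of the uniform family of contractions is $C^1$ in $(a,\bar a)$, and so is $c_a:=c_{a,\sigma_a}$ via \eqref{ca}. The only delicate issue is the seemingly branched dependence on $a$ through $\hat a_k$, which is not differentiable at $a=0$. As emphasised right after \eqref{keyrelation}, this is resolved because $T$ depends on the $a_k$ only through expressions symmetric in $(a_0,\dots,a_n)$ and, separately, in $(\bar a_0,\dots,\bar a_n)$: through $g_{a,\sigma}=\prod_k\frac{q_\sigma(z)-\hat a_k}{z-a_k}$, through the terms $\sum_k H^*(z-a_k)$ and $\overline{\sum_k a_k}$ in $\mathcal H_{a,\sigma}$, and through $c_{a,\sigma}$, itself a symmetric function of the roots. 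Since $q_\sigma^{-1}(w)=b_1 w+b_2 w^2+\cdots$ with coefficients depending $C^1$ on $\sigma$ (Lemma \ref{gomme}), each such quantity has the form $\sum_k\psi(\hat a_k)$ with $\psi$ holomorphic at $0$; and because $\sum_{k=0}^n\hat a_k^{\,s}=(n+1)a^{s/(n+1)}$ if $(n+1)\mid s$ and $0$ otherwise, only integer powers of $a$ appear, so $\sum_k\psi(\hat a_k)$ is a convergent power series in $a$ (and $\overline{\sum_k a_k}$ in $\bar a$). Thus all ingredients of \eqref{sigmaa}--\eqref{ca} are real-analytic in $(a,\bar a)$, and the passage from the residue-free integrand to its inverse antiderivative is $C^1$ into $\mathcal M'(\overline\Omega)$ because all functions involved stay uniformly bounded away from $0$ and $\infty$ in the norm $\|\cdot\|=\|\cdot/\sigma_0\|_{\infty,\tilde\Omega}$; these weighted-norm estimates, together with the regularity just described, are the routine but lengthy part.

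Finally the derivatives. Since $c_{0,\sigma}\equiv c_0$ is independent of $\sigma$, one has $D_\sigma c_{0,\cdot}\equiv 0$, so by the chain rule $\partial_a c_a|_{a=0}=\partial_a c_{a,\sigma_0}|_{a=0}$ and $\partial_{\bar a}c_a|_{a=0}=\partial_{\bar a}c_{a,\sigma_0}|_{a=0}$; i.e. one may freeze $\sigma=\sigma_0$, hence $q_\sigma=q_0$. Write $c_{a,\sigma_0}=[z^{n+1}]P_a(z)$ with $P_a(z):=\frac{g^2_{a,\sigma_0}(z)\,g^2_{0,\sigma_0}(0)}{g^2_{a,\sigma_0}(0)\,g^2_{0,\sigma_0}(z)}\frac{\mathcal H_{a,\sigma_0}(z)}{\mathcal H_{a,\sigma_0}(0)}$ and $P_0=\mathcal H/\mathcal H(0)$. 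Using $\log g_{a,\sigma_0}(z)=\sum_k\log\frac{q_0(z)-\hat a_k}{z-q_0^{-1}(\hat a_k)}$, the exact identity $\log\mathcal H_{a,\sigma_0}(z)-\log\mathcal H(z)=4\pi\sum_k\big[H^*(z-q_0^{-1}(\hat a_k))-H^*(z)\big]-\tfrac{2\pi}{|\Omega|}z\,\overline{\sum_k q_0^{-1}(\hat a_k)}$, and the elementary fact $\partial_a\big|_{a=0}\sum_k\psi(\hat a_k)=(n+1)\,[w^{n+1}]\psi(w)$ (a direct consequence of the power-sum formula above), one gets $\partial_a\log P_a(z)|_{a=0}=(n+1)\big(f_{n+1}(z)-f_{n+1}(0)\big)$ and $\partial_{\bar a}\log P_a(z)|_{a=0}=-\tfrac{2\pi(n+1)}{|\Omega|}\overline{b_{n+1}}\,z$, with $f_{n+1},b_{n+1}$ as in the statement. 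Extracting $z^{n+1}$-coefficients from $P_a=P_0\big(1+(n+1)(f_{n+1}(z)-f_{n+1}(0))\,a-\tfrac{2\pi(n+1)}{|\Omega|}\overline{b_{n+1}}\,z\,\bar a+\cdots\big)$ yields $\mathcal H(0)\,\partial_a c_a|_{a=0}=(n+1)[z^{n+1}]\big(\mathcal H(z)f_{n+1}(z)\big)-(n+1)f_{n+1}(0)\,[z^{n+1}]\mathcal H(z)$ and $\mathcal H(0)\,\partial_{\bar a}c_a|_{a=0}=-\tfrac{2\pi(n+1)}{|\Omega|}\overline{b_{n+1}}\,[z^n]\mathcal H(z)$. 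Under the standing assumption \eqref{pc} of this part of the paper (equivalently $c_0=0$, i.e. $[z^{n+1}]\mathcal H(z)=0$) the second term in the first expression disappears, and one obtains exactly $\Gamma=\frac1{n!}\frac{d^{n+1}}{dz^{n+1}}[\mathcal H(z)f_{n+1}(z)](0)$ and $\Upsilon=-\frac{2\pi(n+1)}{|\Omega|\,n!}\overline{b_{n+1}}\frac{d^n\mathcal H}{dz^n}(0)$, as claimed.
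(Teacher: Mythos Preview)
Your proof is correct and follows essentially the same line as the paper. The only cosmetic difference is that the paper packages the existence step as an Implicit Function Theorem argument on $\Lambda(a,\sigma)=\sigma-T(a,\sigma)$, observing $\partial_\sigma\Lambda(0,\sigma_0)=\mathrm{Id}$, while you phrase the equivalent fact $D_\sigma T(0,\cdot)\equiv 0$ as a contraction; the symmetry argument for smoothness in $(a,\bar a)$ and the computation of $\Gamma,\Upsilon$ by freezing $\sigma=\sigma_0$ (using $\partial_\sigma c_{0,\sigma}\equiv 0$) and invoking \eqref{pc} to kill the $f_{n+1}(0)$ term coincide with the paper's.
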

\begin{proof}
Given $c_{a,\sigma}$ as in \eqref{ca}, equation \eqref{sigmaa}
is equivalent to find zeroes
of the map $\Lambda: (a,\sigma) \in B_\rho(0) \times \mathcal{B}_r \to
\mathcal{M}(\overline{\Omega})$ given as
$$\Lambda(a,\sigma)=
\sigma(z)+\left[ \int^z  \frac{g^2_{a, \sigma}(w)}{g^2_{0, \sigma}(w)}   \frac{\mathcal{H}_{a,\sigma}(w)}{w^{n+2}}  e^{-c_{a,\sigma}w^{n+1}} dw \right]^{-1}.$$
Observe that the zeroes $a_k=a_k(a,\sigma)=q^{-1}(\hat a_k)$ are continuously
differentiable in $\sigma$. Differentiating the relation
$\sigma(a_k)=a$ at $\sigma_0$ along a direction $R \in
\mathcal{M}'(\overline{\Omega})$, we have that $\sigma_0'(a_k(a,\sigma_0)) \partial_\sigma a_k(a,\sigma_0) [R]+R(a_k(a,\sigma_0))=0$. Since $\sigma_0'(a_k) \sim a_k^n$ and $R(a_k)\sim a_k^{n+1}$ in view of $\|R\|<\infty$, we get that $\partial_\sigma a_k(0,\sigma_0)[R]=0$ for all $R \in \mathcal{M}'(\overline{\Omega})$. For $z \not= 0$ the function $\frac{g_{a,\sigma}(z)}{g_{0,\sigma}(z)}$ is continuously differentiable in $\sigma$ with
$$\partial_\sigma \left(\frac{g_{a,\sigma}(z)}{g_{0,\sigma}(z)} \right) [R]=a \frac{z^{n+1}}{\prod_{k=0}^{n}(z-a_k)} \frac{R(z)}{\sigma^2(z)}
+ \frac{\sigma(z)-a }{\prod_{k=0}^{n}(z-a_k)}\frac{z^{n+1}}{\sigma(z)} \sum_{j=0}^n  \frac{1}{z-a_j}  \partial_\sigma a_j(a,\sigma) [R]$$ for every $R \in
\mathcal{M}'(\overline{\Omega})$. In particular, we get that
$\partial_\sigma \left(\frac{g_{a,\sigma}(z)}{g_{0,\sigma}(z)} \right)\Big|_{a=0} [R]=0$ for every $z \not= 0$ and $R \in
\mathcal{M}'(\overline{\Omega})$. Since we can write $\frac{g_{a,\sigma}(z)}{g_{0,\sigma}(z)}$ as
\begin{equation} \label{1310}
\frac{g_{a,\sigma}(z)}{g_{0,\sigma}(z)}=\frac{z^{n+1}}{\sigma(z)} \prod_{k=0}^{n} \frac{q(z)-q(a_k)}{z-a_k}=
\frac{z^{n+1}}{\sigma(z)} \prod_{k=0}^{n} \int_0^1 q'(a_k+t(z-a_k))dt
\end{equation}
for $z$ small in view of $\sigma=q^{n+1}$, we get that $\frac{g_{a,\sigma}(z)}{g_{0,\sigma}(z)}$ is continuously differentiable in $\sigma$ and
the linear operator $\partial_\sigma \left( \frac{g_{a,\sigma}(z)}{g_{0,\sigma}(z)}\right)$ is continuous at
$z=0$. In particular, we get that $\partial_\sigma\left(\frac{g_{a,\sigma_0}(z)}{g_{0,\sigma_0}(z)}\right)\Big|_{a=0}[R]=0$ for
every $z$ and $R \in \mathcal{M}'(\overline{\Omega})$. By \eqref{Hasigma} we have that $\mathcal{H}_{a,\sigma}$ is continuously differentiable in $\sigma$ with $\partial_\sigma \mathcal{H}_{0,\sigma}[R]=0$ for every $R \in
\mathcal{M}'(\overline{\Omega})$. We have that  $c_{a,\sigma}$ is also continuosuly differentiable in $\sigma$ with $\partial_\sigma c_{0,\sigma_0}[R]=0$ for every $R \in
\mathcal{M}'(\overline{\Omega})$, and so $\Lambda (a,\sigma)$ is with $\partial_\sigma \Lambda(0,\sigma_0)=\hbox{Id}$.

\medskip \noindent Since $a_k \sim |a|^{\frac{1}{n+1}}$, the smooth dependence in $a$ is much more delicate, and will be true just for symmetric expressions of the $a_k$'s thanks to the symmetries of $\hat a_k=q(a_k)$. To fully exploit the symmetries, it is crucial that the expression \eqref{Hasigma} of $\mathcal{H}_{a,\sigma}$ is in terms of an holomorphic function $H^*$. Indeed, we have that
\begin{eqnarray*}
2\sum_{k=0}^n H^*(z-a_k)-\frac{z}{|\Omega|} \overline{\sum_{k=0}^n a_k}
&=&2 \sum_{l=0}^{\infty} g_l(z) \sum_{k=0}^n \hat a_k^l
-{z \over
|\Om|}\overline{\sum_{l=1}^\infty b_l \sum_{k=0}^n \hat a_k^l}\\
&=& 2 (n+1)\sum_{l=0}^{\infty} g_{(n+1)l}(z) a^l -{n+1 \over
|\Om|}z\overline{\sum_{l=1}^\infty b_{(n+1)l} a^l}
\end{eqnarray*}
in view of $\displaystyle \sum_{k=0}^n \hat a_k^l=0$ for all $l \notin (n+1)\mathbb{N}$, where $g_l(z)=\frac{1}{l!}\frac{d^l}{dw^l} [H^*(z-q^{-1}(w))](0)$ and $b_l=\frac{1}{l!}\frac{d^l q^{-1}}{dw^l}(0)$ (recall that $b_0=q^{-1}(0)=0$). Since for $z$ small there holds
\begin{eqnarray*}
 \sum_{k=0}^n \log \frac{q(z)-q( a_k)}{z-a_k}=\sum_{l=0}^\infty h_l(z) \sum_{k=0}^n \hat a_k^l=(n+1) \sum_{l=0}^\infty h_{(n+1)l}(z) a^l
\end{eqnarray*}
in view of $a_k=q^{-1}(\hat a_k)$, where $h_l(z)=\frac{1}{l!} \frac{d^l}{dw^l} \left[\log \frac{w-q(z)}{q^{-1}(w)-z}\right](0),$
we have that $\frac{g_{a,\sigma}(z)}{g_{0,\sigma}(z)}$  is continuously differentiable in $a,\, \bar a$ for all $z$ in view of \eqref{1310} (for $z$ far from $0$ it is obvious). Hence,  by \eqref{Hasigma} $\frac{g_{a,\sigma}^2}{g_{0,\sigma}^2} \mathcal{H}_{a,\sigma}$, $c_{a,\sigma}$ and $\Lambda(a,\sigma)$ are continuously differentiable also in $a,\, \bar a$, and then $\Lambda$ is  a $C^1-$map with $\Lambda(0,\sigma_0)=0$, $\partial_\sigma
\Lambda(0,\sigma_0)=\hbox{Id}$. Up to take $\rho$ smaller, by the Implicit
Function Theorem we find a $C^1$-map $a \in B_\rho(0) \to \sigma_a$ so
that $\Lambda(a,\sigma_a)=0$, and the function $a \to c_a=c_{a,\sigma_a}$ is $C^1$. By
$$\partial_a [\frac{g_{a,\sigma}^2(z) g_{0,\sigma}^2(0)}{g_{a,\sigma}^2(0) g_{0,\sigma}^2(z) }  \frac{\mathcal{H}_{a,\sigma}(z)}{\mathcal{H}_{a,\sigma}(0)}](0)=
\frac{g_{0,\sigma}^2(0)}{g_{0,\sigma}^2(z) }\partial_a [e^{2\log g_{a,\sigma}(z)-2\log g_{a,\sigma}(0)}\frac{\mathcal{H}_{a,\sigma}(z)}{\mathcal{H}_{a,\sigma}(0)}](0)=
(n+1)\frac{\mathcal{H}(z)}{\mathcal{H}(0)}[f_{n+1}(z)-f_{n+1}(0)]$$
and
$$\partial_{\bar a} [\frac{g_{a,\sigma}^2(z) g_{0,\sigma}^2(0)}{g_{a,\sigma}^2(0) g_{0,\sigma}^2(z) }  \frac{\mathcal{H}_{a,\sigma}(z)}{\mathcal{H}_{a,\sigma}(0)}](0)=
-\frac{2\pi (n+1)}{|\Omega|} \frac{\mathcal{H}(z)}{\mathcal{H}(0)} \overline{b_{n+1}}z$$
we deduce the desired expression for $\Gamma$ and $\Upsilon$ in view of $\partial_\sigma c_{0,\sigma_0}=0$ and \eqref{pc}. \qed \end{proof}

\section{\hspace{-0.5cm}: The linear theory}

\noindent In this section, we will prove the invertibility of the
linear operator $L$ given by (\ref{ol}) under suitable
orthogonality conditions. The operator $L$ can be described asymptotically by the following
linear operator in $\R^2$
$$L_0(\phi)=\Delta\phi+{8(n+1)^2|y|^{2n}\over (1+|y^{n+1}-\zeta_0|^2)^2}\phi,$$
where $\zeta_0=\lim \frac{a}{\delta}$. When $\zeta_0=0$, as in the case $n=0$ \cite{BaPa}, by using a Fourier decomposition
of $\phi$ it can be shown in a rather direct way that the bounded
solutions of $L_0(\phi)=0$ in $\R^2$ are precisely linear combinations of
\begin{equation*}
Y_{0}(y) = \,{1-|y|^{2n+2}\over 1+|y|^{2n+2}}
\qquad\text{and}\qquad Y_l(y) = { (y^{n+1})_l \over
1+|y|^{2n+2}} ,\:l=1,2.
\end{equation*}
Note that $L_0$ is the linearized operator at the radial solution
$U=U_{1,0}$ of $-\Delta U=|z|^{2n} e^U$.

\medskip \noindent For the linearized operator at $U_{1,\zeta_0}$ with $\zeta_0 \not=0$, the Fourier decomposition
is useless since $U_{1,\zeta_0}$ is not radial w.r.t. any point if $n \geq 1$. However, the same property is still true as recently
proved in \cite{DEM5}, and the argument below could be carried out in full generality in the range $a=O(\delta)$. Since in Theorem \ref{main} we are concerned with the case $a=o(\delta)$, for simplicity we will discuss the linear theory just in this case.

\medskip \noindent Recall that
\begin{equation*}
Z_{0}(z) = {\delta^2-|\sigma(z)-a|^2\over
\de^2+|\sigma(z)-a|^2}, \qquad\quad Z_l(z) =
{\de[\sigma(z)-a]_l \over
\de^2+|\sigma-a|^2},\qquad l=1,2,
\end{equation*}
and $PZ_l$, $l=0,1,2$, denotes the projection of $Z_l$ onto the doubly-periodic functions with
zero average:
\begin{equation*}
\left\{ \begin{array}{ll} \Delta PZ_l =\Delta
Z_l-\frac{1}{|\Om|}\int_\Om \Delta Z_l & \text{in
$\Omega$}\\
\int_\Om PZ_l=0.&
\end{array}\right.
\end{equation*}
Given $h\in L^\infty(\Om)$ with $\int_\Om h=0$, consider the
problem of finding a function $\phi$ in $\Om$ with zero average
and numbers $d_l$, $l=0,1,2$, such that
\begin{equation}\label{plco}
\left\{ \begin{array}{ll}
L(\phi) =h + \displaystyle \sum_{l=0}^{2}d_l \Delta PZ_l &\text{ in $\Om$}\\
\int_{\Omega } \Delta PZ_l \phi = 0   &\forall\: l=0,1,2.
\end{array} \right.
\end{equation}
Since $Z=Z_1+iZ_2$, observe that (\ref{plco}) is equivalent to
solve (\ref{plcobis}) with $d=d_1-id_2$. Let us stress that
the orthogonality conditions in \eqref{plco} are taken with
respect to the elements of the approximate kernel due to
translations and to an extra element which involves dilations. A
similar situation already appears in \cite{DDeMW}.

\medskip \noindent First, we will prove an a-priori estimate for problem
\eqref{plco} when $d_l=0$ for all $l=0,1,2$, w.r.t. the
$\|\cdot\|_*$-norm defined as
$$\|h\|_*=\sup_{z\in \Om}{(\de^2+|\sigma(z)-a|^2)^{1+\gamma/2}\over \de^\gamma(|\sigma'(z)|^2+\de^{2n\over n+1})}|h(z)|,$$
where $0<\gamma <1$ is a small fixed constant.
\begin{prop} \label{p1} There exist $\eta_0>0$ small and $C>0$ such that
for any $0<\de\leq \eta_0$, $\e^2\leq \eta_0 \de^{2\over n+1}$,
$|a|\leq \eta_0 \de$ and any solution $\phi$ to
\begin{equation}\label{plco1}
\left\{ \begin{array}{ll}
L(\phi)=h &\text{in }\Om\\
\int_{\Omega } \Delta PZ_l \phi = 0 &\forall\:l=0,1,2\\
\int_\Omega \phi=0,&
\end{array} \right.
\end{equation}
one has
\begin{equation}\label{est}
\|\phi \|_\infty \le C \log \frac{1}{\de} \|h\|_*.
\end{equation}
\end{prop}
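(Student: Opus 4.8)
The plan is to argue by contradiction following the standard Liapunov--Schmidt blow-up scheme adapted to a doubly-periodic domain with concentration on the zero set of the potential. Suppose the estimate fails: then there are sequences $\de_k\to0$, $\e_k^2\le\eta_k\de_k^{2/(n+1)}$, $a_k=o(\de_k)$, functions $\phi_k$, $h_k$ satisfying \eqref{plco1} with $\|\phi_k\|_\infty=1$ and $\|h_k\|_*\to0$. Because $\|h_k\|_*\to0$, the right-hand side of $L(\phi_k)=h_k$ is pointwise controlled by $\|h_k\|_*\,\delta_k^{\gamma}(|\sigma'|^2+\delta_k^{2n/(n+1)})(\delta_k^2+|\sigma-a_k|^2)^{-1-\gamma/2}$, which, after the inner change of variables, is a small multiple of the standard Liouville-type weight. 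The first step is to pass to the inner scale near the unique concentration point $0$: set $y=q(z)$ (Lemma \ref{gomme}), then $y\mapsto y^{n+1}$, then $y\mapsto (y-a_k)/\delta_k$, to obtain rescaled functions $\tilde\phi_k(y)=\phi_k(\cdots)$ solving an equation that converges locally uniformly to $L_0\tilde\phi=0$ in $\R^2$, where $L_0$ is the linearized Liouville operator with $\zeta_0=\lim a_k/\delta_k=0$. Elliptic estimates give a subsequential limit $\tilde\phi$, bounded, solving $L_0\tilde\phi=0$; by the nondegeneracy classification quoted in the excerpt (the bounded kernel is spanned by $Y_0,Y_1,Y_2$), $\tilde\phi=\sum c_lY_l$.

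The second step is to kill the limiting kernel using the three orthogonality conditions $\int_\Omega \Delta PZ_l\,\phi_k=0$. By the expansions \eqref{pzij} of $PZ_l$ and the formulas \eqref{deltaZ0}--\eqref{deltaZ} for $\int_\Omega\Delta Z_l$, one has $\int_\Omega \Delta PZ_l\,\phi_k=\int_{\sigma^{-1}(B_\rho(0))}|\sigma'|^2e^{U_{\de_k,a_k}}Z_l\,\phi_k+o(1)$ up to the normalization; rescaling turns this into $\int_{\R^2}\frac{8(n+1)^2|y|^{2n}}{(1+|y^{n+1}|^2)^2}Y_l\tilde\phi_k+o(1)\to\int_{\R^2}(\text{weight})Y_l\tilde\phi$. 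Hence each $c_l=0$, so $\tilde\phi\equiv0$, i.e.\ the blow-up profile at the inner scale vanishes. The third step upgrades this to $\|\phi_k\|_\infty\to0$ by a global comparison argument: one builds an explicit barrier out of $|y|^{-\gamma}$-type functions and the fundamental solution to show that away from the concentration region the equation $-\Delta\phi_k=\mathcal{K}\phi_k+\tilde\gamma(\phi_k)-h_k$ forces $\phi_k$ to be small there too, using that $\mathcal{K}$ is, away from $0$, of size $O(\delta_k^{2/(n+1)})$ in $L^1$ and $\tilde\gamma(\phi_k)$ is a sum of integral (nonlocal) terms each $O((1+\eta_k)\|\phi_k\|_\infty)$ against functions with small $\|\cdot\|_*$-norm. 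The $\log\frac1{\de_k}$ loss in \eqref{est} is exactly what the Green's representation formula for $-\Delta$ on $\Omega$ produces when the datum behaves like the Liouville weight: $|\phi_k(z)|\lesssim \log\frac1{\de_k}\|h_k\|_*+o(1)$ uniformly. This contradicts $\|\phi_k\|_\infty=1$ and proves the proposition.

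A few technical points deserve care. The nonlocal terms in $\tilde\gamma(\phi)$ and in $\mathcal{K}$ (ratios of integrals $\int e^{u_0+W}\phi$, $\int e^{2u_0+2W}\phi$, and the $\epsilon^2 B(W)$ factor) must be shown to be genuinely small compared to the principal local term $4\pi N e^{u_0+W}/\int e^{u_0+W}$; this is where the hypotheses $\e^2\le\eta_0\de^{2/(n+1)}$ and $|a|\le\eta_0\de$ enter, guaranteeing $\epsilon^2B(W)=O(\eta_0)$ by \eqref{BW} and hence $\tilde\gamma(\phi)=O(\|\phi\|_\infty)$ with a small constant in the relevant norm. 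One also needs the exponential decay/integrability of the weight $|y|^{2n}/(1+|y^{n+1}|^2)^2$ to justify the convergence of the orthogonality integrals and to run dominated convergence; near $y=0$ the factor $|y|^{2n}$ is harmless and at infinity the decay is $|y|^{-2n-4}$.

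I expect the main obstacle to be the global part of the argument (the third step): transferring smallness from the inner region to all of $\Omega$ while tracking the precise $\log\frac1\de$ dependence and showing that the nonlocal corrections do not destroy the maximum-principle/representation-formula estimate. The inner blow-up and the use of the orthogonality conditions are routine given the classification of the kernel of $L_0$ already cited; the delicate bookkeeping is in controlling $\mathcal{K}$ and $\tilde\gamma$ uniformly in $\de,\e,a$ within the stated ranges, which is what forces the hypotheses in the statement.
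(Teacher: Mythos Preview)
Your overall architecture (contradiction, inner blow-up, kill the kernel via orthogonality, then a global/barrier step) matches the paper's. Two points, however, need correction.

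First, a minor but consequential slip: negating \eqref{est} and normalizing $\|\phi_k\|_\infty=1$ gives $|\log\de_k|\,\|h_k\|_*\to0$, not merely $\|h_k\|_*\to0$. You implicitly use the stronger assumption at the end (``$|\phi_k|\lesssim \log\frac1{\de_k}\|h_k\|_*+o(1)$ contradicts $\|\phi_k\|_\infty=1$''), so state it from the outset.

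Second, and more substantive: your claim that ``$\tilde\gamma(\phi)=O(\|\phi\|_\infty)$ with a small constant'' is not correct. Recall $L(\phi)=\Delta\phi+\mathcal{K}[\phi+\gamma(\phi)]$ with $\gamma(\phi)$ a \emph{scalar} whose leading piece is $-\frac{\int e^{u_0+W}\phi}{\int e^{u_0+W}}$; this is $O(\|\phi\|_\infty)$ with constant of order one, not $o(1)$. Only the $\epsilon^2B(W)$ correction in $\gamma(\phi)$ is small. Consequently, if you blow up $\phi_k$ directly, the inner limit equation is not $L_0\tilde\phi=0$ but $L_0(\tilde\phi+c)=0$ for $c=\lim\gamma(\phi_k)$, and you must then track this constant through the orthogonality and the outer matching. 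The paper sidesteps this entirely by the substitution
\[
\psi_k:=\phi_k+\gamma(\phi_k),
\]
which turns $L$ into the \emph{local} operator $\tilde L(\psi)=\Delta\psi+\mathcal{K}\psi$ and preserves the orthogonality conditions (since $\gamma(\phi_k)$ is constant). All five claims in the paper are then run for $\psi_k$: the inner limit solves $L_0\Psi=0$ and is killed by orthogonality (Claim~2); the outer limit is a constant $\tilde c$ (Claim~1); $\tilde c=0$ follows from the Green representation, and this is precisely where the hypothesis $|\log\de_k|\,\|h_k\|_*\to0$ is spent (Claim~3); finally a super-solution $\tilde Z$ and an explicit barrier bridge the annulus between scales (Claims~4--5). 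Your sketch of the ``third step'' conflates Claims~3--5 and, without the $\psi$-substitution, leaves the role of the $O(1)$ nonlocal constant unresolved. Incorporating that substitution (one line) repairs the argument and brings it in line with the paper.
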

\begin{proof} The proof of estimate \equ{est} consists of several steps. Assume by contradiction the existence of sequences $\de_k \to 0$, $\e_k$ with $\e_k^2=o(\de_k^{2\over n+1})$, $a_k$ with $a_k=o(\de_k)$, functions $h_k$ with $|\log \de_k| \, \|h_k\|_*=o(1)$ as $k \to +\infty$, and solutions $\phi_k$ of (\ref{plco1}) with $\|\phi_k\|_\infty=1$. Since by (\ref{ol}) the operator $L$ acts as $L(\phi) = \Delta \phi + \ml{K} \left[ \phi+ \gamma(\phi)\right]$, where $\gamma(\phi) \in \mathbb{R}$, the function $\psi_k=\phi_k+\gamma(\phi_k)$ does solve
\begin{equation*}
\left\{ \begin{array}{ll}
\Delta \psi_k+\ml{K}_k \psi_k= h_k &\text{in $\Om$}\\
\int_{\Omega } \Delta PZ_{k,l} \psi_k= 0 &\forall \: l=0,1,2,
\end{array}\right.
\end{equation*}
where $W_k$, $\ml{K}_k$, $Z_{k,l}$ denote the functions $W$, $\ml{K}$, $Z_l$,
respectively, along the given sequence.

\begin{claim}
$\displaystyle \liminf_{k \to +\infty} \|\psi_k\|_\infty
>0$ and, up to a subsequence, $\psi_k \to \ti c \in \mathbb{R}$ as $k \to+\infty$ in $C^{1,\alpha}_{\hbox{loc}}(\bar \Om\sm\{0\})$, for all $\alpha \in (0,1)$.
\end{claim}
\noindent Indeed, assume by contradiction that $\displaystyle
\liminf_{k \to +\infty} \|\psi_k\|_\infty =0$. Up to a
subsequence, assume that
$\|\psi_k\|_\infty=\lf\|\phi_k+\gamma(\phi_k)\rg\|_\infty\to 0$ as
$k\to+\infty$. Since $\e_k^2=o(\de_k^{2\over n+1})$, by (\ref{BW})
it follows that
$$\gamma(\phi_k)=-\frac{\int_\Om e^{u_0+W_k}\phi_k}{\int_\Om
e^{u_0+W_k}}+o(1)=O(1).$$ Up to a subsequence we have that
$\frac{\int_\Om e^{u_0+W_k}\phi_k}{\int_\Om e^{u_0+W_k}} \to c$, and then
$\phi_k \to c$ uniformly in $\Omega$ as $k \to +\infty$. Since
$\int_\Om\phi_k=0$, we get $c=0$ and $\phi_k \to 0$ in
$L^\infty(\Omega)$, in contradiction with $\|\phi_k\|_\infty=1$.
Moreover, since $\|\psi_k\|_\infty=O(1)$, by
(\ref{eps1})-(\ref{eps2}) we have that $\Delta\psi_k=o(1)$ in
$C_{\hbox{loc}}(\bar \Omega \setminus \{0\})$. Up to a
subsequence, we have that $\psi_k \to \psi$ as $k\to+\infty$ in
$C^{1,\alpha}_{\hbox{loc}}(\bar \Om\sm\{0\})$. Since $\|\psi_k
\|_\infty =O(1)$, $\psi$ is a bounded function which can be
extended to a harmonic doubly-periodic function in $\Om$.
Therefore, $\psi=\ti c$ in $\Om$ with $\ds\ti
c=\lim_{k\to+\infty}\gamma(\phi_k)$, since $\ds{1\over
|\Om|}\int_\Om\psi_k=\gamma(\phi_k)$.

\medskip \noindent Now, consider the function $\Psi_{k}(y)=\psi_k ( \delta_k^{1\over n+1} y)$. Then, $\Psi_k$ satisfies
$$\Delta \Psi_{k} + K_{k}(y)\Psi_{k} =\hat h_{k}(y)\qquad\text{in }
\de_k^{-\frac{1}{n+1}} \Omega ,$$ where $
K_{k}(y)=\de_k^{2\over n+1} \ml{K}_k (\de_k^{1\over n+1}y)$ and
$ \hat h_{k}(y)=\de_k^{2\over n+1} h_k (\de_k^{1\over n+1} y)$.
Also, we set $\sigma_k(y)=\delta_k^{-1} \sigma_{a_k}(\de_k^{1\over n+1}y)$ for $y$ in
compact subsets of $\R^2$.

\begin{claim}
$\Psi_{k} \to \Psi=0$ in $C_{\hbox{loc}}(\R^2)$ as $k\to+\infty$.
\end{claim}
\noindent Indeed, observe that by (\ref{BW}) and
(\ref{eps1})-(\ref{eps2}) we have the following expansions:
\begin{equation} \label{mlK}
\ml{K}(z)=|\sigma'(z)|^2 e^{U_{\de,a}}
\left[1+O(|c_a||z|^{n+1})+O(|c_a||a|+\delta^2|\log \delta|) \right]+O(\epsilon^2
|\sigma'(z)|^4 e^{2U_{\de,a}}) .
\end{equation}
Since $\e_k^2=o(\de_k^{2\over n+1})$, the first
estimate above re-writes along our sequence as
$$ K_{k}(y)=(1+o(1)+O(\delta_k|y|^{n+1})){8|\sigma_k'(y)|^2  \over \lf(1+\big|\sigma_k(y)-a_k \de_k^{-1}\big|^2\rg)^2}+o(1) {64|\sigma_k'(y)|^4 \over \lf(1+\big|\sigma_k(y)-a_k \de_k^{-1}\big|^2\rg)^4}$$
uniformly in $\de_k^{-\frac{1}{n+1}} \Omega$ as $k\to+\infty$.
Since $\sigma=z^{n+1}Q$, we have that $\sigma_k(y)= y^{n+1} Q_{a_k}( \delta_k^{\frac{1}{n+1}} y)$  and $\sigma_k'(y)=(n+1)  y^n Q_{a_k}(\delta_k^{\frac{1}{n+1}} y)+
\delta_k^{\frac{1}{n+1}}  y^{n+1} Q_{a_k}'(\delta_k^{\frac{1}{n+1}} y)$. Since $Q_{a_k}(0) \to \frac{n+1}{\mathcal{H}(0)}=:\gamma\not=0$ and $\|Q_{a_k}'\|_{\infty,\Omega}\leq
C \|Q_{a_k}\|_{\infty,\tilde \Omega}\leq C'$, we have
that
$$\sigma_k(y)=y^{n+1}[\gamma+o(1)+O(\de_k^{1\over n+1}|y|)] \:,\qquad
\sigma_k'(y)=(n+1) y^n [\gamma+o(1)+  O(\delta_k^{1 \over n+1}|y|)]$$
as $k \to +\infty$. Then we get that
\begin{equation} \label{Kk}
K_{k}(y)=\left[{8(n+1)^2 \gamma|^2 |y|^{2n} \over \lf(1+\big|\sigma_k(y)-a_k
\de_k^{-1}\big|^2\rg)^2}+o(1) {64(n+1)^4|\gamma|^4 |y|^{4n} \over
\lf(1+\big|\sigma_k(y)-a_k \de_k^{-1}\big|^2\rg)^4} \right][1+o(1)+O(\de_k^{1\over
n+1}|y|)]
\end{equation}
uniformly in $\de_k^{-\frac{1}{n+1}} \Omega$. Choose $\eta$ small so that $|\sigma_k(y)|\geq
\frac{|\gamma|}{2}|y|^{n+1}$ in $B_{\de_k^{-\frac{1}{n+1}}\eta}(0)$ for $k$ large. Since $\|\Psi_k\|_\infty=O(1)$ and $|\hat h_k(y)|\le C\|h_k \|_*
\to 0$ on compact sets, by elliptic estimates and (\ref{Kk}) we get that $\Psi_k(\gamma^{-\frac{1}{n+1}} y) \to \hat \Psi$ in $C_{\hbox{loc}}(\R^2)$ as $k \to+\infty$, where
$\hat \Psi$ is a bounded solution of $L_0(\hat \Psi) = 0$ (with $\zeta_0=0$). Then
$\hat \Psi(y)=\displaystyle \sum_{j=0}^2
b_{j}Y_j(y)$ for some $b_{j}\in\R$, $j=0,1,2$.\\
Since $\lap Z_{k,l}+|\sigma_k'|^2 e^{U_{\de_k,a_k}}Z_{k,l}=0$ for $l=0,1,2$ (where $U_{\de_k,a_k}$ stands for $U_{\de_k,a_k,\sigma_{a_k}}$), for $l=1,2$ we have that
\begin{equation*}
\begin{split}
\int_\Om \psi_k \Delta Z_{k,l}
=-\int_\Omega |\sigma_k'(z)|^2 \psi_k e^{U_{\de_k,a_k}}Z_{k,l}=-\int_{B_{\de_k^{-\frac{1}{n+1}} \eta}(0)}
{8 |\sigma_k'(z)|^2 (\sigma_k-a_k \de_k^{-1}) \Psi_k \over
(1+|\sigma_k-a_k \de_k^{-1} |^2 )^3}\,dy+O(\de_k^3).
\end{split}
\end{equation*}
Since for all $l=0,1,2$
$$0=\int_\Om \psi_k\Delta PZ_{k,l}=\int_\Om \psi_k \lf[ \Delta Z_{k,l} -
{1\over |\Om|}\int_\Om \Delta Z_{k,l}\rg]=\int_\Om \psi_k \Delta Z_{k,l}+o(1)$$
as $k \to \infty$ in view of \eqref{deltaZ0}-\eqref{deltaZ}, by dominated convergence we get that
$$\int_{\R^2} \hat \Psi(y)\,\frac{|y|^{2n}(y^{n+1})_l}{(1+|y|^{2n+2})^3}\, dy =0 \qquad
\hbox{for }l=1,2,$$ and we conclude that $b_{1}=b_{2}=0$.
Similarly, for $l=0$ we deduce that
$$\int_{\R^2} \hat \Psi(y)\,{|y|^{2n}(1-|y|^{2n+2})\over (1+|y|^{2n+2})^3}\, dy=0,$$
which implies that $b_0=0$. Thus, the claim follows.

\medskip \noindent On the other hand, from the equation of $\psi_k$ we have the following integral representation
\begin{equation}\label{irsn}
\psi_k(z)={1\over |\Om|}\int_{\Om} \psi_k +\int_\Om G(y,z)
\lf[\ml{K}_k(y) \psi_k(y)-h_k(y) \rg]\, dy.
\end{equation}
\begin{claim} $\ti c=0$
\end{claim}
\noindent Indeed, Claims 1 and 2 imply that  $\psi_k(0)=\Psi_k(0)
\to 0$ and ${1\over |\Om|}\int_{\Om} \psi_k =\gamma(\phi_k)\to \ti c$ as $k \to
+\infty$ by definition. So, by \eqref{irsn} we deduce
that
$$\int_\Om G(y,0) \lf[\ml{K}_k(y) \psi_k(y)-h_k(y) \rg]\, dy \to -\ti c $$
as $k \to +\infty$. Now, we first estimate the integral involving
$h_k$. Since $\int_{B_{\de_k}(0)}|\log|y||\,dy=O(\de_k^2  \log
\de_k),$ we get that
\begin{equation*}
\lf|\int_{B_{\de_k}(0)} G(y,0) h_k(y) dy\rg| \le {C\over
\de_k^2}\|h_k\|_* \int_{B_{\de_k}(0)} G(y,0) dy \le C
|\log\de_k| \|h_k\|_*.
\end{equation*}
By \eqref{1458} we have that
\begin{equation*}
\lf|\int_{\Om\sm B_{\de_k}(0)} G(y,0) h_k(y) dy\rg| \le C
|\log\de_k| \int_\Omega |h_k|\leq C |\log \de_k| \|h_k\|_*,
\end{equation*}
and we conclude that
$$\lf|\int_{\Om} G(y,0) h_k(y) dy\rg|\le C|\log \de_k|\|h_k\|_* \to 0$$
in view of $|\log \de_k| \, \|h_k\|_*=o(1)$ as $k \to +\infty$. By
\eqref{mlK} we have that
\begin{equation*}
\begin{split}
&\int_{\Om} G(y,0) \ml{K}_k(y) \psi_k(y) dy=\int_{B_\eta(0)}
G(y,0) \ml{K}_k(y) \psi_k(y)
dy+O(\de_k^2)\\
&=\int_{B_{ \de_k^{-\frac{1}{n+1}}\eta}(0)} \bigg[-{1\over 2\pi
}\log |y|-{1\over 2\pi(n+1)} \log \de_k+H( \de_k^{1\over n+1}
y,0)\bigg]  K_k(y) \Psi_k(y) dy+O(\de_k^2).
\end{split}
\end{equation*}
Since by (\ref{Kk}) $K_{k}=O({|y|^{2n} \over (1+|y|^{2n+2})^2
})$ does hold uniformly in $B_{\de_k^{-\frac{1}{n+1}}\eta}(0)
\setminus B_1(0)$ and $ K_{k}(y) \to {8(n+1)^2|y|^{2n} \over
(1+|y|^{2n+2})^2}$ as $k\to+\infty$, by dominated convergence we
get that
\begin{eqnarray*}
&&\int_{B_{\de_k^{-\frac{1}{n+1}}\eta}(0)} \bigg[-{1\over 2\pi }\log |y|+H( \de_k^{1\over n+1} y,0)\bigg] K_k(y) \Psi_k(y) dy\\
&& \to \int_{\mathbb{R}^2} \bigg[-{1\over 2\pi }\log |y|+H(0,0)\bigg] {8(n+1)^2|y|^{2n} \over (1+|y|^{2n+2})^2} \Psi(y) dy = 0
\end{eqnarray*}
as $k \to +\infty$. Since $\int_\Om h_k=0$, the integration of the
equation satisfied by $\psi_k$ gives that $\int_\Om \ml{K}_k
\psi_k=0$. Then, by \eqref{mlK} we get that
$$\int_{B_{\de_k^{-\frac{1}{n+1}}\eta}(0)} K_k \Psi_k dy=\int_{B_\eta(0)}  \ml{K}_k \psi_k dy=-\int_{\Om\sm
B_\eta(0)} \ml{K}_k \psi_k=O(\de_k^2),$$ which implies that
$$\log \de_k \int_{B_{\de_k^{-\frac{1}{n+1}}\eta}(0)} K_k \Psi_k dy=O(\de_k^2 \log \de_k).$$
In conclusion, we have shown that $\int_{\Om} G(y,0) \ml{K}_k(y)
\psi_k(y)dy \to 0$ as $k \to +\infty$, yielding to $\ti c=0$.

\medskip \noindent In the following Claims, we will omit the subscript $k$. Let us denote $\ti L(\psi)=\lap\psi + \ml{K} \psi$.
\begin{claim} The operator $\ti L$ satisfies the maximum principle in $
B_\eta(0) \sm B_{R\de^{1\over n+1}}(0)$ for $R$ large enough.
\end{claim}
\noindent Indeed, as already noticed in the proof of the previous
Claim in terms of $K_{k}$, there is $C_1>0$ \st
\begin{equation} \label{salsi}
\ml{K}(z)\le C_1 {(n+1)^2  \de^2|z|^{2n}\over
(\de^2+|z|^{2n+2})^2}
\end{equation}
in $B_\eta(0)\setminus B_{\de^{1\over n+1}}(0)$. The function
$$\ti Z(z)=- Y_0\lf({ \mu z\over\de^{1\over n+1}}\rg)=\frac{\mu ^{2n+2}|z|^{2n+2}-\de^2}{\mu^{2n+2}|z|^{2n+2}+\de^2}$$
satisfies
$$-\lap \ti Z(z)=16(n+1)^2 \frac{\de^2 \mu^{2n+2} |z|^{2n} (\mu^{2n+2}|z|^{2n+2}-\de^2)}{(\mu^{2n+2}|z|^{2n+2}+\de^2)^3}.$$
For $R$ large so that $\mu^{2n+2}R^{2n+2}>{5\over 3}$ we have that
\begin{equation*}
\begin{split}
-\lap \ti Z(z)&\ge 16(n+1)^2\frac{\de^2 \mu^{2n+2} |z|^{2n} }{(\mu^{2n+2}|z|^{2n+2}+\de^2)^2}\,{\mu^{2n+2}R^{2n+2}-1\over \mu^{2n+2}R^{2n+2}+1}\\
&\ge 4(n+1)^2\frac{\de^2 \mu^{2n+2}
R^{4n+4}}{(\mu^{2n+2}R^{2n+2}+1)^2}\,{1\over
|z|^{2n+4}}\ge{(n+1)^2\over \mu^{2n+2}}{\de^2\over |z|^{2n+4}}
\end{split}
\end{equation*}
in $B_\eta(0) \sm B_{R\de^{1\over n+1}}(0)$. On the other hand,
since $\ti Z \le 1$ we have that
$$\ml{K} (z)\ti Z(z)\le C_1{(n+1)^2\de^2|z|^{2n}\over
(\de^2+|z|^{2n+2})^2}\le C_1{(n+1)^2\de^2\over |z|^{2n+4}}$$ in
$B_\eta(0)\setminus B_{\de^{1\over n+1}}(0)$, and for
$0<\mu<{1\over\sqrt{C_1}}$ we then get that
$$\ti L(\ti Z)\le \lf(-{1\over \mu^{2n+2}}+C_1\rg){(n+1)^2 \de^2 \over
|z|^{2n+4}}<0$$ in $B_\eta(0) \sm B_{R\de^{1\over n+1}}(0)$. Since
$$\ti Z(x)\ge {\mu^{2n+2}R^{2n+2}-1\over \mu^{2n+2}R^{2n+2}+1}>{1\over
4}$$ for $|z|\geq R\de^{1\over n+1}$, we have provided the
existence of a positive super-solution for $\ti L$, a sufficient
condition to have that $\ti L$ satisfies the maximum principle.

\begin{claim}
There exists a constant $C>0$ \st
$$\|\psi\|_{\infty, B_\eta(0)\sm B_{R\de^{1\over n+1}}(0)}\le C[\|\psi\|_i+\|h\|_*],$$ where
$$\|\psi\|_i=\|\psi\|_{\infty, \partial B_{R\de^{1\over n+1}}(0)}+\|\psi\|_{\infty, \ptl B_{\eta}(0)}.$$
\end{claim}
\noindent Indeed, letting $\Phi$ be the solution of
$$
\left\{ \begin{array}{ll}
-\lap \Phi=2 \displaystyle \sum_{i=1}^2 {\de^{\sigma_i \over n+1} \over |z|^{2+\sigma_i}}&\hbox{for }R\de^{1\over n+1} \leq |z| \leq r\\
\Phi=0 &\text{for }|z|=r,\, R\de^{1\over n+1}
\end{array}\right.$$
with $r \in (\eta,2\eta)$, $\sigma_1=\sigma (n+1)$ and
$\sigma_2=2n+\sigma (n+1)$, we construct a barrier function of the
form $\ti\Phi=4\|\psi\|_i \ti Z + \|h\|_* \Phi$. A direct
computation shows that
$$\Phi(z)=2 \sum_{i=1}^2 \de^{\sigma_i \over n+1}\lf[-\frac{1}{\sigma_i^2 |z|^{\sigma_i}} + \alpha_i \log |z|
+\beta_i \rg],$$ where
$$\alpha_i={1\over \sigma_i^2 \log {R\de^{1\over n+1}\over r}}\lf({1\over R^{\sigma_i}\de^{\sigma_i \over n+1}}-{1\over
r^{\sigma_i}}\rg)<0,\quad\quad \beta_i={1\over \sigma_i^2
r^{\sigma_i}}-{\log r\over \sigma_i^2\log {R\de^{1\over n+1}\over
r}}\lf({1\over R^{\sigma_i} \de^{\sigma_i \over n+1}}-{1\over
r^{\sigma_i}}\rg)$$ for $i=1,2$. Since
$$0\le \Phi(z)\le 2 \sum_{i=1}^2 \de^{\sigma_i \over n+1}\lf[-{1\over \sigma_i^2 r^{\sigma_i}}+\alpha_i \log
R\de^{1\over n+1}+\beta_i\rg]=2 \sum_{i=1}^2 \de^{\sigma_i \over
n+1} \alpha_i  \log {R\de^{1\over n+1}\over r}\le \sum_{i=1}^2 {2
\over \sigma_i^2 R^{\sigma_i}},$$ we get that
\begin{equation*}
\begin{split}
\ti L(\ti\Phi)&\le \| h\|_*\lf[-2 {\de^{\sigma}\over
|z|^{2+\sigma(n+1)}}-2{\de^{\sigma+{2n \over n+1}} \over
|z|^{2+2n+\sigma(n+1)}}
+C_1(n+1)^2 {\de^2|z|^{2n}\over (\de^2+|z|^{2n+2})^2}\sum_{i=1}^2 {2\over \sigma_i^2 R^{\sigma_i}} \rg]\\
&\le \| h\|_*\lf[-2 {\de^{\sigma}\over |z|^{2+\sigma(n+1)}}-{
\de^{\sigma+{2n \over n+1}} \over (\de^2+|z|^{2n+2})^{1+\sigma/2}
}
+{\de^\sigma |z|^{2n}\over(\de^2+|z|^{2n+2})^{1+\sigma/2} } \rg]\\
&\le - \| h\|_* {\de^\sigma(|z|^{2n}+\de^{2n\over n+1})\over
(\de^2+|z|^{2n+2})^{1+\sigma/2}}
\end{split}
\end{equation*}
in view of (\ref{salsi}), for $R$ large so that $C_1(n+1)^2
\displaystyle \sum_{i=1}^2 {2\over \sigma_i^2 R^{\sigma_i}} \leq
1$. Since $|\psi| \leq \ti \Phi$ on $\ptl B_{R\de^{1\over
n+1}}(0)\cup\ptl B_r(0)$ in view of $4\ti Z\ge 1$, by the maximum
principle we conclude that $|\psi|\le \ti\Phi$ in
$B_\eta(0)\setminus B_{R\de^{1\over n+1}}(0)$ and the claim
follows.

\medskip \noindent Since Claims 2 and 3 provide that $\|\psi_k \|_i \to 0$ as $k \to \infty$, by Claim 5 we conclude that
$\|\psi_k \|_\infty=o(1)$ as $k\to+\infty$, a contradiction with
$\liminf_{k \to +\infty} \|\psi_k\|_\infty>0$ according to Claim
1. This completes the proof. \qed
\end{proof}

\medskip \noindent We are now in position to solve problem \eqref{plco}.
\begin{prop} \label{p2}
There exists $\eta_0>0$ small such that for any $0<\de\leq
\eta_0$, $|\log \delta| \e^2\leq \eta_0 \de^{2\over n+1}$, $|a|\leq \eta_0 \de$
and $h \in L^\infty(\Omega)$ with $\int_\Omega h=0$ there is a
unique solution $\phi:=T(h)$, with $\int_\Omega \phi=0$, and
$d_0,d_1,d_2 \in \mathbb{R}$ of problem \eqref{plco}. Moreover,
there is a constant $C>0$ so that
\begin{equation}\label{est1}
\|\phi \|_\infty \le C\lf(\log \frac 1\de \rg)\|h\|_*,\quad
\sum_{l=0}^2 |d_{l}|\le C\|h\|_*.
\end{equation}
\end{prop}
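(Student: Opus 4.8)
The plan is to prove the a priori bounds \eqref{est1} for an arbitrary solution of \eqref{plco} by reducing the problem to Proposition \ref{p1}, and then to obtain existence by a Fredholm argument at fixed $\delta,\epsilon$. I expect the a priori estimate to be the main obstacle: one must check that the ``cross terms'' created by testing \eqref{plco} against the near-kernel functions $PZ_l$ — in particular the contribution of the error functional $\gamma(\cdot)$ — are small enough to survive multiplication by the factor $\log\frac1\delta$ lost in Proposition \ref{p1}. This is exactly where the sharper hypothesis $|\log\delta|\,\epsilon^2\le\eta_0\delta^{2/(n+1)}$ (rather than $\epsilon^2\le\eta_0\delta^{2/(n+1)}$) is used, and it hinges on the symmetry cancellations of the Liouville bubble and on the sharp weight built into $\|\cdot\|_*$.

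For the estimate, let $(\phi,d_0,d_1,d_2)$ solve \eqref{plco} and put $\hat Z_j=PZ_j+\frac1{|\Omega|}\int_\Omega Z_j$, so that $\|\hat Z_j\|_\infty=O(1)$ and $\hat Z_j=Z_j+O(\delta)$ in $C(\bar\Omega)$ by \eqref{pzij}. Testing \eqref{plco} against $\hat Z_j$, using $\int_\Omega\Delta PZ_l=0$ and writing $L=\tilde L+\mathcal K\gamma(\cdot)$ as in \eqref{ol}, with $\tilde L(\psi)=\Delta\psi+\mathcal K\psi$ self-adjoint on periodic functions, one is led to
\[
\sum_{l=0}^{2} d_l\int_\Omega \Delta PZ_l\,PZ_j=\int_\Omega\phi\,\tilde L(\hat Z_j)+\gamma(\phi)\int_\Omega \mathcal K\,\hat Z_j-\int_\Omega h\,\hat Z_j,\qquad j=0,1,2.
\]
Since $-\Delta Z_j=\mathcal K_0 Z_j$ with $\mathcal K_0=|\sigma'|^2e^{U_{\delta,a}}$, one gets $\tilde L(\hat Z_j)=(\mathcal K-\mathcal K_0)Z_j+O(\delta)\,\mathcal K+O(\delta^2)$, which by \eqref{mlK} has $\|\tilde L(\hat Z_j)\|_*=o(1)$ with $|\log\delta|\,\|\tilde L(\hat Z_j)\|_*\to0$; hence $\big|\int_\Omega\phi\,\tilde L(\hat Z_j)\big|\le\|\phi\|_\infty\|\tilde L(\hat Z_j)\|_{L^1}=o(1)\|\phi\|_\infty$ by \eqref{1458}. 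The change of variables $y=\sigma(z)$ together with $\int_{\mathbb{R}^2}\frac{1-|y|^2}{(1+|y|^2)^3}=\int_{\mathbb{R}^2}\frac{y}{(1+|y|^2)^3}=0$ gives $\int_\Omega\mathcal K\,\hat Z_j=o(1)$, so $\big|\gamma(\phi)\int_\Omega\mathcal K\,\hat Z_j\big|=o(1)\|\phi\|_\infty$ by $|\gamma(\phi)|=O((1+\eta)\|\phi\|_\infty)$ from \eqref{BW}; and $\big|\int_\Omega h\,\hat Z_j\big|\le\|\hat Z_j\|_\infty\int_\Omega|h|=O(\|h\|_*)$. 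By \eqref{deltaZ0}--\eqref{deltaZ} and the computations in the proof of Lemma \ref{1039}, the matrix $\big(\int_\Omega\Delta PZ_l\,PZ_j\big)_{l,j=0,1,2}$ is invertible with uniformly bounded inverse for $\delta$ small, whence $\sum_l|d_l|\le C(\|h\|_*+o(1)\|\phi\|_\infty)$. Then $\phi$ solves \eqref{plco1} with datum $\tilde h=h+\sum_l d_l\Delta PZ_l$, which has zero average and satisfies $\|\tilde h\|_*\le C\|h\|_*+o(1)\|\phi\|_\infty$ since $\|\Delta PZ_l\|_*=O(1)$ (a direct estimate of the weight against $\Delta PZ_l=-\mathcal K_0 Z_l+O(\delta^2)$). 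Proposition \ref{p1} then gives $\|\phi\|_\infty\le C\log\frac1\delta\big(\|h\|_*+o(1)\|\phi\|_\infty\big)$, and since $|\log\delta|\,\eta=O(\eta_0)$ and $|\log\delta|\cdot o(1)\to0$ under the standing hypothesis, for $\eta_0$ small the last term is absorbed, yielding $\|\phi\|_\infty\le C\log\frac1\delta\|h\|_*$ and hence $\sum_l|d_l|\le C\|h\|_*$; taking $h=0$ gives uniqueness.

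For existence, note that for fixed $\delta,\epsilon$ the potential $\mathcal K$ is bounded on $\Omega$ (since $W=PU_{\delta,a,\sigma}$ solves an elliptic equation with bounded right-hand side), so the map $\mathcal L:(\phi,d)\mapsto L(\phi)-\sum_l d_l\Delta PZ_l$, defined on pairs $(\phi,d)\in W^{2,p}(\Omega)\times\mathbb{R}^3$ with $\phi$ periodic, $\int_\Omega\phi=0$ and $\int_\Omega\phi\,\Delta PZ_l=0$ for $l=0,1,2$, with values in the zero-average functions of $L^p(\Omega)$ ($1<p<\infty$), is obtained from the isomorphism $\phi\mapsto\Delta\phi$ (on periodic zero-average functions) by adding a compact perturbation — $\phi\mapsto\mathcal K(\phi+\gamma(\phi))$ factors through the compact embedding $W^{2,p}(\Omega)\hookrightarrow W^{1,p}(\Omega)$ — and a finite-rank term, the three extra degrees of freedom in $d\in\mathbb{R}^3$ being balanced by the three orthogonality constraints; hence $\mathcal L$ is Fredholm of index $0$. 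By the a priori estimate with $h=0$ it is injective, therefore an isomorphism, which produces the unique $(\phi=T(h),d_0,d_1,d_2)$ solving \eqref{plco} for every admissible $h$; elliptic regularity upgrades $\phi$ and estimate \eqref{est1}, already proved, completes the argument.
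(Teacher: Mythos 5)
Your proposal is correct and follows essentially the same route as the paper's proof: test \eqref{plco} against $PZ_j+\frac{1}{|\Om|}\int_\Om Z_j$, use the near-diagonal Gram matrix $\langle \lap PZ_l,PZ_j\rangle$ and the smallness of $\|\ti L(PZ_j+t_j)\|_*$ to bound $\sum_l|d_l|$ by $\|h\|_*+o(1)\|\phi\|_\infty$, feed this back into Proposition \ref{p1} (using $\|\lap PZ_l\|_*=O(1)$) and absorb the $\log\frac1\de\,(\de+\e^2\de^{-2/(n+1)})\|\phi\|_\infty$ term thanks to the hypothesis $|\log\de|\,\e^2\le\eta_0\de^{2/(n+1)}$. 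The only (cosmetic) difference is the existence step, where you set up a Fredholm index-zero operator on $W^{2,p}\times\R^3$ while the paper uses the Riesz representation and the Fredholm alternative in the constrained $H^1$ space; both reduce to ``identity plus compact'' and injectivity from the a priori estimate.
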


\begin{proof} Since $-\Delta Z_l=|\sigma'(z)|^{2} e^{U_{\de,a}} Z_l$ in $\Om$ (where $U_{\de,a}$ stands for $U_{\de,a,\sigma_a}$) and $\int_\Omega \Delta Z_l=O(\de^2)$ in view of \eqref{deltaZ0}-\eqref{deltaZ}, we have that $\Delta PZ_l =O(|\sigma'(z)|^{2} e^{U_{\de,a}})+O(\de^2)$ in view of $Z_l=O(1)$, yielding to $\|\lap PZ_{l}\|_*\le C$ for all $l=0,1,2$. By Proposition \ref{p1} every solution of \equ{plco} satisfies
$$\|\phi\|_\infty\le
C\lf(\log{1\over\de}\rg)\lf[\|h\|_*+\sum_{l=0}^2|d_{l}|\rg].$$ Set
$\langle f,g\rangle=\int_\Om fg$ and notice that
\begin{equation}\label{diesis}
\ds\langle L(\phi),PZ_{j}\rangle = \ds\langle
L(\phi),PZ_{j}+t\rangle =\langle \phi+\gamma(\phi),\ti
L(PZ_{j}+t)\rangle
\end{equation}
for any $t\in \mathbb{R}$, in view of $\int_\Omega L(\phi)=0$. To
estimate the $|d_{l}|$'s, let us test equation \equ{plco} against
$PZ_{j}$, $j=0,1,2$, to get
$$\big\langle \phi+\gamma(\phi),\ti L(PZ_{j}+t_j)\big\rangle =\langle
h,PZ_{j}\rangle + \sum_{l=0}^2d_{l}\langle \lap
PZ_{l},PZ_{j}\rangle$$
where $t_j=\frac{1}{|\Omega|}\int_\Omega Z_j$, $j=0,1,2$. From the
proof of Lemma \ref{1039} we know that for $Z_0$ and $Z=Z_1+iZ_2$
there hold the following:
\begin{eqnarray*}
&& \int_\Om \lap PZ_0PZ_0=- 16 (n+1) \int_{\mathbb{R}^2} \frac{1-|y|^2}{(1+|y|^2)^4} +O(\de^2)\,,\qquad \int_\Om \lap PZPZ_0=O(\de^2)\\
&&\int_\Om \lap PZ \overline{PZ}=- 8 (n+1) \int_{\mathbb{R}^2}
\frac{|y|^2 }{(1+|y|^2)^4} +O(\de)\,, \qquad \int_\Om \lap PZ
PZ=O(\de)
\end{eqnarray*}
where $ \int_{\mathbb{R}^2} \frac{dy}{(1+|y|^2)^4}=2
\int_{\mathbb{R}^2} \frac{1-|y|^2}{(1+|y|^2)^4}=\frac{\pi}{3}$. In
terms of the $Z_l$'s we then have that
$$\langle \lap PZ_{l},PZ_{j}\rangle=-(n+1)C_{ij} \de_{lj}+O(\de^2),$$
where $\de_{lj}$ denotes the Kronecker's symbol and $c_{00}={8\pi\over 3}$, $c_{11}=c_{22}={4\pi\over 3}$. For $j=0,1,2$ let
us now estimate $\big\|\ti L(PZ_j+t_j)\big\|_*$:
\begin{equation}\label{diesisdiesis}
\big\|\ti L(PZ_j+t_j)\big\|_*=\big\|-|\sigma'(z)|^2 e^{U_{\de,a}}
Z_j+\ml{K}(PZ_j+t_j)+O(\delta^2) \big\|_*=O( \de+\epsilon^2
\delta^{-\frac{2}{n+1}}+\de|c_a|)
\end{equation}
in view of \eqref{deltaZ0}-(\ref{pzij}) and (\ref{mlK}). Since
$|\gamma(\phi)|=O(\|\phi\|_\infty)$ in view of (\ref{BW}) and
$\epsilon^2 \delta^{-\frac{2}{n+1}}=o(1)$, by \eqref{1458} we get that
$$\langle \phi+\gamma(\phi),\ti L(PZ_{j}+t_j)\rangle=O(\de+\epsilon^2 \delta^{-\frac{2}{n+1}}) \|\phi\|_\infty,$$
which along the previous estimates yields to
\begin{equation}\label{estcij}
\begin{split}
|d_j|\le
C\bigg[(\de+\e^2\de^{-{2\over n+1}})\|\phi\|_\infty+\|h\|_*+\delta \sum_{l=0}^2|d_l| \bigg]
\end{split}
\end{equation}
in view of $PZ_j=O(1)$. Since (\ref{estcij}) gives that
$\displaystyle \sum_{l=0}^2|d_{l}|=O(\de+\e^2\de^{-{2\over n+1}})\|\phi\|_\infty+O(\|h\|_*)$, we have that every
solution of \equ{plco} satisfies
$$\|\phi\|_\infty\le
C\lf(\log{1\over\de}\rg)\lf[\|h\|_*+\sum_{l=0}^2|d_{l}|\rg] \leq
C\log{1\over\de}(\de+\e^2\de^{-{2\over n+1}})\|\phi\|_\infty+C \log{1\over\de} \|h\|_*.$$ In view of
$\log{1\over\de} (\de+\e^2\de^{-{2\over n+1}})=o(1)$ as $\eta_0\to 0$, the a-priori estimates \eqref{est1} immediately follow.

\medskip \noindent To solve \equ{plco}, consider now the space
$$H=\lf\{\phi\in H^1(\Omega) \hbox{ doubly-periodic}: \: \int_\Omega \phi=0\,,\:\int_\Om\lap PZ_{l}\,\phi=0 \hbox{ for }l=0,1,2\rg\}$$
endowed with the usual inner product
$[\phi,\psi]=\int_\Om\grad\phi\grad\psi.$ Problem \eqref{plco} is
equivalent to finding $\phi\in H$ \st
$$[\phi,\psi]=\int_\Om\lf[\ml{K} \lf(\phi+\gamma(\phi)\rg)-h\rg]\psi\qquad\text{for all }\psi\in H.$$ With the aid
of Riesz's representation theorem, the equation has the form
$(\hbox{Id}-\hbox{compact operator})\phi= \ti h$. Fredholm's
alternative guarantees unique solvability of this problem for any
$h$ provided that the homogeneous equation has only the trivial
solution. This is equivalent to \eqref{plco} with $h\equiv 0$,
which has only the trivial solution by the a-priori estimates
\eqref{est1}. The proof is now complete.\qed
\end{proof}


\section{\hspace{-0.5cm}: The nonlinear problem}
We consider the following non linear problem
\begin{equation}\label{pnla}
\left\{\begin{array}{ll}
L(\phi)= -[R+N(\phi)] +\displaystyle \sum_{l=0}^{2}d_l \Delta PZ_{l} & \text{in }\Om\\
\int_{\Omega } \Delta PZ_l\phi = 0 \hbox{ for all }l=0,1,2 & \\
\int_{\Om}\phi=0,&
\end{array} \right.
\end{equation}
where $R$, $N(\phi)$ and $L$ are given by \eqref{R}, \eqref{nlt}
and \eqref{ol}, respectively. Notice that \eqref{linear} and
(\ref{pnla}) are equivalent by setting $d=d_1-id_2$.

\begin{lem}\label{lpnla}
There exists $\de_0>0$ small such that for any $0<\de<\eta_0$,
$|\log \delta|^2 \e^2\leq \eta_0 \de^{2\over n+1}$, $|a|\leq \eta_0 \de$ problem
\eqref{pnla} admits a unique solution $\phi$ and $d_l$, $l=0,1,2$.
Moreover, there exists $C>0$ so that
\begin{equation}\label{cotapsi}
\|\phi\|_\infty\le C|\log\de|\|R\|_*.
\end{equation}
\end{lem}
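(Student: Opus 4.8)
The plan is to solve \eqref{pnla} by a standard fixed-point argument built on the linear theory of Appendix B. Proposition \ref{p2} provides, for $0<\de\le\eta_0$, $|\log\de|\,\e^2\le\eta_0\de^{2/(n+1)}$ and $|a|\le\eta_0\de$, a linear solution operator $T$ mapping $h\in L^\infty(\Om)$ with $\int_\Om h=0$ to the unique solution $\phi=T(h)$ of \eqref{plco} (together with the multipliers $d_l$), with the bound $\|T(h)\|_\infty\le C|\log\de|\,\|h\|_*$. Problem \eqref{pnla} can then be rewritten as the fixed-point equation
\[
\phi = \mathcal{A}(\phi):= -T\big(R+N(\phi)\big),
\]
and the task reduces to showing that $\mathcal{A}$ is a contraction on the ball
\[
\mathcal{F}=\big\{\phi\in L^\infty(\Om):\ \textstyle\int_\Om\phi=0,\ \|\phi\|_\infty\le K|\log\de|\,\|R\|_*\big\}
\]
for a suitable constant $K>0$, where we may use $\|R\|_*=O(\de^{2-\gamma}+\eta+\eta^2)$ from Corollary \ref{estrr0cor} (with $\eta=\e^2\de^{-2/(n+1)}$), which under the hypothesis $|\log\de|^2\e^2\le\eta_0\de^{2/(n+1)}$ is small.

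First I would record the quadratic nature of $N$. From the explicit expression \eqref{nlt}, each group of terms is a second difference of a smooth function of $\phi$ (an exponential-type nonlinearity, plus the algebraic factor $(1+\sqrt{1-\e^2 B})^{-2}$), so a Taylor expansion with the $L^\infty$-smallness of $\phi$ and the bound $\e^2 B(W)=O(\eta)=o(1)$ from \eqref{BW} yields estimates of the form
\[
\|N(\phi)\|_*\le C\big(1+\eta\big)\,e^{C\|\phi\|_\infty}\,\|\phi\|_\infty^{\,2},
\qquad
\|N(\phi_1)-N(\phi_2)\|_*\le C\big(\|\phi_1\|_\infty+\|\phi_2\|_\infty\big)\,\|\phi_1-\phi_2\|_\infty
\]
for $\phi,\phi_1,\phi_2\in\mathcal{F}$. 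The key point in deriving these is that multiplying by $e^{u_0+W}/\int_\Om e^{u_0+W}$, $e^{2u_0+2W}/\int_\Om e^{2u_0+2W}$, etc., produces exactly the weight built into $\|\cdot\|_*$: indeed $e^{u_0+W}$ behaves near $0$ like $|\sigma'(z)|^2 e^{U_{\de,a}}$, which is comparable to $\de^\gamma(|\sigma'(z)|^2+\de^{2n/(n+1)})(\de^2+|\sigma(z)-a|^2)^{-1-\gamma/2}$ up to the normalization, so the $\|\cdot\|_*$-norm of these nonlinear remainders is controlled by powers of $\|\phi\|_\infty$ times a bounded integral factor, the latter being handled by \eqref{1458}.

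Then the argument runs as follows. For $\phi\in\mathcal{F}$,
\[
\|\mathcal{A}(\phi)\|_\infty\le C|\log\de|\big(\|R\|_*+\|N(\phi)\|_*\big)\le C|\log\de|\,\|R\|_*+C|\log\de|\,(K|\log\de|\,\|R\|_*)^2,
\]
and since $|\log\de|\,\|R\|_*=o(1)$ under the stated hypotheses, the second term is $\le \tfrac12 K|\log\de|\,\|R\|_*$ once $K$ is fixed large enough and $\eta_0$ small, so $\mathcal{A}(\mathcal{F})\subset\mathcal{F}$. Likewise
\[
\|\mathcal{A}(\phi_1)-\mathcal{A}(\phi_2)\|_\infty\le C|\log\de|\,\|N(\phi_1)-N(\phi_2)\|_*\le C|\log\de|\big(\|\phi_1\|_\infty+\|\phi_2\|_\infty\big)\|\phi_1-\phi_2\|_\infty\le\tfrac12\|\phi_1-\phi_2\|_\infty,
\]
again because $|\log\de|\cdot K|\log\de|\,\|R\|_*=o(1)$. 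The contraction mapping principle gives a unique $\phi\in\mathcal{F}$, hence a unique solution of \eqref{pnla} together with $d_0,d_1,d_2$, and \eqref{cotapsi} is exactly the defining bound of $\mathcal{F}$. Finally, the $C^1$ dependence of $(\de,a)\mapsto\phi(\de,a)$ claimed in Proposition \ref{nlp} follows by differentiating the fixed-point identity: $\partial_{(\de,a)}\phi$ solves a linear problem of the same type with right-hand side $\partial_{(\de,a)}[R+N(\phi)]$ plus the $(\de,a)$-derivative of the operator $T$ itself, and one applies Proposition \ref{p2} once more after checking these derivatives have finite $\|\cdot\|_*$-norm, using that $R$, $\ml{K}$, $\sigma_a$, $c_a$ depend smoothly on $(\de,a)$ (the latter two by Lemma \ref{derivca}).

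The main obstacle I anticipate is not the abstract contraction scheme but the bookkeeping behind the nonlinear estimates: one must verify that every term in the long expression \eqref{nlt} — including the ones carrying the factor $DB(W)[\phi]$ and the $\sqrt{1-\e^2 B(W)}$ denominators — is genuinely quadratic in $\phi$ in the $\|\cdot\|_*$-norm, with constants uniform in $\de,a,\e$ in the admissible range. This requires carefully pairing each occurrence of $e^{u_0+W}$ or $e^{2u_0+2W}$ (divided by its integral) with the concentration weight, controlling the non-local averages $\int_\Om e^{u_0+W}\phi$ via $\|\phi\|_\infty$ and the total mass estimate, and using $\e^2 B(W)=O(\eta)$ together with $\eta=o(1)$ to keep the algebraic prefactors bounded; this is routine but lengthy, and I would only sketch it, referring to the analogous computations already performed in the proof of Theorem \ref{estrr01550} and Corollary \ref{estrr0cor}.
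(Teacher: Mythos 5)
Your proposal follows essentially the same route as the paper: rewrite \eqref{pnla} as the fixed-point equation $\phi=-T(R+N(\phi))$ using the linear operator of Proposition \ref{p2}, establish the quadratic/Lipschitz estimate \eqref{star} for $N$ in the $\|\cdot\|_*$-norm (which the paper also only sketches, via the same pointwise comparison $e^{u_0+W+\phi}\le e^{2\|\phi\|_\infty}e^{u_0+W}$ and the bounds $\|e^{u_0+W}/\int_\Om e^{u_0+W}\|_*=O(1)$, $\e^2 B(W)=O(\eta)=o(1)$), and run the contraction mapping principle on the ball of radius $M|\log\de|\,\|R\|_*$ using $\log^2\de\,\|R\|_*=o(1)$. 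This matches the paper's argument in both structure and the smallness conditions invoked.
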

\begin{proof}
In terms of the operator $T$ defined in Proposition \ref{p2},
problem \eqref{pnla} reads as
$$\phi=-T\lf(R+N(\phi)\rg):=\ml{A}(\phi).$$
For a given number $M>0$, let us consider the space
$$
\ml{F}_M = \{\phi\in L^\infty(\Om) \hbox{ doubly-periodic }:\: \|
\phi \|_\infty \le M|\log\de| \,\|R\|_* \}.$$ It is a
straightforward but tedious computation to show that
\begin{equation}\label{star}
\|N(\phi_1) - N(\phi_2)\|_* \leq C_1 (\|\phi_1\|_\infty
+\|\phi_2\|_\infty) \|\phi_1-\phi_2\|_\infty.
\end{equation}
Just to give an idea on how (\ref{star}) can be proved, observe
that $0\leq \frac{e^{u_0+W+\phi}}{\int_\Omega e^{u_0+W+\phi}} \leq
e^{2\|\phi\|_\infty} \frac{e^{u_0+W}}{\int_\Omega e^{u_0+W}}$ and
$|\int_\Omega e^{u_0+W+\phi} \phi|\leq \|\phi\|_\infty \int_\Omega
e^{u_0+W+\phi}$. For $\|\phi\|_\infty\leq 1$ we can then get that
$$\|\phi\|_\infty \|D [\frac{e^{u_0+W+\phi}}{\int_\Omega e^{u_0+W+\phi}}][\phi]\|_*+\|D^2 [\frac{e^{u_0+W+\phi}}{\int_\Omega e^{u_0+W+\phi}}][\phi,\phi]\|_*=O(\|\frac{e^{u_0+W}}{\int_\Omega e^{u_0+W}}\|_*  \|\phi\|_\infty^2)
=O(\|\phi\|_\infty^2)$$ in view of $\|\frac{e^{u_0+W}}{\int_\Omega
e^{u_0+W}}\|_*=O(1)$ by (\ref{eps1}). This exactly what
we need to estimate in $\|\cdot\|_*-$norm the difference between
the first term of $N(\phi_1)$ and $N(\phi_2)$. For the other terms
we can argue in a similar way to get
$$\|\phi\|_\infty \|D [\frac{e^{2(u_0+W+\phi)}}{\int_\Omega e^{2(u_0+W+\phi)}}][\phi]\|_*+\|D^2 [\frac{e^{2(u_0+W+\phi)}}{\int_\Omega e^{2(u_0+W+\phi)}}][\phi,\phi]\|_*=O(\| \frac{e^{2(u_0+W)}}{\int_\Omega e^{2(u_0+W)}}\|_* \|\phi\|_\infty^2)=O(\|\phi\|_\infty^2)$$
in view of $\| \frac{e^{2(u_0+W)}}{\int_\Omega e^{2(u_0+W)}}\|_* =O(1)$ by
(\ref{eps2}), and
$$\|\phi\|_\infty \|D[B(W+\phi)][\phi]\|_*+\|D^2[B(W+\phi)][\phi,\phi]\|_*=O(B(W)\|\phi\|_\infty^2)=O(\delta^{-\frac{2}{n+1}}\|\phi\|_\infty^2)$$
in view of (\ref{BW}). Since $\epsilon^2
\delta^{-\frac{2}{n+1}}=o(1)$ we can deduce the validity of
(\ref{star}).

\medskip \noindent Denote by $C'$ the constant present in \eqref{est1}. By Proposition \ref{p2} and (\ref{star}) we get that
$$\|\ml{A}(\phi_1)-\ml{A}(\phi_2)\|_\infty \leq C'|\log \de| \|N(\phi_1)-N(\phi_2)\|_*\leq 2C'C_1 M \|R\|_ * \log^2 \de \|\phi_1-\phi_2\|_\infty$$ 
for all $\phi_1,\phi_2 \in \ml{F}_M$. By Proposition \ref{p2} we also have that
\begin{equation*}
\|\ml{A}(\phi)\|_\infty \le C' | \log \de |\lf[ \|R\|_* +
\|N(\phi)\|_*\rg]\leq   C' | \log \de | \|R\|_*+C' C_1|\log
\de| \|\phi\|_\infty^2
\end{equation*}
for all $\phi\in \ml{F}_M$. Fix now $M$ as $M=2C'$, and by (\ref{ere}) take
$\eta_0$ small so that $4(C')^2  C_1 \log^2 \de \|R\|_*
< \frac{1}{2}$ in order to have that $\ml{A}$ is a contraction
mapping of $\ml{F}_M$ into itself. Therefore $\ml{A}$ has a unique
fixed point $\phi$ in $\ml{F}_M$, which satisfies (\ref{cotapsi})
with $C=M$.\qed
\end{proof}

\section{\hspace{-0.5cm}: The integral coefficients in \eqref{solve1b}-\eqref{solve2b}}
Letting $\zeta=\frac{a}{\delta}$, we aim to investigate the integral coefficients
$$I:=\int_{\mathbb{R}^2} \frac{(|y|^2-1)|y+\zeta|^{\frac{2n}{n+1}}}{(1+|y|^2)^5}\,dy\:,\qquad K:=\int_{\mathbb{R}^2} \frac{|y+\zeta|^{\frac{2n}{n+1}}y}{(1+|y|^2)^5}\,dy$$
which appear in \eqref{solve1b}-\eqref{solve2b} or \eqref{solve1}-\eqref{solve2}. We will show below that $I=f(|\zeta|)$ and $K=g(|\zeta|)\zeta$ with $f<0<g$, and the asymptotic behavior of $f$ and $g$ as $|\zeta|\to +\infty$ will be identified.

\medskip \noindent By the change of variable $y \to y+\zeta$ and the Taylor expansion
$$(1-x)^{-5}=\sum_{k=0}^{+\infty} c_k x^k \quad\hbox{for }|x|<1$$
with $c_k=\frac{(4+k)!}{24\,k!}$, we can re-write $I$ as
\begin{eqnarray*}
I&=&\int_{\mathbb{R}^2} \frac{|y|^{\frac{2n}{n+1}}(|y-\zeta|^2-1)}{(1+|y-\zeta|^2)^5}dy
=\sum_{k=0}^{+\infty} c_k \int_{\mathbb{R}^2} \frac{|y|^{\frac{2n}{n+1}}(|y|^2+|\zeta|^2-1-y\bar \zeta-\bar y \zeta)(y\bar \zeta+\bar y \zeta)^k}{(1+|y|^2+|\zeta|^2)^{5+k}}dy
\end{eqnarray*}
in view of 
$$(1+|y-\zeta|^2)^{-5}=(1+|y|^2+|\zeta|^2)^{-5}(1-\frac{y\bar \zeta+\bar y \zeta}{1+|y|^2+|\zeta|^2})^{-5}$$ 
with 
$$\frac{|y\bar \zeta+\bar y \zeta|}{1+|y|^2+|\zeta|^2} \leq \frac{|y|^2+|\zeta|^2}{1+|y|^2+|\zeta|^2}<1.$$
Since 
$$(y\bar \zeta+\bar y \zeta)^k= \sum_{j=0}^k \left(\begin{array}{l} k \\ j \end{array}\right)   y^j \bar \zeta^j \bar y^{k-j} \zeta^{k-j}=
\sum_{1\leq j <\frac{k}{2}  }  \left(\begin{array}{l} k \\ j \end{array}\right) \zeta^{k-2j} \bar y^{k-2j} |\zeta|^{2j} |y|^{2j}  
+\sum_{\frac{k}{2}<j\leq k}  \left(\begin{array}{l} k \\ j \end{array}\right) \bar \zeta^{2j-k} y^{2j-k} |\zeta|^{2k-2j} |y|^{2k-2j}  
$$
for $k$ odd and
$$(y\bar \zeta+\bar y \zeta)^k=
\sum_{1\leq j <\frac{k}{2}  } \left(\begin{array}{l} k \\ j \end{array}\right) \zeta^{k-2j} \bar y^{k-2j} |\zeta|^{2j} |y|^{2j}  
+\sum_{\frac{k}{2}<j\leq k}  \left(\begin{array}{l} k \\ j \end{array}\right) \bar \zeta^{2j-k} y^{2j-k} |\zeta|^{2k-2j} |y|^{2k-2j}  
+ \left(\begin{array}{l} k \\  \frac{k}{2} \end{array}\right) |\zeta|^k |y|^k
$$
for $k$ even, by symmetry we can simplify the expression of $I$ as follows:
\begin{eqnarray*}
I&=&
\sum_{k=0}^{+\infty} c_k \int_{\mathbb{R}^2} \frac{|y|^{\frac{2n}{n+1}}(|y|^2+|\zeta|^2-1)(y\bar \zeta+\bar y \zeta)^k}{(1+|y|^2+|\zeta|^2)^{5+k}}dy
-\sum_{k=0}^{+\infty} c_k \int_{\mathbb{R}^2} \frac{|y|^{\frac{2n}{n+1}}(y\bar \zeta+\bar y \zeta)^{k+1}}{(1+|y|^2+|\zeta|^2)^{5+k}}dy\\
&=&\sum_{k=0}^{+\infty} c_{2k} \left(\begin{array}{c} 2k \\  k \end{array}\right) |\zeta|^{2k}   \int_{\mathbb{R}^2} \frac{|y|^{\frac{2n}{n+1}+2k} (|y|^2+|\zeta|^2-1)}{(1+|y|^2+|\zeta|^2)^{5+2k}}  dy
-\sum_{k=1}^{+\infty}  c_{2k-1} \left(\begin{array}{c} 2k \\  k \end{array}\right) |\zeta|^{2k} \int_{\mathbb{R}^2} \frac{|y|^{\frac{2n}{n+1}+2k} }{(1+|y|^2+|\zeta|^2)^{4+2k}} dy
\end{eqnarray*}
Since $I^p_q=\displaystyle \int_0^{\infty} \frac{ \rho^p}{(1+\rho)^q}d\rho$, $q>p+1$, does satisfy the relations:
\begin{equation} \label{Ipq}
I^p_{q+1}=\frac{q-p-1}{q}I^p_q\:,\qquad I^{p+1}_q=\frac{p+1}{q-p-2} I^p_q,
\end{equation}
through the change of variable $\rho^2=\lambda t$, $\lambda=1+|\zeta|^2$, in polar coordinates we have that
\begin{eqnarray} \label{1748}
\int_{\mathbb{R}^2} \frac{|y|^{\frac{2n}{n+1}+2k} }{(1+|y|^2+|\zeta|^2)^{5+2k}}  dy
&=& 
\pi \lambda^{\frac{n}{n+1}-4-k} I^{\frac{n}{n+1}+k}_{5+2k}
=\pi \frac{3+k-\frac{n}{n+1}}{4+2k}\lambda^{\frac{n}{n+1}-4-k} I^{\frac{n}{n+1}+k}_{4+2k}\nonumber\\
&=&\frac{3+k-\frac{n}{n+1}}{2(2+k)(1+|\zeta|^2)}
\int_{\mathbb{R}^2} \frac{|y|^{\frac{2n}{n+1}+2k} }{(1+|y|^2+|\zeta|^2)^{4+2k}}  dy
\end{eqnarray}
and
\begin{eqnarray} \label{1818}
\int_{\mathbb{R}^2} \frac{|y|^{\frac{2n}{n+1}-2+2k} }{(1+|y|^2+|\zeta|^2)^{2+2k}}  dy
&=& 
\pi \lambda^{\frac{n}{n+1}-2-k} I^{\frac{n}{n+1}-1+k}_{2+2k}
=\pi \frac{(2+2k)(3+2k)}{(k+\frac{n}{n+1})(2+k-\frac{n}{n+1})}\lambda^{\frac{n}{n+1}-2-k} I^{\frac{n}{n+1}+k}_{4+2k}\nonumber\\
&=&\frac{(2+2k)(3+2k)}{(k+\frac{n}{n+1})(2+k-\frac{n}{n+1})}(1+|\zeta|^2)
\int_{\mathbb{R}^2} \frac{|y|^{\frac{2n}{n+1}+2k} }{(1+|y|^2+|\zeta|^2)^{4+2k}}  dy
\end{eqnarray}
Inserting \eqref{1748} and \eqref{1818} into $I$, we get that
\begin{eqnarray*}
I&=&
\sum_{k=0}^{+\infty} c_{2k} \left(1-\frac{3+k-\frac{n}{n+1}}{(2+k)(1+|\zeta|^2)}\right)\left(\begin{array}{c} 2k \\  k \end{array}\right) |\zeta|^{2k}   \int_{\mathbb{R}^2} \frac{|y|^{\frac{2n}{n+1}+2k} }{(1+|y|^2+|\zeta|^2)^{4+2k}}  dy\\
&&-\sum_{k=1}^{+\infty}  c_{2k-1} \left(\begin{array}{c} 2k \\  k \end{array}\right) |\zeta|^{2k} \int_{\mathbb{R}^2} \frac{|y|^{\frac{2n}{n+1}+2k} }{(1+|y|^2+|\zeta|^2)^{4+2k}} dy\\
&=&
\sum_{k=1}^{+\infty} \left[\frac{2(3+2k)c_{2k-2}}{k+\frac{n}{n+1}} \left(\frac{1+k}{2+k-\frac{n}{n+1}}-\frac{1}{1+|\zeta|^2}\right)\left(\begin{array}{c} 2k-2 \\  k-1 \end{array}\right)    (1+|\zeta|^2)-c_{2k-1} \left(\begin{array}{c} 2k \\  k \end{array}\right) |\zeta|^2 \right]\times\\
&&\times |\zeta|^{2k-2} \int_{\mathbb{R}^2} \frac{|y|^{\frac{2n}{n+1}+2k} }{(1+|y|^2+|\zeta|^2)^{4+2k}} dy.
\end{eqnarray*}
Since $2(3+2k)c_{2k-2} \left(\begin{array}{c} 2k-2 \\  k-1 \end{array}\right)=k
c_{2k-1} \left(\begin{array}{c} 2k \\  k \end{array}\right)$ for all $k \geq 1$, setting $\beta_k=c_{2k-1} \left(\begin{array}{c} 2k \\  k \end{array}\right) |\zeta|^{2k-2} \int_{\mathbb{R}^2} \frac{|y|^{\frac{2n}{n+1}+2k} }{(1+|y|^2+|\zeta|^2)^{4+2k}} dy$ we deduce that
\begin{eqnarray*}
I&=&
\sum_{k=1}^{+\infty} \left[\frac{k}{k+\frac{n}{n+1}} \left(\frac{1+k}{2+k-\frac{n}{n+1}}-\frac{1}{1+|\zeta|^2}\right) (1+|\zeta|^2)- |\zeta|^2 \right] \beta_k\\
&=&
\sum_{k=1}^{+\infty} \left[\frac{k}{k+\frac{n}{n+1}} \left(\frac{|\zeta|^2}{1+|\zeta|^2}-\frac{1}{(2+k)(n+1)-n} \right) (1+|\zeta|^2)- |\zeta|^2 \right] \beta_k<\sum_{k=1}^{+\infty} \left[\frac{k}{k+\frac{n}{n+1}}-1\right]  |\zeta|^2 \beta_k<0.
\end{eqnarray*}
In conclusion, we have shown that $I=f(|\zeta|)$ with $f<0$.

\medskip \noindent By the change of variable $y \to y+\zeta$ and the Taylor expansion of $(1-x)^{-5}$, arguing as before $K$ can be re-written as
\begin{eqnarray*}
K&=&\int_{\mathbb{R}^2} \frac{|y|^{\frac{2n}{n+1}}(y-\zeta)}{(1+|y-\zeta|^2)^5}dy
=\sum_{k=0}^{+\infty} c_k \int_{\mathbb{R}^2} \frac{|y|^{\frac{2n}{n+1}}(y-\zeta)(y\bar \zeta+\bar y \zeta)^k}{(1+|y|^2+|\zeta|^2)^{5+k}}dy.
\end{eqnarray*}
By the previous expansions of $(y\bar \zeta+\bar y \zeta)^k$ and
\begin{eqnarray*}
\int_{\mathbb{R}^2} \frac{|y|^{\frac{2n}{n+1}+2+2k}}{(1+|y|^2+|\zeta|^2)^{6+2k}}dy
&=&\pi \lambda^{\frac{n}{n+1}-4-k} I^{\frac{n}{n+1}+1+k}_{6+2k}=  
\pi \frac{\frac{n}{n+1}+1+k}{5+2k} \lambda^{\frac{n}{n+1}-4-k} I^{\frac{n}{n+1}+k}_{5+2k}\\
&=&
\frac{\frac{n}{n+1}+1+k}{5+2k}\int_{\mathbb{R}^2} \frac{|y|^{\frac{2n}{n+1}+2k}}{(1+|y|^2+|\zeta|^2)^{5+2k}}dy,
\end{eqnarray*}
for symmetry $K$ reduces to
\begin{eqnarray*}
K&=&
\zeta \, \sum_{k=0}^{+\infty} \left[c_{2k+1} \frac{\frac{n}{n+1}+1+k}{5+2k}\left(\begin{array}{c} 2k+1 \\  k \end{array}\right) 
-c_{2k} \left(\begin{array}{c} 2k \\  k \end{array}\right)
\right]
|\zeta|^{2k} \int_{\mathbb{R}^2} \frac{|y|^{\frac{2n}{n+1}+2k}}{(1+|y|^2+|\zeta|^2)^{5+2k}}dy.
\end{eqnarray*}
Since $(1+k) c_{2k+1} \left(\begin{array}{c} 2k+1 \\  k \end{array}\right)=(5+2k) c_{2k} \left(\begin{array}{c} 2k \\  k \end{array}\right)$ for all $k \geq 0$, we get that
\begin{eqnarray*}
K&=&
\zeta \, \sum_{k=0}^{+\infty} \frac{n}{(n+1)(1+k)} c_{2k} \left(\begin{array}{c} 2k \\  k \end{array}\right)
|\zeta|^{2k} \int_{\mathbb{R}^2} \frac{|y|^{\frac{2n}{n+1}+2k}}{(1+|y|^2+|\zeta|^2)^{5+2k}}dy.
\end{eqnarray*}
In conclusion, we have shown that $K=g(|\zeta|)\zeta$ with $g>0$.

\medskip \noindent In order to determine the asymptotic behavior of $f$ and $g$ as $|\zeta|\to +\infty$, we will use complex analysis to get some integral representation of $f$ and $g$, see \eqref{exprI-2J} and \eqref{exprK}. We split $I$ as $I=J_1-2J_2$, and we compute separately the constants
$$J_1=\int_{\mathbb{R}^2} \frac{|y+\zeta|^{\frac{2n}{n+1}}}{(1+|y|^2)^4}dy\:,\quad J_2=\int_{\mathbb{R}^2} \frac{|y+\zeta|^{\frac{2n}{n+1}}}{(1+|y|^2)^5}dy.$$
Concerning $J_1$, we re-write it in polar coordinates as
\begin{eqnarray*}
J_1&=&\int_{\mathbb{R}^2} \frac{|y|^{\frac{2n}{n+1}}}{(1+|y-\zeta|^2)^4}dy=\int_0^{+\infty} \rho^{\frac{2n}{n+1}+1} d\rho \int_0^{2\pi} \frac{d\theta}{(1+\rho^2+|\zeta|^2-\zeta\rho e^{-i\theta}-\overline{\zeta}\rho e^{i\theta})^4}\\
&=&- i \int_0^{+\infty} \rho^{\frac{2n}{n+1}+1} d\rho \int_{\partial^+ B_1(0)} \frac{w^3}{(\overline{\zeta}\rho)^4 (w^2-\frac{1+\rho^2+|\zeta|^2}{\overline{\zeta}\rho}w+\frac{\zeta^2}{|\zeta|^2})^4}dw.
\end{eqnarray*}
Since $w^2- \displaystyle \frac{1+\rho^2+|\zeta|^2}{\overline{\zeta}\rho}w+\frac{\zeta^2}{|\zeta|^2}$ vanishes only at
$$w_\pm=\frac{1+\rho^2+|\zeta|^2\pm \sqrt{(1+\rho^2+|\zeta|^2)^2-4\rho^2|\zeta|^2}}{2 \overline{\zeta} \rho}$$
with $|w_-|<1<|w_+|$, by the Residue Theorem we have that
\begin{eqnarray*}
J_1= - i \int_0^{+\infty} \rho^{\frac{2n}{n+1}+1} d\rho \int_{\partial^+ B_1(0)} \frac{w^3}{(\overline{\zeta}\rho)^4 (w-w_-)^4(w-w_+)^4}dw=2\pi
\int_0^{\infty} \frac{ \rho^{\frac{2n}{n+1}+1} }{6 (\overline{\zeta}\rho)^4}  \frac{d^3}{d w^3} \left[ \frac{w^3}{(w-w_+)^4}\right](w_-) d \rho .
\end{eqnarray*}
A straightforward computation shows that
$$ \frac{d^3}{d w^3}\left[ \frac{w^3}{(w-w_+)^4}\right]=-6\frac{w^3+w_+^3+9w w_+(w+w_+)}{(w-w_+)^7},$$
and then
$$ \frac{d^3}{d w^3}\left[ \frac{w^3}{(w-w_+)^4}\right](w_-)=6 (\overline{\zeta}\rho)^4 \frac{(1+\rho^2+|\zeta|^2)[(1+\rho^2+|\zeta|^2)^2+6\rho^2 |\zeta|^2]}{[(1+\rho^2+|\zeta|^2)^2-4\rho^2 |\zeta|^2]^{\frac{7}{2}}}.$$
Recalling that $\lambda=1+|\zeta|^2$, through the change of variable $\rho \to \rho^2$ we finally get for $J_1$ the expression
\begin{eqnarray}\label{exprI}
J_1=\pi
\int_0^{\infty} \rho^{\frac{n}{n+1}} \frac{(\lambda+\rho)[(\lambda+\rho)^2+6(\lambda-1)\rho]}{[(\lambda+\rho)^2-4(\lambda-1)\rho]^{\frac{7}{2}}}
d \rho.
\end{eqnarray}

\medskip \noindent In a similar way, we first re-write $J_2$ as
\begin{eqnarray*}
J_2=  i \int_0^{+\infty} \rho^{\frac{2n}{n+1}+1} d\rho \int_{\partial^+ B_1(0)} \frac{w^4}{(\overline{\zeta}\rho)^5 (w-w_-)^5 (w-w_+)^5}dw=
-2\pi \int_0^{+\infty} \frac{ \rho^{\frac{2n}{n+1}+1} }{24 (\overline{\zeta}\rho)^5}  \frac{d^4}{d w^4} \left[ \frac{w^4}{(w-w_+)^5}\right](w_-) d\rho
\end{eqnarray*}
in view of the Residue Theorem. Since
$$ \frac{d^4}{d w^4}\left[ \frac{w^4}{(w-w_+)^5}\right]=24\frac{w^4+w_+^4+16w w_+(w^2+w_+^2)+36 w^2 w_+^2}{(w-w_+)^9},$$
we get that
$$ \frac{d^4}{d w^4}\left[ \frac{w^4}{(w-w_+)^5}\right](w_-)=-24 (\overline{\zeta}\rho)^5 \frac{(1+\rho^2+|\zeta|^2)^4 +12 \rho^2|\zeta|^2 (1+\rho^2+|\zeta|^2)^2+42\rho^4 |\zeta|^4}{[(1+\rho^2+|\zeta|^2)^2-4\rho^2 |\zeta|^2]^{\frac{9}{2}}},$$
and then
\begin{eqnarray}\label{exprJ}
J_2=\pi
\int_0^{\infty} \rho^{\frac{n}{n+1}} \frac{(\lambda+\rho)^4+12(\lambda-1)\rho (\lambda+\rho)^2+42(\lambda-1)^2\rho^2}{[(\lambda+\rho)^2-4(\lambda-1)\rho]^{\frac{9}{2}}}
d \rho.
\end{eqnarray}

\medskip \noindent By \eqref{exprI}-\eqref{exprJ} we finally get that $f(|\zeta|)$  takes the form
\begin{eqnarray} \label{exprI-2J}
f=\pi \int_0^{\infty} \hspace{-0,3cm} \rho^{\frac{n}{n+1}} \frac{(\lambda+\rho)^5-2(\lambda+\rho)^4+2(\lambda-1) \rho (\lambda+\rho)^3 -24 \lambda (\lambda-1) \rho(\rho+1) (\lambda+\rho) -84(\lambda-1)^2 \rho^2 }{[(\lambda+\rho)^2-4(\lambda-1)\rho]^{\frac{9}{2}}}
d \rho
\end{eqnarray}
where $\lambda=1+|\zeta|^2$.

\medskip \noindent Observe that for $\zeta=0$ (i.e. $\lambda=1$) we simply have that
\begin{equation} \label{1228}
f(0)=J_1-2J_2=\pi [I^{\frac{n}{n+1}}_4-2 I^{\frac{n}{n+1}}_5]=-\frac{2\pi}{2n+3}I^{\frac{n}{n+1}}_5
\end{equation}
in view of \eqref{Ipq}. By the change of variable $\rho=\lambda+\sqrt \lambda t$ and the Lebesgue Theorem we get that
\begin{eqnarray*}
\lambda^{-\frac{n}{n+1}} J_1=\pi \int_{-\sqrt \lambda}^\infty (1+\frac{t}{\sqrt \lambda})^{\frac{n}{n+1}} \frac{(2+\frac{t}{\sqrt \lambda})^3+6\frac{\lambda-1}{\lambda}
(1+\frac{t}{\sqrt \lambda})(2+\frac{t}{\sqrt \lambda})}{(t^2+4+\frac{4t}{\sqrt \lambda})^{\frac{7}{2}}} dt \to 20 \pi \int_{\mathbb{R}} \frac{dt}{(t^2+4)^{\frac{7}{2}}}
\end{eqnarray*}
and
\begin{eqnarray*}
\lambda^{-\frac{n}{n+1}} J_2&=&\pi \int_{-\sqrt \lambda}^\infty (1+\frac{t}{\sqrt \lambda})^{\frac{n}{n+1}} \frac{(2+\frac{t}{\sqrt \lambda})^4+12 \frac{\lambda-1}{\lambda}
(1+\frac{t}{\sqrt \lambda})(2+\frac{t}{\sqrt \lambda})^2 +42 (\frac{\lambda-1}{\lambda})^2
(1+\frac{t}{\sqrt \lambda})^2 }{(t^2+4+\frac{4t}{\sqrt \lambda})^{\frac{9}{2}}} dt\\
& \to & 106 \pi \int_{\mathbb{R}} \frac{dt}{(t^2+4)^{\frac{9}{2}}}
\end{eqnarray*}
as $|\zeta|\to +\infty$ (i.e. $\lambda \to +\infty$). Since $\displaystyle \int_{\mathbb{R}} \frac{dt}{(t^2+4)^{\frac{7}{2}}}=\frac{14}{3} \displaystyle  \int_{\mathbb{R}} \frac{dt}{(t^2+4)^{\frac{9}{2}}},$ we get that
\begin{equation} \label{1902}
\frac{f(|\zeta|)}{|\zeta|^{\frac{2n}{n+1}}} \to -\frac{356}{3} \pi \int_{\mathbb{R}} \frac{dt}{(t^2+4)^{\frac{9}{2}}}
\end{equation}
as $|\zeta|\to \infty$. 

\medskip \noindent In a similar way, for $K$ we have that
\begin{eqnarray*}
K=  i \int_0^{+\infty} \rho^{\frac{2n}{n+1}+1} d\rho \int_{\partial^+ B_1(0)} \frac{w^4(\rho w-\zeta)}{(\overline{\zeta}\rho)^5 (w-w_-)^5 (w-w_+)^5}dw=
-2\pi \int_0^{+\infty}  \frac{\rho^{\frac{2n}{n+1}+1} }{24 (\overline{\zeta}\rho)^5}  \frac{d^4}{d w^4} \left[ \frac{w^4(\rho w-\zeta)}{(w-w_+)^5}\right](w_-) d\rho
\end{eqnarray*}
in view of the Residue Theorem. Since
$$ \frac{d^4}{d w^4}\left[ \frac{w^4(\rho w-\zeta)}{(w-w_+)^5}\right]=24\frac{5\rho w w_+[w^3+w_+^3+6ww_+(w+w_+)]-\zeta [w^4+w_+^4+16w w_+(w^2+w_+^2)+36 w^2 w_+^2]}{(w-w_+)^9},$$
we get that
$$ \frac{d^4}{d w^4}\left[ \frac{w^4(\rho w- \zeta)}{(w-w_+)^5}\right](w_-)=12 (\overline{\zeta}\rho)^5 \zeta \frac{(\lambda+\rho^2)^4+2\rho^2 (\lambda-6-5\rho^2) (\lambda+\rho^2)^2+6(\lambda-1)\rho^4  (2\lambda-7-5\rho^2)}{[(\lambda+\rho^2)^2-4(\lambda-1)\rho^2 ]^{\frac{9}{2}}},$$
and then
\begin{eqnarray}\label{exprK}
g(|\zeta|)=-\frac{\pi}{2}
\int_0^{\infty} \rho^{\frac{n}{n+1}} \frac{(\lambda+\rho)^4+2\rho (\lambda-6-5\rho) (\lambda+\rho)^2+6(\lambda-1)\rho^2  (2\lambda-7-5\rho)}{[(\lambda+\rho)^2-4(\lambda-1)\rho]^{\frac{9}{2}}}
d \rho.
\end{eqnarray}
So, we have that
\begin{equation} \label{1903}
g(0)=\frac{\pi}{2}(9I_5^{\frac{n}{n+1}}-10 I_6^{\frac{n}{n+1}})=\frac{3n+1}{2(n+1)} \pi I_5^{\frac{n}{n+1}}
\end{equation}
in view of \eqref{Ipq}, and, by the change of variable $\rho=\lambda+\sqrt \lambda t$ and the Lebesgue Theorem,
\begin{eqnarray} \label{1904}
\frac{g(|\zeta|)}{|\zeta|^{\frac{2n}{n+1}}} \to 17 \pi \int_{\mathbb{R}} \frac{dt}{(t^2+4)^{\frac{9}{2}}}
\end{eqnarray}
as $|\zeta|\to +\infty$, in view of 
$$\int_{-\sqrt \lambda}^\infty (1+\frac{t}{\sqrt \lambda})^{\frac{n}{n+1}} \frac{
(2+\frac{t}{\sqrt \lambda})^4-2(1+\frac{t}{\sqrt \lambda}) (4+\frac{6+5 \sqrt \lambda t}{\lambda}) (2+\frac{t}{\sqrt \lambda})^2-6\frac{\lambda-1}{\lambda} (1+\frac{t}{\sqrt \lambda})^2  (3+\frac{7+5 \sqrt \lambda t}{\lambda})}{(t^2+4+\frac{4t}{\sqrt \lambda})^{\frac{9}{2}}} dt\to -   \int_{\mathbb{R}} \frac{34\, dt}{(t^2+4)^{\frac{9}{2}}}$$
as $\lambda \to +\infty$.

\medskip \noindent {\bf Acknowledgements:} The work for this paper began while the second author was visiting the Departamento de Matem\'atica, Pontificia Universidad Cat\'olica de Chile (Santiago, Chile). Let him thank M. Musso and M. del Pino for their kind invitation and hospitality.
\end{appendices}


\begin{thebibliography}{AAA}
\bibitem{AbSte} M. Abramowitz, I.A. Stegun, {\em Handbook of mathematical functions with formulas, graphs, and mathematical tables}, National Bureau of Standards Applied Mathematics Series, 55. For sale by the Superintendent of Documents, U.S. Government Printing Office, Washington, 1964.

\bibitem{Abr}A.A. Abrikosov, {\em On the magnetic properties of superconductors of the second group}. Soviet Phys. JETP {\bf 5}
(1957), 1174--1182.

\bibitem{Ap} T.M. Apostol, {\em Modular functions and Dirichlet series in number theory. Second edition}, Graduate Texts in Mathematics, 41. Springer-Verlag, New York, 1990.

\bibitem{BaPa}S. Baraket, F. Pacard, {\em Construction of singular limits for a semilinear elliptic equation in dimension $2$}. Calc. Var. Partial Differential Equations {\bf 6} (1998), no. 1, 1--38.

\bibitem{Bog}E.B. Bogomolnyi, {\em The stability of classical solutions}. Sov. J. Nucl. Phys. {\bf 24} (1976), 449--454.

\bibitem{CY} L.A. Caffarelli, Y. Yang, \emph{Vortex condensation in
the Chern-Simons-Higgs model: an existence theorem}. Comm. Math.
Phys. {\bf 168} (1995), no. 2, 321--336.

\bibitem{ChI}D. Chae, O. Imanuvilov, \emph{The existence of non-topological multivortex solutions in
the relativistic self-dual Chern-Simons theory}. Comm. Math. Phys.
{\bf 215} (2000), no. 1, 119--142.

\bibitem{CFL}H. Chan, C.C. Fu, C.-S. Lin, \emph{Non-topological multivortex solution
to the selfdual Chern-Simons-Higgs equation}. Comm. Math. Phys.
{\bf 231} (2002), 189--221.

\bibitem{CLW}C.C. Chen, C.-S. Lin, G. Wang, \emph{Concentration phenomena of two-vortex solutions in a Chern-Simons model}, Ann. Sc. Norm. Sup. Pisa Cl. Sci. (5) {\bf 3} (2004), 367--397.

\bibitem{CHMY}X. Chen, S. Hastings, J.B. McLeod, Y. Yang, {\em A nonlinear elliptic equations arising from gauge
field theory and cosmology}. Proc. R. Soc. Lond. A {\bf 446}
(1994), no. 1928, 453--478.

\bibitem{ChO}X. Chen, Y. Oshita, \emph{An application of the modular function in nonlocal variational problems}, Arch. Ration. Mech. Anal. {\bf 186} (2007), no. 1, 109--132.

\bibitem{Ch}K. Choe, {\em Asymptotic behavior of condensate solutions in the Chern-Simons-Higgs theory}. J. Math. Phys. {\bf 48} (2007), no. 10, 103501, 17 pp.

\bibitem{DDeMW} J. Davila, M. del Pino, M. Musso, J. Wei, {\em Singular limits of a two-dimensional boundary value problem arising in corrosion modelling}. Arch. Ration. Mech. Anal. {\bf 182} (2006), no. 2, 181--221.

\bibitem{DEM4} M. del Pino, P. Esposito, M. Musso, {\em Two-dimensional Euler flows with concentrated vorticities}. Trans. Amer. Math. Soc. {\bf 362} (2012), no. 12, 6381--6395.

\bibitem{DEM5} M. del Pino, P. Esposito, M. Musso, {\em Linearized theory for entire solutions of a singular Liouvillle equation}. Proc. Amer. Math. Soc. {\bf 140} (2012), no. 2, 581--588.

\bibitem{dkm} M. del Pino, M. Kowalczyk, M. Musso, {\em Singular limits in Liouville-type equations}. Calc. Var. Partial Differential Equations {\bf 24} (2005), no. 1,  47--81.

\bibitem{DJLPW}W. Ding, J. Jost, J. Li, X. Peng, G. Wang, {\em Self duality equations for Ginburg-Landau and Seiberg-Witten type functionals with $6^{\hbox{th}}$ order potential}. Comm. Math. Phys. {\bf 217} (2001), 383--407.

\bibitem{DJLW2}W. Ding, J. Jost, J. Li, G. Wang, {\em An analysis of the two-vortex case in the Chern-Simons-Higgs model}. Calc. Var. Partial Differential Equations {\bf 7} (1998), 87--97.

\bibitem{D}G. Dunne, \emph{Selfdual Chern-Simons theories}, Lecture Notes in Physics
Monograph Series, 36. Springer-Verlag, Heidelberg, 1995.

\bibitem{EsFi}P. Esposito, P. Figueroa, {\em  Singular mean field equations on compact Riemann surfaces}, arXiv:1210.6162.

\bibitem{EGP}P. Esposito, M. Grossi, A. Pistoia, {\em On the existence of blowing-up solutions for a mean field equation}. Ann. IHP Analyse Non Lin{\'e}aire {\bf 22} (2005), no. 2 , 227--257.

\bibitem{Fi}P. Figueroa, \emph{Singular limits for
Liouville-type equations on the flat two-torus}, Calc. Var. Partial Differential Equations (2013), doi 10.1007/s00526-012-0594-0.

\bibitem{H}J. Han, \emph{Existence of topological multivortex solutions in self dual
gauge theory}. Proc. Roy. Soc. Edinburgh {\bf 130} (2000),
1293--1309.

\bibitem{HKP}J. Hong, Y. Kim, P.Y. Pac, {\em Multivortex solutions of the abelian Chern-Simons-Higgs theory}.
Phys. Rev. Lett. {\bf 64} (1990), no. 19, 2230--2233.

\bibitem{JW}R. Jackiw, E.J. Weinberg, {\em Self-dual Chern-Simons vortices}.
Phys. Rev. Lett. {\bf 64} (1990), no. 19, 2234--2237.

\bibitem{JaTa}A. Jaffe, C. Taubes, {\em Vortices and monopoles}, Progress in Physics, 2. Birkh\"auser, Boston, 1980.

\bibitem{LiWa}C.-S. Lin, C.-L. Wang, \emph{Elliptic functions, Green functions and the mean field equations on tori}. Ann. of Math. (2) {\bf 172} (2010), no. 2, 911--954.

\bibitem{LinYan}C.-S. Lin, S. Yan, {\em Bubbling solutions for relativistic abelian Chern-Simons
model on a torus}. Comm. Math. Phys. {\bf 297} (2010), 733--758.

\bibitem{LinYan1}C.-S. Lin, S. Yan, {\em Existence of Bubbling Solutions for Chern-Simons Model on a Torus}. Arch. Ration. Mech. Anal. {\bf 207} (2013), no. 2, 353-392.

\bibitem{Nol} M. Nolasco, {\em Nontopological $N$-vortex condensates for the self-dual Chern-Simons theory}. Comm. Pure Appl. Math. {\bf 56} (2003), no. 12, 1752--1780.

\bibitem{NoTa3}M. Nolasco, G. Tarantello, {\em Double vortex condensates in the Chern-Simons-Higgs theory}. Calc. Var. Partial Differential Equations {\bf 9} (1999), 31--91.

\bibitem{SY1}J. Spruck, Y. Yang, {\em The existence of non-topological solutions in the self-dual Chern-Simons theory}. Comm. Math. Phys. {\bf 149} (1992), 361--376.

\bibitem{SY2}J. Spruck, Y. Yang, {\em Topological solutions in the self-dual Chern-Simons theory: existence
and approximation}. Ann. IHP Analyse Non Lin\'eaire {\bf 12}
(1995), no. 1, 75--97.

\bibitem{T}G. Tarantello, {\em Multiple condensate solutions for the Chern-Simons-Higgs theory}. J. Math. Phys. {\bf 37} (1996), 3769--3796.

\bibitem{Tbook}G. Tarantello, {\em Self-dual gauge field theories. An analytical approach}, Progress in Nonlinear Differential Equations and their Applications, 72. Birkhäuser, Boston, 2008.

\bibitem{Taubes}C. Taubes, {\em Arbitrary N-vortex solutions for the
first order Ginzburg-Landau equations}. Comm. Math. Phys. {\bf 72}
(1980), 277--292.

\bibitem{tHo}G. 't Hooft, {\em A property of electric and magnetic flux in nonabelian gauge theory}. Nuclear Phys. B {\bf 153} (1979), 141--160.

\bibitem{RWa}R. Wang, {\em The existence of Chern-Simons vortices}. Comm. Math. Phys. {\bf 137} (1991), 587--597.

\bibitem{Ybook}Y. Yang, {\em Solitons in field theory and nonlinear analysis}, Springer Monographs in Mathematics. Springer-Verlag, New York, 2001.

\end{thebibliography}
\end{document}